\algrenewcommand{\algorithmiccomment}[1]{\hfill[{\it #1}]}
\definecolor{color0}{RGB}{127, 0, 255}
\definecolor{color1}{RGB}{128, 128, 128}
\definecolor{color2}{RGB}{255, 0, 127}
\definecolor{color3}{RGB}{255, 0, 255}
\definecolor{color4}{RGB}{0, 0, 255}
\definecolor{color5}{RGB}{0, 255, 0}
\definecolor{color6}{RGB}{255, 128, 0}
\definecolor{color7}{RGB}{255, 0, 0}
\definecolor{color8}{RGB}{0, 128, 255}
\definecolor{color9}{RGB}{128, 255, 0}
\tikzstyle{node}=[rectangle,
\tikzstyle{mylabel}=[draw=none, opacity=0]
\tikzstyle{solid black}=[->, draw=black, thick, -angle 90]
\tikzstyle{dashed black}=[->, draw=black, thick, dashed, -angle 90]
\tikzstyle{a}=[-angle 90, draw=color0, thick]
\tikzstyle{b}=[-angle 90, draw=color1, thick]
\tikzstyle{c}=[-angle 90, draw=color2, thick]
\tikzstyle{d}=[-angle 90, draw=color3, thick]
\tikzstyle{e}=[-angle 90, draw=color4, thick]
\tikzset{arrow/.style={-angle 90, shorten >=2pt, shorten <=2pt}}
\tikzset{parallel_arrow_1/.style={-angle 90,
decoration={sl,raise=1mm},decorate, shorten >=2pt, shorten <=2pt}}
\tikzset{parallel_arrow_2/.style={-angle 90,
decoration={sl,raise=-1mm},decorate, shorten >=2pt, shorten <=2pt}}
\tikzset{aarrow/.style={-aa, shorten >=2pt, shorten <=2pt, densely dashed}}
\tikzset{parallel_aarrow_1/.style={-aa,
    decoration={sl,raise=1mm},decorate, shorten >=2pt, shorten <=2pt,
densely dashed}}
\tikzset{parallel_aarrow_2/.style={-aa,
    decoration={sl,raise=-1mm},decorate, shorten >=2pt, shorten <=2pt,
densely dashed}}
\newcommand{\id}{\mbox{\rm id}}
\newcommand{\set}[2]{\ensuremath{\{\: #1 \: :\: #2 \:\}}}
\newcommand{\genset}[1]{\ensuremath{\langle\: #1 \:\rangle}}
\renewcommand{\to}{\longrightarrow}
\newcommand{\Z}{\mathbb{Z}}
\newcommand{\N}{\mathbb{N}}
\renewcommand{\P}{\mathcal{P}}
\newcommand{\Stab}{\operatorname{Stab}}
\newcommand{\im}{\operatorname{im}}
\newcommand{\QuoStab}[3]{\Stab_{#2}(#3)/\ker(#1)}
\newcommand{\Sym}{\operatorname{Sym}}
\newcommand{\defn}[1]{\textbf{\textit{#1}}}
\renewcommand{\L}{\mathscr{L}}
\newcommand{\R}{\mathscr{R}}
\newcommand{\T}{\mathbb{T}}
\newcommand{\X}{\mathbb{X}}
\newcommand{\Y}{\mathbb{Y}}
\newcommand{\V}{\mathbb{V}}
\newcommand{\E}{\mathbb{E}}
\newcommand{\vv}{\mathbb{v}}
\newcommand{\w}{\mathbb{w}}
\newcommand{\Reach}{\operatorname{Desc}}
\newcommand{\Kids}{\operatorname{Kids}}
\newtheorem{thm}{Theorem}[section]
\newtheorem{prop}[thm]{Proposition}
\newtheorem{cor}[thm]{Corollary}
\newtheorem{lemma}[thm]{Lemma}
\newtheorem{defi}[thm]{Definition}
\newenvironment{de}{
\begin{defi} \rm}{
\end{defi}}
\newtheorem{exam}[thm]{Example}
\newenvironment{ex}{
\begin{exam} \rm}{
\end{exam}}
\newtheorem{remark}[thm]{Remark}
\numberwithin{equation}{section}
\newtheorem*{lemma-no-num}{Lemma}
\newcommand{\libsemigroups}{\textsc{libsemigroups}~\cite{Mitchell2022ab}\xspace}
\newcommand{\libsemigroupspybind}{\textsc{libsemigroups\_pybind11}~\cite{Mitchell2022ad}\xspace}
\newcommand{\python}{\textsc{Python}\xspace}
\newcommand{\GAP}{\textsc{GAP}~\cite{GAP2022aa}\xspace}
\newcommand{\threemanifolds}{\textsc{3\-manifolds}~\cite{Culler2025aa}\xspace}
\newcommand{\Semigroups}{\textsc{Semigroups}~\cite{Mitchell2022ac}\xspace}
\newcommand{\CREAM}{\textsc{CREAM}~\cite{Pereira2022aa}\xspace}
\setlist{itemsep=0em}
\newcommand\footnoteref[1]{\protected@xdef\@thefnmark{\ref{#1}}\@footnotemark}
\title{Computing finite index congruences of finitely presented
semigroups and monoids}
\author{Marina Anagnostopoulou-Merkouri, Reinis Cirpons, James D. Mitchell, and
Maria Tsalakou}
\begin{document}
\maketitle

\begin{abstract}
  In this paper, we describe an algorithm for computing the left,
  right, or 2-sided
  congruences of a finitely presented semigroup or monoid with finitely many
  classes, and an alternative algorithm when the finitely presented semigroup or
  monoid is finite. We compare the two algorithms presented with
  existing algorithms
  and implementations. The first algorithm is a generalization of
  Sims' low-index
  subgroup algorithm for finding the congruences of a monoid. The
  second algorithm
  involves determining the distinct principal congruences, and then
  finding all of
  their possible joins. Variations of this algorithm have been suggested in
  numerous contexts by numerous authors. We show how to utilize the theory of
  relative Green's relations, and a version of Schreier's Lemma for monoids, to
  reduce the number of principal congruences that must be generated as the first
  step of this approach. Both of the algorithms described in this paper are
  implemented in the \GAP package \Semigroups, and the first algorithm is
  available in the C++ library \libsemigroups and in its \python bindings
  \libsemigroupspybind.
\end{abstract}

\tableofcontents

\section{Introduction}\label{section-intro}

In this paper, we are concerned with the problem of computing finite index
congruences of a finitely presented semigroup or monoid. One case of
particular interest is
computing the entire lattice of congruences of a finite semigroup or monoid.
We will present two algorithms that can perform these computations and compare
them with each other and to existing algorithms and their implementations.
The first algorithm is the only one of its kind, permitting the
computation of finite index 1-sided and 2-sided congruences, and a
host of other things (see \cref{section-applications-of-low-index-algo})
of infinite finitely presented semigroups and monoids. Although this
first algorithm is not specifically designed to find finite index
subgroups of finitely presented groups, it can be used for such
computations and is sometimes faster than the existing
implementations in \textsc{3Manifolds}~\cite{Culler2025aa} and \GAP;
see~\cref{table-infinite}.
The second algorithm we present is, for many examples, several orders
of magnitude faster than any existing method,
and in many cases permits computations that were previously unfeasible.
Examples
where an implementation of an existing
algorithm, such as that in \cite{Pereira2022aa}, is faster are
limited to those with total runtime below 1 second; see
\cref{table-numbers}. Some further
highlights include: computing the numbers of right/left congruences of many
classical examples of finite transformation and diagram monoids, see
\cref{appendix-numbers}; reproducing and extending the computations
from~\cite{Bailey2016aa} to find the number of congruences in
free semigroups and monoids (see \cref{table-2-sided}); computational
experiments with the algorithms implemented were crucial in
determining the minimum transformation representation of the
so-called diagram monoids in~\cite{East2024aa}; and in classifying the maximal
and minimal 1-sided congruences of the full transformation monoids
in~\cite{Cirpons2025aa}. In \cref{appendix-benchmarks}, we present significant
quantitative data exhibiting  the performance of our algorithms.
\cref{appendix-numbers} provides a wealth of data generated using the
implementation of the algorithms described here.
For example, the sequences of numbers of minimal 1-sided congruences
of a number of well-studied transformation monoids are apparent in
several of the tables in \cref{appendix-numbers}, such as
\cref{table-motzkin-dual-sym-inv}.

The question of determining the lattice of 2-sided congruences of a semigroup or
monoid is classical and has been widely studied in the literature; see, for
example,~\cite{Maltsev1952aa}.
Somewhat more recently, this interest was rekindled by
Ara\'{u}jo, Bentz, and Gomes in~\cite{Araujo2018aa}, Young (n\'{e} Torpey)
in~\cite{Torpey2019aa}, and the third author of the present article, which
resulted in~\cite{East2018aa} and its numerous offshoots~\cite{Brookes2020aa,
Dandan2019aa, East2022ab, East2020aa, East2018ab, East2022aa, East2019aa}. The
theory of 2-sided congruences of a monoid is analogous to the theory of normal
subgroups of a group, and 2-sided congruences play the same role for
monoids with respect to quotients and homomorphisms. As such, it is
perhaps not surprising
that the theory of 2-sided congruences of semigroups and monoids is rather rich.
The 2-sided congruences of certain types of semigroup are completely
classified, for a small sample among many, via linked triples for regular Rees
0-matrix semigroups~\cite[Theorem 3.5.8]{Howie1995aa}, or via the kernel and
trace for inverse semigroups~\cite[Section 5.3]{Howie1995aa}.

The literature relating to 1-sided congruences is less well-developed; see,
for example,~\cite{Brookes2020aa,Meakin1975aa}. Subgroups are to groups what
1-sided congruences are to semigroups. This accounts, at least in part, for
the relative scarcity of results in the literature on 1-sided congruences.
The number of such congruences can be enormous, and the structure of the
corresponding lattices can be wild. For example, the full transformation monoid
of degree $4$ has size $256$ and possesses $22,069,828$ right
congruences\footnote{\label{footnote-numbers}This number was computed
  for the first time using the algorithm described in
  \cref{section-low-index} as implemented in the C++ library
  \libsemigroups whose authors include the authors of the present
paper. It was not previously known.}.
Another example is that of the stylic monoids from~\cite{Abram2022aa}, which
are finite quotients of the well-known plactic
monoids~\cite{Knuth1970aa,Lascoux1981aa}. The stylic monoid with $5$ generators
has size $51$, while the number of left congruences is
$1,431,795,099$\footnoteref{footnote-numbers}.

The purpose of this paper is to provide general computational tools for
computing the 1- and 2-sided congruences of a finite, or finitely presented,
monoid.
There are a number of examples in the literature of such general
algorithms; notable examples
include~\cite{Freese2008aa},~\cite{Torpey2019aa}, and~\cite{Araujo2022aa},
which describes an implementation of the algorithms from~\cite{Freese2008aa}.

The first of the two algorithms we present is a generalization of
Sims' low-index
subgroup algorithm for congruences of a finitely presented monoid; see Section
5.6 in~\cite{Sims1994aa} for details of Sims' algorithm;  some
related algorithms and applications of the low-index subgroups
algorithm can be found
in~\cite{HoltRees1992aa},~\cite{Hulpke2001aa},~\cite[Section
6]{Neubuser1982aa}, and~\cite{Rauzy2021}.
A somewhat similar algorithm for computing
low-index ideals of a finitely presented monoid with decidable word
problem was given
by Jura in~\cite{JuraIdeals1} and~\cite{JuraIdeals2} (see
also~\cite{Ruskuc1998aa}).
We will refer to the algorithm presented here as the \defn{low-index congruence
algorithm}. We present a unified framework for computing various
special types of congruences, including:  2-sided congruences;
congruences including or excluding given pairs of elements (leading
to the ability to compute specific parts of a congruence lattice);
solving the word problem in residually finite semigroups or monoids;
congruences such that the corresponding quotient is a group;
congruences arising from 1- or 2-sided ideals; and 1-sided
congruences representing a faithful action of the original monoid;
see~\cref{section-applications-of-low-index-algo} for details. This
allows us to, for example, implement a method for computing the
finite index ideals of a finitely presented semigroup akin
to~\cite{JuraIdeals1, JuraIdeals2} by implementing a single function
(\cref{alg-is-right-rees-congruence}) which determines if a finite
index right congruence is a Rees congruence.

The low-index congruence algorithm takes as input a finite monoid
presentation defining
a monoid $M$, and a positive integer $n$. It permits the congruences with up to
$n$ classes to be iterated through without repetition, while only holding a
representation of a single such congruence in memory.
Each congruence is represented as a certain type of directed graph, which we
refer to as \defn{word graphs}; see~\cref{section-prelims} for the definition.
The space complexity of this approach is $O(mn)$ where $m$ is the number of
generators of the input monoid, and $n$ is the input positive integer. Roughly
speaking, the low-index algorithm performs a backtracking search in a tree whose
nodes are word graphs. If the monoid $M$ is finite, then setting $n = |M|$
allows us to determine all of the left, right, or 2-sided congruences of $M$.
Finding all of the subgroups of a finite group was perhaps not the original
motivation behind Sims' low-index subgroup algorithm from~\cite[Section
5.6]{Sims1994aa}. In particular, there are likely better ways of finding all
subgroups of a finite group; see, for example,~\cite{Holt2005aa},
~\cite{Hulpke2018aa} and the references therein. On the other hand, in some
sense, the structure of semigroups and monoids in general is less constrained
than that of groups, and in some cases the low-index congruence algorithm is the
best or only available means of computing congruences.

To compute the actual lattice of congruences obtained from the low-index
congruences algorithm, we require a mechanism for computing the join or meet of
two congruences given by word graphs. We show that two well-known algorithms
for finite state automata can be used to do this. More specifically, a
superficial modification of the Hopcroft-Karp Algorithm~\cite{Hopcroft1971aa},
for checking if two finite state automata recognise the same language, can be
used to compute the join of two congruences. Similarly, a minor modification of
the standard construction of an automaton recognising the intersection of two
regular languages can be utilised to compute meets; see for
example~\cite[Theorem 1.25]{Sipser2013aa}.
For more background on automata theory, see~\cite{Pin2022aa}.

As described in Section 5.6 of~\cite{Sims1994aa}, one motivation of Sims'
low-index subgroup algorithm was to provide an algorithm for proving
non-triviality of the group $G$ defined by a finite group presentation by
showing that $G$ has a subgroup of index greater than $1$. The low-index
congruences algorithm presented here can similarly be used to prove the
non-triviality of the monoid $M$ defined by a finite monoid presentation by
showing that $M$ has a congruence with more than one class.
There are a number of other possible applications: to determine small degree
transformation representations of monoids (every monoid has a faithful action on
the classes of a right congruence); to prove that certain relations in a
presentation are irredundant; or more generally to show that the monoids
defined by two presentations are not isomorphic (if the monoids defined by
  $\langle A\mid R\rangle$ and $\langle B\mid S\rangle$ have
  different numbers of
congruences with $n$ classes, then they are not isomorphic). Further
applications are discussed in \cref{section-applications-of-low-index-algo}.

The low-index congruence algorithm is implemented in the open-source C++
library \libsemigroups, and available for use in the \GAP package \Semigroups,
and the \python package \libsemigroupspybind. The low-index algorithm is almost
embarrassingly parallel, and the implementation in \libsemigroups is
parallelised using a version of the work stealing queue described
in~\cite[Section 9.1.5]{Williams2019aa}; see \cref{section-low-index} and
\cref{appendix-benchmarks} for more details.

The second algorithm we present is more straightforward than the low-index
congruence algorithm, and is a variation on a theme that has been suggested in
numerous contexts, for example, in~\cite{Araujo2022aa,Freese2008aa,
Torpey2019aa}. There are two main steps to this procedure. First, the distinct
principal congruences are determined, and, second, all possible joins of the
principal congruences are found. Unlike the low-index congruence algorithm,
this algorithm cannot be used to compute anything about infinite finitely
presented semigroups or monoids. This second algorithm is implemented in the
\GAP package \Semigroups.


The first step of the second algorithm, as described in, for
example,~\cite{Araujo2022aa, Freese2008aa, Torpey2019aa}, involves computing
the principal congruence, of the input monoid $M$, generated by every pair $(x,
y) \in M \times M$. Of course, in practice, if $(x, y) \in M\times M$, then the
principal congruences generated by $(x, y)$ and $(y, x)$ are the same, and so
only $|M|(|M| - 1) / 2$ principal congruences are actually generated. In either
case, this requires the computation of $O(|M| ^ 2)$ such principal congruences.
We will show that certain of the results from~\cite{East2019aa} can be
generalized to provide a sometimes smaller set of pairs $(x, y) \in M
\times M$ required to generate all of the principal congruences of $M$. In
particular, we show how to compute relative Green's $\mathscr{R}$-class and
$\mathscr{J}$-class representatives of elements in the direct product $M\times
M$ modulo its submonoid $\Delta_M = \set{(m, m)}{m \in M}$. Relative Green's
relations were introduced in~\cite{Wallace1963aa}; see also~\cite{Cain2012aa,
Gray2008aa}. It is straightforward to verify that if $(x, y), (z, t)\in M
\times M$ are $\mathscr{R}$-related modulo $\Delta_M$, then the principal right
congruences generated by $(x, y)$ and $(z, t)$ coincide; see
\cref{prop-relative-greens-cong}\ref{prop-relative-greens-cong-i} for a proof.
We will show that it is possible to reduce the problem of computing relative
$\mathscr{R}$-class representatives to the problems of computing the right
action of $\Delta_M$ on a set, and membership testing in an associated
(permutation) group. The relative $\mathscr{J}$-class representatives
correspond to strongly connected components of the action of $\Delta_M$ on the
relative $\mathscr{R}$-classes by left multiplication, and can be found
whenever the relative $\mathscr{R}$-classes can be. In many examples, the time
taken to find such relative $\mathscr{R}$-class and $\mathscr{J}$-class
representatives is negligible when compared to the overall time required to
compute the lattice of congruences, and in some examples there is a dramatic
reduction in the number of principal congruences that must be generated. For
example, if $M$ is the general linear monoid of $3\times 3$ matrices over the
finite field $\mathbb{F}_2$ of order $2$, then $|M| = 512$ and so $|M|(|M| - 1)
/ 2 = 130,816$. On the other hand, the numbers of relative $\mathscr{J}$- and
$\mathscr{R}$-classes of elements of $M\times M$ modulo $\Delta_M$ are $44$ and
$1,621$, $M$ has $6$ and $1,621$ principal 2-sided and right congruences,
respectively,  and the total number of 2-sided congruences is $7$.  Another
example: if $N$ is the monoid consisting of all $2\times 2$ matrices over the
finite field $\mathbb{F}_7$ with $7$ elements with determinant $0$ or $1$, then
$|N| = 721$ and so $|N|(|N| - 1) / 2 = 259,560$, but the numbers of relative
$\mathscr{J}$- and $\mathscr{R}$-classes are $36$ and $1,862$, there are $7$
and $376$ principal 2-sided and right congruences, respectively, and the total
number of 2-sided congruences is $10$. Of course, there are other examples
where there is no reduction in the number of principal congruences that must be
generated, and as such the time taken to compute the relative Green's classes
is wasted; see \cref{appendix-benchmarks} for a more thorough analysis.

One question we have not yet addressed is how to compute a (principal)
congruence from its generating pairs. For the purpose of computing the lattice
of congruences, it suffices to be able to compare congruences by containment.
There are a number of different approaches to this: such as the algorithm
suggested in~\cite[Algorithm 2]{Freese2008aa} and implemented
in~\cite{Araujo2022aa}, and that suggested in~\cite[Chapter 2]{Torpey2019aa}
and implemented in the \GAP package \Semigroups and the C++ library
\libsemigroups. The former is essentially a brute force enumeration of the
pairs belonging to the congruence, and congruences are represented by the
well-known disjoint sets data structure. The latter involves running two
instances of the Todd--Coxeter Algorithm in parallel; see~\cite[Chapter
2]{Torpey2019aa} and~\cite{Coleman2022aa} for more details.

We conclude this introduction with some comments about the relative merits and
de-merits of the different approaches outlined above, and we refer the reader
to~\cref{appendix-benchmarks} for some justification for the claims we are
about to make.

As might be expected, the runtime of the low-index congruence algorithm is
highly dependent on the input presentation, and it seems difficult (or
impossible) to predict what properties of a presentation reduce the runtime. We
define the \defn{length of a presentation} to be the sum of the lengths of the
words appearing in relations plus the number of generators.
On the one hand, for a fixed monoid $M$, long presentations appear to have an
adverse impact on the performance, but so too do very short presentations.
Perhaps one explanation for this is that a long presentation increases the cost
of processing each node in the search tree, while a short presentation does not
make it evident that certain branches of the tree contain no
solutions until many
nodes in the tree have been explored. If $M$ is finite, then it is possible to
find a presentation for $M$ using, for example, the Froidure-Pin
Algorithm~\cite{Froidure1997aa}. In some examples, the presentations produced
mechanically (i.e.\ non-human presentations) qualify as long in the preceding
discussion. In some examples presentations from the literature (i.e.\ human
presentations) work better than their non-human counterparts, but in other
examples they qualify as short in the preceding discussion, and are worse.
It seems that in many cases some experimentation is required to find a
presentation for which the low-index congruence algorithm works best. On the
other hand, in examples where the number of congruences is large, say in the
millions, running any implementation of the second algorithm is infeasible
because it requires too much space. Having said that, there are still some
relatively small examples where the low-index congruences algorithm is faster
than the implementations of the second algorithm in \Semigroups and in
\CREAM, and others where the opposite holds; see \cref{table-numbers}.

One key difference between the implementation of the low-index subgroup
algorithm in, say, \GAP, and the low-index congruence algorithm in
\libsemigroups is that the former finds conjugacy class representatives of
subgroups with index at most $n\in \N$. As far as the authors are aware, there
is no meaningful notion of conjugacy that can be applied to the low-index
congruences algorithm for semigroups and monoids in general.
Despite the lack of any optimizations for groups in the
implementation of the low-index congruence algorithm in
\libsemigroups, its performance is often better than or comparable to
that of the implementations of Sims' low-index subgroup algorithm in
\cite{Culler2025aa} and \GAP; see \cref{table-infinite} for more details.

The present paper is organised as follows: in \cref{section-prelims} we present
some preliminaries required in the rest of the paper; in
\cref{section-word-graphs} we state and prove some results related to actions,
word graphs, and congruences; in \cref{section-low-index} we state the low-index
congruence algorithm and prove that it is correct; in
\cref{section-applications-of-low-index-algo} we give numerous applications of
the low-index congruences algorithm; in \cref{section-joins-meets} we show how
to compute the joins and meets of congruences represented by word graphs; in
\cref{section-algo-2} we describe the algorithm based on~\cite{East2019aa} for
computing relative Green's relations. In \cref{appendix-benchmarks}, we
provide some benchmarks that compare the performance of the implementations in
\libsemigroups, \Semigroups, and \CREAM. Finally in
\cref{appendix-numbers} we present some tables containing
statistics about the lattices of congruences of some well-known families of
monoids.

\section{Preliminaries}\label{section-prelims}

In this section we introduce some notions that are required for the latter
sections of the paper.

Throughout the paper we use the symbol $\bot$ to denote an ``undefined'' value,
and note that $\bot$ is not an element of any set except where it is explicitly
included.

Let $S$ be a semigroup. An equivalence relation $\rho\subseteq S\times S$ is a
\defn{right congruence} if $(xs, ys)\in \rho$ for all $(x,y) \in \rho$ and all
$s\in S$. \defn{Left congruences} are defined analogously, and a
\defn{2-sided congruence} is both a left and a right congruence. We refer to
the number of classes of a congruence as its \defn{index}.

If $X$ is any set, and $\Psi:X\times S\to X$ is a function, then $\Psi$ is a
\defn{right action} of $S$ on $X$ if  $((x, s)\Psi, t)\Psi=(x, st)\Psi$ for all
$x\in X$ and for all $s, t \in S$. If in addition $S$ has an identity element
$1_S$ (i.e.\ if $S$ is a \defn{monoid}), we require $(x, 1_S)\Psi = x$ for all
$x\in X$ also. \defn{Left actions} are defined dually. In this paper we will
primarily be concerned with right actions of monoids.

If $M$ is a monoid, and $\Psi_0: X_0 \times M\to X_0$ and $\Psi_1: X_1\times M
\to X_1$ are right actions of $M$ on sets $X_0$ and $X_1$, then we say that
$\lambda: X_0\to X_1$ is a \defn{homomorphism of the right actions $\Psi_0$ and
$\Psi_1$} if
\[
  (x, s)\Psi_0\lambda = ((x)\lambda, s)\Psi_1
\]
for all $x\in X$ and all $s\in M$.
An \defn{isomorphism} of right actions is a bijective homomorphism.

Let $A$ be any alphabet and let $A ^ *$ denote the \defn{free monoid} generated
by $A$ (consisting of all words over $A$ with operation juxtaposition and
identity the empty word $\varepsilon$).
We define a \defn{word graph} $\Gamma = (V, E)$ over the alphabet $A$ to be
a digraph with set of nodes $V$ and edges $E\subseteq V \times A \times V$.
Word graphs are essentially finite state automata without initial or accept
states. More specifically, if $\Gamma = (V, E)$ is a word graph over
$A$, then for any
$\alpha \in V$ and any $Q_1\subseteq V$, we can define a finite state automaton
$(V, A, \alpha, \delta, Q_1)$ where: the state set is $V$; the alphabet is $A$;
the start state is $\alpha$; the transition function $\delta: V\times A \to V$
is defined by $(\alpha, a)\delta = \beta$ whenever $(\alpha, a, \beta)\in E$;
and $Q_1$ denotes the accept states.

If $(\alpha, a, \beta)\in E$ is an edge in a word graph $\Gamma$, then $\alpha$
is the \defn{source}, $a$ is the \defn{label}, and $\beta$ is the \defn{target}
of $(\alpha, a, \beta)$.
A word graph
$\Gamma$ is \defn{complete} if for every node $\alpha$ and every letter $a\in
A$ there is at least one edge with source $\alpha$ labelled by $a$. A word
graph $\Gamma = (V, E)$ is \defn{finite} if the sets of nodes $V$ and edges $E$
are finite. A word graph is \defn{deterministic} if for every node $\alpha\in
V$ and every $a\in A$ there is at most one edge with source $\alpha$ and label
$a$.

If $\alpha, \beta \in V$, then an \defn{$(\alpha, \beta)$-path} is a sequence of
edges $(\alpha_0, a_0, \alpha_{1}), \ldots, (\alpha_{n - 1}, a_{n - 1},
\alpha_{n})\in E$ where $\alpha_0 = \alpha$ and $\alpha_{n} = \beta$ and $a_0,
\ldots, a_{n - 1}\in A$; $\alpha$ is the \defn{source} of the path; the word
$a_0\cdots a_{n - 1}\in A ^ *$ \defn{labels} the path; $\beta$ is the
\defn{target} of the path; and the \defn{length} of the path is $n$. We say that
there exists an $(\alpha, \alpha)$-path of length $0$ labelled by $\varepsilon$
for all $\alpha \in V$. For $\alpha\in V$, and $u\in A^\ast$  we will
write $\alpha
\cdot_{\Gamma} u = \beta \in V$ to mean that $u$ labels a $(\alpha,
\beta)$-path in
$\Gamma$, and $\alpha \cdot_{\Gamma} u = \bot$ if $u$ does not label
a path with source
$\alpha$ in $\Gamma$. When there are no opportunities for ambiguity
we will omit the subscript $\Gamma$ from $\cdot_{\Gamma}$.
If $\alpha, \beta \in V$ and there is an
$(\alpha, \beta)$-path in $\Gamma$, then we say that $\beta$ is
\defn{reachable} from $\alpha$. If $\alpha$ is a node in a word graph $\Gamma$,
then the \defn{strongly connected component of $\alpha$} is the set of all
nodes $\beta$ such that $\beta$ is reachable from $\alpha$ and $\alpha$ is
reachable from $\beta$. If $\Gamma = (V, E)$ is a word graph and
$\mathfrak{P}(A ^ * \times A ^ *)$ denotes the power set of $A ^ * \times A ^
*$, then the
\defn{path relation of $\Gamma$} is the function $\pi_{\Gamma}: V
\to \mathfrak{P}(A ^ * \times A ^ *)$ defined by $(\alpha)\pi_{\Gamma}
= \set{(u, v)\in A ^ * \times A ^ *}{\alpha\cdot u \not= \bot\text{ and
}\alpha\cdot u = \alpha\cdot v}$. If $\Gamma$ is a complete word graph and
$\alpha$ is a node in $\Gamma$, then $(\alpha)\pi_{\Gamma}$ is a
right congruence on $A ^ *$. If $R\subseteq A ^ *\times A ^ *$, $\Gamma$ is a
word graph, and $\pi_{\Gamma}$ is the path relation of $\Gamma$, then we say
that $\Gamma$ is \defn{compatible with $R$} if
$\alpha\cdot u = \alpha\cdot v$ whenever $\alpha\cdot u \neq \bot$ and
$\alpha\cdot v \neq \bot$ for all $(u, v)\in R$ and for all $\alpha \in V$.
Equivalently, if $\Gamma$ is complete, then $\Gamma$ is compatible with $R$ if
and only if $R\subseteq (\alpha)\pi_{\Gamma}$ for every node $\alpha$ in
$\Gamma$. It is routine to verify that if a word graph $\Gamma$ is compatible
with $R$, then it is also compatible with the least (2-sided) congruence on $A ^
*$ containing $R$; we denote this congruence by $R ^{\#}$.

\section{Word graphs and right congruences}\label{section-word-graphs}

In this section we establish some fundamental results related to word graphs and
right congruences. It seems likely to the authors that the results presented in
this section are well-known; similar ideas occur in
~\cite{Jura1978aa},~\cite{Kilp2000aa},~\cite{Sims1994aa}, and probably
elsewhere. However, we did not find any suitable references that suit our
purpose here, particularly in \cref{section-standard-word-graphs}, and have
included some of the proofs of these results for completeness.

This section has three subsections: in \cref{subsection-right-congruences} we
describe the relationship between right congruences and word graphs; in
\cref{subsection-homomorphisms-of-word-graphs} we present some results about
homomorphisms between word graphs; and finally in
\cref{section-standard-word-graphs} we give some technical results about
standard word graphs.

\subsection{Right congruences}%
\label{subsection-right-congruences}

Suppose that $M$ is the monoid defined by the monoid presentation $\langle A
\mid R\rangle$ and $\Psi: V \times M\to V$ is a right action of $M$ on a set
$V$. Recall that $M$ is isomorphic to the quotient of the free monoid $A ^ *$
by $R ^{\#}$. If $\theta: A ^ * \to M$ is the surjective homomorphism with
$\ker(\theta) = R ^ {\#}$, then we can define a complete deterministic word
graph $\Gamma = (V, E)$ over the alphabet $A$ such that $(\alpha, a, \beta) \in
E$ whenever $(\alpha, (a)\theta)\Psi = \beta$. Conversely, if $\Gamma = (V, E)$
is a complete word graph that is compatible with $R$, then we define $\Psi: V
\times M \to V$ by
\begin{equation}\label{eq-action-from-word-graph}
  (\alpha, (w)\theta)\Psi = \beta
\end{equation}
whenever $w$ labels an $(\alpha, \beta)$-path in $\Gamma$. It is routine to
verify that the functions just described mapping a right action to a word
graph, and vice versa, are mutual inverses. For future reference, we record
these observations in the next proposition.

\begin{prop}\label{prop-word-graph-iff-action}
  Let $M$ be the monoid defined by the monoid presentation $\langle A \mid
  R\rangle$. Then there is a one-to-one correspondence between right actions of
  $M$ and complete deterministic word graphs over $A$ compatible with $R$.
\end{prop}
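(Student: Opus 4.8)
The plan is to make precise the two constructions sketched immediately before the statement and to check that they are mutually inverse; throughout, write $\theta \colon A^* \to M$ for the surjective homomorphism with $\ker(\theta) = R^{\#}$.

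\emph{From actions to word graphs.} Given a right action $\Psi \colon V \times M \to V$, define $\Gamma_\Psi = (V, E)$ by declaring $(\alpha, a, \beta) \in E$ exactly when $(\alpha, (a)\theta)\Psi = \beta$. Since $\Psi$ is a function, for each node $\alpha$ and each letter $a$ there is exactly one such $\beta$, so $\Gamma_\Psi$ is complete and deterministic. The load-bearing sublemma here, proved by induction on word length using the action axiom and the fact that $\theta$ is a homomorphism, is that a word $w = a_0 \cdots a_{n-1}$ labels the (unique) path from $\alpha$ to $(\alpha, (w)\theta)\Psi$, with base case $w = \varepsilon$ handled by $(\alpha, 1)\Psi = \alpha$. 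Compatibility with $R$ then follows: if $(u, v) \in R$ then $(u)\theta = (v)\theta$ because $R \subseteq R^{\#} = \ker(\theta)$, so for every node $\alpha$ the words $u$ and $v$ label paths from $\alpha$ to the common target $(\alpha, (u)\theta)\Psi$, i.e.\ $(u,v) \in (\alpha)\pi_{\Gamma_\Psi}$.

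\emph{From word graphs to actions.} Given a complete deterministic word graph $\Gamma = (V, E)$ compatible with $R$, completeness and determinism ensure that each word $w \in A^*$ labels a unique path from any fixed node $\alpha$; write $\beta_{\alpha, w}$ for its target. Define $\Psi_\Gamma \colon V \times M \to V$ by $(\alpha, (w)\theta)\Psi_\Gamma = \beta_{\alpha, w}$. This is well-defined: $\theta$ is surjective, so every element of $M$ has the form $(w)\theta$; and if $(w)\theta = (w')\theta$, then $(w, w') \in \ker(\theta) = R^{\#}$, and since compatibility with $R$ implies compatibility with $R^{\#}$ (noted at the end of \cref{section-prelims}), we get $(w, w') \in (\alpha)\pi_\Gamma$, which by determinism forces $\beta_{\alpha, w} = \beta_{\alpha, w'}$. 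That $\Psi_\Gamma$ is a right action follows by concatenating paths: if $s = (u)\theta$ and $t = (v)\theta$, then $uv$ labels the path from $\alpha$ through $\beta_{\alpha,u}$ to $\beta_{\beta_{\alpha,u},\, v}$, giving $((\alpha, s)\Psi_\Gamma, t)\Psi_\Gamma = (\alpha, st)\Psi_\Gamma$; and $(\alpha, 1)\Psi_\Gamma = \beta_{\alpha,\varepsilon} = \alpha$.

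\emph{Mutual inverses and the main obstacle.} Starting from $\Psi$ and forming $\Psi_{\Gamma_\Psi}$: by the sublemma above, $w$ labels the path in $\Gamma_\Psi$ from $\alpha$ to $(\alpha, (w)\theta)\Psi$, so $(\alpha, (w)\theta)\Psi_{\Gamma_\Psi} = (\alpha, (w)\theta)\Psi$ for all $\alpha$ and $w$, whence $\Psi_{\Gamma_\Psi} = \Psi$. Starting from $\Gamma$ and forming $\Gamma_{\Psi_\Gamma}$: an edge $(\alpha, a, \beta)$ lies in the new graph iff $(\alpha, (a)\theta)\Psi_\Gamma = \beta$ iff the length-one word $a$ labels an $(\alpha, \beta)$-path in $\Gamma$ iff $(\alpha, a, \beta) \in E$, so $\Gamma_{\Psi_\Gamma} = \Gamma$. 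Everything here is routine once the bookkeeping is set up; the only genuinely delicate points are the path-tracking sublemma in the first construction and, in the second, the passage from ``compatible with $R$'' to ``compatible with $R^{\#} = \ker(\theta)$'', which is exactly what guarantees $\Psi_\Gamma$ descends to a well-defined function on $M$ rather than merely on $A^*$.
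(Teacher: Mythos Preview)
Your proof is correct and follows exactly the approach the paper sketches in the paragraph preceding the proposition. The paper itself does not give a proof beyond stating the two constructions and remarking that ``it is routine to verify that the functions just described mapping a right action to a word graph, and vice versa, are mutual inverses''; you have simply written out that routine verification in full, correctly identifying the two points that require care (the path-tracking sublemma and the passage from compatibility with $R$ to compatibility with $R^{\#}$).
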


If $\Gamma_0= (V_0, E_0)$ and $\Gamma_1= (V_1, E_1)$ are word graphs over the
same alphabet $A$, then $\phi:V_0\to V_1$ is a \defn{word graph homomorphism}
if $(\alpha, a, \beta)\in E_0$ implies $((\alpha)\phi, a, (\beta)\phi)\in E_1$;
and we write $\phi: \Gamma_0\to \Gamma_1$. An \defn{isomorphism} of word graphs
$\Gamma_0$ and $\Gamma_1$ is a bijection $\phi: \Gamma_0\to \Gamma_1$ such that
both $\phi$ and $\phi ^ {-1}$ are homomorphisms. If $\phi: \Gamma_0 \to
\Gamma_1$ is a word graph homomorphism and $w \in A ^ *$ labels an $(\alpha,
\beta)$-path in $\Gamma_0$, then it is routine to verify that $w$ labels a
$((\alpha)\phi, (\beta)\phi)$-path in $\Gamma_1$.

The correspondence between right actions and word graphs given in
\cref{prop-word-graph-iff-action} also preserves isomorphisms, which we record
in the next proposition.

\begin{prop}
  Let $M$ be a monoid generated by a set $A$, let $\Psi_0$ and
  $\Psi_1$ be right actions of $M$, and
  let $\Gamma_0$ and $\Gamma_1$ be the word graphs over $A$
  corresponding to these
  actions. Then $\Psi_0$ and $\Psi_1$ are isomorphic actions if and only if
  $\Gamma_0$ and $\Gamma_1$ are isomorphic word graphs.
\end{prop}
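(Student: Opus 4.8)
The plan is to show that, under the bijective correspondence of \cref{prop-word-graph-iff-action}, ``isomorphism of right actions'' and ``isomorphism of the corresponding word graphs'' are two descriptions of the same bijection; each implication then follows by unwinding the two definitions against each other. Throughout I would fix a presentation $\langle A \mid R\rangle$ for $M$, let $\theta\colon A^*\to M$ be the surjection with $\ker(\theta)=R^{\#}$, write $\Psi_i\colon V_i\times M\to V_i$ and $\Gamma_i=(V_i,E_i)$ for $i\in\{0,1\}$, and recall that by construction $(\alpha,a,\beta)\in E_i$ exactly when $(\alpha,(a)\theta)\Psi_i=\beta$.

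For the forward direction, I would take an isomorphism $\lambda\colon V_0\to V_1$ of the actions $\Psi_0$ and $\Psi_1$ and check directly that it is a word graph homomorphism: given an edge $(\alpha,a,\beta)\in E_0$, one has $(\alpha,(a)\theta)\Psi_0=\beta$, and applying $\lambda$ together with the action-homomorphism identity $(x,s)\Psi_0\lambda=((x)\lambda,s)\Psi_1$ yields $((\alpha)\lambda,(a)\theta)\Psi_1=(\beta)\lambda$, i.e.\ $((\alpha)\lambda,a,(\beta)\lambda)\in E_1$. The inverse bijection $\lambda^{-1}$ is again an action homomorphism (substitute $x=(y)\lambda^{-1}$ into the identity and apply $\lambda^{-1}$), so the same computation shows $\lambda^{-1}\colon\Gamma_1\to\Gamma_0$ is a word graph homomorphism; hence $\lambda$ is an isomorphism of word graphs.

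For the converse, I would take a word graph isomorphism $\phi\colon\Gamma_0\to\Gamma_1$, fix $\alpha\in V_0$ and $s\in M$, and pick $w\in A^*$ with $(w)\theta=s$ (possible since $\theta$ is onto). Since $\Gamma_0$ is complete and deterministic, $w$ labels a unique path from $\alpha$, whose target is $\beta:=(\alpha,s)\Psi_0$ by \eqref{eq-action-from-word-graph}; since $\phi$ is a word graph homomorphism, $w$ also labels an $((\alpha)\phi,(\beta)\phi)$-path in $\Gamma_1$, which, $\Gamma_1$ being complete and deterministic, is the unique $w$-labelled path from $(\alpha)\phi$, so its target is $((\alpha)\phi,s)\Psi_1$, again by \eqref{eq-action-from-word-graph}. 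Comparing targets gives $((\alpha,s)\Psi_0)\phi=((\alpha)\phi,s)\Psi_1$, which is precisely the action-homomorphism identity; as $\phi$ is a bijection it is an isomorphism of actions. No well-definedness check against the choice of $w$ is needed, because $\Psi_1$ is a genuine action and so $((\alpha)\phi,s)\Psi_1$ depends only on $(\alpha)\phi$ and $s$.

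I do not expect a genuine obstacle. The only point needing mild care is the passage from the single-edge condition defining the $E_i$ to statements about paths labelled by an arbitrary word $w$, together with the repeated appeal to completeness and determinism of the $\Gamma_i$ to ensure that such a path exists, is unique, and has the expected target — but this is exactly what \eqref{eq-action-from-word-graph} and \cref{prop-word-graph-iff-action} already package up, so once they are invoked the argument is a routine translation between the two sets of axioms.
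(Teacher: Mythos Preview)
Your proposal is correct and follows essentially the same approach as the paper: both arguments translate between the edge condition $(\alpha,a,\beta)\in E_i \iff (\alpha,(a)\theta)\Psi_i=\beta$ and the action-homomorphism identity, using in one direction the single-letter definition of edges and in the other the preservation of paths by word graph homomorphisms together with \eqref{eq-action-from-word-graph}. The only cosmetic difference is that the paper treats the implication ``word graphs isomorphic $\Rightarrow$ actions isomorphic'' first, and is slightly terser in invoking that $\phi$ and $\phi^{-1}$ both preserve paths rather than appealing explicitly to completeness and determinism for uniqueness.
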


\begin{proof}
  Assume that $\Gamma_0 = (V_0, E_0)$ and $\Gamma_1= (V_1, E_1)$ are isomorphic
  word graphs. Then there exists a bijection $\phi: V_0 \to V_1$. Since
  $\Gamma_0$ and $\Gamma_1$ are isomorphic word graphs, $w\in A ^ *$ labels an
  $(\alpha, \beta)$-path in $\Gamma_0$ if and only if $w$ labels a
  $((\alpha)\phi, (\beta)\phi)$-path in $\Gamma_1$. Hence $((\alpha,
  (w)\theta)\Psi_0)\phi = ((\alpha)\phi, (w)\theta)\Psi_1$ for all $\alpha \in
  V_0, w \in A^*$ and so $\phi$ is an isomorphism of the actions $\Psi_0$ and
  $\Psi_1$.

  Conversely, assume that $\Psi_0$ and $\Psi_1$ are isomorphic right actions of
  $M$ on sets $V_0$ and $V_1$, respectively. Then there exists a bijective
  homomorphism of right actions $\lambda: V_0 \to V_1$ such that $((\alpha,
  s)\Psi_0)\lambda = ((\alpha)\lambda, s)\Psi_1$ for all $\alpha \in V_0, s \in
  M$. We will prove that $\lambda: \Gamma_0 \to \Gamma_1$ is a word graph
  isomorphism. It suffices to show that $(\alpha, a, \beta) \in E_0$ if and
  only if $((\alpha)\lambda, a, (\beta)\lambda) \in E_1$. If $(\alpha, a,
  \beta) \in E_0$, then $(\alpha, (a)\theta)\Psi_0 = \beta$ and hence
  $((\alpha, (a)\theta)\Psi_0)\lambda = (\beta)\lambda$. Since $\lambda$ is an
  isomorphism of right actions $((\alpha, (a)\theta)\Psi_0)\lambda =
  ((\alpha)\lambda, (a)\theta)\Psi_1 = (\beta)\lambda$ and so
  $((\alpha)\lambda, a, (\beta)\lambda) \in E_1$. Similarly, it can be shown
  that if $((\alpha)\lambda, a, (\beta)\lambda) \in E_1$, then $(\alpha, a,
  \beta) \in E_0$ and hence $\lambda$ defines a word graph isomorphism.
\end{proof}

In \cref{section-low-index} we are concerned with enumerating the right
congruences of a monoid $M$ subject to certain restrictions, such as those
containing a given set $B\subseteq M \times M$ or those with a given number of
equivalence classes. If $M$ is the monoid defined by the presentation $\langle
A \mid R\rangle$ and $\rho$ is a right congruence on $M$, then the
function $\Psi:
M/\rho \times M \to M/\rho$ defined by $(x/\rho, y) \Psi = xy/\rho$ is a right
action of $M$ on $M/\rho$ where $x/\rho$ is the equivalence class of $x$ in
$\rho$ and $M/\rho = \set{x/\rho}{x\in M}$.  It follows by
\cref{prop-word-graph-iff-action} that $\rho$ corresponds to a complete
deterministic word graph $\Gamma$ over $A$ compatible with $R$. In particular,
the nodes of $\Gamma$ are the classes $M/\rho$, and the edges are $\{(x/\rho,
a, (x, a)\Psi): x\in M,\ a\in A\}$.

On the other hand, not every right action of a monoid is an action on the
classes of a right congruence. For example, if $S ^ 1$ is a $4 \times 4$
rectangular band with an adjoined identity, then two faithful right actions of
$S ^ 1$ are depicted in~\cref{fig-non-cyclic-monoid-action} with respect to the
generating set $\{a := (1, 1), b := (2, 2), c := (3, 3), d := (4, 4)\} \cup
\{1_{S^1}\}$. It can be shown that the action of $S^1$ on $\{0, \ldots, 6\}$
shown in \cref{fig-non-cyclic-monoid-action} corresponds to a right congruence
of $S^1$ but that the action of $S^1$ on $\{0, \ldots, 5\}$ does not.

\begin{figure}
  \centering

  \begin{tikzpicture}[scale=1.2]
    \node[draw] (0) at (1, 2) {$0$};
    \node[draw] (1) at (2.5, 3) {$1$};
    \node[draw] (2) at (2.5, 1) {$2$};
    \node[draw] (3) at (-0.5, 1) {$3$};
    \node[draw] (4) at (1, 1) {$4$};
    \node[draw] (5) at (4, 1) {$5$};

    \draw[style=e, loop above] (0) to (0);
    \draw[style=a]             (0) to (2);
    \draw[style=b] (0) to (3);
    \draw[style=c] (0) to (4);
    \draw[style=d, bend left]  (0) to (5);

    \draw[style=e, loop above] (1) to (1);
    \draw[style=a, parallel_arrow_1]      (1) to (4);
    \draw[style=b, bend left]      (1) to (2);
    \draw[style=c, parallel_arrow_2]      (1) to (4);
    \draw[style=d, bend left] (1) to (5);

    \draw[style=e, loop above] (2) to (2);
    \draw[style=a, loop left]  (2) to (2);
    \draw[style=b, loop below] (2) to (2);
    \draw[style=c, parallel_arrow_1]            (2) to (5);
    \draw[style=d, parallel_arrow_2]            (2) to (5);

    \draw[style=e, loop right] (4) to (4);
    \draw[style=a, loop below] (4) to (4);
    \draw[style=b, bend left, out=45]  (4) to (3);
    \draw[style=c, loop, out=-25, in=-65, looseness=8] (4) to (4);
    \draw[style=d, bend right, out=-40] (4) to (3);

    \draw[style=e, loop above] (3) to (3);
    \draw[style=a, parallel_arrow_1]            (3) to (4);
    \draw[style=b, loop below] (3) to (3);
    \draw[style=c, parallel_arrow_2]            (3) to (4);
    \draw[style=d, loop left]  (3) to (3);

    \draw[style=e, loop right] (5) to (5);
    \draw[style=a, bend left]  (5) to (2);
    \draw[style=b, bend right] (5) to (2);
    \draw[style=c, loop below] (5) to (5);
    \draw[style=d, loop, out=-25, in=-65, looseness=8] (5) to (5);
  \end{tikzpicture}
  \qquad\qquad
  \centering
  \begin{tikzpicture}[scale=1.2]
    \node[draw] (0) at (-0.75, 2.5) {$0$};
    \node[draw] (1) at (-3, 1) {$1$};
    \node[draw] (2) at (-1.5, 1) {$2$};
    \node[draw] (3) at (0, 1) {$3$};
    \node[draw] (4) at (1.5, 1) {$4$};
    \node[draw] (5) at (0, -0.5) {$5$};
    \node[draw] (6) at (1.5, -0.5) {$6$};

    \draw[style=e, loop above] (0) to (0);
    \draw[style=a] (0) to (1);
    \draw[style=b] (0) to (2);
    \draw[style=c] (0) to (3);
    \draw[style=d] (0) to (4);

    \draw[style=a, loop left]   (1) to (1);
    \draw[style=b, loop below]  (1) to (1);
    \draw[style=c, loop right]  (1) to (1);
    \draw[style=d, loop above]  (1) to (1);
    \draw[style=e, loop, out=-25, in=-65, looseness=8] (1) to (1);

    \draw[style=a, loop left]  (2) to (2);
    \draw[style=b, loop below] (2) to (2);
    \draw[style=c, loop right] (2) to (2);
    \draw[style=d, loop above] (2) to (2);
    \draw[style=e, loop, out=-25, in=-65, looseness=8] (2) to (2);

    \draw[style=e, loop above] (3) to (3);
    \draw[style=a, loop left] (3) to (3);
    \draw[style=b, parallel_arrow_1]  (3) to (5);
    \draw[style=c, loop right] (3) to (3);
    \draw[style=d, parallel_arrow_2] (3) to (5);

    \draw[style=e, loop above] (4) to (4);
    \draw[style=a, parallel_arrow_1]  (4) to (6);
    \draw[style=b, loop left] (4) to (4);
    \draw[style=c, parallel_arrow_2] (4) to (6);
    \draw[style=d, loop right]  (4) to (4);

    \draw[style=e, loop below] (5) to (5);
    \draw[style=a, loop left]  (5) to (5);
    \draw[style=b, loop right] (5) to (5);
    \draw[style=c, bend left] (5) to (3);
    \draw[style=d, bend right] (5) to (3);

    \draw[style=e, loop below] (6) to (6);
    \draw[style=a, loop left]  (6) to (6);
    \draw[style=b, bend left] (6) to (4);
    \draw[style=c, loop right] (6) to (6);
    \draw[style=d, bend right] (6) to (4);

  \end{tikzpicture}

  \caption{Two faithful right actions of a $4\times 4$ rectangular band with
    identity adjoined; ({\color{color0} $a = (1, 1)$}, {\color{color1} $b = (2,
      2)$}, {\color{color2} $c = (3, 3)$}, {\color{color3} $d = (4, 4)$} and
  {\color{color3} for the adjoined identity}).}
  \label{fig-non-cyclic-monoid-action}
\end{figure}
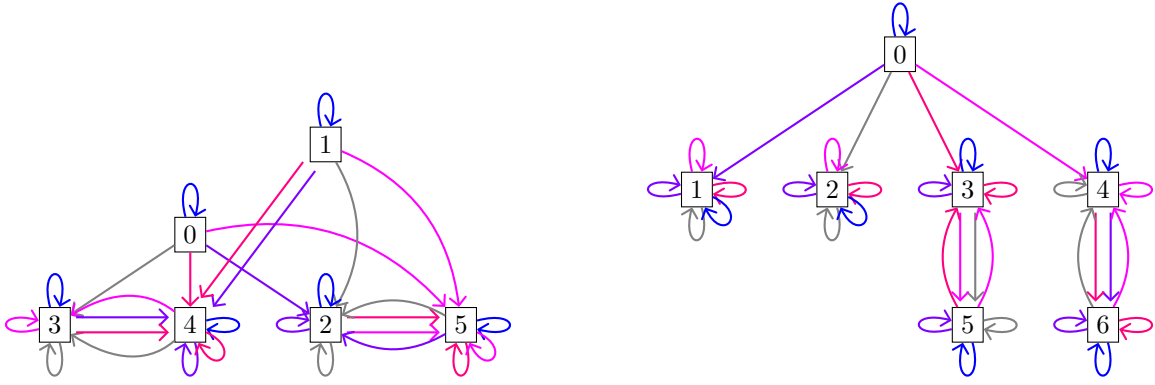

In a word graph $\Gamma$ corresponding to the action of a monoid $M$ on a right
congruence $\rho$, if $m\in M$, then there exist $a_0, \ldots, a_{n-1} \in A$
such that $m = a_0 \cdots a_{n - 1}$, and so $m$ labels the path from $1_M$ to
$m/\rho$ in $\Gamma$. In particular, every node in $\Gamma$ is reachable from
the node $1_M/\rho$. The converse statement, that every complete deterministic
word graph compatible with $R$ where every node is reachable from $1_M/\rho$
corresponds to a right congruence on $M$ is established in the next
proposition.




If $S$ and $T$ are semigroups, $R\subseteq S\times S$ is a binary relation and
$\theta: S\to T$ is a homomorphism. Then abusing our notation somewhat
we write
\[(R)\theta = \left\{((u)\theta, (v)\theta)\in T\times T : (u,v) \in
R\right\}.\]

\begin{prop}\label{graph-to-congruence}
  Let $M$ be the monoid defined by the monoid presentation $\langle
  A\mid R\rangle$,
  let $\theta : A ^ * \to M$ be the unique homomorphism with $\ker(\theta) = R ^
  {\#}$, and let $\Gamma = (V, E)$ be a word graph over $A$ that is complete,
  deterministic, compatible with $R$ and where every node is reachable from some
  $\alpha \in V$. Then $\rho = ((\alpha)\pi_{\Gamma})\theta$ is a right
  congruence on $M$, where $(\alpha)\pi_{\Gamma}$ is the path relation on
  $\Gamma$.
\end{prop}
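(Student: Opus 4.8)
The plan is to realise $\rho$ as the kernel of an action map, so that it is an equivalence relation for free and right-compatible by the action axiom. Since $\Gamma$ is complete, deterministic, and compatible with $R$, \cref{prop-word-graph-iff-action} supplies a right action $\Psi \colon V \times M \to V$ of $M$ on $V$, characterised by $(\beta, (w)\theta)\Psi = \gamma$ exactly when $w$ labels a $(\beta, \gamma)$-path in $\Gamma$. Fixing the node $\alpha$ from the hypothesis and letting $f \colon M \to V$ be given by $f(m) = (\alpha, m)\Psi$, I claim $\rho = \ker f$, that is, $\rho = \set{(x, y) \in M \times M}{(\alpha, x)\Psi = (\alpha, y)\Psi}$.

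The inclusion $\subseteq$ is immediate: if $(u, v) \in (\alpha)\pi_\Gamma$ then $u$ and $v$ both label $(\alpha, \beta)$-paths for a common node $\beta$, so $(\alpha, (u)\theta)\Psi = \beta = (\alpha, (v)\theta)\Psi$. For $\supseteq$, given $x, y \in M$ with $(\alpha, x)\Psi = (\alpha, y)\Psi$, I would use surjectivity of $\theta$ to write $x = (u)\theta$ and $y = (v)\theta$; completeness of $\Gamma$ yields nodes $\beta, \beta'$ with $u$ labelling an $(\alpha, \beta)$-path and $v$ an $(\alpha, \beta')$-path, and the defining property of $\Psi$ then forces $\beta = (\alpha, x)\Psi = (\alpha, y)\Psi = \beta'$, whence $(u, v) \in (\alpha)\pi_\Gamma$ and $(x, y) \in \rho$. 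Being the kernel of a function, $\rho$ is reflexive on all of $M$ (equivalently: each $x = (u)\theta$ has $(u, u) \in (\alpha)\pi_\Gamma$ by completeness), symmetric, and transitive. For right-compatibility, if $(x, y) \in \rho$ and $m \in M$, then the action axiom gives $(\alpha, xm)\Psi = ((\alpha, x)\Psi, m)\Psi = ((\alpha, y)\Psi, m)\Psi = (\alpha, ym)\Psi$, so $(xm, ym) \in \rho$; hence $\rho$ is a right congruence on $M$.

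I do not expect a serious obstacle; the one point requiring care is the middle identification of $(\alpha)\pi_\Gamma$ with $\ker f$, where determinism of $\Gamma$ is exactly what makes $\Psi$ single-valued (via \cref{prop-word-graph-iff-action}) and completeness guarantees that every word labels a path out of $\alpha$. If one wished to avoid \cref{prop-word-graph-iff-action} altogether, the statement can instead be verified directly at the level of $A^*$: reflexivity and symmetry of $(\alpha)\pi_\Gamma$ are clear, right-compatibility follows by concatenating a common $(\alpha, \beta)$-path of $u$ and $v$ with a path labelled by a preimage of $m$ (using completeness), and transitivity, the delicate case, uses that $(u)\theta = (v)\theta$ forces $(u, v) \in R^{\#} \subseteq (\alpha)\pi_\Gamma$ (compatibility with $R$, hence with $R^{\#}$) together with determinism to make the relevant path targets agree; the image under $\theta \times \theta$ of the resulting right congruence on $A^*$ is then the desired right congruence on $M$. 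Incidentally, the reachability hypothesis is not needed for $\rho$ to be a right congruence: it is what makes $f$ surjective, so that $M/\rho$ is in bijection with $V$.
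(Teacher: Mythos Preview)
Your proof is correct. The paper takes the direct route you sketch in your final paragraph: it simply observes that if $(u,v)\in(\alpha)\pi_\Gamma$, so that $u$ and $v$ both label $(\alpha,\beta)$-paths, then for any $w\in A^*$ completeness and determinism give a unique $(\beta,\gamma)$-path labelled $w$, whence $(uw,vw)\in(\alpha)\pi_\Gamma$ and $((uw)\theta,(vw)\theta)\in\rho$. The paper leaves the equivalence-relation part implicit, relying on the earlier remarks that $(\alpha)\pi_\Gamma$ is an equivalence relation on $A^*$ and that compatibility with $R$ forces $R^{\#}\subseteq(\alpha)\pi_\Gamma$ (which is what makes the image under $\theta\times\theta$ transitive).

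Your primary approach is genuinely different and arguably cleaner: you invoke \cref{prop-word-graph-iff-action} to obtain the action $\Psi$, then identify $\rho$ with $\ker\big(m\mapsto(\alpha,m)\Psi\big)$. This buys you the equivalence-relation axioms for free and reduces right-compatibility to a one-line application of the action axiom, at the cost of appealing to the earlier correspondence. The paper's argument is more self-contained but, as you note, has to handle transitivity with a little care. Your closing remark that the reachability hypothesis is unused here (it only makes the orbit map surjective) is also correct and worth noting.
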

\begin{proof}
  Suppose that $((u)\theta, (v)\theta)\in \rho$ for some $(u, v)\in
  (\alpha)\pi_{\Gamma}$ and that $w\in A^*$ is arbitrary. Then $\alpha\cdot u
  \neq \bot$ and $\alpha\cdot u = \alpha \cdot v$. Hence $\alpha\cdot uw =
  (\alpha\cdot u) \cdot w = (\alpha\cdot v) \cdot w = \alpha \cdot vw$. Since
  $\Gamma$ is complete, $\alpha\cdot uw \neq \bot$, and so $(uw, vw) \in
  (\alpha)\pi_{\Gamma}$. Therefore $((uw)\theta, (vw)\theta)\in \rho$, and
  $\rho$ is a right congruence.
\end{proof}



It follows from \cref{graph-to-congruence} that in order to enumerate the right
congruences of $M$ it suffices to enumerate the word graphs over $A$ that are
complete, deterministic, compatible with $R$ and where every node is reachable
from a node $0$. Of course, we only want to obtain every right congruence of
$S$ once, for which we require the next proposition. Henceforth we will suppose
that if $\Gamma = (V, E)$ is a word graph over $A$, then $V = \{0, \ldots, n -
1\}$ for some $n \geq 1$ and so, in particular, $0$ is always a node in
$\Gamma$. Moreover, we will assume that the node $0$ corresponds to the
equivalence class $1_M/\rho$ of the identity $1_M$ of $M$.

\begin{prop}\label{prop-same-congruence}
  Let $\Gamma_0$ and $\Gamma_1$ be word graphs over $A$ corresponding to right
  congruences of a monoid $M$ generated by $A$. Then $\Gamma_0$ and
  $\Gamma_1$ represent the
  same right congruence of $M$ if and only if there exists a word graph
  isomorphism $\phi: \Gamma_0 \to \Gamma_1$ such that $(0)\phi = 0$.
\end{prop}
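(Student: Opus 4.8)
The plan is to prove both directions via the dictionary of \cref{prop-word-graph-iff-action} and \cref{graph-to-congruence}: a word graph $\Gamma_i$ ``corresponding to a right congruence'' is complete, deterministic, compatible with $R$, and has every node reachable from $0$, and it represents the right congruence $\rho_i = \set{((u)\theta, (v)\theta)}{(u, v) \in (0)\pi_{\Gamma_i}}$; moreover $\Gamma_i$ carries the associated right action $\Psi_i\colon V_i \times M \to V_i$ with $(\alpha, (w)\theta)\Psi_i = \beta$ whenever $w$ labels an $(\alpha, \beta)$-path in $\Gamma_i$. Throughout, ``$\Gamma_0$ and $\Gamma_1$ represent the same right congruence'' means $\rho_0 = \rho_1$ in this notation.

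For the reverse implication, suppose $\phi\colon \Gamma_0 \to \Gamma_1$ is a word graph isomorphism with $(0)\phi = 0$. Using that both $\phi$ and $\phi^{-1}$ are word graph homomorphisms, and that a homomorphism sends an $(\alpha,\beta)$-path labelled by $w$ to a $((\alpha)\phi, (\beta)\phi)$-path labelled by $w$ (the remark preceding \cref{prop-word-graph-iff-action}), one gets that $w$ labels a $(0, \beta)$-path in $\Gamma_0$ if and only if $w$ labels a $(0, (\beta)\phi)$-path in $\Gamma_1$. Since $\phi$ is a bijection on nodes, this yields $(0)\pi_{\Gamma_0} = (0)\pi_{\Gamma_1}$, and hence $\rho_0 = \rho_1$; that is, the two word graphs represent the same right congruence.

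For the forward implication, write $\rho := \rho_0 = \rho_1$ and define $\phi\colon V_0 \to V_1$ by choosing, for each $\alpha \in V_0$, a word $w \in A^*$ labelling a $(0,\alpha)$-path in $\Gamma_0$ and setting $(\alpha)\phi = (0, (w)\theta)\Psi_1$; such $w$ exists since every node of $\Gamma_0$ is reachable from $0$. The main obstacle is checking that $\phi$ is well defined, and this is precisely where the hypothesis $\rho_0 = \rho_1$ enters: if $w, w'$ both label $(0,\alpha)$-paths in $\Gamma_0$ then $(w, w') \in (0)\pi_{\Gamma_0}$, so $((w)\theta, (w')\theta) \in \rho = \rho_1$, so $(w, w') \in (0)\pi_{\Gamma_1}$, whence $(0, (w)\theta)\Psi_1 = (0, (w')\theta)\Psi_1$ by determinism of $\Gamma_1$. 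Running the same equivalences in reverse shows $\phi$ is injective, and surjectivity holds since every node of $\Gamma_1$ is reachable from $0$. That $\phi$ is a word graph homomorphism is a short computation: for $(\alpha, a, \beta) \in E_0$ and $w$ labelling a $(0,\alpha)$-path in $\Gamma_0$, the word $wa$ labels a $(0,\beta)$-path, so $(\beta)\phi = (0, (wa)\theta)\Psi_1 = ((0, (w)\theta)\Psi_1, (a)\theta)\Psi_1 = ((\alpha)\phi, (a)\theta)\Psi_1$, and, $\Gamma_1$ being complete and deterministic, this means $((\alpha)\phi, a, (\beta)\phi) \in E_1$. Finally $\phi^{-1}$ is a homomorphism: given $((\alpha)\phi, a, (\beta)\phi) \in E_1$, completeness of $\Gamma_0$ gives an edge $(\alpha, a, \gamma) \in E_0$, hence $((\alpha)\phi, a, (\gamma)\phi) \in E_1$, and determinism of $\Gamma_1$ together with injectivity of $\phi$ forces $\gamma = \beta$. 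Since $\varepsilon$ labels the length-$0$ path at $0$ in $\Gamma_0$, we get $(0)\phi = (0, (\varepsilon)\theta)\Psi_1 = (0, 1_M)\Psi_1 = 0$, completing the argument. The only point requiring genuine care is the well-definedness step above; the remaining verifications are routine unwindings of the definitions of word graph homomorphism, completeness, and determinism.
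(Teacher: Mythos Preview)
Your proof is correct and follows essentially the same approach as the paper: both directions are handled by translating between paths in $\Gamma_0$ and paths in $\Gamma_1$ via the homomorphism, and in the forward direction the map $\phi$ is defined by choosing a word labelling a $(0,\alpha)$-path and following it in $\Gamma_1$. Your argument is in fact more complete than the paper's, which asserts bijectivity without detail and does not verify that $\phi^{-1}$ is a homomorphism (required by the paper's definition of isomorphism); you supply these checks explicitly.
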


\begin{proof}
  We denote the right congruences associated to $\Gamma_0$ and $\Gamma_1$ by
  $\rho_0$ and $\rho_1$, respectively.

  Suppose that there exists a word graph isomorphism $\phi: \Gamma_0 \to
  \Gamma_1$ such that $(0)\phi = 0$ and that the words $x, y\in A ^
  *$ label $(0,
  \alpha)$-paths in $\Gamma_0$. Then $x$ and $y$ label $(0, (\alpha)\phi)$-paths
  $\Gamma_1$ and so $\rho_{0}\subseteq \rho_{1}$. Since $\phi^{-1}: \Gamma_1
  \to \Gamma_0$ is a word graph isomorphism satisfying $(0)\phi^{-1} = 0$,
  it follows that $\rho_{1}\subseteq \rho_{0}$, and so $\rho_0 = \rho_1$, as
  required.

  Conversely, assume that $\Gamma_0 = (V_0, E_0)$ and $\Gamma_1=(V_1, E_1)$
  represent the same right congruence of $M$. We define a word graph
  homomorphism $\phi: \Gamma_0 \to \Gamma_1$ as follows. For every $\alpha\in
  V_0$, fix a word $w_{\alpha} \in A ^ *$ that labels a $(0, \alpha)$-path in
  $\Gamma_0$. We define $(\alpha)\phi$ to be the target of the path with source
  $0$ labelled by $w_{\alpha}$ in $\Gamma_1$. If $(\alpha, a, \beta)\in E_0$,
  then both $w_{\alpha}a$ and $w_{\beta}$ label $(0, \beta)$-paths in
  $\Gamma_0$, and so $(w_{\alpha}a , w_{\beta}) \in (0)\pi_{\Gamma_0} = \rho_0
  = \rho_1 = (0)\pi_{\Gamma_1}$. Hence $((\alpha)\phi, a, (\beta)\phi)$ is an
  edge of $\Gamma_1$. In addition, $|V_0| = |V_1|$ and it is clear that if
  $\alpha \neq \beta$, then $(\alpha)\phi \neq (\beta)\phi$ and hence $\phi$ is
  a bijection.
\end{proof}

It follows from \cref{prop-same-congruence} that to enumerate the right
congruences of a monoid $M$ defined by the presentation $\langle A
\mid R\rangle$
it suffices to enumerate the complete deterministic word graphs over $A$
compatible with $R$ where every node is reachable from 0 up to isomorphism. On
the face of it, this is not much of an improvement because, in general, graph
isomorphism is a difficult problem. However, word graph isomorphism, in the
context of \cref{prop-same-congruence}, is trivial by comparison.

If the alphabet $A = \{a_0, a_1, \ldots, a_{n - 1}\}$ and $u, v \in A ^ *$,
then we define $u \leq v$  if $|u| < |v|$ or $|u| = |v|$ and $u$ is less
than or equal to $v$ in the usual lexicographic order on $A ^ *$
arising from the
linear order $a_0 < a_1 < \cdots < a_{n - 1}$ on $A$. The order $\leq$ is
called the \defn{short-lex ordering} on $A ^ *$. If $u\neq v$ and $u\leq v$,
then we write $u< v$. If $\alpha$ is a node in a word graph $\Gamma$ and
$\alpha$ is reachable from $0$, then there is a $\leq$-minimum word
labelling a $(0, \alpha)$-path; we denote this path by $w_{\alpha}$.

The following definition is central to the algorithm presented in
\cref{section-low-index}.

\begin{de}\label{de-standard}
  A complete word graph $\Gamma = (V, E)$ over $A$ is \defn{standard}
  if the following
  hold:
  \begin{enumerate}[(i)]
    \item
      $\Gamma$ is deterministic;

    \item
      every node is reachable from $0\in V =  \{0, \ldots, |V|-1\}$;

    \item
      for all $\alpha, \beta \in V$, $\alpha< \beta$ if and only if $w_\alpha
      < w_\beta$.
  \end{enumerate}
\end{de}

\begin{prop}\label{prop-isomorphic-graphs}[cf. Proposition 8.1
  in~\cite{Sims1994aa}]
  Let $\Gamma_0$ and $\Gamma_1$ be standard complete word graphs over the same
  alphabet. Then there exists a word graph isomorphism $\phi: \Gamma_0
  \to\Gamma_1$ such that $(0)\phi = 0$ if and only if $\Gamma_0 = \Gamma_1$.
\end{prop}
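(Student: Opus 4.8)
The reverse implication is immediate: if $\Gamma_0 = \Gamma_1$ then $\phi = \id$ does the job. So the plan is to prove the forward implication. Suppose $\phi\colon \Gamma_0 \to \Gamma_1$ is a word graph isomorphism with $(0)\phi = 0$; I aim to show that $\phi$ is the identity map and that $E_0 = E_1$, which together give $\Gamma_0 = \Gamma_1$.

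First I would record the key consequence of $\phi$ being an isomorphism that fixes $0$: a word $u \in A^*$ labels a $(0,\alpha)$-path in $\Gamma_0$ if and only if it labels a $(0,(\alpha)\phi)$-path in $\Gamma_1$. The forward direction is the routine observation (noted after the definition of word graph homomorphism above) that a word graph homomorphism carries the $(0,\alpha)$-path labelled by $u$ to a $(0,(\alpha)\phi)$-path labelled by $u$; the converse is the same observation applied to the homomorphism $\phi^{-1}$, using $(0)\phi^{-1} = 0$. Since $\Gamma_0$ and $\Gamma_1$ are complete and deterministic, each word labels exactly one path with source $0$ in each graph, so the sets ``words labelling a $(0,\alpha)$-path'', as $\alpha$ ranges over the nodes, form a partition of $A^*$ into blocks that are nonempty because every node is reachable from $0$. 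The displayed equivalence thus says exactly that $\phi$ matches up the blocks of $\Gamma_0$ with those of $\Gamma_1$.

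Next I would pass to the short-lex-minimal representatives and invoke standardness. For a node $\alpha$ of $\Gamma_i$, write $w^{\Gamma_i}_{\alpha}$ for the $\preceq$-minimum word labelling a $(0,\alpha)$-path in $\Gamma_i$; taking $\preceq$-minima in the matched blocks from the previous paragraph gives $w^{\Gamma_0}_{\alpha} = w^{\Gamma_1}_{(\alpha)\phi}$ for every node $\alpha$ of $\Gamma_0$. Since $\phi$ is a bijection, $|V_0| = |V_1| =: n$ and both node sets are $\{0,\dots,n-1\}$. Standardness of $\Gamma_0$ gives $w^{\Gamma_0}_{0} \prec w^{\Gamma_0}_{1} \prec \cdots \prec w^{\Gamma_0}_{n-1}$, and standardness of $\Gamma_1$ gives $w^{\Gamma_1}_{0} \prec \cdots \prec w^{\Gamma_1}_{n-1}$. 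The map $i \mapsto w^{\Gamma_1}_{(i)\phi}$ coincides with the strictly $\preceq$-increasing map $i \mapsto w^{\Gamma_0}_{i}$; as $\phi$ permutes $\{0,\dots,n-1\}$, this map and $i \mapsto w^{\Gamma_1}_{i}$ are two strictly $\preceq$-increasing enumerations of the same $n$-element set $\{\,w^{\Gamma_1}_{j} : 0 \le j < n\,\}$, hence are equal. Because distinct nodes of $\Gamma_1$ have distinct minimal words, $(i)\phi = i$ for all $i$, so $\phi = \id$. Then $V_0 = V_1$, and since $\id\colon\Gamma_0\to\Gamma_1$ and its inverse are word graph homomorphisms, $(\alpha,a,\beta)\in E_0$ if and only if $(\alpha,a,\beta)\in E_1$, so $E_0 = E_1$ and $\Gamma_0 = \Gamma_1$.

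I do not expect a serious obstacle: the argument is elementary and parallels Proposition~8.1 in~\cite{Sims1994aa}. The one place needing a little care is the block-matching step of the second paragraph, which genuinely uses all three hypotheses on the $\Gamma_i$ — completeness and determinism, so that ``read $u$ from $0$'' is a well-defined function from $A^*$ to the nodes, and the fact that $\phi^{-1}$ as well as $\phi$ is a homomorphism. Once that is in place, the concluding step is just the observation that two strictly increasing enumerations of the same finite totally ordered set must agree.
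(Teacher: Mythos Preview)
Your argument is correct. Note, however, that the paper does not actually supply a proof of this proposition: it simply states the result with a cross-reference to Proposition~8.1 in~\cite{Sims1994aa} and moves on. Your write-up is the standard argument one would expect for such a statement (and is essentially the argument in Sims): an isomorphism fixing $0$ preserves the set of words labelling $(0,\alpha)$-paths, hence preserves the short-lex minimal such word, and then standardness forces the permutation of nodes to be the identity. There is nothing to compare against in the paper itself, and no gap in what you have written.
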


By \cref{prop-isomorphic-graphs}, every complete deterministic word graph
$\Gamma$ in which every node is reachable from 0 is isomorphic to a unique
standard complete word graph. It is straightforward to compute this standard
word graph from the original graph by relabelling the nodes, for example, using
the procedure \texttt{SWITCH} from~\cite[Section 4.7]{Sims1994aa}. We refer to
this process as \defn{standardizing} $\Gamma$.

Another consequence of \cref{prop-same-congruence} and
\cref{prop-isomorphic-graphs} is that enumerating the right congruences of a
monoid $M$ defined by a presentation $\langle A \mid R\rangle$ is equivalent to
enumerating the standard complete word graphs over $A$ that are compatible with
$R$. As mentioned above, the right action of $M$ on the classes of a right
congruence is isomorphic to the right action represented by the corresponding
word graph $\Gamma$.  We record this in the following theorem.

\begin{thm}\label{thm-graph-to-right-congruence}
  Let $M$ be a monoid defined by a monoid presentation $\langle A
  \mid R\rangle$.
  Then there is a one-to-one correspondence between the right congruences of
  $M$ and the standard complete word graphs over $A$ compatible with $R$.

  If $\rho$ is a right congruence on $M$ and $\Gamma$ is the corresponding word
  graph, then the right actions of $M$ on $M/\rho$ and on $\Gamma$ are
  isomorphic; and $\rho =((0)\pi_{\Gamma})\theta$ where $\theta : A ^ * \to M$
  is the unique homomorphism with $\ker(\theta) = R ^ {\#}$ and
  $(0)\pi_{\Gamma}$ is the path relation on $\Gamma$.
\end{thm}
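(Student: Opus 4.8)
The plan is to assemble the results already proved in this section; no new idea is needed, only careful bookkeeping of the isomorphisms that fix the node $0$. Throughout, let $\theta : A ^ * \to M$ be the homomorphism with $\ker(\theta) = R ^ {\#}$.

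First I would define the two maps that are to be shown mutually inverse. Given a right congruence $\rho$ on $M$, the formula $(x/\rho, y)\Psi = xy/\rho$ is a right action of $M$ on $M/\rho$, and by \cref{prop-word-graph-iff-action} this action corresponds to a complete deterministic word graph $\Gamma_\rho$ over $A$ compatible with $R$; as observed just before \cref{graph-to-congruence}, every node of $\Gamma_\rho$ is reachable from the node $1_M/\rho$, which by our convention is the node $0$. Standardizing $\Gamma_\rho$ produces a standard word graph together with an isomorphism onto it that fixes $0$, and by \cref{prop-isomorphic-graphs} this standard word graph is the unique one isomorphic to $\Gamma_\rho$ via an isomorphism sending $0$ to $0$; call it $F(\rho)$. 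Conversely, given a standard complete deterministic word graph $\Gamma$ over $A$ compatible with $R$ in which every node is reachable from $0$, \cref{graph-to-congruence} applied with $\alpha = 0$ yields a right congruence $G(\Gamma) = \set{((u)\theta, (v)\theta)}{(u,v) \in (0)\pi_\Gamma}$.

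Next I would check $G \circ F = \id$ and $F \circ G = \id$. For $G(F(\rho)) = \rho$: in $\Gamma_\rho$ a word $w$ labels a $(0,\beta)$-path precisely when $(w)\theta/\rho = \beta$ (this is how $\Gamma_\rho$ was built from $\Psi$ via \eqref{eq-action-from-word-graph}), so $(u,v) \in (0)\pi_{\Gamma_\rho}$ if and only if $((u)\theta,(v)\theta) \in \rho$; since standardizing is a word graph isomorphism fixing $0$, and such isomorphisms carry $(0)\pi$ to $(0)\pi$ (the argument is the first half of the proof of \cref{prop-same-congruence}), the same description holds for $F(\rho)$, giving $G(F(\rho)) = \rho$. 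For $F(G(\Gamma)) = \Gamma$: writing $\rho = G(\Gamma)$, the word graph $\Gamma_\rho$ of the action of $M$ on $M/\rho$ represents the same right congruence as $\Gamma$ by the previous sentence, so \cref{prop-same-congruence} supplies a word graph isomorphism $\Gamma \to \Gamma_\rho$ fixing $0$; composing with the standardizing isomorphism $\Gamma_\rho \to F(\rho)$ gives an isomorphism fixing $0$ between the standard word graphs $\Gamma$ and $F(\rho)$, whence $\Gamma = F(\rho)$ by \cref{prop-isomorphic-graphs}. (Injectivity of $F$ can alternatively be read directly off \cref{prop-same-congruence}.) This establishes the one-to-one correspondence.

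For the remaining assertions, fix $\rho$ and put $\Gamma = F(\rho)$. The action of $M$ on $\Gamma$, in the sense of \eqref{eq-action-from-word-graph}, is by \cref{prop-word-graph-iff-action} the action to which $\Gamma$ corresponds; since $\Gamma$ is isomorphic as a word graph to $\Gamma_\rho$, and the action corresponding to $\Gamma_\rho$ is by construction the action of $M$ on $M/\rho$, the proposition of this section asserting that isomorphic word graphs correspond to isomorphic actions shows the action of $M$ on $\Gamma$ is isomorphic to the action of $M$ on $M/\rho$. The identity $\rho = \set{((u)\theta,(v)\theta)}{(u,v) \in (0)\pi_\Gamma}$ is exactly $G(F(\rho)) = \rho$ from the previous paragraph. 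The only point needing care anywhere is the repeated appeal to the fact that standardizing yields an isomorphism fixing $0$, so that nothing is lost in passing between $\Gamma_\rho$ and its standard form, together with keeping the two a priori different descriptions of ``the action attached to a word graph'' and ``the action on $M/\rho$'' from becoming circular; neither is a genuine difficulty.
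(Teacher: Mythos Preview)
Your proof is correct and follows essentially the same approach as the paper: the theorem is presented there without a separate proof, merely ``recorded'' as a consequence of \cref{prop-same-congruence} and \cref{prop-isomorphic-graphs} together with \cref{graph-to-congruence} and the action--word graph correspondence. Your write-up spells out carefully the two maps $F$ and $G$ and verifies they are mutual inverses, which is exactly the bookkeeping the paper leaves implicit.
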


It is possible to determine a one-to-one correspondence between the right
congruences of a semigroup $S$ and certain word graphs arising from $S ^ 1$, as
a consequence of \cref{thm-graph-to-right-congruence}.

\begin{cor}\label{cor-semigroups-right}
  Let $S$ be a semigroup defined by a semigroup presentation $\langle A \mid
  R\rangle$. Then there is a one-to-one correspondence between the right
  congruences of $S$ and the complete  word graphs $\Gamma = (V,
  E)$ for $S ^ 1$ over $A$ compatible with $R$ such that $(v, a, 0)\not\in  E$
  for all $v\in V$ and all $a\in A$.

  If $\rho$ is a right congruence of $S$ and $\Gamma$ is the corresponding word
  graph for $S ^ 1$, then the right actions of $S$ on $S/ \rho$ and on
  $\Gamma\setminus \{0\}$ are isomorphic.
\end{cor}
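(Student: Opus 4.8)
The plan is to deduce the corollary from \cref{thm-graph-to-right-congruence} applied to the monoid $S ^ 1$, reading ``word graph for $S ^ 1$'' in the sense of that theorem, namely a standard complete deterministic word graph over $A$ compatible with $R$ in which every node is reachable from $0$. The first thing to record is that, because every relation in $R$ is a pair of non-empty words, the monoid defined by the \emph{monoid} presentation $\langle A \mid R\rangle$ is exactly $S ^ 1$: no relation in $R$ can rewrite a word to $\varepsilon$, so the $R ^ {\#}$-class of $\varepsilon$ is $\{\varepsilon\}$, the identity $1_{S ^ 1}$ lies outside the image of $A ^ +$, and the subsemigroup that is the image of $A ^ +$ is isomorphic to $S$; thus $S ^ 1 \setminus \{1_{S ^ 1}\} \cong S$. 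Hence \cref{thm-graph-to-right-congruence} supplies a bijection between the right congruences $\sigma$ of $S ^ 1$ and the word graphs $\Gamma$ of the type just described, with the node $0$ corresponding to $1_{S ^ 1}/\sigma$.

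The second step is a bijection between the right congruences $\rho$ of $S$ and the right congruences $\sigma$ of $S ^ 1$ for which $\{1_{S ^ 1}\}$ is a $\sigma$-class: send $\rho$ to $\rho \cup \{(1_{S ^ 1}, 1_{S ^ 1})\}$, and send $\sigma$ to $\sigma \cap (S \times S)$. That the former is a right congruence of $S ^ 1$ having $\{1_{S ^ 1}\}$ as a singleton class, and that the latter is a right congruence of $S$, are routine verifications once one observes that $S$ is a two-sided ideal of $S ^ 1$ and that $1_{S ^ 1} \notin S$. The two maps are mutually inverse because any right congruence $\sigma$ of $S ^ 1$ with $\{1_{S ^ 1}\}$ a singleton class satisfies $\sigma \subseteq (S \times S) \cup \{(1_{S ^ 1}, 1_{S ^ 1})\}$ and contains $(1_{S ^ 1}, 1_{S ^ 1})$.

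Third, I would match up, under the correspondence of \cref{thm-graph-to-right-congruence}, those $\sigma$ with $\{1_{S ^ 1}\}$ a singleton class with those word graphs $\Gamma = (V, E)$ having no edge $(v, a, 0)$. If $(v, a, 0) \in E$, then since every node of $\Gamma$ is reachable from $0$ there is a word $u$ labelling a $(0, v)$-path, so $ua$ labels a $(0, 0)$-path and $(ua)\theta \in S$ with $((ua)\theta, 1_{S ^ 1}) \in \sigma$; hence $\{1_{S ^ 1}\}$ is not a singleton class. Conversely, if $(s, 1_{S ^ 1}) \in \sigma$ for some $s \in S$, then any non-empty word $w$ with $(w)\theta = s$ labels a $(0, 0)$-path, whose final edge has target $0$. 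Composing with the bijection of the second step yields the claimed one-to-one correspondence. For the last assertion, when $\rho \leftrightarrow \sigma \leftrightarrow \Gamma$ as above, the nodes of $\Gamma$ other than $0$ are precisely the classes $s/\sigma$ with $s \in S$, which biject with $S/\rho$ via $s/\rho \mapsto s/\sigma$ (well defined as $\rho = \sigma \cap (S \times S)$); the absence of edges with target $0$ makes $\Gamma \setminus \{0\}$ complete, and the right action of $S ^ 1$ on $\Gamma$ — isomorphic to the action on $S ^ 1/\sigma$ by \cref{thm-graph-to-right-congruence} — restricts along this bijection to the right action of $S$ on $S/\rho$.

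I expect the only real difficulty to be bookkeeping rather than depth: keeping straight that $\langle A \mid R\rangle$, viewed as a monoid presentation, defines $S ^ 1$ with a genuinely adjoined identity (and not $S$ itself, even when $S$ already happens to be a monoid), and making precise that ``no edge with target $0$'' is \emph{exactly} equivalent to ``$1_{S ^ 1}$ is a singleton $\sigma$-class'' — an equivalence that crucially uses the hypothesis inherited from \cref{thm-graph-to-right-congruence} that every node of $\Gamma$ is reachable from $0$.
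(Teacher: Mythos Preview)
Your proposal is correct and follows essentially the same route as the paper: apply \cref{thm-graph-to-right-congruence} to $S^{1}$, identify right congruences of $S$ with those right congruences of $S^{1}$ in which $\{1_{S^{1}}\}$ is a singleton class, and translate that singleton condition into the absence of edges with target $0$. Your write-up is in fact more careful than the paper's own proof, since you make explicit why the monoid presentation $\langle A\mid R\rangle$ defines $S$ with a genuinely adjoined identity and why reachability of every node from $0$ is needed for the equivalence between ``no edge into $0$'' and ``$1_{S^{1}}$ is isolated''.
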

\begin{proof}
  By \cref{thm-graph-to-right-congruence} it follows that there is a one-to-one
  correspondence between the right congruences of $S ^ 1$ and the complete
  word graphs $\Gamma = (V, E)$ for $S ^ 1$ over $A$ compatible
  with $R$. In addition, there is a one-to-one correspondence between the right
  congruences of $S$ and the right  congruences $\rho$ of $S ^ 1$
  such that $1_M/
  \rho = x/\rho$ if and only if $x = 1_M$. Since $\rho$ and $(0)\pi_{\Gamma}$
  coincide it follows that there is a one-to-one correspondence
  between the right
  congruences $\rho$ of $S ^ 1$ such that $1_M/ \rho = x/\rho$ if and only if $x
  = 1_M$ and the complete  word graphs $\Gamma = (V, E)$ for $S ^ 1$
  over $A$ compatible with $R$ such that $(v, a, 0)\not\in  E$ for all $v\in V$
  and all $a\in A$. The argument to prove that the right actions of $S$ on
  $S/\rho$ and on $\Gamma \setminus \{0\}$ are isomorphic is identical to the
  argument in the proof of \cref{thm-graph-to-right-congruence}.
\end{proof}

Given \cref{thm-graph-to-right-congruence}, we will refer to the standard
complete word graph over $A$ compatible with $R$ corresponding to a given right
congruence $\rho$ as the \defn{word graph of $\rho$}.

We require the following lemma.

\begin{lemma}\label{lem-containing}
  Let $S$ and $T$ be semigroup, let $R \subseteq S \times S$, and let $\theta: S
  \to T$ be a homomorphism. Then the following hold:
  \begin{enumerate}[\rm (i)]
    \item
      if $\ker(\theta)\subseteq R$, $R$ is transitive and $u, v\in S$ are such
      that $(u)\theta = (u')\theta$ and $(v)\theta = (v')\theta$ for some $(u',
      v') \in R$, then $(u, v) \in R$;
    \item\label{lem-rho-preserves-least-congruence}
      if $\theta$ is surjective and $\ker(\theta)$ is contained in the least
      (left, right, or 2-sided) congruence $\rho$ on $S$ containing $R$, then
      the least (left, right, or 2-sided) congruence on $T$
      containing $(R)\theta$  is $(\rho)\theta$.
  \end{enumerate}
\end{lemma}

\cref{lem-containing}(i) can be reformulated as follows.
\begin{cor}\label{lem-rho-theta-rho-ker}
  If $\ker(\theta) \subseteq R$ and $R$ is transitive, then
  $((u)\theta, (v)\theta)\in (R)\theta$ if and only if
  $(u, v) \in R$
  for all $u, v\in S$.
\end{cor}

A convenient consequence of the correspondence between right congruences
$\rho$ and their word graphs $\Gamma$ allows us to determine a set of generating
pairs for $\rho$ from $\Gamma$. This method is similar to Stalling's method for
finding a generating set of a subgroup of a free group from its associated
coset graph, see, for example,~\cite[Proposition~6.7]{Kapovich2002aa}.

\begin{lemma}\label{lem-generating-pairs}
  Let $M$ be a monoid defined by a monoid presentation $\langle A
  \mid R\rangle$,
  let $\rho$ be a right congruence of $M$, and let $\Gamma = (V, E)$ be the
  word graph of $\rho$.
  Then
  \(
    \left\{(w_\alpha a, w_\beta) \in A ^ *\times A ^ * : (\alpha, a, \beta)
    \in E \right\}
  \)
  generates the path relation $(0)\pi_{\Gamma}$ as a right
  congruence.

  In particular, if $\theta: A ^ * \to M$ is the natural homomorphism with
  $\ker(\theta) = R ^ \#$, then
  \[
    \left\{((w_\alpha a)\theta, (w_\beta)\theta)
    \in M\times M : (\alpha, a, \beta) \in E \right\}
  \]
  generates $\rho$ as a right congruence.
\end{lemma}
\begin{proof}
  We set $W = \set{w_\alpha}{\alpha\in V}$ where $w_{\alpha}$ is the short-lex
  minimum word labelling any $(0, \alpha)$-path in $\Gamma$ for every $\alpha
  \in V$.

  We denote by $\sigma$ the right congruence on $A ^ *$ generated by $S :=
  \left\{(w_\alpha a, w_\beta) \in A ^ *\times A ^ * : (\alpha, a, \beta) \in
  E\right\}$. If $(w_{\alpha} a, w_\beta) \in S$, then since $(\alpha, a,
  \beta) \in E$, it follows that both $w_{\alpha}a$  and $w_{\beta}$ label $(0,
  \beta)$-paths in $\Gamma$. In particular, $S \subseteq (0)\pi_{\Gamma}$ and
  so, since $\sigma$ is the least right congruence containing $S$ and
  $(0)\pi_{\Gamma}$ is a right congruence, $\sigma \subseteq (0)\pi_{\Gamma}$
  also.

  For the converse inclusion, we will show that $(u, w_\alpha) \in \sigma$ for
  every $u\in A^\ast$ that labels a $(0, \alpha)$-path in $\Gamma$. It will
  follow from this that $(0)\pi_\Gamma \subseteq \sigma$.

  Suppose that $u \in A^\ast$ labels a $(0, \alpha)$-path in $\Gamma$. We write
  $u = w_{\beta}v$ where  $w_{\beta}$ is the longest prefix of $u$ belonging to
  $W$. We proceed by induction on $|v|$. If $|v| = 0$, then $u = w_\beta$, and
  so $\beta = \alpha$. Hence $u = w_\alpha$ and so $(u, w_\alpha)\in \sigma$ by
  reflexivity. This establishes the base case.

  Suppose that for some $k\geq 0$, $(u, w_{\alpha})\in \sigma$ for all $u\in A
  ^ *$ such that $u$ labels a $(0, \alpha)$-path and $u = w_{\beta} v$ where
  $|v| \leq k$. Let $u = w_\beta v$ for some $v\in A ^*$ with $|v| = k+1 > 0$.
  Then we may write $v = av_0$ for some $a\in A$ and $v_0\in A^\ast$ with
  $|v_0| = k$. Since $\Gamma$ is complete, there is an edge $(\beta, a,
  \gamma)\in E$ for some $\gamma \in V$. Hence $(w_\beta a, w_\gamma)\in S$ by
  definition.

  If we set $u_0 = w_\gamma v_0$, then $u_0$ also labels a $(0, \alpha)$-path
  in $\Gamma$. Again we write $u_0 = w_\delta v_1$ where $w_{\delta}$ is the
  maximal prefix of $u_0$ in $W$ and $v_1\in A ^*$. Since $w_\gamma$ is a
  prefix of $u_0$ and $w_{\delta}$ is maximal, $w_{\gamma}$ is a (not
  necessarily proper) prefix of $w_{\delta}$. In particular, $|w_\delta| \geq
  |w_\gamma|$  and so $|v_1| \leq |v_0| = k$. Hence by induction $(u_0,
  w_{\alpha})\in \sigma$. In addition $(u, u_0) = (w_\beta a v_0, w_\gamma v_0)
  \in \sigma$, since $(w_{\beta}a, w_{\gamma})\in S \subseteq \sigma$ and
  $\sigma$ is a right congruence. Therefore by transitivity $(u, w_\alpha)\in
  \sigma$, as required.

  Since $R ^ {\#} = \ker(\theta) \subseteq (0)\pi_{\Gamma} = \sigma$ (we have
    that $R ^ {\#}\subseteq(0)\pi_{\Gamma}$ since $\Gamma$ is complete and
  compatible with $R$), we can apply
  \cref{lem-containing}\ref{lem-rho-preserves-least-congruence} to show that
  $(S)\theta = \left\{((w_\alpha a)\theta, (w_\beta)\theta) \in M\times M :
  (\alpha, a, \beta) \in E \right\}$ generates $\rho$ as a right congruence. It
  follows that $(\sigma)\theta = ((0)\pi_{\Gamma})\theta = \rho$ is generated by
  $(S)\theta$.
\end{proof}

From this point in the paper onwards, for the purposes of describing the
time or space complexity of some claims, we assume that we are using the RAM
model of computation. The following corollary is probably well-known.

\begin{cor}\label{cor-generating-pairs}
  Let $M$ be a monoid defined by a monoid presentation $\langle A\mid
  R\rangle$, and let $\rho$ be a right congruence on $M$.  If $\rho$ has finite
  index $n\in \N$, then $\rho$ is finitely generated as a right congruence and
  a set of generating pairs for $\rho$ can be determined in $O(n^2 |A|)$ time
  from the word graph associated to $\rho$.
\end{cor}
\begin{proof}
  This follows immediately since the generating set for $\rho$ given in
  \cref{lem-generating-pairs} has size $n|A|$,
  and it can be found by a breadth first traversal of the word graph. The
  breadth first traversal of the word graph has time complexity $O(n |A|)$.
  To store the generating pairs $(u, v)\in A ^ * \times A ^ *$ for $\rho$
  requires at most $O(n ^ 2|A|)$ space and time, yielding the stated time
  complexity.
\end{proof}

\subsection{Homomorphisms of word graphs}%
\label{subsection-homomorphisms-of-word-graphs}
In this section we state some results relating homomorphisms of word graphs
and containment of the associated congruences.

\begin{lemma}\label{unique-homomorphism}
  If $\Gamma_0 = (V_0, E_0)$ and $\Gamma_1= (V_1, E_1)$ are deterministic word
  graphs on the same alphabet $A$ such that every node in $\Gamma_0$ and
  $\Gamma_1$ is reachable from $0\in V_0$  and $0 \in
  V_1$, respectively, then there exists at most one word graph homomorphism
  $\phi: \Gamma_0\to \Gamma_1$ such that $(0)\phi = 0$.
\end{lemma}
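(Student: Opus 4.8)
The plan is to prove uniqueness by induction on the length of a shortest path from $0$ to a node, exploiting determinism to show that any homomorphism fixing $0$ is forced on every node. First I would fix two homomorphisms $\phi, \psi : \Gamma_0 \to \Gamma_1$ with $(0)\phi = (0)\psi = 0$, and show by induction on $n$ that $(\alpha)\phi = (\alpha)\psi$ for every node $\alpha \in V_0$ that is reachable from $0$ by a path of length at most $n$. Since every node of $\Gamma_0$ is reachable from $0$ by hypothesis, this suffices.

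The base case $n = 0$ is immediate: the only node reachable by a path of length $0$ is $0$ itself, and $(0)\phi = 0 = (0)\psi$ by assumption. For the inductive step, suppose $\alpha \in V_0$ is reachable from $0$ by a path of length $n + 1$. Then there is an edge $(\beta, a, \alpha) \in E_0$ with $\beta$ reachable from $0$ by a path of length $n$, so $(\beta)\phi = (\beta)\psi$ by the inductive hypothesis. Applying the homomorphism property of $\phi$ to the edge $(\beta, a, \alpha)$ gives $((\beta)\phi, a, (\alpha)\phi) \in E_1$, and similarly $((\beta)\psi, a, (\alpha)\psi) = ((\beta)\phi, a, (\alpha)\psi) \in E_1$. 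But $\Gamma_1$ is deterministic, so there is at most one edge in $E_1$ with source $(\beta)\phi$ and label $a$; hence $(\alpha)\phi = (\alpha)\psi$, completing the induction.

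The only real subtlety — and the step I would be most careful about — is making sure the induction is set up so that the determinism of $\Gamma_1$ is actually applicable: we need the two candidate target values to arise as targets of edges sharing the same source in $\Gamma_1$, which is exactly why we must first know $(\beta)\phi = (\beta)\psi$ before invoking determinism. This is handled cleanly by inducting on path length rather than trying to argue about all nodes simultaneously. (Note that determinism of $\Gamma_0$ is not needed for the argument, though it is harmless to assume it; only reachability of every node from $0$ in $\Gamma_0$ and determinism of $\Gamma_1$ are used.) The argument does not require completeness of either word graph, nor compatibility with any set of relations.
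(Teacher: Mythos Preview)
Your proof is correct and follows essentially the same approach as the paper's. The paper argues by contradiction: assuming two homomorphisms $\phi_0,\phi_1$ differ at some node $\alpha$, any word $w$ labelling a $(0,\alpha)$-path in $\Gamma_0$ would label both a $(0,(\alpha)\phi_0)$-path and a $(0,(\alpha)\phi_1)$-path in $\Gamma_1$, contradicting determinism of $\Gamma_1$. Your induction on path length is simply this same argument unrolled edge by edge rather than invoking the path-preservation property of homomorphisms as a single step; your observation that determinism of $\Gamma_0$ is unnecessary is also correct and implicit in the paper's proof.
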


\begin{proof}
  Suppose that $\phi_0, \phi_1: \Gamma_0\to \Gamma_1$ are word graph
  homomorphisms such that $(0)\phi_0 = (0)\phi_1 = 0$ and $\phi_0 \neq \phi_1$.
  Then there exists a node
  $\alpha\in V_0$ such that $(\alpha)\phi_0\neq (\alpha)\phi_1$. If $w$ is any
  word labelling a $(0, \alpha)$-path, then, since word graph homomorphisms
  preserve paths, $w$ labels $(0, (\alpha)\phi_0)$- and $(0,
  (\alpha)\phi_1)$-paths in $\Gamma_1$, which contradicts the assumption that
  $\Gamma_1$ is deterministic.
\end{proof}

Suppose that $\Gamma_0 = (V_0, E_0)$ and $\Gamma_1= (V_1, E_1)$ are word
graphs. At various points in the paper it will be useful to consider the
\defn{disjoint union} $\Gamma_0\sqcup \Gamma_1$ of $\Gamma_0$ and $\Gamma_1$,
which has set of nodes, after appropriate relabelling, $V_0\sqcup V_1$ and
edges $E_0\sqcup E_1$. If $\alpha\in V_i$, then we will write
$\alpha_{\Gamma_i}\in V_0\sqcup V_1$ if it is necessary to distinguish which
graph the node belongs to. We will also abuse notation by assuming that $V_0,
V_1 \subseteq V_0\sqcup V_1$ and $E_0, E_1\subseteq E_0\sqcup E_1$.

\begin{cor}\label{corollary-homomorphism}
  If $\Gamma_0 = (V_0, E_0)$, $\Gamma_1= (V_1, E_1)$, and $\Gamma_2 = (V_2,
  E_2)$ are  word graphs over the same alphabet, and every node in each word
  graph $\Gamma_i$ is reachable from $0_{\Gamma_i}\in V_i$ for $i\in \{0, 1,
  2\}$, then there is at most one word graph homomorphism  $\phi:
  \Gamma_0\sqcup \Gamma_1\to \Gamma_2$ such that $(0_{\Gamma_0})\phi =
  (0_{\Gamma_1})\phi = 0_{\Gamma_2}$.
\end{cor}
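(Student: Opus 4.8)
The plan is to reduce the statement about $\Gamma_0 \sqcup \Gamma_1$ to two applications of \cref{unique-homomorphism}, after first observing that a homomorphism out of a disjoint union is exactly a pair of homomorphisms out of the summands. Concretely, suppose $\phi, \psi : \Gamma_0 \sqcup \Gamma_1 \to \Gamma_2$ are word graph homomorphisms with $(0_{\Gamma_0})\phi = (0_{\Gamma_1})\phi = (0_{\Gamma_0})\psi = (0_{\Gamma_1})\psi = 0_{\Gamma_2}$. The first step is to restrict: since (after relabelling) $V_0, V_1 \subseteq V_0 \sqcup V_1$ and $E_0, E_1 \subseteq E_0 \sqcup E_1$, the restrictions $\phi|_{V_0}$ and $\psi|_{V_0}$ are word graph homomorphisms $\Gamma_0 \to \Gamma_2$, and similarly $\phi|_{V_1}, \psi|_{V_1} : \Gamma_1 \to \Gamma_2$. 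This uses only that the edges of $\Gamma_0$ inside $\Gamma_0 \sqcup \Gamma_1$ are precisely $E_0$, so the homomorphism condition restricts cleanly.

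The second step applies \cref{unique-homomorphism} directly. The hypotheses there are met for the pair $\Gamma_0, \Gamma_2$: both are deterministic (word graphs here are assumed deterministic in this context, or more carefully one notes the argument of \cref{unique-homomorphism} only needs $\Gamma_2$ deterministic together with reachability from $0$ in $\Gamma_0$, both of which are given), and every node of $\Gamma_0$ is reachable from $0_{\Gamma_0}$, every node of $\Gamma_2$ from $0_{\Gamma_2}$. Since $\phi|_{V_0}$ and $\psi|_{V_0}$ both send $0_{\Gamma_0}$ to $0_{\Gamma_2}$, \cref{unique-homomorphism} forces $\phi|_{V_0} = \psi|_{V_0}$. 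The identical argument with $\Gamma_1$ in place of $\Gamma_0$ gives $\phi|_{V_1} = \psi|_{V_1}$. As $V_0 \sqcup V_1$ is the disjoint union of $V_0$ and $V_1$, these two equalities together yield $\phi = \psi$, which is the claim.

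I do not expect a genuine obstacle here; the corollary is essentially bookkeeping. The one point that deserves a line of care is that \cref{unique-homomorphism} as stated requires \emph{both} $\Gamma_0$ and $\Gamma_1$ to be deterministic with all nodes reachable from $0$ — so to invoke it for the pair $(\Gamma_0, \Gamma_2)$ one should check that $\Gamma_2$ satisfies these hypotheses, which it does by assumption in the corollary (reachability from $0_{\Gamma_2}$ is explicitly assumed, and determinism is part of the ambient setting of this subsection). If one wanted the corollary to hold without assuming $\Gamma_2$ deterministic, the proof would break, but that is not the situation here. A second, even more trivial point: one must note that $\Gamma_0 \sqcup \Gamma_1$ need not have all nodes reachable from a single base point, so \cref{unique-homomorphism} cannot be applied to it directly — hence the detour through the restrictions.
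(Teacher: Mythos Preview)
Your proposal is correct and follows essentially the same approach as the paper: restrict the two candidate homomorphisms to each summand $\Gamma_j$, apply \cref{unique-homomorphism} to each restriction, and conclude that the homomorphisms agree on all of $V_0 \sqcup V_1$. Your additional remarks about verifying the hypotheses of \cref{unique-homomorphism} (determinism of $\Gamma_2$, reachability from the basepoints) are more careful than the paper's own proof, which simply invokes the lemma without comment.
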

\begin{proof}
  Suppose that $\phi_0, \phi_1: \Gamma_0\sqcup \Gamma_1 \to \Gamma_2$ are
  word graph homomorphisms such that $(0_{\Gamma_0})\phi = (0_{\Gamma_1})\phi
  = 0_{\Gamma_2}$. Then $\phi_i|_{\Gamma_j}: \Gamma_j \to \Gamma_2$ is a word
  graph homomorphism for $i = 0, 1$ and $j = 0, 1$. Hence, by
  \cref{unique-homomorphism}, $\phi_0|_{\Gamma_0} = \phi_1|_{\Gamma_0}$ and
  $\phi_0|_{\Gamma_1} = \phi_1|_{\Gamma_1}$, and so $\phi_0 = \phi_1$.
\end{proof}

The final lemma in this subsection relates word graph homomorphisms and
containment of the associated congruences.

\begin{lemma}\label{lemma-kernel-right-invariant}
  Let $\Gamma_0 = (V_0, E_0)$ and $\Gamma_1 = (V_1, E_1)$ be word graphs over
  an alphabet $A$ representing congruences $\rho$ and $\sigma$ on a monoid $M =
  \genset{A}$. Then there exists a word graph homomorphism $\phi : \Gamma_0 \to
  \Gamma_1$ such that $(0_{\Gamma_0})\phi = 0_{\Gamma_1}$ if and only if $\rho
  \subseteq \sigma$.
\end{lemma}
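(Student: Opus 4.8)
The plan is to read off from \cref{thm-graph-to-right-congruence} the structural facts we need about $\Gamma_0$ and $\Gamma_1$: each is complete, deterministic, and has every node reachable from $0$, and the associated congruences are recovered as $\rho = \set{((u)\theta, (v)\theta)}{(u, v) \in (0)\pi_{\Gamma_0}}$ and $\sigma = \set{((u)\theta, (v)\theta)}{(u, v) \in (0)\pi_{\Gamma_1}}$, where $\theta : A ^ * \to M$ is a fixed surjective homomorphism with kernel the defining congruence. The two implications are then proved separately, and the whole statement is a mild generalisation of \cref{prop-same-congruence} (replacing ``isomorphism and $\rho = \sigma$'' by ``homomorphism and $\rho \subseteq \sigma$'').

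For the forward direction, suppose $\phi : \Gamma_0 \to \Gamma_1$ is a word graph homomorphism with $(0_{\Gamma_0})\phi = 0_{\Gamma_1}$, and let $((u)\theta, (v)\theta) \in \rho$, so that $u$ and $v$ both label $(0, \alpha)$-paths in $\Gamma_0$ for some node $\alpha$. Since word graph homomorphisms preserve the labels of paths (as noted just after the definition of word graph homomorphism), $u$ and $v$ both label $(0, (\alpha)\phi)$-paths in $\Gamma_1$, whence $(u, v) \in (0)\pi_{\Gamma_1}$ and $((u)\theta, (v)\theta) \in \sigma$. This gives $\rho \subseteq \sigma$.

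For the converse I would reuse essentially verbatim the construction in the proof of \cref{prop-same-congruence}. Assuming $\rho \subseteq \sigma$, for each $\alpha \in V_0$ fix a word $w_{\alpha} \in A ^ *$ labelling a $(0, \alpha)$-path in $\Gamma_0$ (available since every node of $\Gamma_0$ is reachable from $0$), taking $w_0 = \varepsilon$. Because $\Gamma_1$ is complete and deterministic, $w_{\alpha}$ labels a unique path with source $0$ in $\Gamma_1$; define $(\alpha)\phi$ to be its target, so $(0_{\Gamma_0})\phi = 0_{\Gamma_1}$. To check $\phi$ is a homomorphism, take $(\alpha, a, \beta) \in E_0$; then $w_{\alpha}a$ and $w_{\beta}$ both label $(0, \beta)$-paths in $\Gamma_0$, so $(w_{\alpha}a, w_{\beta}) \in (0)\pi_{\Gamma_0}$, hence $((w_{\alpha}a)\theta, (w_{\beta})\theta) \in \rho \subseteq \sigma$, i.e.\ $(w_{\alpha}a, w_{\beta}) \in (0)\pi_{\Gamma_1}$; thus in $\Gamma_1$ the words $w_{\alpha}a$ and $w_{\beta}$ label paths with source $0$ and a common target. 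On the other hand, in $\Gamma_1$ the path labelled $w_{\alpha}$ ends at $(\alpha)\phi$ and, by completeness, extends along the edge out of $(\alpha)\phi$ labelled $a$; by determinism this is the unique $(0, \cdot)$-path labelled $w_{\alpha}a$, and similarly $w_{\beta}$ labels the unique $(0, (\beta)\phi)$-path, so matching targets forces $((\alpha)\phi, a, (\beta)\phi) \in E_1$, as required.

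I do not anticipate a serious obstacle; the only real care needed is bookkeeping about which of completeness, determinism, and reachability-from-$0$ is invoked where — determinism of $\Gamma_1$ to make $\phi$ well defined and to pin down path targets, reachability in $\Gamma_0$ to have the words $w_{\alpha}$ at all, and completeness of $\Gamma_1$ so that $w_{\alpha}a$ genuinely labels a path. One could alternatively package the converse through the action correspondence of \cref{prop-word-graph-iff-action} (a homomorphism of the right actions $M/\rho \to M/\sigma$ exists precisely when $\rho \subseteq \sigma$), but the path-relation argument above is self-contained and shorter, so I would present that.
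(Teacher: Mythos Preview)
Your proposal is correct and follows essentially the same approach as the paper's own proof: both directions use path-preservation of word graph homomorphisms, and the converse constructs $\phi$ by fixing words $w_\alpha$ labelling $(0,\alpha)$-paths in $\Gamma_0$ and sending $\alpha$ to the target of $w_\alpha$ in $\Gamma_1$. You are, if anything, slightly more careful than the paper about distinguishing the path relation on $A^*$ from the congruence $\rho$ on $M$ via $\theta$, and about which structural hypotheses (completeness, determinism, reachability) are invoked at each step.
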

\begin{proof}
  ($\Rightarrow$)
  If $\phi : \Gamma_0 \to \Gamma_1$ is a word graph homomorphism such that
  $(0_{\Gamma_0})\phi = 0_{\Gamma_1}$, and $w\in A ^*$
  labels a $(0, \alpha)$-path in $\Gamma_0$, then $w$ labels a $(0,
  (\alpha)\phi)$-path in $\Gamma_1$. If $(x, y) \in \rho$, then $x$
  and $y$ label
  $(0, \alpha)$-paths in $\Gamma_0$ and so $x$ and $y$ label $(0,
  (\alpha)\phi)$-paths in $\Gamma_1$. Hence $(x, y) \in \sigma$ and so  $\rho
  \subseteq \sigma$.

  ($\Leftarrow$) Conversely, assume that $\rho \subseteq \sigma$. We
  define $\phi
  : \Gamma_0 \to \Gamma_1$ as follows. For every $\alpha\in V_0$, we fix a
  word $v_{\alpha}\in A ^ *$ labelling a $(0, \alpha)$-path. Such a path exists
  for every $\alpha\in V_0$ because every node is reachable from $0$. We define
  $(\alpha)\phi = 0\cdot_{\Gamma_1} v_{\alpha}$
  for all $\alpha \in V_0$. If $(\alpha, a, \beta)\in
  E_0$, then $v_{\alpha}a$ and $v_{\beta}$ both label $(0, \beta)$-paths, and so
  $(v_{\alpha}a, v_{\beta})\in \rho$. Since $\rho \subseteq \sigma$, it follows
  that $(v_{\alpha}a, v_{\beta})\in \sigma$ and so $v_{\alpha}a$ and
  $v_{\beta}$ both label $(0, (\beta)\phi)$-paths in $\Gamma_1$. In particular,
  $((\alpha)\phi, a, (\beta)\phi)$ is an edge of $\Gamma_1$, and $\phi$ is a
  homomorphism.
\end{proof}

\subsection{Standard word graphs}%
\label{section-standard-word-graphs}

In this section we prove some essential results about standard word graphs.

Suppose that $\Gamma = (V, E)$ is a word graph where $V = \{0, \ldots, m - 1\}$
for some $n\in \N$ and $E\subseteq V\times A \times V$ where $A = \{a_0 <
\cdots < a_{k-1}\}$. The ordering on $V$ and $A$ induces the lexicographic
ordering on $V\times A$ and $V\times A\times V$, and the latter orders the
edges of any word graph $\Gamma = (V, E)$. We write $<$ for any and all of
these orders.

We defined standard complete word graphs in \cref{de-standard}, we
now extend this
definition to incomplete word graphs $\Gamma = (V, E)$ by adding the following
requirement:
\begin{enumerate}[(i)]
    \addtocounter{enumi}{3}
  \item\label{de-standard-incomplete}
    if $(\alpha, a)\in V\times A$ is a missing edge in $\Gamma$, $(\alpha, a) <
    (\beta, b)$ for some $(\beta, b)\in V\times A$, and
    there exists $\gamma\in V$ such that $(\beta, b, \gamma)\in E$, then
    $w_\gamma \neq w_\beta b$.
\end{enumerate}

An edge $(\alpha, a, \beta)\in E$ is a \defn{short-lex defining edge} in
$\Gamma$ if $w_\beta = w_\alpha a$.

\begin{lemma}\label{prop-short-lex-path-properties}
  Let $\Gamma = (V, E)$ be a deterministic word graph over an alphabet $A$,
  let $\alpha, \beta \in V$, and let
  \[
    (0, b_0, \alpha_1) = (\alpha_0, b_0, \alpha_1),\ldots, (\alpha_{n-1},
    b_{n-1}, \alpha_n)= (\alpha_{n-1}, b_{n-1}, \alpha)
  \]
  be an $(\alpha, \beta)$-path labelled by $w = b_0\cdots b_{n-1} \in A ^*$. If
  $w$ is the short-lex least word labelling any $(\alpha, \beta)$-path in
  $\Gamma$, then the following hold:
  \begin{enumerate}[\rm (i)]
    \item
      $\alpha_i = \alpha_j$ if and only if $i=j$, hence the $(\alpha,
      \beta)$-path labelled by $w$ does not contain duplicate edges;

    \item
      $b_i\cdots b_{j-1}$ is the short-lex least word labelling any $(\alpha_i,
      \alpha_j)$-path for all $i, j \in \{0, \ldots, n\}$ such that $i < j$.
  \end{enumerate}
\end{lemma}

\begin{lemma}\label{prop-standard-short-lex-path-properties}
  Let $\Gamma = (V, E)$ be a standard word graph over the alphabet
  $A$, let $\alpha \in V$, and let
  \[
    (0, b_0, \alpha_1)=(\alpha_0, b_0, \alpha_1),\ldots, (\alpha_{n-1}, b_{n-1},
    \alpha_n)=(\alpha_{n - 1}, b_{n - 1}, \alpha)
  \]
  be a $(0, \alpha)$-path labelled by $w = b_0\cdots b_{n-1}\in A ^ *$. If $w$
  is the short-lex least word labelling any $(0, \alpha)$-path in
  $\Gamma$, then:
  \begin{enumerate}[\rm (i)]
    \item every edge $(\alpha_i, b_i, \alpha_{i+1})$ is a short-lex defining
      edge;
    \item $\alpha_0 < \alpha_1 < \cdots < \alpha_n$.
  \end{enumerate}
\end{lemma}
\begin{proof}
  For (i), let $w_{\alpha_i}$ be the short-lex least word labelling a $(0,
  \alpha_i)$-path for each $i\in \{0, \ldots, n\}$. By
  \cref{prop-short-lex-path-properties}(ii), $w_{\alpha_i} = b_0\cdots
  b_{i-1}$. Hence $w_{\alpha_{i+1}} = w_{\alpha_i}b_i$ and so $(\alpha_i, b_i,
  \alpha_{i+1})$ is a short-lex defining edge as required.

  For (ii), since $|w_{\alpha_i}| < |w_{\alpha_{i+1}}|$, it follows that
  $w_{\alpha_i} < w_{\alpha_{i + 1}}$ and so by \cref{de-standard}(iii),
  $\alpha_i < \alpha_{i+1}$ for every $i$.
\end{proof}

If $\Gamma = (V, E)$ is an incomplete word graph over $A$, then we refer to
$(\alpha, a)\in V \times A$ as a \defn{missing edge} if $(\alpha, a,
\beta)\not\in E$ for all $\beta\in V$.  Recall that the missing edges
are ordered lexicographically according to the orders on $V$ and $A$
as defined at the start of \cref{section-standard-word-graphs}.

\begin{lemma}%
  \label{lemma-minimum-missing-edge-maximum-word}
  Let $\Gamma = (V, E)$ be an incomplete standard word graph over the
  alphabet $A$, and let
  $(\alpha, a)\in V \times A$ be a missing edge. Then $w_\gamma < w_\alpha
  a$ for all $\gamma \in V$.
\end{lemma}
\begin{proof}
  If $\gamma = 0$, then $w_\gamma = \varepsilon < w_{\alpha}a$ as required.
  Assume that $\gamma> 0$. Then $|w_{\gamma}| \geq 1$ and, in particular, there
  is at least one edge on the $(0, \gamma)$-path labelled by $w_\gamma$. Let
  $(\beta, b, \gamma)$ be the last such edge. By
  \cref{prop-short-lex-path-properties}(ii), $w_\gamma = w_\beta b$, and, by
  \cref{de-standard}(iv), $(\beta, b)\leq (\alpha, a)$. If $(\beta, b) =
  (\alpha, a)$, then $(\alpha, a)$ is not a missing edge. Hence $(\beta, b)
  < (\alpha, a)$, and so, either $\beta < \alpha$; or $\alpha = \beta$ and $a <
  b$.
  If $\beta < \alpha$, then, by \cref{de-standard}(iii), $w_\beta < w_\alpha$
  and so $w_\gamma = w_\beta b < w_\alpha a$, as required. If $\beta = \alpha$
  and $b < a$, then  $w_{\gamma} = w_{\beta}b = w_\alpha b < w_\alpha a$.
\end{proof}

\begin{lemma}\label{lemma-label-preservation}
  Let $\Gamma = (V, E)$ be an incomplete standard word graph over the
  alphabet $A$, let $(\alpha,
  a)$ be a missing edge, let $\beta \in V$, and let $e=(\alpha, a,
  \beta)$. If $\Gamma^\prime = (V, E\cup \{e\})$, then
  the following hold:
  \begin{enumerate}[\rm(i)]
    \item
      $w_\gamma = w_\gamma^\prime$ for all $\gamma\in V$ where $w_\gamma^\prime$
      is the short-lex least word labelling any $(0, \gamma)$-path in
      $\Gamma^\prime$;
    \item
      \label{lemma-standard-adding-internal-edge-preserved}
      $\Gamma'$ is standard.
  \end{enumerate}
\end{lemma}
\begin{proof}
  \textbf{(i).}
  Since every path in $\Gamma$ is also a path in $\Gamma^\prime$,
  $w^\prime_\gamma \leq w_\gamma$ for all $\gamma\in V$. Let $\gamma\in V$ be
  arbitrary. If the $(0, \gamma)$-path labelled by $w^\prime_\gamma$ in
  $\Gamma^\prime$ does not contain the newly added edge $e$, then
  $w^\prime_\gamma$ also labels a $(0, \gamma)$-path in $\Gamma$ and so
  $w_\gamma = w^\prime_\gamma$.

  Seeking a contradiction suppose that $(0, \gamma)$-path labelled by
  $w^\prime_\gamma$ in $\Gamma$ contains the newly added edge $e$. Then we can
  write $w^\prime_\gamma = uav$ where $u$ labels a $(0, \alpha)$-path and $v$
  labels a $(\beta, \gamma)$-path in $\Gamma^\prime$. By
  \cref{prop-short-lex-path-properties}(ii), $u$ labels the short-lex least
  $(0, \alpha)$-path in $\Gamma^\prime$ and so $u = w^\prime_\alpha$.
  Since $(\alpha, a)$ was a missing edge, $\Gamma^\prime$ is
  deterministic, so by
  \cref{prop-short-lex-path-properties}(i) in $\Gamma^\prime$, the
  $(0, \alpha)$-path labelled by
  $u$ does not contain the edge $e$. Hence $w^\prime_\alpha$ also labels a $(0,
  \alpha)$-path in $\Gamma$ and so $w^\prime_\alpha = w_\alpha$. By
  \cref{prop-short-lex-path-properties}(ii) again, $w_\beta^\prime = w_\alpha
  a$. But now by \cref{lemma-minimum-missing-edge-maximum-word} in $\Gamma$,
  $w_\beta^\prime = w_\alpha a > w_\beta$, which is a contradiction.

  Hence the $(0, \gamma)$-path labelled by $w^\prime_\gamma$ in $\Gamma$ does
  not contain $e$ for any $\gamma \in V$, and so $w_\gamma = w^\prime_\gamma$.

  \textbf{(ii).}
  Clearly, parts (i) and (ii) of \cref{de-standard} hold in $\Gamma'$. Part
  (iii) of \cref{de-standard} holds by part (i) of this lemma.

  To show that \cref{de-standard}(iv) holds, suppose that $(\gamma, c)$ is a
  missing edge of $\Gamma^\prime$ and $(\delta, d, \zeta)\in E\cup \{e\}$ for
  some $\gamma, \delta, \zeta \in V$ and $c, d \in A$ such that $(\gamma, c) <
  (\delta, d)$. Note that every missing edge of $\Gamma'$ is a missing edge of
  $\Gamma$ also. In particular, $(\gamma, c)$ is a missing edge of $\Gamma$.

  If $(\delta, d, \zeta)\neq e$, then, by \cref{de-standard}(iv) applied to
  $\Gamma$,  $w_{\zeta} \neq w_{\delta}d$. But part (i) implies that
  $w_{\zeta}' = w_{\zeta}$ and $w_{\delta}' = w_{\delta}$, and so $w_{\zeta}'
  \neq w_{\delta}'d$ also.  In particular, $\Gamma'$ satisfies
  \cref{de-standard}(iv) in this case.

  Applying \cref{lemma-minimum-missing-edge-maximum-word} to $\Gamma$ and
  $(\alpha, a)$ yields $w_\beta < w_\alpha a$. So, if $(\delta, d, \zeta) = e =
  (\alpha, a, \beta)$, then $w_{\zeta} = w_{\beta} < w_{\alpha}a =
  w_{\delta}d$. Applying part (i) as in the previous case implies that
  $w_{\zeta}' < w_{\delta}'d$.
\end{proof}

The final lemma in this section shows that the analogue of
\cref{lemma-label-preservation}(ii) holds when the target of the missing edge
is defined to be a new node.


\begin{lemma}\label{lemma-standard-plus-min-missing-is-standard}
  Let $\Gamma = (V, E)$ be an incomplete standard word graph over the
  alphabet $A$, let $(\alpha,
  a)\in V \times A$ be the shortlex least missing edge in $\Gamma$,
  and let $e = (\alpha, a, |V|)$. Then $\Gamma^\prime = (V\cup
  \{|V|\}, E \cup \{e\})$ is standard.
\end{lemma}
\begin{proof}
  The word graph $\Gamma'$ is deterministic since $(\alpha, a)$ is a missing
  edge in $\Gamma$. Hence $\Gamma'$ satisfies \cref{de-standard}(i). Since
  $\Gamma$ is standard, every node in $V$ is reachable from $0$ in $\Gamma$ and
  $\Gamma'$. In particular, $\alpha$ is reachable from $0$ in $\Gamma'$ and
  hence so too is $|V|$. Thus \cref{de-standard}(ii) holds for $\Gamma'$.

  We set $V' = V \cup \{|V|\}$ and $E' = E\cup \{e\}$, so that $\Gamma' = (V',
  E')$. As in \cref{lemma-label-preservation}(i), for every $\gamma\in
  V^\prime$, we denote by $w^\prime_\gamma$ the short-lex least word labelling
  any $(0, \gamma)$-path in $\Gamma^\prime$. There are no edges in $E^\prime$
  with source $|V|$ and $|V|\not\in V$, and so if $\gamma \in V$, then no $(0,
  \gamma)$-path in $\Gamma^\prime$ contains the newly added edge $e$. Therefore
  $w_\gamma = w^\prime_\gamma$ for each $\gamma\in V$. Since $(\alpha, a,
  |V|)$ is the only edge with target $|V|$, and from
  \cref{prop-short-lex-path-properties}(ii), $w^\prime_{|V|} = w_{\alpha}'a =
  w_\alpha a$. But then \cref{lemma-minimum-missing-edge-maximum-word} implies
  that $w_\gamma' = w_{\gamma} < w_\alpha a = w_{|V|}'$ for all $\gamma\in V$.
  So, if $\gamma, \delta \in V'$ and $\gamma < \delta$, then either $\gamma,
  \delta\in V$ or $\delta = |V|$. In both cases, $w_{\gamma}' <
  w_{\delta}'$ and so \cref{de-standard}(iii) holds for $\Gamma'$.

  To establish that \cref{de-standard}(iv) holds for $\Gamma'$, suppose that
  $(\gamma, c)$ is a missing edge of $\Gamma^\prime$ and $(\delta, d, \zeta)\in
  E^\prime$ for some $\gamma, \delta, \zeta \in V$ and $c, d \in A$
  with $(\gamma,
  c) < (\delta, d)$.
  There are three cases to consider:
  \begin{enumerate}

    \item
      $(\gamma, c)$ is a missing edge in $\Gamma$ and $(\delta, d, \zeta)\in E$;

    \item
      $(\gamma, c)$ is a missing edge in $\Gamma$ and $(\delta, d,
      \zeta)\notin E$;

    \item
      $(\gamma, c)$ is not a missing edge in $\Gamma$.
  \end{enumerate}

  If (a) holds, then $w_{\zeta} \neq w_{\delta}d$ by \cref{de-standard}(iv)
  applied to $\Gamma$. But \cref{lemma-label-preservation} implies that
  $w_{\zeta}' = w_{\zeta}$ and $w_{\delta}'\neq w_{\delta}$ and so $w_{\zeta}'
  \neq w_{\delta}'d$ in this case.

  We conclude the proof by showing that neither (b) nor (c) holds.

  If (b) holds, then $(\delta, d, \zeta) = e= (\alpha, a, |V|)$. But $(\alpha,
  a)$ is the least missing edge of $\Gamma$, so $(\delta, d) =
  (\alpha, a) \leq (\gamma, c)$,
  which contradicts the assumption that $(\gamma, c) < (\delta, d)$. Hence (b)
  does not hold.

  If (c) holds, then $\gamma = |V|$. Since there are no edges with source $|V|$
  in $\Gamma^\prime$, it follows that $\delta \in V$ and so $\delta < |V|$.
  But then $(\gamma, c) = (|V|, c) > (\delta, d)$, which again contradicts the
  assumption that $(\delta, d) > (\gamma, c)$.
\end{proof}

Combining \cref{lemma-label-preservation}(ii) and
\cref{lemma-standard-plus-min-missing-is-standard} we obtain the following
corollary.

\begin{cor}
  Let $\Gamma = (V, E)$ be an incomplete standard word graph over the
  alphabet $A$, let $(\alpha, a)$
  be the short-lex least missing edge in $\Gamma$, and let $\beta \in V\cup
  \{|V|\}$. Then $\Gamma^\prime = (V\cup \{\beta\}, E\cup\{(\alpha,
  a, \beta)\})$
  is also standard.
\end{cor}

\section{Algorithm 1: the low-index right congruences
algorithm}\label{section-low-index}

Throughout this section we suppose that: $\langle A \mid R\rangle$ is a fixed
finite monoid presentation defining a monoid $M$; and that $n\in \mathbb{Z} ^
+$ is fixed. The purpose of this section is to describe a procedure for
iterating through the right congruences of $M$ with at most $n$ congruence
classes.
This procedure is based on the Todd--Coxeter Algorithm (see for
example~\cite{Coleman2022aa},~\cite{Jura1978aa} or~\cite{Todd1936aa})
and is related to
Sims' ``low-index'' algorithm for computing subgroups of finitely presented
groups described in Chapter 5.6 of~\cite{Sims1994aa}.

As shown in \cref{thm-graph-to-right-congruence}, there is a bijective
correspondence between complete standard word graphs with at most $n$ nodes
that are compatible with $R$, and the right
congruences of $M$ with index at most $n$. As we hope to demonstrate, the key
advantage of this correspondence is that word graphs are inherently
combinatorial objects which lend themselves nicely to various enumeration
methods. The algorithm we describe in this section is a more or less classical
backtracking algorithm, or depth-first search.

This section is organised as follows. We begin with a brief general description
of backtracking algorithms and refining functions
in~\cref{subsection-backtrack}. For
a more detailed overview of backtracking search methods
see~\cite[Section 7.2.2]{Knuth2022aa}. In
\cref{subsection-specific-search-tree} we construct a specific search
tree for the
problem at hand, whose nodes are standard word graphs; in
\cref{subsec-refining-functions} we introduce various refining
functions that improve the
performance of finding right congruences.

\subsection{Backtracking search and refining
functions}\label{subsection-backtrack}

We start with the definition of the search space where we are performing the
backtracking search. To do so we require the notion of a digraph $\T = (\V, \E)$
where $\V$ is the set of nodes, and $\E\subseteq \V\times \V$ is the set of
edges. We denote such digraphs using blackboard fonts to distinguish them from
the word graphs defined above.

A \defn{multitree} is a digraph $\T = (\V, \E)$ such that for all $\vv, \w\in
\V$ there exists at most one directed path from $\vv$ to $\w$ in $\T$. For a
node $\vv\in \V$ we write $\Reach(\T, \vv)$ to denote the set of all
nodes reachable from $\vv$ in $\T$.

Given a multitree $\T = (\V, \E)$ and a (possibly infinite) set $\X\subseteq
\V$, we say that $\T$ is a \defn{search multitree for $\X$} if there exists
$\vv\in \V$ such that $\X\subseteq \Reach(\vv)$. We refer to any such node $\vv$
as a \defn{root node} of $(\T, \X)$. Recall that the symbol $\bot$ is used to
mean ``undefined'' and does not belong to $\V$.

\begin{de}\label{de-refining-func}
  A function $f:
  \V\cup\{\bot\}\to \V\cup\{\bot\}$ is a \defn{refining function for
  $\X$} if the
  following hold for all $\vv\in \V$:
  \begin{enumerate}[(i)]
    \item $f(\bot) = \bot$,
    \item if $f(\vv) = \bot$, then $\Reach(\vv)\cap \X = \varnothing$,
    \item if $f(\vv) \neq \bot$, then $\Reach(\vv) \cap \X =
      \Reach(f(\vv))\cap \X$.
  \end{enumerate}
\end{de}
For example, the identity function $\id: \V\cup\{\bot\}\to \V\cup\{\bot\}$ is a
refining function for subset of $\V$.

If $\vv \in \V$, then we refer to the set
\[
  \Kids(\vv) =\{\w\in \V : (\vv, \w)\in\E\}
\]
as the \defn{children} of $\vv$ in $\T = (\V, \E)$. Given algorithms for
computing the refining function $f$, testing membership in $\X$, and
determining the children $\Kids(\vv)$ of any $\vv\in \V$, the backtracking
algorithm $\texttt{BacktrackingSearch}_\X(f, \vv)$ outputs the set
$\Reach(\vv)\cap \X$ for any $\vv\in \V$. As a consequence
$\texttt{BacktrackingSearch}_\X(f, \vv) = \X$ if $\vv$ is any root node of $(\T,
\X)$. Pseudocode for the algorithm $\texttt{BacktrackingSearch}_\X$ is given
in~\cref{alg-backtrack}.

\begin{algorithm}
  \caption{- $\texttt{BacktrackingSearch}_{\X}$}\label{alg-backtrack}
  \textbf{Input:} A refining function $f:\V\cup \{\bot\} \to \V\cup
  \{\bot\}$ for $\X$ and a node $\vv\in \V$. \\
  \textbf{Output:} $\Reach(\vv)\cap \X$.
  \begin{algorithmic}[1]
    \State $S\gets \varnothing$
    \State $\vv\gets f(\vv)$
    \If{$\vv \not = \bot$}
    \If{$\vv\in \X$}
    \State $S\gets S \cup \{\vv\}$
    \EndIf
    \For{$\w\in \Kids(\vv)$}
    \State $S\gets S \cup \texttt{BacktrackingSearch}_{\X}(f, \w)$
    \Comment{Due to the multitree property, this is a disjoint union.}
    \EndFor
    \EndIf
    \State \Return $S$
  \end{algorithmic}
\end{algorithm}
Of course, if $\X$ is infinite, then $\texttt{BacktrackingSearch}_\X$ will not
terminate. In practice, if $\X$ is finite but $\T$ is infinite, some care is
required when choosing a refining function $f:\V \cup\{\bot\} \to \V\cup
\{\bot\}$ to ensure that $\texttt{BacktrackingSearch}_\X(f, \vv)$ terminates.
On the other hand if $f(\vv) \neq \bot$ for only finitely many $\vv \in \V$,
then clearly, $\texttt{BacktrackingSearch}_\X$ will terminate.

$\texttt{BacktrackingSearch}_\X$
can be modified to simply count the number of elements in $\X$, to apply any
function to each element of $\X$ as it is discovered, or to search for an
element of $\X$ with a particular property by modifying line 5 and 8.

The following properties of search multitrees and refining functions will be
useful later:
\begin{prop}\label{prop-refiner-properties}
  Let $\T = (\V, \E)$ be a search multitree for $\X\subseteq \V$. Then
  \begin{enumerate}[\rm (i)]
    \item
      If $\mathbb{Y}\subseteq \X$, then $\T$ is also a search multitree
      for $\mathbb{Y}$;

    \item
      If $\mathbb{Y}, \mathbb{Z} \subseteq\X$ and $f_{\mathbb{Y}}$ and
      $f_{\mathbb{Z}}$ are refining functions for $\mathbb{Y}$ and $\mathbb{Z}$
      respectively, then $f_{\mathbb{Y}}\circ f_{\mathbb{Z}}$ is a
      refining function for
      $\mathbb{Y}\cap \mathbb{Z}$.
  \end{enumerate}
\end{prop}

\subsection{The search multitree of standard word
graphs}\label{subsection-specific-search-tree}

In this section, we describe the specific search multitree $\T = (\V, \E)$
required for the low-index congruences algorithm.

We define $\V$ to be the set of all standard word graphs over a fixed finite
alphabet $A$ and we define $\E\subseteq \V\times \V$ to consist of the edges
$(\Gamma, \Gamma^\prime)\in\E$ if and only if $\Gamma = (V, E)$, $\beta \in
V\cup \{|V|\}$, $(\alpha, a)$ is the short-lex least missing edge in $\Gamma$,
and $\Gamma^\prime = (V\cup \{\beta\}, E\cup \{(\alpha, a, \beta)\})$. Since
every word graph over $A$ is finite by definition, the set $\V$ is
countably infinite.

We write $\Gamma=(V, E)\subseteq\Gamma' = (V', E')$
if $V\subseteq V'$ and $E\subseteq E'$.

\begin{lemma}\label{prop-specific-multitree}
  The digraph $\T$ is a multitree.
\end{lemma}
\begin{proof}
  Suppose that $\Gamma_0, \ldots, \Gamma_m$ and $\Gamma_0^\prime, \ldots,
  \Gamma_n^\prime$  are paths in $\T$ such that $\Gamma_0 = \Gamma_0'$ and
  $\Gamma_m = \Gamma_n'$. From the definition of $\T$ it
  follows that $\Gamma_0\subseteq \cdots\subseteq \Gamma_m$ and
  $\Gamma_0^\prime\subseteq \cdots\subseteq \Gamma_n^\prime$. Seeking a
  contradiction suppose that $i\in\N$ is the least value such that
  $\Gamma_{i+1} \neq \Gamma_{i+1}'$. If $(\alpha, a)$ is the least missing
  edge in $\Gamma_i = (V_i, E_i) = \Gamma_i'$, then $\Gamma_{i+1} = (V_i\cup
  \{\beta\}, E_i\cup \{(\alpha, a, \beta)\})$ and $\Gamma_{i + 1}' = (V_i\cup
  \{\beta'\}, E_i\cup \{(\alpha, a, \beta')\})$ for some $\beta, \beta'\in
  V_i\cup \{|V_i|\}$ with $\beta \neq \beta'$.
  It follows that $(\alpha, a, \beta), (\alpha, a, \beta') \in \Gamma_m =
  \Gamma_n'$, and so $\Gamma_m$ is not deterministic, and hence not standard,
  which is a contradiction.
\end{proof}

We denote the set of all complete standard word graphs over $A$ by $\X$. Note
that, by \cref{thm-graph-to-right-congruence}, the word graphs in $\X$ are in
bijective correspondence with the right congruences of the free monoid $A ^ *$;
see~\cref{appendix-numbers}. We do not use this correspondence explicitly.

We
will now show that every $\Gamma\in \X$ is reachable from the trivial word
graph $\Xi = (\{0\}, \varnothing)$ in $\T$, so that $\T$ is a search
multitree for $\X$.

\begin{lemma}\label{prop-specific-search-tree}
  Let $\Gamma=(V, E)\in \X$ be any complete standard word graph over $A$. Then
  there exists a sequence of standard word graphs
  \[
    \Xi = \Gamma_0, \Gamma_1, \ldots, \Gamma_n = \Gamma
  \]
  such that $(\Gamma_i, \Gamma_{i + 1})$ is an edge in $\T$ for every $i\in
  \{0, \ldots, n - 1\}$. In particular, $\X \subseteq \Reach(\Xi)$, and so
  $\T$ is a search multitree for $\X$.
\end{lemma}
\begin{proof}
  Suppose that we have defined $\Gamma_0, \ldots, \Gamma_i$ for some $0 \leq i
  < n$ such that $\Gamma_0\subsetneq \cdots \subsetneq \Gamma_i
  \subseteq \Gamma$
  and $(\Gamma_j, \Gamma_{j + 1})$ is an edge in $\T$ for every $j\in
  \{0, \ldots, i - 1\}$.
  Let $(\alpha, a)\in V \times A$ be the least missing edge in $\Gamma_i =
  (V_i, E_i)$.
  Since $\Gamma$ is deterministic, there exists $\beta\in V$ such that
  $(\alpha, a, \beta) \in E$. We define $\Gamma_{i + 1} = (V_i\cup \{\beta\},
  E_i\cup \{(\alpha, a, \beta)\})$. Clearly, $(\Gamma_i, \Gamma_{i + 1})\in \E$
  by definition and $\Gamma_i \subsetneq \Gamma_{i + 1}\subseteq \Gamma$.

  Since $\Gamma$ is finite, and the $\Gamma_i$ form a strictly increasing
  sequence of subsets of $\Gamma$, it follows that the sequence of $\Gamma_i$
  is finite.
\end{proof}

If $\Gamma=(V, E)$ is an incomplete standard word graph with least missing edge
$(\alpha, a)\in V\times A$, then the children of $\Gamma$ in $\T$ are:
\[
  \Kids(\Gamma) = \set{(V\cup \{\beta\}, E\cup \{(\alpha, a, \beta)\})}{\beta\in
  \{0, \ldots, |V|\}}.
\]
Clearly, since $\Kids(\Gamma)$ is finite, it can be computed in linear time in
$|V|$. Also it is possible to check if $\Gamma$ is complete in constant time
by checking whether $|E| = |A||V|$. Hence we can check whether $\Gamma$
belongs to $\X$ in constant time.

We conclude this subsection with some comments about the implementational
issues related to $\texttt{BacktrackingSearch}_\X(f, \Gamma)$ for some refining
function $f$ of $\X$ and word graph $\Gamma\in \V$.
It might appear that to
iterate over $\Kids(\Gamma)$ in line 7 of \cref{alg-backtrack}, it is necessary
to copy $\Gamma$ with the appropriate edge added, so that the
recursive call to $\texttt{BacktrackingSearch}_\X$ in line 8 does not modify
$\Gamma$. However, this approach is extremely memory inefficient, requiring
memory proportional to the size of the search tree. This is especially bad when
$\texttt{BacktrackingSearch}_{\X}$ is used to count the word graphs satisfying
certain criteria or used to find a word graph satisfying a particular property.
We briefly outline how to iterate over $\Kids(\Gamma)$ by modifying $\Gamma$
inplace, which requires no extra memory (other than that needed to
store the additional edge). If the maximum number of nodes in any word graph
$\Gamma$ that will be encountered during the search is known beforehand, then
counting and random sampling of $\X$ within the search multitree $\T$ can be
performed with constant space.

To do this, the underlying datastructure used to store $\Gamma$ must support
the following operations: retrieving the total number of edges, adding an edge
(with a potentially new node as its target), removing the most recently added
edge (and any incident nodes that become isolated). It is possible to implement
a datastructure where each of these operations takes constant time and space,
this is the approach used in  \libsemigroups. Of course, the refining functions
$f$ may also modify $\Gamma$ inplace.

Given such a datastructure and refining functions, we can then perform
the loop in lines 7-9 of \cref{alg-backtrack} as follows:
\begin{enumerate}[(1)]

  \item
    Let $k = |E|$ be the total number of edges of $\Gamma$, let
    $(\alpha, a)$ be the least missing edge of $\Gamma$ and let $\beta = 0$.

  \item
    Add the edge $(\alpha, a, \beta)$ to $\Gamma$.

  \item
    Set $S\gets S\cup \texttt{BacktrackingSearch}_\X(f, \Gamma)$,
    noting that the recursive call takes the modified $\Gamma$ as input.

  \item
    If $|E| > k$, then repeatedly remove the most recently added edge
    from $\Gamma$ until $|E| = k$.

  \item
    Increment $\beta$. If $\beta > |V|$, then terminate. Otherwise go to Step 2.
\end{enumerate}
The word graph $\Gamma$ is equal to one of its children after the edge is added
in Step 2. After Step 4, $\Gamma$ is restored to its original state before
the recursive call was made. Note that we cannot just remove the last added
edge, as the refining function $f$ may have added extra edges to $\Gamma$ in
the recursive call, and these extra edges are not removed in the recursive call.


\subsection{Refining functions for standard word graphs}%
\label{subsec-refining-functions}

We denote the set of complete standard word graphs over $A$
\begin{itemize}
  \item with at most $n$ nodes by $\X_n$;
  \item compatible with $R\subseteq A ^ * \times A ^ *$ by $\X_R$.
\end{itemize}
By  \cref{thm-graph-to-right-congruence}, the word graphs in
$\X_{n}\cap \X_R$ are precisely the word graphs of
the right congruences of the monoid defined by $\langle A \mid R\rangle$ with
index at most $n$.

In this section we define the refining functions $\texttt{AtMost}_n$ and
$\texttt{IsCompatible}_R$ for $\X_n$ and $\X_R$, respectively. It follows from
\cref{prop-refiner-properties}(ii) that $\texttt{AtMost}_n\circ
\texttt{IsCompatible}_R$ is a refining function for $\X_{n}\cap
\X_R$.  We also define two further refining functions for $\X_R$ that try to
reduce the number of word graphs (or equivalently nodes in the search
multitree) visited by $\texttt{BacktrackingSearch}_{\X_R}$; we will
say more about this later.

The first refining function is
$\texttt{AtMost}_n:\V\cup\{\bot\}\to \V\cup\{\bot\}$ for any
$n\in \N$ defined by
\[
  \texttt{AtMost}_n(\Gamma) =
  \begin{cases}
    \Gamma & \text{if } \Gamma=(V,E) \text{ and } |V|\leq n\\
    \bot & \text{otherwise}
  \end{cases}
\]
for every $\Gamma\in \V\cup \{\bot\}$ (the set of standard word graphs over $A$
and $\bot$).  It is routine to verify that
$\texttt{AtMost}_n$  is a refining function for $\X_n$.

%
%

If $\Gamma=(V, E)$ is a standard word graph, then checking whether $\Gamma\in
\X$ (i.e. $\Gamma$ is complete) can be done in constant time, and checking that
$\Gamma\in \X_n$ (i.e. that $|V|\leq n$) also has constant time complexity.
Moreover, $\texttt{AtMost}_n(\Gamma) \neq \bot$ for only finitely many standard
word graphs $\Gamma$ over $A$, and thus $\texttt{BacktrackingSearch}_{\X_n}(
\texttt{AtMost}_n, \vv)$ terminates and outputs $\X_n$.



It is possible to check if a, not necessarily standard, word graph $\Gamma$
belongs to $\mathbb{X}_{n,R}$ in linear time in the length of the presentation
$\langle A \mid R\rangle$.  Again since $\X_{n}\cap \X_R\subseteq \X$, $\T$ is a
multisearch tree for $\X_{n}\cap \X_R$. This gives us an immediate, if rather
inefficient, method for computing all the right congruences with index at most
$n$ of a given finitely presented monoid: simply run
$\texttt{BacktrackingSearch}_{\X_n}(\texttt{AtMost}_n, \Xi)$ to
obtain $\X_n$, and
then check each word graph in $\X_n$ for compatibility with $R$.

This method would explore just as many nodes of the search tree $\mathbb{T}$
for the free monoid $\langle a, b \mid \rangle$ as it would for the trivial
monoid $\langle a, b \mid a = 1, b = 1\rangle$. On the other hand, when
considering the trivial monoid, as soon as we define an edge leading to a node
other than $0$, both of the relations $a = 1$ and $b = 1$ are violated and
hence there is no need to consider any of the descendants of the current node
in the multitree. So that we can take advantage of this observation, we define
the function $\texttt{IsCompatible}_{R}:\V\cup\{\bot\}\to
\V\cup\{\bot\}$ by
\begin{equation*}
  \texttt{IsCompatible}_{R}(\Gamma) =
  \begin{cases}
    \Gamma & \text{if } \Gamma \in \V \text{ and } \Gamma \text{ is
    compatible with } R\\
    \bot & \text{otherwise}
  \end{cases}
\end{equation*}

Recall from the definition, if $(u, v) \in R$, then  $\Gamma = (V, E)$ is
compatible with $(u, v)$ if $\alpha \cdot u = \alpha \cdot v$ for every
$\alpha\in V$ such that $\alpha\cdot u\neq \bot$ and $\alpha\cdot v\neq \bot$.
The non-existence of a path with source $\alpha$ labelled by $u$ or $v$,
however, does not make $\Gamma$ incompatible with $(u, v)$. So it
may be possible to extend $\Gamma$ so that $u$ and $v$ do label paths with
source $\alpha$ and common target $\beta$. Hence $\texttt{IsCompatible}_R$ does
not return $\bot$ just because some relation word does not label a path from
some node in $\Gamma$. It is straightforward to verify that
$\texttt{IsCompatible}_R$ is a refining function for $\mathbb{X}_R$.

%
%
%

By \cref{prop-refiner-properties}(ii),
$\texttt{IsCompatible}_R\circ\texttt{AtMost}_n$ is a refining
function for $\X_{n}\cap \X_R$. Therefore the output of
$\texttt{BacktrackingSearch}_{\X_n\cap \X_R}(
  \texttt{IsCompatible}_R\circ\texttt{AtMost}_n,
\Xi)$ is $\X_n\cap \X_R$.
For comparison, the number of standard word graphs visited by
$\texttt{BacktrackingSearch}_{\X_n\cap \X_R}( \texttt{AtMost}_n,
\Xi)$ where $n = 4$, for
the $3$-generated plactic monoid (with the standard presentation,
see~\cite{Knuth1970aa,Lascoux1981aa}) is $3,556,169$. On the other
hand, the number for
$\texttt{BacktrackingSearch}_{\X_n\cap
\X_R}(\texttt{IsCompatible}_R\circ\texttt{AtMost}_n, \Xi)$ is
$29,800$.
This example illustrates that it can be significantly faster to check
for compatibility with $R$ at every node in the search multitree $\T$
rather than first finding $\X_n$ and then checking for compatibility.
The extra time spent per word graph (or node in the search multitree)
checking compatibility with $R$ is negligible in comparison to the
saving achieved in this example.

The refiner $\texttt{IsCompatible}_R$ can be improved. Consider the situation
where $(u, v)\in R$ with $v = v_1b$ for some word $v_1\in A^\ast$ and letter
$b\in A$. If there is a node $\alpha\in V$ such that $\alpha \cdot u \neq \bot$
and $\alpha\cdot v_1 \neq \bot$, but $\alpha\cdot v= \bot$, then $(\alpha\cdot
v_1, b)$ is a missing edge in $\Gamma$. There is only one choice $\alpha\cdot u$
for the target of this missing edge which will not break compatibility with $R$.
This situation is shown diagrammatically in \cref{fig-one-away-from-compatible}.

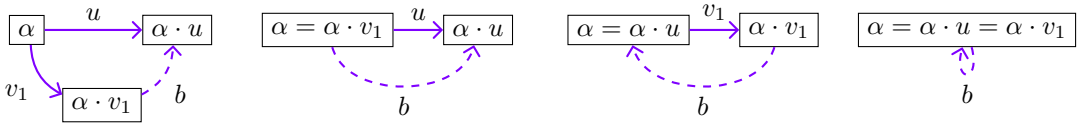
\begin{figure}[H]
  \centering
  \begin{tikzpicture}
    \begin{scope}
      \node[draw] (alpha) at (0,0) {$\alpha$};
      \node[draw] (beta) at (2,0)  {$\alpha \cdot u$};
      \node[draw] (gamma) at (1,-1) {$\alpha \cdot v_1$};
      \draw[style=a](alpha) to node [midway,above]{$u$} (beta);
      \draw[style=a, bend right](alpha) to node [midway,below
      left]{$v_1$} (gamma);
      \draw[dashed, style=a, bend right](gamma) to node [midway,below
      right]{$b$} (beta);
    \end{scope}
    \begin{scope}[xshift=4cm]
      \node[draw] (alpha) at (0,0) {$\alpha=\alpha\cdot v_1$};
      \node[draw] (beta) at (2,0) {$\alpha\cdot u$};
      \draw[style=a] (alpha) to node [midway,above]{$u$} (beta);
      \draw[dashed, style=a, bend right=75] (alpha) to node
      [midway,below]{$b$} (beta);
    \end{scope}
    \begin{scope}[xshift=8cm]
      \node[draw] (alpha) at (0,0) {$\alpha=\alpha\cdot u$};
      \node[draw] (gamma) at (2,0) {$\alpha \cdot v_1$};
      \draw[style=a] (alpha) to node [midway,above]{$v_1$} (gamma);
      \draw[dashed, style=a, bend left=75] (gamma) to node
      [midway,below]{$b$} (alpha);
    \end{scope}
    \begin{scope}[xshift=12.5cm]
      \node[draw] (alpha) at (0,0) {$\alpha=\alpha\cdot u=\alpha\cdot v_1$};
      \path [dashed, style=a, loop below](alpha) to node
      [midway,below]{$b$} (alpha);
    \end{scope}
  \end{tikzpicture}
  \caption{Illustration of a node $\alpha$ being one letter away from being
    compatible with the relation $(u, v_1b)$, including degenerate cases when
  one or both of $u, v_1$ are empty words.}
  \label{fig-one-away-from-compatible}
\end{figure}

So, if $\Gamma'\in \Reach(\Gamma)\cap \X_{n}\cap \X_R$, then $(\gamma, b,
\beta)$ must be an edge in $\Gamma'$. Of course, it is not guaranteed that
$(\gamma, b, \beta)$ is such that $(\gamma, b)$ is the least missing edge of
any descendent of $\Gamma$ in the search multitree $\T$. However by
\cref{lemma-label-preservation}\ref{lemma-standard-adding-internal-edge-preserved},
if $\Gamma = (V, E)$, then $\beta \in V$ and so $\Gamma^\prime = (V, E \cup
\{(\gamma, b, \beta)\})$ is standard.

We now improve the refining function $\texttt{IsCompatible}_R$ by adding the
ability to define edges along paths that are one letter away from fully
labelling a relation word in the manner described above. We denote this new
refining function for $\X_R$ by $\texttt{MakeCompatible}_R$ and define it in
\cref{alg-make-compatible}.

\begin{algorithm}
  \caption{- $\texttt{MakeCompatible}_R$}
  \label{alg-make-compatible}
  \textbf{Input:} A standard word graph $\Gamma = (V, E)\in \V$ or
  $\Gamma= \bot$.
  \\
  \textbf{Output:} A standard word graph $\Gamma'\in \Reach(\Gamma)$ or $\bot$.
  \begin{algorithmic}[1]
    \If{$\Gamma = \bot$}
    \State \Return $\bot$
    \EndIf
    \State $E' := E$
    \For{$\alpha \in V$ and $(u, v) \in R$}
    \If{$u=u_1a$, $\alpha\cdot u_1 = \beta \in V$, $\beta \cdot a = \bot$,
    and $\alpha \cdot v = \gamma\in V$}
    \State  $E'\gets E' \cup \{(\beta, a, \gamma)\}$
    \ElsIf{$v=v_1b$, $\alpha\cdot v_1 = \beta \in V$, $\beta \cdot b = \bot$,
    and $\alpha \cdot u = \gamma\in V$}
    \State  $E'\gets E' \cup \{(\beta, b, \gamma)\}$
    \ElsIf{$\alpha\cdot u \neq \alpha\cdot v$}
    \State \Return $\bot$
    \EndIf
    \EndFor
    \State \Return $\Gamma' = (V, E')$
  \end{algorithmic}
\end{algorithm}

\begin{lemma}
  $\texttt{MakeCompatible}_R$ is a refining function for $\X_R$.
\end{lemma}
\begin{proof}
  We verify the conditions in \cref{de-refining-func}.

  Clearly, $\texttt{MakeCompatible}_R(\bot) =\bot$ and so
  \cref{de-refining-func}(i) holds. Let $\Gamma = (V, E)\in \V$.
  If $\Gamma \neq \bot$, then the execution of the algorithm
  constructs a sequence of word graphs $\Gamma = \Gamma_0, \Gamma_1,
  \ldots, \Gamma_n =
  \Gamma^\prime$ such that $\Gamma_{i}$ is obtained from $\Gamma_{i-1}$ by
  adding an edge in line 7 or line 9 of \cref{alg-make-compatible}, where
  $\Gamma^\prime$ is the final word graph constructed before returning in either
  line 11 or 14. In the final iteration of the for loop in line 5, $\Gamma'$ is
  the output word graph, and so the conditions in lines 6 and 8 do not hold
  because no more edges are added to $\Gamma^\prime$. The condition
  in line 10 will only hold if
  $\Gamma^\prime$ is not compatible with $R$. Therefore
  $\texttt{MakeCompatible}_R(\Gamma)= \bot$ if and only if
  $\texttt{IsCompatible}_R(\Gamma^\prime)=\bot$ and
  $\texttt{MakeCompatible}_R(\Gamma)= \Gamma^\prime$ otherwise. If
  $\Reach(\Gamma)\cap \X_R = \Reach(\Gamma^\prime)\cap \X_R$, then
  \cref{de-refining-func}(ii) and (iii) both hold since
  $\texttt{IsCompatible}_R$ is a refining function for $\X_R$.

  It suffices to establish that $\Reach(\Gamma)\cap \X_R =
  \Reach(\Gamma^\prime)\cap \X_R$ when $n=1$. The claim can then be established
  for $n>1$ by straightforward induction.
  We may
  also assume without loss of generality $\Gamma'$ is obtained from $\Gamma$ in
  line 9 of \cref{alg-make-compatible}. The other case when an edge is added
  in line 7 of \cref{alg-make-compatible} is dual.

  If we add an edge to $\Gamma$ in line 9, then the condition in line 8 of
  \cref{alg-make-compatible} holds. Therefore, there exist $(u, v)\in R$,
  $v_1\in A^\ast$, $b\in A$, and $\alpha, \beta, \gamma \in V$ such that the
  following hold: $\alpha\cdot v = \bot$; $\alpha \cdot v_1 = \gamma$ for some
  $\gamma\in V$; $\alpha \cdot u = \beta$ for some $\beta \in V$; and
  $\Gamma^\prime = (V, E \cup \{(\gamma, b, \beta)\})$.

  We must show that $\Reach(\Gamma)\cap \X_R = \Reach(\Gamma^\prime)\cap \X_R$.
  Since $\Gamma^\prime$ contains $\Gamma$, it is clear that $\Reach(\Gamma)\cap
  \X_R \supseteq \Reach(\Gamma^\prime)\cap \X_R$.

  Let $\Gamma'' \in \Reach(\Gamma)\cap \X_R$.   It suffices to show that
  $\Gamma'' \in \Reach(\Gamma')$. Since $\Gamma''$ is complete, there exists
  $\delta \in V''$ such that $(\gamma, b, \delta)\in E^{\prime\prime}$. Since
  $u$ labels an $(\alpha, \beta)$-path in $\Gamma$, $u$ also labels such a path
  in $\Gamma''$. Likewise, $v_1$ labels an $(\alpha, \gamma)$-path in
  $\Gamma''$. Hence $v = v_1b$ labels an $(\alpha, \delta)$-path in $\Gamma''$.
  Since $(u, v)\in R$ and $\Gamma''$ is compatible with $R$, it follows that
  $\beta = \delta$ and so $(\gamma, b, \beta) \in E^{\prime\prime}$.

  By the definition of $\Reach(\Gamma)$ there exists a sequence of word graphs
  \[
    \Gamma = \Gamma_0, \Gamma_1, \ldots, \Gamma_m = \Gamma''
  \]
  and of edges $e_1=(\alpha_1, a_1, \beta_1), e_2= (\alpha_2, a_2, \beta_2),
  \ldots, e_m = (\alpha_m, a_m, \beta_m)$ such that $(\alpha_i, a_i)$ is the
  least missing edge of $\Gamma_{i-1}$ and $\Gamma_i = (V\cup \{\beta_1,
  \ldots, \beta_i\}, E\cup\{e_1, \ldots, e_i\})$ for every $i\in\{1,\ldots,
  m\}$.

  If $j\in \{1, \ldots, m\}$ is such that $e_j = (\alpha_j, a_j, \beta_j) =
  (\gamma, b, \beta)$, then we consider the sequence of word graphs
  \begin{align*}
    \Gamma_0' & = (V\cup \{\beta_j\}, E\cup \{e_j\}) = \Gamma', \\
    \Gamma_1' & = (V\cup \{\beta_j, \beta_1\}, E\cup \{e_j, e_1\}), \\
    & \vdots \\
    \Gamma_{j - 1}' & = (V\cup \{\beta_j, \beta_1, \ldots,\beta_{j-1}\}, E\cup
    \{e_j, e_1, \ldots, e_{j -1}\}) = \Gamma_j,
  \end{align*}
  Suppose the least missing edge in $\Gamma_0' = \Gamma'$ is $(\alpha'_1,
  a_1')$.
  Every missing edge of $\Gamma_0'$ is also a missing edge of
  $\Gamma$, and so $(\alpha'_1, a_1')\geq  (\alpha_1, a_1)$, since
  $(\alpha_1, a_1)$ is the least missing edge in $\Gamma$.
  Since $\Gamma$ and $\Gamma_0'$ differ by the single edge $e_j$, it follows
  that either $j = 1$ and $(\alpha'_1, a_1') > (\alpha_1, a_1) = e_j$;
  or $j > 1$ and $(\alpha'_1, a_1') = (\alpha_1, a_1)$.
  Similarly, if $i < j$, then by the same argument, the least missing edge in
  $\Gamma_i'$ is $(\alpha_i, a_i)$ for every $i$.
  Therefore
  \[
    \Gamma' =\Gamma_0', \ldots, \Gamma_{j - 1}' = \Gamma_j, \Gamma_{j + 1},
    \ldots, \Gamma_m = \Gamma''
  \]
  is a path in $\T$ and so $\Gamma''\in \Reach(\Gamma')$, as required.
\end{proof}

It is possible that $\texttt{MakeCompatible}_R^2(\Gamma) \neq
\texttt{MakeCompatible}_R(\Gamma)$.
To ensure that we add as many edges to $\Gamma$ as possible we could keep track
of whether $\texttt{MakeCompatible}_{R}$ adds any edges to its input, and
run \cref{alg-make-compatible} again until no more edges are added. We denote
this algorithm by $\texttt{MakeCompatibleRepeatedly}_R$; see
\cref{alg-make-compatible-repeatedly} for pseudocode.

\begin{algorithm}
  \caption{- $\texttt{MakeCompatibleRepeatedly}_R$}
  \label{alg-make-compatible-repeatedly}
  \textbf{Input:} A word graph $\Gamma = (V, E)\in \V$ or $\Gamma = \bot$.
  \\
  \textbf{Output:} A word graph or $\bot$.
  \begin{algorithmic}[1]
    \If{$\Gamma = \bot$}
    \State \Return $\bot$
    \EndIf
    \State $\texttt{EdgesAdded} := \textrm{True}$
    \While{\texttt{EdgesAdded}}
    \State $\texttt{EdgesAdded}\gets \textrm{False}$
    \State $\Gamma' \gets \texttt{MakeCompatible}_R(\Gamma)$
    \If{$\Gamma\neq \bot$ and $\Gamma'\neq \Gamma$}
    \State $\texttt{EdgesAdded}\gets \textrm{True}$
    \EndIf
    \State $\Gamma \gets \Gamma^\prime$
    \EndWhile
    \State \Return $\Gamma$
  \end{algorithmic}
\end{algorithm}

That $\texttt{MakeCompatibleRepeatedly}_R$ is a refining function for
$\mathbb{X}_R$
follows by repeatedly applying
\cref{prop-refiner-properties}(ii).
Therefore both $\texttt{MakeCompatible}_R\circ \texttt{AtMost}_n$
and $\texttt{MakeCompatibleRepeatedly}_R\circ \texttt{AtMost}_n$ are refining
functions for $\X_{R}\cap \X_n$.

While $\texttt{MakeCompatibleRepeatedly}_R$ is more computationally expensive
than $\texttt{MakeCompatible}_R$, every edge added to $\Gamma$ by
$\texttt{MakeCompatibleRepeatedly}_R$ reduces the number of nodes in $\T$ that
must be traversed in $\texttt{BacktrackingSearch}_{\X_R}(f, \Xi)$
where $f$ is $\texttt{MakeCompatibleRepeatedly}_R$ by a factor of at least
$|V|+1$. This tradeoff seems quite useful in practice as can be seen in
\cref{appendix-benchmarks}.

Although in line 5 of $\texttt{MakeCompatible}_R$ we loop over all nodes
$\alpha$ and all relations in $R$, in practice this is not necessary. Clearly,
if the word graph $(V, E)$ is compatible with $R$, then we only need to follow
those paths labelled by relations that include any new edges. A
technique for doing just this is given in~\cite[Section 7.2]{Coleman2022aa},
and it is this that is implemented in \libsemigroups.
A comparison of the refining functions for $\X_n\cap\X_R$ presented
in this section is given in \Cref{table-plactic-nodes-visited}

\begin{table}[H]
  \centering
  \begin{tabular}[t]{r|rrrr}
    $n$ & 1 & 2 & 3 & 4 \\\hline
    $\texttt{AtMost}_n$
    & 6 & 165 & 15,989 & 3,556,169\\
    $\texttt{IsCompatible}_R\circ\texttt{AtMost}_n$
    & 6 & 120 & 1,680 & 29,800\\
    $\texttt{MakeCompatible}_R\circ\texttt{AtMost}_n$
    & 6 & 75 & 723 & 6,403\\
    $\texttt{MakeCompatibleRepeatedly}_R\circ\texttt{AtMost}_n$
    & 6 & 75 & 695 & 6,145
  \end{tabular}
  \caption{Number of word graphs visited by
    $\texttt{BacktrackingSearch}_{\X_n\cap \X_R}(f, \Xi)$ for
    the $3$-generated plactic monoid where $f$ ranges over the refining
  functions of $\X_{n}\cap\X_R$ presented in \Cref{subsec-refining-functions}.}
  \label{table-plactic-nodes-visited}
\end{table}

\section{Applications of Algorithm 1}%
\label{section-applications-of-low-index-algo}

In this section we describe a number of applications of the low-index right
congruences algorithm. Recall that $M$ is a monoid defined by the presentation
$\langle A \mid R\rangle$. Each application essentially consists of defining a
refining function $f$ so that $\texttt{BacktrackingSearch}_{\X_R\cap \X_n}(f,
\Xi)$ returns a particular subset of right congruences. In brief these subsets
are:
\begin{enumerate}

  \item
    left congruences;

  \item
    2-sided congruences;

  \item
    right congruences including, or excluding, a given subset of $A ^ * \times
    A ^ *$;

  \item
    2-sided congruences $\rho$ such that the quotient $M /\rho$ is a finite
    group;

  \item
    non-trivial right Rees congruences when $M$ has decidable word problem; and

  \item
    right congruences $\rho$ where the right action of $M$ on the nodes of the
    word graph of $\rho$ is faithful.
\end{enumerate}
with index up to $n\in \N$. A further application of (c) provides a practical
algorithm for solving the word problem in finitely presented residually finite
monoids; this is described in \cref{subsec-mckinsey}. Each application
enumerated above is described in a separate subsection below.

A further application of the implementations of the algorithms described in
this section is to reproduce, and extend, some of the results
from~\cite{Bailey2016aa}.

\subsection{Left congruences}\label{subsection-left-congs}
In Sections~\ref{section-word-graphs} and~\ref{section-low-index}, we only
considered right congruences, and, in some sense, word graphs are inherently
``right handed''.  It is possible to state an analogue of
\cref{thm-graph-to-right-congruence} for left congruences, for which we require
the following notation. If $w = b_0\cdots b_{m - 1}\in A^*$, then we write
$\tilde{w}$ to denote the reverse $b_{m - 1} \cdots b_0$ of $w$. If $R
\subseteq A^* \times A^*$ is arbitrary, then we denote by $\tilde{R}$ the set
of relations containing $(\tilde{u}, \tilde{v})$ for all $(u, v) \in R$.

An analogue of \cref{prop-word-graph-iff-action} holds for left actions. More
specifically, if $\Psi: M \times V\to V$ is a left action of a monoid $M$ on a
set $V$, and $\theta: A ^ * \to M$ is as above, then the corresponding word
graph is $\Gamma = (V, E)$ where $(\alpha, a, \beta) \in E$ whenever
$((a)\theta, \alpha)\Psi = \beta$. Conversely, if $\Gamma = (V, E)$ is a word
graph, then we define a left action $\Psi: M \times V \to V$ by
\[
  ((w)\theta, \alpha)\Psi = \beta
\]
whenever $\tilde{w}$ labels an $(\alpha, \beta)$-path in $\Gamma$.

\begin{thm}\label{thm-graph-to-left-congruence}
  Let $M$ be a monoid defined by a monoid presentation $\langle A
  \mid R\rangle$.
  Then there is a one-to-one correspondence between the left congruences of $M$
  and the standard complete word graphs over $A$ compatible with $\tilde{R}$.

  If $\rho$ is a left congruence on $M$ and $\Gamma$ is the corresponding word
  graph, then the left actions of $M$ on $M/\rho$ (by left multiplication) and
  on $\Gamma$ are isomorphic; and $\rho =(\{(\tilde{u}, \tilde{v}) :
  (u,v)\in (0)\pi_{\Gamma}\})\theta$ where $\theta : A ^ * \to M$
  is the unique homomorphism with $\ker(\theta) = R ^ {\#}$ and
  $(0)\pi_{\Gamma}$ is the path relation on $\Gamma$.
\end{thm}

It follows from \cref{thm-graph-to-left-congruence} that the left congruences
of a monoid can be enumerated using the same method for enumerating right
congruences applied to $\tilde{R}$. Some care is required here, in particular,
since the corresponding word graphs are associated to right congruences on the
dual of the original monoid (defined by the presentation $\langle
A\mid\tilde{R}\rangle$), rather than to left congruences on $M$.

\subsection{2-sided congruences}\label{subsec-2-sided-congs}


The word graph corresponding to a 2-sided congruence $\rho$ of $M$
is just the right Cayley graph of $M / \rho$ with respect to $A$.
Therefore characterizing $2$-sided congruences is equivalent to
characterizing Cayley graphs.
The corresponding question for groups --- given a word graph $\Gamma=(V, E)$,
determine whether $\Gamma$ is the Cayley graph of a group --- was
investigated in~\cite[Theorem~8.14]{Kapovich2002aa}. An important necessary
condition, in our notation, is that $(0)\pi_\Gamma = (\alpha)\pi_\Gamma$ for
all $\alpha\in V$. In some sense, this condition states that the automorphism
group of $\Gamma$ is transitive. We will next show that the corresponding
condition for monoids is that $(0)\pi_\Gamma \subseteq (\alpha)\pi_\Gamma$ for
all $\alpha\in V$. This condition for monoids is, in the same sense as for
groups, equivalent to the statement that for every node of the Cayley word
graph there is an endomorphism mapping the identity to that node.

\begin{lemma}\label{lem-2-sided-pi-inclusion}
  Let $M$ be the monoid defined by the monoid presentation $\langle A \mid
  R\rangle$, and let $\rho$ be a right congruence on $M$ with word
  graph $\Gamma = (V, E)$. Then $\rho$ is a 2-sided congruence if and only if
  $(0)\pi_\Gamma \subseteq (\alpha)\pi_\Gamma$ for all $\alpha \in V$.
\end{lemma}
\begin{proof}
  For every $\alpha\in V$, we denote by $w_\alpha$ the short-lex minimum word
  labelling any $(0,\alpha)$-path in $\Gamma$. We also denote by $\theta: A^\ast
  \to M$ the unique surjective homomorphism with $\ker(\theta) = R^\#$.
  Recall that, since $\Gamma$ is compatible with $R$, $\ker(\theta)\subseteq
  (0)\pi_{\Gamma}$ and so, by \cref{lem-rho-theta-rho-ker}, $(u, v)\in
  (0)\pi_{\Gamma}$ if and only if $((u)\theta, (v)\theta) \in
  ((0)\pi_{\Gamma})\theta$. Also, by \cref{thm-graph-to-right-congruence},
  $((0)\pi_{\Gamma})\theta = \rho$.

  For the forward implication, suppose that $\rho$ is a 2-sided congruence
  and that $(u, v)\in (0)\pi_\Gamma$. Then $((u)\theta, (v)\theta)\in
  ((0)\pi_{\Gamma})\theta = \rho$. Since $\rho$ is a 2-sided congruence,
  \[
    ((w_\alpha)\theta\cdot (u)\theta, (w_\alpha)\theta\cdot
    (v)\theta) = ((w_\alpha
    u)\theta, (w_\alpha v)\theta) \in \rho
  \]
  for all $\alpha \in V$. In particular, again by
  \cref{lem-rho-theta-rho-ker}, $(w_\alpha u, w_\alpha
  v)\in (0)\pi_\Gamma$. Thus $0\cdot w_\alpha u = 0\cdot w_\alpha
  \neq \bot$. Since $0\cdot w_\alpha = \alpha$, it follows that
  $\alpha \cdot u = \alpha \cdot v$ and so $(u,
  v)\in (\alpha)\pi_\Gamma$ as required.

  For the converse implication, assume that $(0)\pi_\Gamma \subseteq
  (\alpha)\pi_\Gamma$ for all $\alpha\in V$. This implies that $\Gamma$
  is compatible with $(0)\pi_\Gamma$ and so $(0)\pi_\Gamma^\# \subseteq
  (\alpha)\pi_\Gamma$ for all $\alpha\in V$. Therefore
  $(0)\pi_\Gamma = (0)\pi_\Gamma^\#$ is a 2-sided congruence. Since $\Gamma$
  is also compatible with $R$, $\ker(\theta) = R^\# \subseteq (0)\pi_\Gamma =
  (0)\pi_\Gamma^\#$. So applying
  \cref{lem-containing}\ref{lem-rho-preserves-least-congruence}
  yields:
  \[\rho = (0)\pi_\Gamma\theta = \left((0)\pi_\Gamma^\#\right)\theta =
  \left(((0)\pi_\Gamma)\theta\right)^\# = \rho^\#,\]
  and so $\rho$ is a 2-sided congruence.
\end{proof}

We can further refine \cref{lem-2-sided-pi-inclusion} by using the generating
pairs of \cref{lem-generating-pairs} to yield a computationally testable
condition as follows.

\begin{thm}\label{thm-2-sided-condition}
  Let $M$ be the monoid defined by the monoid presentation $\langle A \mid
  R\rangle$, and let $\rho$ be a right congruence on $M$ with word
  graph $\Gamma = (V, E)$. Then $\rho$ is a 2-sided congruence if and only if
  $\Gamma$ is compatible with $\left\{(w_\alpha a, w_\beta) : (\alpha, a,
  \beta) \in E \right\}$ where $w_{\alpha} \in A ^ *$ is the short-lex minimum
  word labelling any $(0,\alpha)$-path in $\Gamma$.
\end{thm}
\begin{proof}
  Let $\theta : A^\ast \to M$ be the unique surjective homomorphism with
  $\ker(\theta) = R^\#$ and let $X_{\Gamma}$ denote the set
  \[
    \left\{(w_\alpha a, w_\beta) : (\alpha, a, \beta) \in E \right\}.
  \]

  ($\Rightarrow$)
  If $\rho$ is a 2-sided congruence, then by \cref{lem-2-sided-pi-inclusion},
  $(0)\pi_\Gamma\subseteq (\alpha)\pi_\Gamma$ for all $\alpha\in V$.
  The relation
  $X_\Gamma$ is contained in $(0)\pi_\Gamma$ by definition. Therefore
  $X_\Gamma\subseteq
  (0)\pi_{\Gamma} \subseteq (\alpha)\pi_\Gamma$ for all $\alpha\in V$. Hence
  $\Gamma$ is compatible with $X_\Gamma$ as required.

  ($\Leftarrow$)
  Assume that $\Gamma$ is compatible with $X_\Gamma$. Then $X_\Gamma^\#
  \subseteq (0)\pi_\Gamma$ and so $(X_\Gamma^\#)\theta \subseteq
  ((0)\pi_{\Gamma})\theta = \rho$.
  Since $\ker(\theta) = R^\# \subseteq (0)\pi_\Gamma$ and $(0)\pi_{\Gamma}$ is
  the least right congruence containing $X_{\Gamma}$, it follows
  that $\ker(\theta) \subseteq X_\Gamma^\#$. Hence
  $(X_{\Gamma}^{\#})\theta = ((X_{\Gamma})\theta)^{\#}$ by
  \cref{lem-containing}\ref{lem-rho-preserves-least-congruence}.
  Therefore $((X_{\Gamma})\theta)^{\#} = (X_\Gamma^\#)\theta \subseteq \rho$.
  But $\rho$ is generated as a right congruence by $(X_{\Gamma})\theta$ by
  \cref{lem-generating-pairs} and so $((X_{\Gamma})\theta)^{\#} \subseteq \rho
  \subseteq ((X_{\Gamma})\theta)^{\#}$ giving equality throughout. In
  particular, $\rho$ is a 2-sided congruence, as required.
\end{proof}

In \cref{thm-2-sided-condition} we showed there is a bijection between the
2-sided congruences of the monoid $M$ defined by $\langle A \mid R\rangle$ and
the complete standard word graphs $\Gamma$ compatible with both $R$ and the set
$X_{\Gamma} = \{(w_\alpha a, \beta) : (\alpha, a, \beta)\in E\}$. We denote by
$\Y_{R}$ the set of complete standard word graphs corresponding to 2-sided
congruences of $M$. Recall from \cref{cor-generating-pairs} that we can compute
$X_\Gamma$ from $\Gamma = (V, E)$ in $O(|V|^2|A|)$ time. We can also verify
that a given word graph is compatible with $X_{\Gamma}$ in $O(m\cdot |V|)$
where $m$ is the sum of the lengths of the words occurring in $X_{\Gamma}$,
using, for example,  $\texttt{IsCompatible}_{X_\Gamma}$.


We define the function
$\texttt{TwoSidedMakeCompatibleRepeatedly}_R : \V\cup\{\bot\} \to
\V\cup\{\bot\}$ by
\[
  \texttt{TwoSidedMakeCompatibleRepeatedly} (\Gamma) =
  \begin{cases}
    \bot & \Gamma = \bot \\
    \texttt{MakeCompatibleRepeatedly}_{X_\Gamma}(\Gamma)
    & \text{otherwise}
  \end{cases}
\]
for all $\Gamma \in \V\cup \{\bot\}$.

\begin{lemma}
  $\texttt{TwoSidedMakeCompatibleRepeatedly}$ is a refining function for $\Y_R$.
\end{lemma}
\begin{proof} 
  On superficial inspection, it might seem that
  $\texttt{TwoSidedMakeCompatibleRepeatedly}$ is a refining function because
  $\texttt{MakeCompatibleRepeatedly}_{X_\Gamma}$ is a refining function.
  However, this does not follow immediately because the set $X_\Gamma$ is
  dependent on the input word graph $\Gamma$.

  Clearly, $\texttt{TwoSidedMakeCompatibleRepeatedly}(\bot) = \bot$ so
  \cref{de-refining-func}(i) holds.

  If $\Gamma \subseteq \Gamma^\prime$ are deterministic
  word graphs, then $X_\Gamma\subseteq X_{\Gamma^\prime}$ and so
  $\X_{X_\Gamma} \supseteq \X_{X_{\Gamma^\prime}}$. If additionally
  $\Gamma^\prime\in \Y_R$, then $\Gamma^\prime \in \X_{X_{\Gamma^\prime}}
  \subseteq \X_{X_\Gamma}$ by \cref{thm-2-sided-condition}. Hence,
  $\Reach(\Gamma)\cap \Y_R \subseteq \Reach(\Gamma)\cap \X_{X_\Gamma}$. If
  $\texttt{MakeCompatibleRepeatedly}_{X_\Gamma}(\Gamma) = \bot$, then
  $\Reach(\Gamma)\cap\X_{X_\Gamma} = \varnothing$ by
  \cref{de-refining-func}(ii) applied to
  $\texttt{MakeCompatibleRepeatedly}_{X_\Gamma}$. Hence $\Reach(\Gamma)\cap
  \Y_R = \varnothing$ also and so \cref{de-refining-func}(ii) holds.

  If $\texttt{MakeCompatibleRepeatedly}_{X_\Gamma}(\Gamma) = \Gamma^\prime\neq
  \bot$, then $\Reach(\Gamma)\cap\X_{X_\Gamma} = \Reach(\Gamma^\prime)\cap
  \X_{X_\Gamma}$ by \cref{de-refining-func}(iii) applied to
  $\texttt{MakeCompatibleRepeatedly}_{X_\Gamma}$. Since $\Reach(\Gamma^\prime)
  \subseteq \Reach(\Gamma)$ and $\Reach(\Gamma)\cap\Y_R \subseteq
  \Reach(\Gamma)\cap \X_{X_{\Gamma}}$, it follows that $\Reach(\Gamma)\cap \Y_R
  = \Reach(\Gamma^\prime)\cap\Y_R$, and so \cref{de-refining-func}(iii) holds.
\end{proof}

It follows that $\texttt{BacktrackingSearch}_{\Y_R\cap \X_n}(f, \Xi)$
where $f$ is
the refining function $\texttt{TwoSidedMakeCompatibleRepeatedly}\circ
\texttt{MakeCompatibleRepeatedly}_R\circ \texttt{AtMost}_n$ returns $\Y_R\cap
\X_n$ the set of word graphs of the 2-sided congruences of the monoid defined
by $\langle A\mid R\rangle$ with index at most $n$.
As a practical comparison, the number of word graphs visited by
$\texttt{BacktrackingSearch}_{\X_n\cap \Y_R}( f,
\Xi)$ where $n = 6$ and $f=
\texttt{MakeCompatibleRepeatedly}_R\circ\texttt{AtMost}_n$ for
the $3$-generated plactic monoid is $662,550$. On the other hand, the
number of word graphs visited when using the refining function $f =
\texttt{TwoSidedMakeCompatibleRepeatedly}\circ
\texttt{MakeCompatibleRepeatedly}_R\circ \texttt{AtMost}_n$ is only $37,951$.

As an example, in \cref{table-2-sided} we compute the number of 2-sided
congruences with index at most $n$ of the free monoid $A ^ *$ when
$n$ and $|A|$ are not too large. For example,  we compute the number of
2-sided congruences of $A ^ *$ up to index $22$, $14$, $10$ and $9$, when $|A|
= 2, 3, 4$, and $5$, respectively.

\subsection{Congruences including or excluding a relation}%
\label{subsec-include-exclude-refining-functions}

Given two elements $x$ and $y$ of the monoid $M$ defined by the finite
presentation $\langle A\mid R\rangle$, we might be interested in finding finite
index right congruences containing $(x, y)$ or not containing $(x, y)$. Suppose
that $x, y\in M$ and $u,v\in A^\ast$ are such that $(u)\theta = x, (v)\theta =
y$ where $\theta: A ^ * \to M$ is the unique homomorphism with $\ker(\theta) = R
^ {\#}$. By~\cref{thm-graph-to-right-congruence}, if $\rho$ is a right
congruence of $M$, then $(x, y)\in\rho$ if and only if $\alpha \cdot u =
a\alpha\cdot v\neq \bot$ in the word graph $\Gamma$ of $\rho$.

For $u, v\in A ^*$, we denote by $\mathbb{X}_{(u, v)}$ the set of complete
standard word graphs such that $0\cdot u = 0\cdot v$. Similarly, we denote by
$\mathbb{X}_{\overline{(u, v)}}$  the set of complete standard word graphs such
that $0\cdot u \neq 0 \cdot v$.

We also define refining functions $\texttt{Include}_{(u, v)}$ and
$\texttt{Exclude}_{(u, v)}$ by
\begin{align*}
  \texttt{Include}_{(u, v)}(\Gamma) &=
  \begin{cases}
    \bot & \textrm{ if } \Gamma = \bot \text{ or } 0\cdot u\neq \bot,
    0\cdot v\neq \bot \text{ and } 0\cdot u \neq 0 \cdot v \\
    \Gamma & \text{ otherwise}
  \end{cases}\\
  \texttt{Exclude}_{(u, v)}(\Gamma) &=
  \begin{cases}
    \bot & \text{ if } \Gamma = \bot \text{ or } 0\cdot u\neq \bot,
    0\cdot v\neq \bot \text{ and } 0\cdot u = 0 \cdot v \\
    \Gamma & \text{ otherwise}
  \end{cases}
\end{align*}
It is routine to verify that $\texttt{Include}_{(u, v)}$ and
$\texttt{Exclude}_{(u, v)}$ are refining functions for $\mathbb{X}_{(u,
v)}$ and $\mathbb{X}_{\overline{(u, v)}}$, respectively.

Composing these refining functions with $\texttt{AtMost}_n$ and any of the
refining functions for one or 2-sided congruences from
\cref{section-low-index} and \cref{subsec-2-sided-congs} allows us to find
one or 2-sided congruences of $\langle A\mid R\rangle$ with index at most $n$
that include or exclude a given relation.

\subsection{McKinsey's algorithm}\label{subsec-mckinsey}

A monoid $M$ is \defn{residually finite} if for all $s, t\in M$ with $s \neq t$
there exists a finite monoid $M^\prime$ and homomorphism $\phi : M\to M^\prime$
such that $(s)\phi \neq (t)\phi$. In ~\cite{McKinsey1943aa}, McKinsey gave an
algorithm for deciding the word problem in finitely presented residually finite
monoids. McKinsey's Algorithm in~\cite{McKinsey1943aa} is, in fact, more
general, and can be applied to residually finite universal algebras.

McKinsey's algorithm relies on two semidecision procedures --- one for testing
equality in a finitely presented monoid and the other for testing inequality in
a finitely generated residually finite monoid. It is well-known (and easy to
show) that testing equality is semidecidable for every finitely presented
monoid.

Suppose that $M$ is finitely presented by $\langle A\mid R\rangle$ and that $M$
is residually finite. There are only finitely many finite monoids $M'$ of every
size, and only finitely many possible functions from $A$ to $M'$. If $f: A \to
M'$ is any such function, then it is possible to verify that $f$ extends to a
homomorphism $\phi: M \to M'$ by checking that $M'$ satisfies the (finite set
of) relation $R$. Clearly, if $s, t\in M$ and $(s)\phi \neq (t)\phi$ for some
$\phi$, then, since $\phi$ is a function, $s\neq t$.  It follows that it is
theoretically possible to verify that $s\neq t$ by looping over the finite
monoids $M'$, the functions $f:A \to M'$, and for every $f$ that extends to a
homomorphism $\phi: M \to M'$, testing whether $(s)\phi \neq (t)\phi$.
Thus testing inequality in a finitely presented residually finite monoid $M$ is
also semidecidable.

McKinsey's algorithm proceeds by running semidecision algorithms for
testing equality and inequality in parallel; this is guaranteed to terminate,
and so the word problem for finitely presented residually finite monoids $M$ is
decidable in theory. In practice, checking for equality in a finitely presented
monoid $M$ with presentation $\langle A \mid R\rangle$ can be done somewhat
efficiently by performing a backtracking search in the space of all elementary
sequences over $R$. On the other hand, the semidecision procedure given above
for checking inequality is extremely inefficient. For example, the number of
monoids of size at most $10$ up to isomorphism and anti-isomorphism is
$52,993,098,927,712$; see~\cite{Distler2009aa}.

The low-index congruences algorithm provides a more efficient algorithm for
deciding inequality in a finitely presented residually finite monoid by
utilizing the $\texttt{Exclude}_{(u, v)}$ refining function for some $u, v\in A
^*$. The set $\X_{\overline{(u, v)}}\cap \Y_{R, n}$ consists of exactly the
2-sided congruences on $\langle A \mid R\rangle$ with index at most $n$ such
that $(u)\theta\neq (v)\theta$ (where $\theta: A ^* \to M$ is the natural
homomorphism). Hence $(u)\theta \neq (v)\theta$ in $M$ if and only if
$\X_{\overline{(u,v)}}\cap \Y_{R, n}\neq \{\bot\}$ for some $n$.
Therefore $\texttt{BacktrackingSearch}_{\X_{\overline{(u,v)}}\cap \Y_{R, n}}(
\texttt{Exclude}_{(u, v)}, \Xi)$ can be used to implement McKinsey's algorithm
with a higher degree of practicality.

\subsection{Congruences defining groups}%
\label{subsec-group-congruences}

We say that a 2-sided congruence $\rho$ on $M$ is a \defn{group congruence}
if the quotient monoid $M/\rho$ is a group. A 2-sided congruence $\rho$ is a
group congruence if and only if for every $x \in M$ there exists $y\in M$ such
that $(xy,1_M)\in \rho $ and $(yx, 1_M)\in \rho$ where $1_M\in M$ is the
identity element. If $M$ is generated by $A$, then $\rho$ is a group congruence
if and only if for every $x \in A$ there exists $y\in M$ such that $(xy,1_M)\in
\rho $ and $(yx, 1_M)\in \rho$. We say that a word graph $\Gamma$ is
\defn{injective} if for all $\beta\in V$ and $a\in A$ there is at most one edge
in $E$ with target $\beta$ and label $a$. This is the dual of the definition of
determinism. We can decide if a finite word graph $\Gamma$ corresponds to a
group congruence as follows.

\begin{thm}
  Let $M$ be the monoid defined by the monoid presentation $\langle A \mid
  R\rangle$, and let $\rho$ be a finite index 2-sided congruence on $M$ with
  word graph $\Gamma = (V, E)$. Then $\rho$ is a group congruence if
  and only if $\Gamma$ is injective.
\end{thm}
\begin{proof}
  Let $\theta: A^\ast \to M$ be the unique surjective homomorphism with
  $\ker(\theta) = R^\#$.

  $(\Rightarrow)$ Let $(\beta, a, \alpha), (\gamma, a, \alpha)\in E$ for some
  $\alpha, \beta, \gamma\in V$ and $a\in A$. Since $\rho$ defines a group
  there exists $y\in M$ such that $((a)\theta \cdot y, 1_M)\in  \rho$. Since
  $\rho$ is a 2-sided congruence, $((w_\beta)\theta\cdot (a)\theta \cdot
  y, (w_\beta)\theta)\in \rho$ and so $((w_\beta a)\theta\cdot y,
  (w_\beta)\theta) \in \rho$ and similarly $((w_\gamma a)\theta \cdot y,
  (w_\gamma)\theta) \in \rho$. But $w_\beta a$ and $w_{\gamma}a$ both label
  $(0,\alpha)$-paths in $\Gamma$, and so $((w_\beta a)\theta, (w_\gamma
  a)\theta)\in\rho$. Hence by transitivity $((w_\beta)\theta,
  (w_\gamma)\theta)\in  \rho$. Then, by \cref{lem-rho-theta-rho-ker} and
  \cref{thm-graph-to-right-congruence}, $(w_\beta, w_\gamma)\in
  (0)\pi_\Gamma$ and so $\beta = \gamma$. We have shown that $\Gamma$ is
  injective.

  $(\Leftarrow)$ Suppose that $a\in A$. Since the set $\{a^n : n\in\N_0\}$ is
  infinite but $\Gamma$ has only finitely many nodes, it follows from the
  pigeonhole principle that there exists $\alpha\in V$ and $i, j\in \N_0$
  with $i < j$ such that $a^i$ and $a^j$ both label $(0,\alpha)$-paths in
  $\Gamma$. Assume that $i$ is the least such value. If $i>0$, then there
  exist $\beta, \gamma \in V$ such that $a^{i-1}$ and $a^{j-1}$ label $(0,
  \beta)$- and $(0,\gamma)$-paths respectively. It follows that $(\beta, a,
  \alpha), (\gamma, a, \alpha)\in E$ and so by injectivity $\beta=\gamma$. In
  particular, $a ^ {i - 1}$ and $a ^ {j - 1}$ both label $(0, \beta)$-paths,
  and this contradicts the minimality of $i$.

  Therefore $i = 0$ and so $(\varepsilon, a^j)\in (0)\pi_\Gamma$. Hence, by
  \cref{thm-graph-to-right-congruence}, $(1_M, (a)\theta^j)\in \rho$. In
  particular, if $y = (a)\theta^{j-1}$, then $((a)\theta\cdot y, 1_M),
  (y\cdot (a)\theta, 1_M)\in\rho$. Since $\theta$ is surjective, and $a\in A$
  was arbitrary, it follows that $\rho$ is a group congruence.
\end{proof}

It is possible to verify if a given word graph over $A$ is injective, or not.
In particular, in the representation used in \libsemigroups, this can be
verified in time linear in $|A||V|$. Furthermore, if $\Gamma$ is not injective,
then neither is any descendent of $\Gamma$ in the search multitree $\T$. Hence
the following function is a refining function for the set of all word graphs
corresponding to group congruences:
\[
  \texttt{IsInjective}(\Gamma) =
  \begin{cases}
    \Gamma & \text{ if } \Gamma \text{ is an injective word graph}\\
    \bot & \text{ otherwise.}
  \end{cases}
\]
Composing $\texttt{IsInjective}$, $\texttt{AtMost}_n$, and any of the refining
functions for $\mathbb{Y}_R$ of word graphs corresponding to 2-sided
congruences of $\langle A\mid R\rangle$, this gives us a method for computing
all group congruences with index at most $n$ of the monoid presented by
$\langle A \mid R\rangle$.

\subsection{Rees congruences}\label{subsec-rees-congs}

In this section we describe how to use the low-index right congruences
algorithm to compute Rees congruences, i.e. those arising from ideals. A
related algorithm for finding low-index Rees congruences is given
in~\cite{JuraIdeals1} and~\cite{JuraIdeals2}. Like the low-index congruences
algorithm, Jura's Algorithm in~\cite{JuraIdeals1} and~\cite{JuraIdeals2} also
uses some aspects of the Todd--Coxeter Algorithm. The algorithm presented in
this section is distinct from Jura's Algorithm. In general, the problem of
computing the finite index ideals of a finitely presented monoid is
undecidable; see~\cite{JuraIdeals2} and~\cite[Theorem 5.5]{Ruskuc1998aa}.
However, if the word problem happens to be decidable for a finitely presented
monoid, then so too is the problem of computing the finite index ideals of that
monoid.

Given a right ideal $I$ of a monoid $M$, the \defn{right Rees congruence} of
$I$ is $\rho_I = \{(x, y) \in M\times M : x = y \textrm{ or } x, y\in I\}$. The
trivial congruence $\Delta_M$ is a right Rees congruence if and only if $M$ has
a right zero; and the trivial congruence has finite index if and only if $M$ is
finite. As such, we will restrict ourselves, in this section, to considering
only non-trivial finite index right Rees congruence.

Let $\Gamma=(V, E)$ be a standard word graph of a right congruence of $M$ and
let $\theta: A^\ast \to M$ be the unique homomorphism with $\ker(\theta) =
R^\#$. We call a node $\omega\in V$ a \defn{sink} if $(\omega, a, \omega)\in E$
for all $a\in A$. We say that a sink $\omega$ is \defn{non-trivial} if there
exists an edge $(\alpha, a, \omega)\in E$ such that $(w_\alpha a)\theta \neq
(w_\omega)\theta$, where as usual $w_\alpha\in A ^ *$ is the short-lex least
word labelling a $(0, \alpha)$-path in $\Gamma$.

If $\Gamma$ is compatible with the relations $R$ defining $M$, $\rho$ is the
right congruence of any complete standard word graph compatible with $R$ that
contains $\Gamma$, and $\alpha\in V$ is a non-trivial sink, then the
equivalence class of $\rho$ on $M$ corresponding to $\alpha$ contains at least
2 elements: $(w_{\alpha}a)\theta$ and $(w_{\omega})\theta$. In particular,
$\rho$ is non-trivial, which explains why we called $\alpha$ a non-trivial
sink.

We give a criterion for deciding if a complete standard word graph corresponds
to a non-trivial right Rees congruence in the next theorem.

\begin{thm}%
  \label{thm-rees-congruence-condition}
  Let $M$ be the monoid defined by the monoid presentation $\langle A \mid
  R\rangle$, let $\rho$ be a right congruence on $M$ with word graph $\Gamma
  = (V, E)$, and let $\theta: A^\ast \to M$ be the unique homomorphism with
  $\ker(\theta) = R^\#$. Then $\rho$ is a non-trivial right Rees congruence
  if and only if the following conditions hold:
  \begin{enumerate}[\rm (i)]
    \item there exists a unique non-trivial sink $\omega \in V$;
    \item if $(\alpha, a, \beta)\in E$ and $\beta\neq \omega$, then
      $(w_\alpha a)\theta = (w_\beta)\theta$.
  \end{enumerate}
\end{thm}
\begin{proof}
  ($\Rightarrow$)
  Let $I$ be a right ideal of $M$ such that $1 < |I|$ and $I\neq M$, and let
  $\rho = \rho_I$ be the corresponding non-trivial right Rees congruence with
  complete standard word graph $\Gamma = (V, E)$. If $u\in A^\ast$ is such
  that $(u)\theta\in I$, then since $\Gamma$ is complete, $0\cdot u = \omega$
  for some $\omega \in V$.

  If $(v)\theta\in I$ for some $v\in A^\ast$, then $((u)\theta, (v)\theta)\in
  \rho$ since $\rho$ is a right Rees congruence. By
  \cref{lem-rho-theta-rho-ker}, it follows that $(u, v)\in (0)\pi_\Gamma$,
  and so $0\cdot v = \omega$ also. Conversely,
  if $(u, v)\in (0)\pi_\Gamma$, then $((u)\theta, (v)\theta)\in \rho$ and
  hence $(v)\theta\in I$. Hence $w\in A^\ast$ labels a $(0,\omega)$-path in
  $\Gamma$ if and only if $(w)\theta \in I$.

  In particular $(w_\omega)\theta \in I$ and if $a\in A$ is arbitrary, then
  $(w_\omega)\theta\cdot (a)\theta = (w_\omega a)\theta \in I$, and so
  $w_\omega a$ also labels a $(0,\omega)$-path in $\Gamma$. It follows that
  $(\omega, a, \omega)\in  E$ for all $a\in A$ and $\omega$ is a sink.
  To show that (i) holds, it remains to show that $\omega$ is non-trivial and
  unique.

  To show that $\omega$ is non-trivial, consider the set
  \[
    W = \set{w\in A ^ *}{(w)\theta\in I\text{ and }(w)\theta\neq
    (w_{\omega})\theta}.
  \]
  Since $|I| > 1$ and $\theta$ is surjective, this set is non-empty. We set $v$
  to be the short-lex least word in $W$. Since $0\cdot v = 0\cdot w_{\omega} =
  \omega$, and $w_{\omega}$ is the short-lex least such word, and $v \neq
  w_{\omega}$, it follows that $v > w_{\omega} \geq \varepsilon$. Hence $v = v_1
  a$ for some $v_1\in A^\ast$ and some $a\in A$. If $v_1$ labels a $(0,
  \alpha)$-path in $\Gamma$ for some $\alpha\in V$, then $(\alpha, a, \omega)\in
  E$ and so it suffices to show that  $(w_\alpha a)\theta \neq
  (w_{\omega})\theta$. If $(v_1)\theta=(w_\alpha)\theta$, then $(w_\alpha
  a)\theta = (v_1a)\theta = (v) \theta$. Since $v\in W$, it would follow that
  $(w_{\alpha}a)\theta = (v)\theta \not= (w_{\omega})\theta$ and $\omega$ is
  non-trivial. Hence it suffices to show that $(v_1)\theta=(w_\alpha)\theta$.

  If $\alpha \neq \omega$, then $0\cdot w_{\alpha}\neq \omega$, and
  so $(w_\alpha)\theta \not \in I$. But $(w_\alpha, v_1) \in
  (0)\pi_{\Gamma}$ implies that $((w_\alpha)\theta, (v_1)\theta)\in\rho$ and so
  $(w_\alpha)\theta = (v_1)\theta$ since $\rho$ is a Rees congruence, as
  required. If $\alpha = \omega$, then $0\cdot v_1 = \omega$ and so
  $(v_1)\theta \in I$. Since $v_1 < v$ and $v$ is the least element of $W$, it
  follows that $v_1\not\in W$ and so $(v_1)\theta = (w_{\omega})\theta =
  (w_{\alpha})\theta$, as required. We have shown that $\omega$ is non-trivial.

  To establish the uniqueness of $\omega$, let $\omega^\prime\in V$ be a
  non-trivial sink. By the definition of non-trivial sinks, there exists
  $(\alpha, a, \omega^\prime)\in E$ such that $(w_\alpha a)\theta \neq
  (w_{\omega^\prime})\theta$. Since $0\cdot w_{\alpha}a = 0\cdot w_{\omega'}=
  \omega'$, it follows that $((w_\alpha a)\theta, (w_{\omega^\prime})\theta)\in
  \rho$. If $(w_{\omega'})\theta\not \in I$ or $(w_{\alpha}a)\theta\not\in I$,
  then, since $\rho$ is a Rees congruence, $(w_{\alpha}a)\theta =
  (w_{\omega^\prime})$, which is a contradiction. Hence $w_{\omega'}\theta\in
  I$ and so $\omega = \omega'$.

  To show that (ii) holds, let $(\alpha, a, \beta)\in E$ and $\beta\neq
  \omega$. It follows that neither $0\cdot w_\alpha a, 0\cdot w_\beta \neq
  \omega$, and so $(w_\alpha a)\theta, (w_\beta)\theta\not \in I$. On the other
  hand $(w_\alpha a, w_\beta)\in (0)\pi_\Gamma$ implies $((w_\alpha a)\theta,
  (w_\beta)\theta) \in \rho$ and so $(w_\alpha a)\theta = (w_\beta)\theta$
  again since $\rho$ is a right Rees congruence.

  ($\Leftarrow$)
  Let $\omega$ be the unique node satisfying condition (i) and let
  \[
    I = \{(u)\theta\in M \ :\ u\in A ^*,\ 0\cdot u = \omega \}.
  \]
  If $\omega = 0$, then $I = M$, and $\rho_I = M \times M$ is a non-trivial Rees
  congruence. Hence we may suppose that $\omega \neq 0$.

  By assumption $(\omega, a, \omega) \in E$ for all $a\in A$ and so
  $(\varepsilon, v)\in (\omega)\pi_\Gamma$ for all $v\in A^\ast$. Hence if
  $u\in A ^ *$ labels a $(0,\omega)$-path in $\Gamma$, then so does $uv$ for
  all $v\in A^\ast$ and so $(uv)\theta = (u)\theta\cdot (v)\theta\in I$ for all
  $(u)\phi\in I$ and $v\in A^\ast$. Since $\theta$ is surjective, this implies
  that $I$ is a right ideal of $M$. It suffices by
  \cref{thm-graph-to-right-congruence} to show that $\rho_I =
  ((0)\pi_\Gamma)\theta$.

  Suppose that $(x, y)\in \rho_I$. If $x, y\in I$, then there exist $u,v\in
  A^\ast$ such that $x = (u)\theta$ and $y=(v)\theta$. Hence by the definition
  of $I$, both $u$ and $v$ label $(0, \omega)$-paths in $\Gamma$. This implies
  $(u, v)\in (0)\pi_\Gamma$ and so $(x, y)\in ((0)\pi_\Gamma)\theta$.
  Otherwise, if $x = y$, then $(x, y)\in ((0)\phi_\Gamma)\theta$ by reflexivity
  since $((0)\pi_\Gamma)\theta$ is a right congruence. Hence $\rho_I\subseteq
  ((0)\pi_\Gamma)\theta$.

  For the converse, suppose that $((u)\theta, (v)\theta) \in
  (0)\pi_{\Gamma}\theta$ for some $u, v\in A ^*$ such that $(u)\theta,
  (v)\theta\not\in I$. We proceed by induction on $\max\{|u|, |v|\}$. If
  $\max\{|u|, |v|\} = 0$, then $u = v= \varepsilon$ and so $((u)\theta ,
  (v)\theta)\in \rho_I$  by reflexivity.

  Suppose that for some $n \geq 1$ and for all $u, v\in A ^ *$ with $|u|, |v| <
  n$, $((u)\theta, (v)\theta) \in (0)\pi_{\Gamma}\theta$ implies $((u)\theta,
  (v)\theta)\in \rho_I$. Let $u, v\in A ^ *$ be such that $\max\{|u|, |v|\} = n
  > 0$ and $((u)\theta, (v)\theta) \in (0)\pi_{\Gamma}\theta$. Without loss of
  generality there are two cases to consider: when $v = \varepsilon$; and when
  $u\neq \varepsilon$ and $v\neq \varepsilon$.

  If $v= \varepsilon$, then $u\neq \varepsilon$ by assumption. Hence we can
  write $u = u_1a$ for some $u_1\in A ^ *$ and $a\in A$. If $0\cdot u_1 =
  \alpha\in V$, then $(u_1, w_{\alpha})\in (0)\pi_{\Gamma}$ and so $|u_1| \geq
  |w_{\alpha}|$. In particular, $|u_1|, |w_{\alpha}| \leq |u_1| < |u| = n$ and
  so by induction $((u_1)\theta, (w_{\alpha})\theta)\in \rho_I$. Thus
  $((u)\theta, (w_\alpha a)\theta) \in \rho_I$. Since $(\alpha, a, 0)\in
  E$ and $0\neq \omega$,
  it follows by (ii) that $((w_\alpha a)\theta, (w_0)\theta) = ((w_\alpha
  a)\theta, (\varepsilon)\theta)\in \rho$. Hence $((u)\theta, (v)\theta) =
  ((u)\theta, (\varepsilon)\theta) \in \rho_I$ by transitivity.

  If $u\neq \varepsilon$ and $v\neq \varepsilon$, then
  we can write $u = u_1a$ and $v = v_1b$ for some $u_1, v_1\in A ^ *$ and $a,
  b\in A$. If $0\cdot u_1 = \alpha$ and $0\cdot v_1 = \beta$, then
  $(u_1, w_{\alpha}), (v_1, w_{\beta}) \in (0)\pi_{\Gamma}$. Since $u_1 \geq
  w_{\alpha}$, it follows that $|u_1|, |w_{\alpha}| < |u| \leq  n$. Similarly
  $|v_1|, |w_{\beta}| < |v| \leq n$. Hence, by induction, $((u_1)\theta,
  (w_{\alpha})\theta), ((v_1)\theta, (w_{\beta})\theta)\in \rho_I$ and so
  $((u)\theta, (w_{\alpha}a)\theta), ((v)\theta, (w_{\beta}b)\theta)\in\rho_I$.
  If $0\cdot u = \gamma = 0\cdot v$, then by (ii) applied to $(\alpha, a,
  \gamma), (\beta, b, \gamma)\in E$, it follows that $(w_{\alpha}a)\theta =
  (w_{\gamma})\theta = (w_{\beta}b)\theta$. Thus, by transitivity,
  $((u)\theta, (v)\theta) \in \rho_I$, and the proof is complete.
\end{proof}

Unlike in the previous subsections, the conditions of
\cref{thm-rees-congruence-condition} can only be tested if a method for solving
the word problem in the monoid $\langle A \mid R\rangle$ is known. Given an
algorithm for solving the word problem, the non-triviality of a sink in
\cref{thm-rees-congruence-condition}(i) and the condition $(w_\alpha a)\theta =
(w_\beta)\theta$ in \cref{thm-rees-congruence-condition}(ii) can both be
verified computationally.

Let $\Z_R$ be the set of all standard complete word graphs corresponding to
non-trivial right Rees congruences on the monoid presented by $\langle A \mid
R\rangle$. The function $\texttt{IsRightReesCongruence}_R$ is defined in
\cref{alg-is-right-rees-congruence}. We will show that the function
$\texttt{IsRightReesCongruence}_R$, is a refining function for $\Z_R$ in
\cref{lemma-rees-refiner}.

\begin{algorithm}
  \caption{- $\texttt{IsRightReesCongruence}_R$}
  \label{alg-is-right-rees-congruence}
  \textbf{Input:} A word graph $\Gamma = (V, E)\in\V$ or $\Gamma = \bot$.
  \\
  \textbf{Output:} A word graph or $\bot$.
  \begin{algorithmic}[1]
    \If{$\Gamma = \bot$}
    \State \Return $\bot$
    \EndIf
    \State $\omega := \bot$
    \For{$(\alpha, a, \beta)\in E$}
    \If{$(w_\alpha a)\theta \neq (w_\beta)\theta$}
    \Comment{Since $\beta$ violates condition (ii) of
      \cref{thm-rees-congruence-condition}, it must be the case that
    $\beta = \omega$}
    \If{$\omega = \bot$}
    \State $\omega\gets \beta$
    \ElsIf{$\omega \neq \beta$}
    \State \Return $\bot$ \Comment{Multiple possibilities for
    $\omega$ detected, contradicting uniqueness}
    \EndIf
    \EndIf
    \EndFor
    \State $E^\prime := E$
    \If{$\omega\neq \bot$}
    \For{$a\in A$}
    \If{$\omega\cdot a = \bot$}
    \State $E^\prime \gets E^\prime \cup\{(\omega, a, \omega)\}$
    \ElsIf{$\omega\cdot a \neq \omega$}
    \State \Return $\bot$ \Comment{$\omega$ is not a sink, so cannot
    satisfy condition (i) of \cref{thm-rees-congruence-condition}}
    \EndIf
    \EndFor
    \EndIf
    \State \Return $\Gamma^\prime = (V, E^\prime)$
  \end{algorithmic}
\end{algorithm}

\begin{lemma}\label{lemma-rees-refiner}
  $\texttt{IsRightReesCongruence}_R$ is a refining function for $\Z_R$.
\end{lemma}
\begin{proof}
  We verify \cref{de-refining-func}. Clearly,
  $\texttt{IsRightReesCongruence}_R(\bot) = \bot$ and so
  \cref{de-refining-func}(i) holds.

  Suppose that $\Gamma\neq \bot$ and
  $\texttt{IsRightReesCongruence}_{R}(\Gamma)= \bot$. Then
  $\texttt{IsRightReesCongruence}_{R}$ returns in line 10 or line 20 of
  \cref{alg-is-right-rees-congruence}.  If $\texttt{IsRightReesCongruence}_{R}$
  returns $\bot$ in line 10, then there exist $(\alpha, a, \beta), (\alpha', a',
  \beta') \in E$ such that $(w_{\alpha}a)\theta \neq (w_{\beta})\theta$ and
  $(w_{\alpha'}a')\theta \neq (w_{\beta})\theta$. If any descendent of
  $\Gamma$ has a unique non-trivial sink $\omega$, then $\omega \neq \beta$ or
  $\omega\neq \beta'$. In particular, \cref{thm-rees-congruence-condition}(ii)
  does not hold, and so $\Reach(\Gamma) \cap \Z_R =
  \varnothing$, and so \cref{de-refining-func}(ii) holds.

  If $\texttt{IsRightReesCongruence}_{R}$ returns $\bot$ in line 20, then there
  exists a unique $\omega \in V$ and $(\alpha, a, \omega)\in E$ such that
  $(w_{\alpha}a)\theta\neq (w_{\omega})\theta$. The node $\omega$ is the unique
  node with this property for every descendent of $\Gamma$ also.
  But, by the condition of line 19, $\omega$ is not a sink in $\Gamma$, and
  hence no descendent of $\Gamma$ contains a unique non-trivial sink. In other
  words, $\Reach(\Gamma)\cap \Z_{R} =\varnothing$, and so
  \cref{de-refining-func}(ii) holds.

  Finally assume that $\texttt{IsRightReesCongruence}_{R}$ returns
  $\Gamma^\prime = (V, E^\prime)$ in line 24 of
  \cref{alg-is-right-rees-congruence}. We must show that $\Reach(\Gamma') \cap
  \Z_R = \Reach(\Gamma)\cap \Z_R$. Since $\Gamma'\in \Reach(\Gamma)$, certainly
  $\Reach(\Gamma')\cap \Z_R \subseteq \Reach(\Gamma)\cap \Z_R$. Suppose that
  $\Gamma''\in \Reach(\Gamma)\cap \Z_R$. The if statement in line 15 implies
  that $\omega$ has been assigned a value in $\Gamma$ and therefore it is the
  unique non-trivial sink of every descendent of $\Gamma$ in $\Z_R$ including
  $\Gamma''$. Since edges are only added to $E^\prime$ in line 18, it follows
  that $\Gamma'' \in \Reach(\Gamma')\cap \Z_R$ and so
  \cref{de-refining-func}(iii) holds.
\end{proof}

It follows from \cref{lemma-rees-refiner} that
$\texttt{BacktrackingSearch}_{\Z_R\cap \X_n}(f, \Xi) = \Z_R\cap\X_n$ where
$f$ is  $\texttt{IsRightReesCongruence}_R\circ
\texttt{MakeCompatibleRepeatedly}_R \circ \texttt{AtMost}_n$.

If $I$ is a right ideal of $M$, then the Rees congruence $\rho_I$ is a 2-sided
congruence if and only if $I$ is a 2-sided ideal of $M$. Hence $\Z_R\cap \Y_R$
is the set of all standard complete word graphs corresponding to Rees
congruences by 2-sided ideals. Combining the criteria of
\cref{thm-rees-congruence-condition} and \cref{thm-2-sided-condition} we can
computationally check if a word graph belongs to $\Z_R\cap \Y_R$. Therefore
$\texttt{BacktrackingSearch}_{\Z_R\cap \Y_R\cap \X_n}(f, \Xi) = \Z_R\cap\Y_R\cap
\X_n$ where $f$ is $\texttt{IsRightReesCongruence}_R\circ
\texttt{TwoSidedMakeCompatibleRepeatedly}\circ
\texttt{MakeCompatibleRepeatedly}_R \circ
\texttt{AtMost}_n$.



\subsection{Congruences representing faithful actions}


If $\Psi: X\times M\to X$ is a (right) monoid action, then every $s\in M$
induces a transformation $\Psi_s: X \to X$ defined by $x \mapsto (x, s)\Psi$.
We say that $\Psi$ is \defn{faithful} if $\Psi_s = \Psi_t$ implies that $s = t$
for all $s, t\in M$. Of course, if $\Psi$ is a faithful right action of $M =
\genset{A}$, then $M$ and $\genset{\Psi_a\mid a \in A}$ are isomorphic monoids.
Several recent papers have studied transformation representations of
various classes of semigroups and monoids;
see~\cite{Araujo2009},~\cite{Cameron2023aa},~\cite{East2024aa},~and~\cite{Margolis2023aa}
and the references therein. The implementations of the algorithms in
this paper were crucial in~\cite{East2024aa} and can be used to
verify, in finitely many cases, some of the results in~\cite{Margolis2023aa}.

Recall that to every complete deterministic word graph
$\Gamma = (V, E)$ compatible with $R$ we associate the right action $\Psi :
V\times M \to V$ given by $(\alpha, (w)\theta)\Psi = \alpha \cdot w$ for all
$\alpha\in V, w\in A^\ast$, where $\theta: A^\ast \to M$ is the unique
homomorphism with $\ker(\theta) = R^\#$.

We require the following theorem.

\begin{thm}[cf. Proposition 1.2 in~\cite{Tully1961aa} or Chapter I, Proposition
  5.24 in~\cite{Kilp2000aa}]\label{thm-faithful}
  Let $M$ be a monoid and let $\rho$ be a right congruence on $M$. Then the
  action of $M$ on $M/\rho$ by right multiplication is faithful if and only if
  the only 2-sided congruence contained in $\rho$ is trivial.
\end{thm}

A 2-sided congruence $\rho$ on a monoid $M$ is \defn{principal} if there exists
$(s, t) \in M \times M$ such that $s\neq t$ and $\rho$ is the least 2-sided
congruence containing $(s, t)$, i.e.\ $\rho = \{(s, t)\}^{\#}$. We also refer
to the pair $(s, t)$ in the preceding sentence as the \defn{generating pair} of
the principal congruence $\rho$. Note that if $\rho$ is a principal 2-sided
congruence, then $\rho\neq \Delta_M$ by definition.

Clearly, every non-trivial 2-sided congruence contains a principal 2-sided
congruence. If $\rho$ is a right congruence not containing any
\textit{principal} 2-sided congruences, then $\rho$ contains no non-trivial
2-sided congruences.  Hence, by \cref{thm-faithful}, $M$ acts faithfully on
$M/\rho$ by right multiplication. This argument establishes one implication of
the next theorem.

\begin{thm}\label{thm-faithful-cong-condition}
  Let $M$ be the monoid defined by the monoid presentation $\langle
  A \mid R\rangle$, let $\theta: A ^ * \to M$ be the natural surjective
  homomorphism,  let $\rho$ be a right congruence on $M$, let $\Gamma
  = (V, E)$ be the word graph of $\rho$, and let $\Psi: V\times M \to V$
  be the associated right action. If $P\subseteq A^* \times A^*$ is such that
  $(P)\theta$ contains a generating pair for every principal 2-sided
  congruence of $M$, then $\Psi$ is faithful if and only if
  for all $(u, v)\in P$ there exists some $\alpha\in V$ with $\alpha
  \cdot u \neq \alpha \cdot v$.
\end{thm}
\begin{proof}
  ($\Leftarrow$)
  If $(u, v)\in P$ is arbitrary, then by assumption there exists some
  $\alpha\in V$ such that $\alpha\cdot u \neq \alpha\cdot v$. If
$\{(u)\theta, (v)\theta)\}^{\#} \subseteq \rho$, then $((w_\alpha
u)\theta, (w_\alpha v)\theta)\in \rho$ and by
\cref{lem-rho-theta-rho-ker} we have that $(w_\alpha u, w_\alpha
v)\in (0)\pi_\Gamma$. But then $\alpha \cdot u  = 0\cdot w_\alpha u =
0\cdot w_\alpha v = \alpha \cdot v$, a contradiction.
Hence $\rho$ contains no
principal 2-sided congruences, and this implication follows by the
argument before the theorem.

($\Rightarrow$)
We prove the contrapositive.
Assume that there exists $(u, v)\in P$ such that
$\alpha \cdot u = \alpha \cdot v$ for all $\alpha \in V$. Then by
definition $(\alpha, (u)\theta)\Psi = (\alpha,
(v)\theta)\Psi$ for all $\alpha\in V$. In other words, $\Psi_{(u)\theta} =
\Psi_{(v)\theta}$, and  since $(u)\theta \neq (v)\theta$, $\Psi$ is not
faithful.
\end{proof}

The condition of \cref{thm-faithful-cong-condition} can only be tested if it is
possible to compute $P$. On the other hand, only finite monoids can have a
faithful action on a finite set. Therefore it is only meaningful to look for
faithful finite index congruences of finite monoids, in which case if the set
$P$ can be computed, then the condition of \cref{thm-faithful-cong-condition}
can be tested. In practice there are two ways of computing a set $P$ of
generating pairs for the principal congruences of $M$. The first is
computational (such as the approach described in \cref{section-algo-2}); the
second is mathematical: if a description of the lattice of congruences of a
particular monoid $M$ is known (such as those given in~\cite{East2019aa}), then
we can obtain a set $P$ from this description directly.

Let $\texttt{IsFaithfulRightCongruence}_P :
\V\cup\{\bot\}\rightarrow \V\cup\{\bot\}$ be given by
\[
\texttt{IsFaithfulRightCongruence}_P(\Gamma) =
\begin{cases}
  \bot & \text{if }\Gamma = \bot\\
  \bot &\text{if }\Gamma = (V, E) \text{ and there exists } (u,v)\in P\\
  & \text{ such that }
  \alpha \cdot u = \alpha\cdot v\neq \bot \text{ for all }
  \alpha\in V\\
  \Gamma &\text{otherwise}
\end{cases}
\]
where $P$ is a set of generating pairs for the principal congruences
of $M$. That $\texttt{IsFaithfulRightCongruence}_P$ is a refining
function for the set of standard word graphs corresponding to faithful right
congruences can be easily verified.

The primary application of faithful right congruences is in finding small
transformation representations of finite monoids. Of course, every such monoid
$M$ has a transformation representation of degree $|M|$; we refer to this as
the \defn{right regular representation} of $M$. However, computing this
representation requires computing the action of $M$ on itself by right
multiplication, which requires all of the elements of $M$ to be stored in
memory. This is only feasible when $|M|$ is relatively small; see~\cite[Section
1]{East2019aa} for a more detailed discussion. If a presentation for $M$ is
known, then iterating through the faithful right congruences of index at most
$|M|$ using $\texttt{BacktrackingSearch}$ we can find the smallest
transformation
representation arising from a right congruence. This is often quite slow in
practice especially when $|M|$ is large. This method can be improved by
modifying $\texttt{BacktrackingSearch}$ to return the first faithful right
congruence with index at most $|M|$. If a faithful right congruence  of index
$n$ is known, then we call $\texttt{BacktrackingSearch}$ to find the first
faithful right congruence with index at most $n - 1$. If no such faithful right
congruence exists, then $n$ is the minimum index of any faithful right
congruence on $M$. Otherwise, if a faithful right congruence with index $m < n$
is found, then we repeat this process. This is implemented in
\libsemigroups, and has been successfully employed to find transformation
representations of relatively small degree for several classes of monoids where
the best previous known bounds were $|M|$; see~\cite{East2024aa} for
more details.

We provide two examples where the algorithm presented in this section
returns minimal transformation representations.

\begin{ex}
Let $S_{\sigma}$ be the Rees matrix semigroup over the symmetric group of degree
$4$ with matrix:
\[
  \begin{bmatrix}
    \sigma & \id \\
    \id & \id
  \end{bmatrix},
\]
where $\id$ is the identity permutation, and $\sigma\in\{(1\
2), (1\ 2\ 3), (1\ 2\ 3\ 4)\}$. Then the minimum degree transformation
representation of $S_{(1\ 2)}$, $S_{(1\ 2\ 3)}$, and $S_{(1\ 2\ 3\ 4)}$
found by the algorithm described in this section are $6$, $7$, and $8$. This
agrees with the minimum degree transformation representation
from~\cite[Theorem 2.19]{Margolis2023aa}.
\end{ex}

\begin{ex}
The \defn{opposite} $S ^ {op}$ of a semigroup $S$ has multiplication $*$ defined
by
\[
  x * y = yx
\]
where the multiplication on the right hand side of the equality is the
multiplication in $S$. It is shown in~\cite[Theorem 2.2]{Margolis2023aa}
that the minimum degree transformation of the opposite $T_n^{op}$ of the
full transformation monoids $T_n$ of degree $n$ is $2 ^ n$; and this agrees
with the output of the algorithm in this section.
\end{ex}

As expected, the algorithm in this section does not always return the minimum
degree transformation representation, since such a representation does not
always correspond to an action on the classes of a right congruence; see, for
example,~\cref{fig-non-cyclic-monoid-action} and~\cite[Table 3.1 and Theorem
3.9]{Cameron2023aa}.

In~\cite{Schein1992aa}, the minimal degree partial permutation representation of
an arbitrary inverse semigroup $S$ is given in terms of the minimal degrees of
certain subgroups of $S$. This is implemented in the \Semigroups package for
\GAP. In the example of the dual symmetric inverse monoid $I_n^*$, the output of
the algorithm described in this section as an application of the low-index
congruences algorithm when $n \leq 6$ agrees with the theoretical
minimum found in~\cite{Maltcev2007aa} of $2 ^ n - 1$,
and with the output of Schein's algorithm from~\cite{Schein1992aa}.

Of course, as a consequence of \cref{prop-word-graph-iff-action} and
\cref{thm-graph-to-right-congruence}, not every faithful action arises from a
faithful right congruence. However, since \cref{prop-word-graph-iff-action}
does give a bijection between word graphs and right actions, it may be possible
to extend the backtracking search over standard word graphs given in
this paper to
cover a wider class of word graphs, and adapt the criterion given in
\cref{thm-rees-congruence-condition} to word graphs representing faithful
actions in general. This would give an algorithm for finding minimal
transformation representations of finite monoids.



\section{Meets and joins for congruences represented by word
graphs}\label{section-joins-meets}

Although \texttt{AllRightCongruences} can be used to find all of the 1-sided
congruences of a monoid, to compute the lattice of such congruences we require
a mechanism for computing joins or meets of congruences represented by word
graphs.   In this section we will show that the Hopcroft-Karp
Algorithm~\cite{Hopcroft1971aa}, for checking whether two finite state
automata recognise the same language, can be used to determine the join of two
congruences represented by word graphs. We will also show that the standard
construction of an automaton recognising the union of two regular languages can
be used to compute the meet of two congruences represented by word graphs.

\subsection{The Hopcroft-Karp Algorithm for joins}

In this section we present \cref{join} which can be used to compute the join of
two congruences represented by word graphs. This is a slightly modified version
of the Hopcroft-Karp Algorithm for checking whether two finite state
automata recognise the same language. The key difference between \cref{join}
and the Hopcroft-Karp Algorithm is that the inputs are word graphs rather than
automata. As mentioned in the introduction a word graph is essentially an
automaton without initial or accept states. The initial states of the automata
that form the input to the Hopcroft-Karp Algorithm are only used at the
beginning of the algorithm (when the pair consisting of the start states of the
two automata are pushed into the stack).  In our context, the node $0$ will
play the role of the start state. Similarly the accept states are only used at
the last step of the algorithm. As such the only difference between the
Hopcroft-Karp Algorithm described in~\cite{Hopcroft1971aa}
(and~\cite{Norton2009aa}) and the version here are: the inputs, the values used
to initialise the stack, and the return value. The time complexity of
\texttt{JoinWordGraphs} is $O(|A|n)$ where $n = \max\{|V_0|, |V_1|\}$.

\cref{join} makes use of the well-known disjoint sets data structure
(originating in~\cite{Galler1964aa}, see also~\cite[Chapter 21]{Cormen2009aa})
which provides an efficient representation of a partition of $V_0\sqcup V_1$.
We identify the disjoint sets data structure for a partition $\kappa$ of
$V_0\sqcup V_1$ and the partition itself. If $\kappa$ denotes a disjoint sets
data structure, then we require the following related functions:
\begin{description}
\item[\texttt{Union}$(\kappa, \alpha, \beta)$:] unites the parts of the
  partition $\kappa$ containing $\alpha, \beta\in V_0\sqcup V_1$;
\item[\texttt{Find}$(\kappa, \alpha)$:] returns a canonical
  representative of the part
  of the partition $\kappa$ containing $\alpha$.
\end{description}

Note that $\alpha, \beta \in V_0\sqcup V_1$ belong to the same part of the
partition represented by our disjoint sets data structure if and only if
$\texttt{Find}(\alpha) = \texttt{Find}(\beta)$. If $\kappa$ is an equivalence
relation on the nodes of a word graph $\Gamma = (V, E)$, then we define the
\defn{quotient $\Gamma/\kappa$ of  $\Gamma$ by $\kappa$} to be the word graph
(after an appropriate relabelling) with nodes
$\set{\alpha/\kappa}{\alpha \in V}$
and edges
$\set{(\alpha /\kappa, a, \beta/\kappa)}{(\alpha, a, \beta) \in E}$.
Note that $\Gamma/\kappa$ is not necessarily deterministic.

\begin{algorithm}
\caption{- \texttt{JoinWordGraphs}}
\label{join}
\textbf{Input:} Word graph $\Gamma_0= (V_0, E_0)$ and $\Gamma_1 = (V_1, E_1)$
representing right congruences $\rho_0$ and $\rho_1$ of the monoid $M$.\\
\textbf{Output:} The word graph of the join $\rho_0\vee \rho_1$ of
$\rho_0$ and $\rho_1$.
\begin{algorithmic}[1]
  \State Let $\kappa$ be the disjoint sets data structure for $\Delta_{V_0\sqcup
  V_1} = \set{(\alpha, \alpha)}{\alpha \in V_0\sqcup V_1}$
  \State Push $(0_{\Gamma_0}, 0_{\Gamma_1})$ onto the stack $S$
  \State \texttt{Union}$(\kappa, 0_{\Gamma_0}, 0_{\Gamma_1})$
  \While{$S \neq \varnothing$}
  \State Pop $(\alpha_0, \alpha_1) \in V_0\times V_1$ from the stack
  \For{$a \in A$}
  \State Let $\alpha_0'\in V_0$ and $\alpha_1'\in V_1$ be such that
  $(\alpha_0, a, \alpha_0')\in E_0$ and $(\alpha_1, a, \alpha_1')\in E_1$.
  \State Let $\gamma_0 =\texttt{Find}(\kappa, \alpha_0')$ and $\gamma_1 =
  \texttt{Find}(\kappa, \alpha_1')$
  \If{$\gamma_0\neq \gamma_1$}
  \State Push $(\gamma_0, \gamma_1)$ onto the stack $S$
  \State \texttt{Union}$(\kappa, \gamma_0, \gamma_1)$
  \EndIf
  \EndFor
  \EndWhile
  \State \Return $(\Gamma_0 \sqcup \Gamma_1) / \kappa$.
\end{algorithmic}
\end{algorithm}

In order to prove the correctness of~\texttt{JoinWordGraphs} we need the
following definition. An equivalence relation $\kappa$ over $V_0 \sqcup V_0$ is
called \defn{right invariant} if for all $a \in A$, $(\alpha, \beta) \in
\kappa$ implies $(\alpha', \beta') \in \kappa$, where $\alpha'\in V_0$ and
$\beta'\in V_1$ are such that $(\alpha, a, \alpha')\in E_0$ and $(\beta, a,
\beta')\in E_1$. The following result is Lemma 1 in~\cite{Hopcroft1971aa}.

\begin{lemma}\label{lemma-HK}
Let $\kappa$ be the equivalence relation on $V_0 \sqcup V_1$ in line 15 of
\texttt{JoinWordGraphs}. Then $\kappa$ is the least right invariant
equivalence relation on $V_0\sqcup V_1$ containing $(0_{\Gamma_0},
0_{\Gamma_1})$.
\end{lemma}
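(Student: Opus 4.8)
**

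The plan is to prove this in two directions, matching the standard structure of the original Hopcroft–Karp correctness argument, adapted to the word-graph setting where the node $0$ plays the role of a fixed start state.

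First I would show that the relation $\kappa$ computed by \texttt{JoinWordGraphs} is right invariant and contains $(0_{\Gamma_0}, 0_{\Gamma_1})$. Containment of the pair is immediate from line 3. For right invariance, the key observation is a loop invariant: at every point during execution, for every pair $(\alpha_0, \alpha_1)$ that has ever been pushed onto $S$, and every $a \in A$, if $(\alpha_0, a, \alpha_0') \in E_0$ and $(\alpha_1, a, \alpha_1') \in E_1$, then eventually (once that pair is popped and its $a$-successors processed in lines 6--12) the representatives of $\alpha_0'$ and $\alpha_1'$ are united in $\kappa$. So I would argue by induction on the number of \texttt{Union} operations that every part of the final partition $\kappa$ is ``generated'' by the pairs that were pushed, and that the pushed pairs are closed under taking $a$-successors. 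More precisely: whenever \texttt{Union}$(\kappa, \gamma_0, \gamma_1)$ is called in line 11, the pair $(\gamma_0, \gamma_1)$ has just been pushed in line 10, so every pair that merges two classes of $\kappa$ is a pushed pair; and every pushed pair, once popped, has all its $a$-successor pairs' representatives merged. Since the algorithm only terminates when $S$ is empty, every pushed pair is eventually popped, so $\kappa$ is closed under the right-invariance condition.

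Next I would show minimality: if $\sigma$ is any right invariant equivalence relation on $V_0 \sqcup V_1$ containing $(0_{\Gamma_0}, 0_{\Gamma_1})$, then $\kappa \subseteq \sigma$. The natural approach is to show by induction on the order in which \texttt{Union} operations are performed that every pair united by the algorithm already lies in $\sigma$. The base case is $(0_{\Gamma_0}, 0_{\Gamma_1}) \in \sigma$ by hypothesis. For the inductive step, suppose $(\gamma_0, \gamma_1)$ is pushed and united in lines 10--11 because it arose as the pair of \texttt{Find}-representatives of the $a$-successors $\alpha_0', \alpha_1'$ of some previously popped pair $(\alpha_0, \alpha_1)$. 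By induction $(\alpha_0, \alpha_1) \in \sigma$ and all earlier \texttt{Union}s lie in $\sigma$, so $\alpha_0 \mathrel{\sigma} \alpha_1$; since $\sigma$ is right invariant, $\alpha_0' \mathrel{\sigma} \alpha_1'$; and since $\gamma_0, \gamma_1$ are in the same $\kappa$-classes as $\alpha_0', \alpha_1'$ respectively via earlier \texttt{Union}s (which by induction lie in $\sigma$), transitivity of $\sigma$ gives $\gamma_0 \mathrel{\sigma} \gamma_1$. Hence every generating pair of $\kappa$ lies in $\sigma$, so $\kappa \subseteq \sigma$.

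I expect the main obstacle to be the bookkeeping around the \texttt{Find}-representatives: the relation $\kappa$ is mutated during the loop, so one must be careful to state the loop invariant in terms of the final $\kappa$ (or, equivalently, phrase it as ``are eventually merged'') rather than the current snapshot, and to verify that no successor pair is silently dropped — in particular that when $\gamma_0 = \gamma_1$ in line 9 and nothing is pushed, the right-invariance condition for that $a$ is already satisfied. A secondary subtlety is that $\kappa$ ranges over $V_0 \sqcup V_1$ but pushed pairs lie in $V_0 \times V_1$, so one should check the \texttt{Find}-representatives behave well with respect to this bipartition (or simply note that the claim only concerns pairs with one coordinate from each side, which is preserved). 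Since the paper explicitly cites this as Lemma 1 of \cite{Hopcroft1971aa}, I would keep the proof brief, perhaps even just indicating the loop invariant and the two inductions, and refer to \cite{Hopcroft1971aa} for the routine verification.
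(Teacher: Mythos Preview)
Your proposal is correct, and indeed it supplies considerably more than the paper does: the paper gives no proof at all for this lemma, stating only that it ``is Lemma 1 in~\cite{Hopcroft1971aa}''. Your two-direction argument (right invariance via the observation that every pushed pair is eventually popped and has its successors merged; minimality via induction on the sequence of \texttt{Union} calls) is exactly the standard Hopcroft--Karp correctness argument and is sound in this setting. The subtleties you flag---phrasing the invariant in terms of the final $\kappa$, handling the case $\gamma_0 = \gamma_1$, and the bookkeeping around whether \texttt{Find}-representatives remain in $V_0 \times V_1$---are real (the last one is arguably a notational sloppiness in the pseudocode rather than in your argument), and you handle them appropriately. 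Your closing suggestion to keep the written proof brief and cite~\cite{Hopcroft1971aa} matches precisely what the paper does.
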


The next result relates right invariant equivalences on a word graph to its
deterministic quotients.

\begin{lemma}\label{lemma-invariant-kernel}
If $\Gamma = (V, E)$ is a complete word graph and $\kappa$ is an equivalence
relation on $V$, then $\Gamma / \kappa$ is deterministic if and only if
$\kappa$ is right invariant.
\end{lemma}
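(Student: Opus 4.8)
The statement is an if-and-only-if about a complete word graph $\Gamma = (V, E)$ and an equivalence relation $\kappa$ on $V$, relating determinism of the quotient $\Gamma/\kappa$ to right invariance of $\kappa$. Recall that $\Gamma/\kappa$ has nodes $\set{\alpha/\kappa}{\alpha \in V}$ and edges $\set{(\alpha/\kappa, a, \beta/\kappa)}{(\alpha, a, \beta)\in E}$; it need not be deterministic in general. The plan is to prove both implications by directly unwinding the two definitions, using completeness of $\Gamma$ to guarantee that every node of every class has an outgoing edge with each label.

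For the forward direction, I would assume $\Gamma/\kappa$ is deterministic and show $\kappa$ is right invariant. Take $(\alpha, \beta)\in\kappa$ and a letter $a\in A$; by completeness there are edges $(\alpha, a, \alpha')$ and $(\beta, a, \beta')$ in $E$, hence edges $(\alpha/\kappa, a, \alpha'/\kappa)$ and $(\beta/\kappa, a, \beta'/\kappa)$ in $\Gamma/\kappa$. But $\alpha/\kappa = \beta/\kappa$, so these are two edges in $\Gamma/\kappa$ with the same source and the same label; determinism forces $\alpha'/\kappa = \beta'/\kappa$, i.e. $(\alpha', \beta')\in\kappa$, which is exactly right invariance.

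For the converse, I would assume $\kappa$ is right invariant and show $\Gamma/\kappa$ is deterministic. Suppose $(\mu, a, \nu_1)$ and $(\mu, a, \nu_2)$ are both edges of $\Gamma/\kappa$; by definition of the quotient edge set, there exist $(\alpha_1, a, \beta_1), (\alpha_2, a, \beta_2)\in E$ with $\alpha_1/\kappa = \alpha_2/\kappa = \mu$ and $\beta_i/\kappa = \nu_i$. Then $(\alpha_1, \alpha_2)\in\kappa$, so right invariance gives $(\beta_1, \beta_2)\in\kappa$, whence $\nu_1 = \beta_1/\kappa = \beta_2/\kappa = \nu_2$. Thus at most one edge of $\Gamma/\kappa$ has any given source and label, so $\Gamma/\kappa$ is deterministic.

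I expect no serious obstacle here: the proof is a routine double unwinding of definitions, and the only subtlety worth flagging is the precise definition of right invariance in the excerpt, which is stated for an equivalence relation $\kappa$ on $V_0 \sqcup V_1$ between two word graphs $\Gamma_0, \Gamma_1$; here one applies it in the degenerate case $\Gamma_0 = \Gamma_1 = \Gamma$, so that the ``$\alpha' \in V_0$, $\beta' \in V_1$'' with $(\alpha, a, \alpha'), (\beta, a, \beta') \in E$ are just the (unique, by completeness — and well-defined up to $\kappa$ is not even needed) $a$-successors within the single graph $\Gamma$. The use of completeness is essential in both directions to ensure the relevant edges exist; without it the forward implication could fail vacuously in a misleading way, but with completeness assumed the argument above is complete.
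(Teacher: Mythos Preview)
Your proof is correct and follows essentially the same approach as the paper: both directions are proved by directly unwinding the definitions of determinism of $\Gamma/\kappa$ and right invariance of $\kappa$, using completeness to guarantee the relevant $a$-successors exist. Your remark about interpreting the $V_0\sqcup V_1$ definition of right invariance in the degenerate case $\Gamma_0=\Gamma_1=\Gamma$ is a reasonable clarification that the paper leaves implicit.
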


\begin{proof}
($\Rightarrow$)
Suppose that $\kappa$ is an equivalence relation on $V$ such that
$\Gamma/\kappa$ is deterministic. Suppose that $a \in A$ and that
$\alpha,\beta \in V$ are arbitrary. We will prove that $(\alpha, \beta) \in
\kappa$ implies that $(\alpha', \beta') \in \kappa$ where $(\alpha, a,
\alpha'), (\beta, a, \beta')\in E$ are the unique edges labelled by $a$ with
sources $\alpha$ and $\beta$.

Assume that $\alpha, \beta\in V$ are such that $(\alpha, \beta) \in
\kappa$. By the definition, $(\alpha /\kappa, a, \alpha'/\kappa)$ and
$(\beta/\kappa, a, \beta'/\kappa)$ are edges in $\Gamma/\kappa$. Since
$\Gamma/\kappa$ is deterministic and $\alpha/\kappa = \beta/\kappa$, it
follows that $\alpha'/\kappa = \beta'/\kappa$ and so $(\alpha', \beta') \in
\kappa$, as required.

($\Leftarrow$)
Conversely, let $\kappa$ be a right invariant equivalence relation on $V$.
The word graph $\Gamma/\kappa$ is complete by definition. If
$(\alpha_0/\kappa, a, \beta_0/\kappa)$ and $(\alpha_1/\kappa, a,
\beta_1/\kappa)$ are edges in $\Gamma/\kappa$ such that $(\alpha_0,
\alpha_1)\in \kappa$, then, since $\kappa$ is right invariant $(\beta_0,
\beta_1) \in \kappa$.  It follows that $\Gamma/\kappa$ is deterministic.
\end{proof}

We can now show that the quotient word graph returned by
\texttt{JoinWordGraphs} represents the join of the congruences represented by
the input word graphs.

\begin{prop}\label{prop-joins}
The word graph $(\Gamma_0 \sqcup \Gamma_1)/ \kappa$ returned by
\texttt{JoinWordGraphs} in \cref{join} is the graph of the join $\rho_0\vee
\rho_1$ of congruences $\rho_0$ and $\rho_1$ represented by $\Gamma_0$ and
$\Gamma_1$ respectively.
\end{prop}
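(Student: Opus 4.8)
The plan is to identify the congruence represented by a word graph with the path relation $(0)\pi$ pulled back to $M$ via $\theta$, and then show that the two constructions agree: the least right invariant equivalence containing $(0_{\Gamma_0}, 0_{\Gamma_1})$ on the disjoint union, quotiented, yields exactly the word graph of $\rho_0 \vee \rho_1$. First I would record that, by \cref{lemma-HK}, the equivalence $\kappa$ produced in line 15 of \texttt{JoinWordGraphs} is the least right invariant equivalence on $V_0 \sqcup V_1$ containing $(0_{\Gamma_0}, 0_{\Gamma_1})$, and that by \cref{lemma-invariant-kernel} the quotient $(\Gamma_0 \sqcup \Gamma_1)/\kappa$ is therefore a complete deterministic word graph. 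Since every node of $\Gamma_i$ is reachable from $0_{\Gamma_i}$ and both roots are identified in the quotient, every node of $(\Gamma_0 \sqcup \Gamma_1)/\kappa$ is reachable from the image of $0$; after standardizing, \cref{thm-graph-to-right-congruence} tells us this word graph represents a genuine right congruence $\tau$ of $M$, namely $\tau = \set{((u)\theta, (v)\theta)}{u, v \text{ label } (0, \cdot)\text{-paths in } (\Gamma_0 \sqcup \Gamma_1)/\kappa}$.

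Next I would prove $\tau = \rho_0 \vee \rho_1$ by two containments. For $\rho_0, \rho_1 \subseteq \tau$: the inclusion map $\Gamma_i \hookrightarrow \Gamma_0 \sqcup \Gamma_1$ followed by the quotient map $\Gamma_0 \sqcup \Gamma_1 \to (\Gamma_0 \sqcup \Gamma_1)/\kappa$ is a word graph homomorphism sending $0_{\Gamma_i}$ to the common root (using that $\kappa$ identifies $0_{\Gamma_0}$ with $0_{\Gamma_1}$), so \cref{lemma-kernel-right-invariant} gives $\rho_i \subseteq \tau$, hence $\rho_0 \vee \rho_1 \subseteq \tau$. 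For the reverse inclusion $\tau \subseteq \rho_0 \vee \rho_1$: let $\sigma = \rho_0 \vee \rho_1$ with word graph $\Gamma_\sigma$. By \cref{lemma-kernel-right-invariant} there are word graph homomorphisms $\psi_i : \Gamma_i \to \Gamma_\sigma$ with $(0_{\Gamma_i})\psi_i = 0_{\Gamma_\sigma}$, which glue to a homomorphism $\psi : \Gamma_0 \sqcup \Gamma_1 \to \Gamma_\sigma$ sending both roots to $0_{\Gamma_\sigma}$. The kernel $\ker\psi = \set{(\alpha, \beta)}{(\alpha)\psi = (\beta)\psi}$ is an equivalence relation on $V_0 \sqcup V_1$ containing $(0_{\Gamma_0}, 0_{\Gamma_1})$, and it is right invariant because $\Gamma_\sigma$ is deterministic: if $(\alpha)\psi = (\beta)\psi$ and $(\alpha, a, \alpha'), (\beta, a, \beta')$ are edges, then $((\alpha)\psi, a, (\alpha')\psi)$ and $((\beta)\psi, a, (\beta')\psi)$ are edges of $\Gamma_\sigma$ with the same source, forcing $(\alpha')\psi = (\beta')\psi$. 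By minimality from \cref{lemma-HK}, $\kappa \subseteq \ker\psi$, so $\psi$ factors through the quotient: there is a word graph homomorphism $\bar\psi : (\Gamma_0 \sqcup \Gamma_1)/\kappa \to \Gamma_\sigma$ with $\bar\psi$ sending the common root to $0_{\Gamma_\sigma}$. Applying \cref{lemma-kernel-right-invariant} one last time yields $\tau \subseteq \sigma = \rho_0 \vee \rho_1$, completing the argument.

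The main obstacle I anticipate is the bookkeeping around the disjoint union and the roots: one must be careful that $\kappa$ really does contain $(0_{\Gamma_0}, 0_{\Gamma_1})$ (this is line 3 of \texttt{JoinWordGraphs}), that the quotient map is a legitimate word graph homomorphism in the sense of the paper (immediate from the definition of $\Gamma/\kappa$), and that the universal property of the quotient ``$\kappa \subseteq \ker\psi$ implies $\psi$ factors through $\Gamma/\kappa$'' holds for word graphs — this is routine but deserves an explicit sentence, since $\Gamma/\kappa$ need not be deterministic for a general $\kappa$ (here it is, by \cref{lemma-invariant-kernel}, which is what makes $\tau$ a bona fide congruence in the first place). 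A secondary point to handle cleanly is the implicit standardization: \texttt{JoinWordGraphs} returns $(\Gamma_0 \sqcup \Gamma_1)/\kappa$ up to relabelling, and \cref{thm-graph-to-right-congruence} is stated for standard word graphs, so I would remark once that standardizing does not change the represented congruence (by \cref{prop-isomorphic-graphs} and \cref{prop-same-congruence}) and then suppress it.
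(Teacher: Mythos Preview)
Your argument is correct and follows essentially the same route as the paper: both verify that $(\Gamma_0\sqcup\Gamma_1)/\kappa$ represents a right congruence $\tau\supseteq\rho_0,\rho_1$, then build the glued homomorphism $\Gamma_0\sqcup\Gamma_1\to\Gamma_\sigma$ to the word graph of $\rho_0\vee\rho_1$, observe its kernel is right invariant and contains $(0_{\Gamma_0},0_{\Gamma_1})$, and invoke the minimality of $\kappa$ from \cref{lemma-HK}. The only substantive omission is that you never check $(\Gamma_0\sqcup\Gamma_1)/\kappa$ is compatible with $R$, which \cref{thm-graph-to-right-congruence} requires; the paper handles this by noting that $\Gamma_0\sqcup\Gamma_1$ is compatible with $R$ and that the quotient map, being a word graph homomorphism, preserves this---add that one line and your proof is complete (your finish via factoring $\psi$ through the quotient is in fact a shade more direct than the paper's commutative-triangle-plus-uniqueness argument).
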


\begin{proof}
Let $\Gamma_0 = (V_0, E_0)$ and $\Gamma_1 = (V_1, E_1)$. We start by showing
that if $\kappa$ is a right invariant equivalence relation on $V_0 \sqcup V_1$
containing $(0_{\Gamma_0}, 0_{\Gamma_1})$, then the quotient graph
$(\Gamma_0 \sqcup \Gamma_1)/ \kappa$ represents a right congruence $\tau$ of
$M$ containing both $\rho_0$ and $\rho_1$. To show that $(\Gamma_0 \sqcup
\Gamma_1)/ \kappa$ represents a right congruence of the monoid $M$ defined by
the presentation $\langle A \mid R\rangle$ it suffices by
\cref{thm-graph-to-right-congruence} to show
that $(\Gamma_0 \sqcup \Gamma_1)/ \kappa$ is complete, deterministic,
compatible with $R$, and every node is reachable from
$0_{\Gamma_0}/\kappa$.

Since $\Gamma_0$ and $\Gamma_1$ are complete graphs, any quotient of
$\Gamma_0 \sqcup \Gamma_1$ is also complete. By
\cref{lemma-invariant-kernel}, $(\Gamma_0 \sqcup \Gamma_1)/ \kappa$ is
deterministic.
Since both $\Gamma_0$ and $\Gamma_1$ are compatible
with $R$, it follows that $\Gamma_0\sqcup \Gamma_1$ is also compatible with $R$.
Suppose that $\phi: \Gamma_0 \sqcup \Gamma_1 \to
(\Gamma_0 \sqcup \Gamma_1)/ \kappa$ is the natural word graph homomorphism
with $\ker(\phi) = \kappa$. Then, since word graph homomorphisms preserve
paths, it follows that $\im \phi =  (\Gamma_0 \sqcup \Gamma_1)/ \kappa$ is
compatible with $R$ too.
By assumption, every node in $V_0$ is reachable from
$0_{\Gamma_0}$ and every node in $V_1$ is reachable from $0_{\Gamma_1}$ in
${\Gamma_0 \sqcup \Gamma_1}$, and $(0_{\Gamma_1}, 0_{\Gamma_2})\in \kappa$.
Thus every node in $(\Gamma_0 \sqcup \Gamma_1)/\kappa$ is reachable from
$0_{\Gamma_1}/\kappa$. It follows that $(\Gamma_0 \sqcup \Gamma_1)/\kappa$
represents a right congruence $\tau$ on $M$.
Again, since the homomorphism $\phi: \Gamma_0 \sqcup \Gamma_1 \to (\Gamma_0
\sqcup \Gamma_1)/ \kappa$ preserves paths, and by
\cref{thm-graph-to-right-congruence}, the path relations $\rho_0 =
(0_{\Gamma_0})\pi_{\Gamma_0}$ and $\rho_1 = (0_{\Gamma_1})\pi_{\Gamma_1}$ on
$\Gamma_0$ and $\Gamma_1$, respectively, are contained in the path relation
$\tau = (0_{\Gamma_0}/\kappa)\pi_{(\Gamma_0\sqcup \Gamma_1)/\kappa}$.

Suppose that $\Gamma_2$ is the word graph of $\rho_0\vee \rho_1$. Then, since
$\rho_0\subseteq \rho_0\vee \rho_1$ and $\rho_1\subseteq \rho_0\vee \rho_1$,
by \cref{lemma-kernel-right-invariant}, there exist
unique word graph homomorphisms $\theta_0: \Gamma_0 \to \Gamma_2$, and
$\theta_1:\Gamma_1 \to \Gamma_2$ such that $(0_{\Gamma_0})\theta_0 =
(0_{\Gamma_1})\theta_1 = 0_{\Gamma_2}$.
Clearly, $\theta_{01}: \Gamma_0\sqcup \Gamma_1 \to
\Gamma_2$ defined by $(\alpha_{\Gamma_i})\theta_{01} =
(\alpha_{\Gamma_i})\theta_i$ for $i\in \{0, 1\}$ is a word graph homomorphism
where $(0_{\Gamma_0})\theta_{01} = (0_{\Gamma_1})\theta_{01} = 0_{\Gamma_2}$,
and this homomorphism is unique by \cref{corollary-homomorphism}.
Since $\rho_0\subseteq \tau$ and $\rho_1\subseteq \tau$, and $\tau$ is a
right congruence, it follows that $\rho_0\vee \rho_1 \subseteq \tau$. Hence,
again by \cref{lemma-kernel-right-invariant}, there exist a unique word graph
homomorphism $\theta_2: \Gamma_2 \to (\Gamma_0 \sqcup \Gamma_1)/\kappa$ such
that $(0_{\Gamma_2})\theta_2 = 0_{\Gamma_0}/\kappa$; see
\cref{fig-comm-diagram}. We will show that $\theta_{01} = \phi$.

\begin{figure}
  \centering
  \begin{tikzcd}
    \Gamma_0\sqcup \Gamma_1 \arrow[r, "\phi"] \arrow[d, "\theta_{01}"] &
    (\Gamma_0\sqcup \Gamma_1) / \kappa \\
    \Gamma_2\arrow[ru, "\theta_2"]
  \end{tikzcd}
  \caption{The commutative diagram from the proof of \cref{prop-joins}.}
  \label{fig-comm-diagram}
\end{figure}

Since the composition of word graph homomorphisms is a word graph
homomorphism, it follows that $\theta_{01}\circ \theta_2: \Gamma_0\sqcup
\Gamma_1\to (\Gamma_0\sqcup \Gamma_1)/\kappa$ is a word graph homomorphism
and
\[
  (0_{\Gamma_0})\theta_{01}\theta_2
  = (0_{\Gamma_1})\theta_{01}\theta_2
  = (0_{\Gamma_2})\theta_2
  = 0_{\Gamma_0}/ \kappa.
\]
But $\phi: \Gamma_0\sqcup \Gamma_1 \to (\Gamma_0\sqcup \Gamma_1)/\kappa$ is
also a word graph homomorphism with $(0_{\Gamma_0})\phi =
(0_{\Gamma_1})\phi= 0_{\Gamma_0}/ \kappa$, and so $\phi = \theta_{01}\circ
\theta_2$ by \cref{corollary-homomorphism}. In particular,
$\ker(\theta_{01}) \subseteq \ker(\phi) = \kappa$.  Since $\Gamma_2 =
(\Gamma_0\sqcup \Gamma_1)/\ker(\theta_{01})$ is a word graph representing a
right congruence, it is deterministic. Hence, by
\cref{lemma-invariant-kernel}, it follows that $\ker(\theta_{01})$ is right
invariant. Also $(0_{\Gamma_0})\theta_{01} = (0_{\Gamma_1})\theta_{01}$
implies that $(0_{\Gamma_0}, 0_{\Gamma_1}) \in \ker(\theta_{01})$. But
$\kappa$ is the least right invariant equivalence relation on $V_0\sqcup
V_1$ containing $(0_{\Gamma_0}, 0_{\Gamma_1})$, by \cref{lemma-HK}, and so
$\ker(\theta_{01}) = \ker(\phi) = \kappa$. It follows that $\Gamma_2$ and
$(\Gamma_0\sqcup \Gamma_1) / \kappa$ coincide, and so $(\Gamma_0\sqcup
\Gamma_1) / \kappa$ represents $\rho_0\vee \rho_1$, as required.
\end{proof}

\subsection{Automata intersection for meets}

In this section we present a slightly modified version of a standard
algorithm from automata theory for finding an automaton recognising the
intersection of two regular languages.  As in the previous section the key
difference is that we use word graphs rather than automata.

If $\Gamma_0=(V_0, E_0)$ and $\Gamma_1=(V_1, E_1)$ are word graphs over the
same alphabet $A$, then we define a word graph $\Gamma_2 =  (V_2, E_2)$ where
$V_2$ is the largest subset of $V_0\times V_1$ such that every node in $V_2$ is
reachable from $(0, 0)$ in $\Gamma_2$ and
$((\alpha_0, \alpha_1), a, (\beta_0, \beta_1)) \in E_2$ if and only if
$(\alpha_0, a, \beta_0) \in E_0$ and $(\alpha_1, a, \beta_1) \in E_1$; we will
refer to $\Gamma_2$ as the \defn{meet word graph} of $\Gamma_0$ and $\Gamma_1$.
This is directly analogous to the corresponding construction for automata; for
more details see the comments following the proof of Theorem 1.25
in~\cite{Sipser2013aa}.

\begin{prop}\label{meet-graph}
If $\Gamma_0=(V_0, E_0)$ and $\Gamma_1=(V_1, E_1)$ are word graphs
representing congruences of the monoid $M$ defined by the presentation
$\langle A \mid R\rangle$, then the meet word graph $\Gamma_2$ of $\Gamma_0$ and
$\Gamma_1$ is a complete deterministic word graph which is compatible with
$R$ and each node of $\Gamma_2$ is reachable from $(0, 0)$.
\end{prop}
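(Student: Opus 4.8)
The plan is to verify the four asserted properties in turn. Reachability of every node from $(0,0)$ is built into the definition of $V_2$, so nothing is needed there. For the remaining three properties I would first record a single auxiliary observation, and then dispatch determinism and completeness in one line each and compatibility with $R$ in a short paragraph.

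The auxiliary observation is: for nodes $(\alpha_0,\alpha_1)\in V_2$ and $(\beta_0,\beta_1)\in V_0\times V_1$, and a word $w\in A^*$, the word $w$ labels an $((\alpha_0,\alpha_1),(\beta_0,\beta_1))$-path in $\Gamma_2$ if and only if $w$ labels an $(\alpha_0,\beta_0)$-path in $\Gamma_0$ and an $(\alpha_1,\beta_1)$-path in $\Gamma_1$ (in particular $(\beta_0,\beta_1)\in V_2$). I would prove this by induction on $|w|$ from the definition of $E_2$: the forward direction is just projecting the product path onto its two coordinates, and for the converse one walks along the unique paths in $\Gamma_0$ and $\Gamma_1$ determined by $w$ (these exist and are unique since $\Gamma_0,\Gamma_1$ are complete and deterministic), noting that each successive pair of nodes is reachable from $(0,0)$ — being reachable from $(\alpha_0,\alpha_1)\in V_2$ — and hence lies in $V_2$, so the corresponding edges genuinely belong to $E_2$.

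With this in hand the rest is routine. Determinism: if $(\alpha_0,\alpha_1)\in V_2$ and $a\in A$, then since $\Gamma_0$ and $\Gamma_1$ are deterministic there is at most one $\beta_0$ with $(\alpha_0,a,\beta_0)\in E_0$ and at most one $\beta_1$ with $(\alpha_1,a,\beta_1)\in E_1$, hence at most one edge of $E_2$ with source $(\alpha_0,\alpha_1)$ and label $a$. Completeness: since $\Gamma_0,\Gamma_1$ are complete such $\beta_0,\beta_1$ do exist, and $(\beta_0,\beta_1)$ is reachable from $(0,0)$ (extend a path witnessing $(\alpha_0,\alpha_1)\in V_2$ by this new edge), so $(\beta_0,\beta_1)\in V_2$ and $((\alpha_0,\alpha_1),a,(\beta_0,\beta_1))\in E_2$. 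Compatibility with $R$: since $\Gamma_0$ and $\Gamma_1$ represent congruences they are, by \cref{thm-graph-to-right-congruence}, complete deterministic word graphs compatible with $R$, so for each $(u,v)\in R$ the words $u$ and $v$ both label $(\alpha_0,\gamma_0)$-paths in $\Gamma_0$ for one and the same $\gamma_0$, and both label $(\alpha_1,\gamma_1)$-paths in $\Gamma_1$ for one and the same $\gamma_1$; by the auxiliary observation $u$ and $v$ then both label $((\alpha_0,\alpha_1),(\gamma_0,\gamma_1))$-paths in $\Gamma_2$, so $(u,v)\in((\alpha_0,\alpha_1))\pi_{\Gamma_2}$, and since $(\alpha_0,\alpha_1)\in V_2$ and $(u,v)\in R$ were arbitrary, $\Gamma_2$ is compatible with $R$.

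I expect the only point needing any care to be the auxiliary observation, and in particular making the (mildly self-referential) definition of $V_2$ precise: $V_2$ is the set of nodes of $V_0\times V_1$ reachable from $(0,0)$ under the product-edge relation, obtained by starting from $(0,0)$ and closing under the clause "if $(\alpha_0,\alpha_1)\in V_2$, $(\alpha_0,a,\beta_0)\in E_0$ and $(\alpha_1,a,\beta_1)\in E_1$ then $(\beta_0,\beta_1)\in V_2$". Once $V_2$ and $E_2$ are pinned down this way, each of the four claims is a one- or two-line verification, and the content of the proposition over the classical automata-intersection construction is precisely the extra bookkeeping that the compatibility-with-$R$ and reachability conditions survive the product.
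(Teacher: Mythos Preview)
Your proof is correct and follows essentially the same approach as the paper's own proof: the paper also dispatches reachability by construction, completeness and determinism in a line from the corresponding properties of $\Gamma_0$ and $\Gamma_1$, and compatibility with $R$ via the same path-product observation you isolate as your auxiliary lemma. Your version is more carefully written---in particular you make explicit why intermediate nodes along a product path remain in $V_2$, a point the paper leaves implicit---but the structure and ideas are the same.
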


\begin{proof}
The word graph $\Gamma_2$ is complete and deterministic since $\Gamma_0$ and
$\Gamma_1$ are, and $\Gamma_2$ was constructed so that every node is
reachable from $(0, 0)$.

It remains to prove that $\Gamma_2$ is compatible with the set of relations
$R$. If $w\in A ^ *$, then, since $\Gamma_0$ and $\Gamma_1$ are
complete, then for $i \in \{0, 1\}$ and each node $\alpha$ in
$\Gamma_i$, there is a unique path in $\Gamma_i$ labeled by $w$ with
source $\alpha$. If $w\in A ^*$ labels a
$(\alpha_0, \beta_0)$-path in $\Gamma_0$ and an $(\alpha_1, \beta_1)$-path in
$\Gamma_1$, then $w$ labels a $((\alpha_0, \alpha_1), (\beta_0,
\beta_1))$-path in $\Gamma_2$. Since $\Gamma_0$ is compatible with $R$, for
every $(u, v) \in R$ and for every $\alpha_0\in V_0$ there exists a
$\beta_0\in V_0$ such that both $u$ and $v$ label $(\alpha_0, \beta_0)$-paths
in $\Gamma_0$. Similarly, for every $\alpha_1\in V_1$ there is $\beta_1\in
V_1$ such that $u$ and $v$ both label $(\alpha_1, \beta_1)$-paths in
$\Gamma_1$. Hence for every $(u, v) \in R$ and every $(\alpha_0, \alpha_1)\in
V_2$ there exists $(\beta_0, \beta_1)\in V_2$ such that both $u$ and $v$
label $((\alpha_0, \alpha_1), (\beta_0, \beta_1))$-paths in $\Gamma_2$, and
so $\Gamma_2$ is compatible with $R$ also.
\end{proof}

\begin{cor}\label{is-meet}
If $\Gamma_0=(V_0, E_0)$ and $\Gamma_1=(V_1, E_1)$ are word graphs
representing congruences $\rho_0$ and $\rho_1$, respectively, of the monoid
$M$ defined by the presentation $\langle A \mid R\rangle$, then the meet word
graph $\Gamma_2$ of $\Gamma_0$ and $\Gamma_1$ represents the meet
$\rho_0\wedge \rho_1$ of $\rho_0$ and $\rho_1$.
\end{cor}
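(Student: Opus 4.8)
The plan is to combine \cref{meet-graph} with \cref{thm-graph-to-right-congruence} to identify the right congruence represented by $\Gamma_2$, and then to recognise that congruence as the lattice-theoretic meet of $\rho_0$ and $\rho_1$. By \cref{meet-graph}, $\Gamma_2$ is complete, deterministic, compatible with $R$, and every node is reachable from $(0,0)$; so by \cref{thm-graph-to-right-congruence} it represents a right congruence $\tau$ on $M$, namely $\tau = \set{((u)\theta, (v)\theta)}{(u,v)\in ((0,0))\pi_{\Gamma_2}}$, where $\theta: A^* \to M$ is the homomorphism with $\ker(\theta) = R^{\#}$.

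The first step is to compute the path relation $((0,0))\pi_{\Gamma_2}$ explicitly in terms of those of $\Gamma_0$ and $\Gamma_1$. The key observation, immediate from the definition of $E_2$ by induction on path length, is that a word $w\in A^*$ labels an $((0,0),(\beta_0,\beta_1))$-path in $\Gamma_2$ if and only if $w$ labels a $(0,\beta_0)$-path in $\Gamma_0$ and a $(0,\beta_1)$-path in $\Gamma_1$; here the reachability pruning in the definition of $V_2$ is harmless, since every node on a path with source $(0,0)$ is by definition reachable from $(0,0)$. Because $\Gamma_0$ and $\Gamma_1$ are complete and deterministic, each $w$ has a unique such pair of endpoints, and hence $(u,v)\in ((0,0))\pi_{\Gamma_2}$ if and only if $u$ and $v$ reach a common node in $\Gamma_0$ and a common node in $\Gamma_1$; that is, $((0,0))\pi_{\Gamma_2} = (0)\pi_{\Gamma_0}\cap (0)\pi_{\Gamma_1}$ as subsets of $A^*\times A^*$.

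Next I would push this identity through $\theta$. Since $\Gamma_i$ is a complete word graph compatible with $R$, the relation $(0)\pi_{\Gamma_i}$ is an equivalence relation on $A^*$ containing $R$, hence containing $R^{\#} = \ker(\theta)$, and it is right-invariant; consequently membership of a pair $(u,v)$ in $(0)\pi_{\Gamma_i}$ depends only on $((u)\theta, (v)\theta)$. It follows that $\rho_i = \set{((u)\theta, (v)\theta)}{(u,v)\in (0)\pi_{\Gamma_i}}$ and that applying $\theta$ to the intersection of the two path relations yields exactly $\rho_0\cap\rho_1$; combined with the previous paragraph, $\tau = \rho_0\cap \rho_1$.

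Finally, it remains to note that $\rho_0\cap\rho_1$ is the meet of $\rho_0$ and $\rho_1$ in the lattice of right congruences of $M$: the intersection of two right congruences is again a right congruence, and it is clearly the largest right congruence contained in both, hence their infimum. Therefore $\Gamma_2$ represents $\rho_0\wedge\rho_1$, as required. I expect the only mildly delicate point to be the bookkeeping in the middle steps, namely verifying that the product construction preserves path relations and that the reachability restriction on $V_2$ changes nothing, since everything else is either quoted from \cref{meet-graph} or a standard fact about congruence lattices.
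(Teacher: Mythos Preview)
Your proposal is correct and follows essentially the same route as the paper: use \cref{meet-graph} to see that $\Gamma_2$ represents some right congruence $\tau$, identify the path relation $((0,0))\pi_{\Gamma_2}$ with $(0)\pi_{\Gamma_0}\cap(0)\pi_{\Gamma_1}$ via the product-automaton description of paths, and conclude $\tau=\rho_0\cap\rho_1=\rho_0\wedge\rho_1$. Your write-up is in fact more careful than the paper's, which compresses the last two steps into a single line and cites the standard automata fact rather than spelling out the induction on path length or the harmlessness of the reachability pruning.
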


\begin{proof}
By~\cref{meet-graph}, $\Gamma_2$ represents a right congruence $\tau$ on
$M$.
If $(\alpha, \beta) \in V_2$, then by the comments
after~\cite[Theorem 1.25]{Sipser2013aa}
a word $w\in A ^ *$ labels a $((0, 0), (\alpha, \beta))$-path in $\Gamma_2$
if and only if $w$ labels a $(0, \alpha)$-path in $\Gamma_0$ and a $(0,
\beta)$-path in $\Gamma_1$. In other words, if $\pi_{\Gamma_i}$ denotes the
path relation on $\Gamma_i$ for $i\in \{0, 1, 2\}$ and $u, v\in A ^ *$ are
arbitrary, then
$(u, v) \in (0)\pi_{\Gamma_2}$ if and only if $(u, v) \in (0)\pi_{\Gamma_0}
\cap (0)\pi_{\Gamma_1}$ if and only if $(u/R^ {\#}, v/R ^ {\#})\in
\rho_0\wedge \rho_1$, as required.
\end{proof}

The algorithm \texttt{MeetWordGraphs} in \cref{meet} can be used to
construct the meet word graph $\Gamma_2$ from the input word graphs
$\Gamma_0$ and $\Gamma_1$. \cref{meet} starts by constructing a graph
with nodes that have the form $(\alpha, \beta, \gamma)$ for $\alpha,
\beta, \gamma \in \mathbb{N}$. These nodes get relabeled at the last
step of the procedure and a standard word graph is returned by
\texttt{MeetWordGraphs}. The triples $(\alpha, \beta, \gamma)$
belonging to $V$ in \texttt{MeetWordGraphs}, then the first component
corresponds to a node in $\Gamma_0$, the second component to a node
in $\Gamma_1$, and the third component labels the corresponding node
in the meet word graph returned by \cref{meet}.

\begin{algorithm}
\caption{- \texttt{MeetWordGraphs}}
\label{meet}
\textbf{Input:} Word graphs $\Gamma_0 = (V_0, E_0)$ and
$\Gamma_1 = (V_1, E_1)$ corresponding to the right congruences $\rho_0$ and
$\rho_1$ of the monoid $M$. \\
\textbf{Output:} The word graph of the meet $\rho_0\wedge \rho_1$ of
$\rho_0$ and $\rho_1$
\begin{algorithmic}[1]
  \State $V = \{(0, 0, 0)\}, E = \varnothing, n = 0$
  \For{$(\alpha_0, \alpha_1, \beta) \in V$}
  \For{$a \in A$}
  \State Let $\alpha_0'\in V_0, \alpha_1'\in V_1$ be such that
  $(\alpha_0, a, \alpha_0')\in E_0$ and $(\alpha_1, a, \alpha_1')\in E_1$
  \If{there exists $\beta'$ such that $(\alpha_0', \alpha_1', \beta') \in V$}
  \State  $E \gets E \cup ((\alpha_0, \alpha_1, \beta), a,
  (\alpha_0', \alpha_1', \beta'))$
  \Else
  \State $n \gets n+1$
  \State $V \gets V \cup (\alpha_0', \alpha_1', n)$
  \State  $E \gets E \cup ((\alpha_0, \alpha_1, \beta), a,
  (\alpha_0', \alpha_1', n))$
  \EndIf
  \EndFor
  \EndFor
  \State Relabel every $(\alpha_0, \alpha_1, \beta) \in V$ to $\beta$
  \State \Return $(V, E)$
\end{algorithmic}
\end{algorithm}

In comparison to the procedure for the construction of the intersection
automaton in~\cite[Theorem 1.25]{Sipser2013aa},
\texttt{MeetWordGraphs} includes a few extra steps so
that the output word graph is standard.

\begin{prop}\label{standard-meet}
The output of~\texttt{MeetWordGraphs} in \cref{meet} is a complete
deterministic standard word graph which is compatible with $R$ and
each node of this word graph is reachable from 0.
\end{prop}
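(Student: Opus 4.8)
The plan is to first identify the word graph returned by \texttt{MeetWordGraphs} with the meet word graph $\Gamma_2$ of $\Gamma_0$ and $\Gamma_1$ from \cref{meet-graph}, and then to check that the relabelling in line 14 produces a standard word graph.

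For the first step, I would show by induction on the progress of the loop of lines 2--13 that: the set $\{(\alpha_0, \alpha_1) : (\alpha_0, \alpha_1, \beta)\in V \text{ for some }\beta\}$ equals the set of nodes of $V_0\times V_1$ reachable from $(0,0)$ under the product transition; no two triples in $V$ share their first two coordinates, this being enforced by the test in line 5; and $((\alpha_0, \alpha_1, \beta), a, (\alpha_0', \alpha_1', \beta'))\in E$ precisely when $(\alpha_0, a, \alpha_0')\in E_0$ and $(\alpha_1, a, \alpha_1')\in E_1$. It then follows that the projection $(\alpha_0, \alpha_1, \beta)\mapsto (\alpha_0, \alpha_1)$ is a word graph isomorphism from the graph built in lines 1--13 onto $\Gamma_2$, carrying the triple $(0,0,0)$ to $(0,0)$. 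By \cref{meet-graph} the graph $\Gamma_2$ --- and hence the output of \texttt{MeetWordGraphs}, which differs from it only by the relabelling in line 14 --- is complete, deterministic, compatible with $R$, and has every node reachable from $(0,0)$; since $(0,0,0)$ is relabelled to $0$, every node of the output is reachable from $0$.

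It remains to prove that the output is standard, that is, that $V = \{0, \ldots, |V|-1\}$ and that $\alpha < \beta$ if and only if $w_\alpha \prec w_\beta$, where $w_\alpha$ denotes the shortlex-least word labelling a $(0,\alpha)$-path. The first assertion is immediate, since the initial triple has third coordinate $0$ and every later triple created in line 9 is given the third coordinate $n$, which runs through the values $1, 2, 3, \ldots$. For the second assertion I would argue, in the spirit of \cref{lemma-invariants}, \cref{lemma-invariants-2}, \cref{standard-pre} and \cref{standard}, that the loop of lines 2--13 performs a breadth-first traversal in which nodes are processed in order of their labels and, for a fixed node, out-edges are examined in the order $a_0 \prec a_1 \prec \cdots \prec a_{m-1}$. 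Call the edge added in line 10 when a node $\gamma \neq 0$ is created in line 9 the \emph{definition edge} of $\gamma$. One then shows, simultaneously by induction on the label of $\gamma$, that the definition edges form a spanning tree rooted at $0$; that if $(\beta, a, \gamma)$ is the definition edge of $\gamma$ then $w_\gamma = w_\beta a$ and the label of $\gamma$ exceeds that of $\beta$; and that labels are assigned to nodes in strictly increasing shortlex order of the words $w_\gamma$. Given these facts, $\alpha < \beta \iff w_\alpha \prec w_\beta$ follows exactly as in \cref{standard}.

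The main obstacle is the induction in the preceding paragraph, specifically showing that the word $w_\beta a$ associated with a node $\gamma$ at the moment it is created is genuinely the shortlex-least word reaching $\gamma$. To do this one assumes a shortlex-smaller word $u$ reaches $\gamma$, writes $u = u'a'$, locates the node $\gamma'$ reached by the proper prefix $u'$, uses $u' \prec u$ together with the inductive hypothesis to conclude that $\gamma'$ was created with definition word $u'$ and processed no later than $\beta$, and then contradicts the order in which the in-edges of $\gamma$ are examined. This is the same kind of bookkeeping as in the proofs of \cref{lemma-invariants-2} and \cref{standard-pre}, adapted from a depth-first to a breadth-first traversal.
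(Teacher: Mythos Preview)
Your proposal is correct and follows essentially the same approach as the paper: both identify the pre-relabelling graph with the meet word graph $\Gamma_2$, invoke \cref{meet-graph} for completeness, determinism, compatibility and reachability, and then deduce standardness from the order in which nodes and edges are created. The paper's argument for standardness is far terser than yours---it simply records that edges with a given source are defined in increasing label order and that all edges from node $\gamma$ are defined before any edge from node $\gamma+1$, and declares standardness to follow---whereas you spell out the breadth-first bookkeeping in detail; but the underlying idea is the same.
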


\begin{proof}
Let $\Gamma_2$ denote the word graph obtained in \cref{meet} before the
relabelling of the nodes in line 14. It is clear that this word graph is
isomorphic to the meet word graph of $\Gamma_0$ and $\Gamma_1$ and hence it
follows by \cref{meet-graph} that it is complete, deterministic, compatible
with $R$ and each node in $\Gamma_2$ is reachable from 0. For every node
$(\alpha, \beta, \gamma)$ the edge leaving $(\alpha, \beta, \gamma)$ labelled
by $a$ is defined before the edge labelled by $b$ whenever $a < b$. In
addition, all edges with source $(\alpha_0, \beta_0, \gamma)$ have been
defined before any edge with source  $(\alpha_1, \beta_1, \gamma + 1)$ is
defined. It follows that the output word graph after the relabelling in line
14 is standard.
\end{proof}

\section{Algorithm 2: principal congruences and joins}
\label{section-algo-2}
In the preceding sections we described algorithms that permit the computation
of the lattice of right or left congruences of a finitely presented semigroup
or monoid. In this section we consider an alternative method for computing the
lattice of 1-sided or 2-sided congruences of a finite monoid.

If $M$ is a finite monoid, then the basic outline of the method considered in
this section is: to compute the set of all principal congruences of $M$; and
then compute all possible joins of the principal congruences. This approach has
been considered by several authors; particularly relevant here
are~\cite{Araujo2022aa, Freese2008aa, Torpey2019aa}. To compute the set of all
principal congruences of a monoid $M$, it suffices to compute  the principal
congruence generated by $(x, y)$ for every $(x, y) \in M \times M$, and to find
those pairs generating distinct congruences. This requires the computation of
$O(|M| ^ 2)$ principal congruences. In this part of the paper, we describe a
technique based on~\cite{East2019aa} for reducing the number of pairs $(x,
y)\in M \times M$ that must be considered. More specifically, we perform a
preprocessing step where some pairs that generate the same congruence are
eliminated. The time taken by this preprocessing step is often insignificant
when compared to the total time required to compute the lattice, as mentioned
above, in many cases results in a significant reduction in the number of
principal congruences that must be computed, and in the overall time to compute
the lattice; see \cref{appendix-benchmarks} for more details. Of course, there
are also examples where the preprocessing step produces little or no reduction
in the number of principal congruences that must be computed, and so increases
the total time required to compute the lattice (the Gossip
monoids~\cite{Brouwer2015aa} are class of such examples).


Recall that a \defn{submonoid} $N$ of a monoid $M$ is a subset of $M$, which
is a monoid under the same operation as $M$ with the same identity element; we
denote this by $N\leq M$. If $X \subseteq M$, then the least submonoid of $M$
containing $X$ is called the \defn{submonoid generated} by $X$ and is denoted
by $\langle X \rangle$.

For the remainder of the paper, we suppose that $U$ is a monoid, $M$ is a
submonoid of $U$, and $N$ is a submonoid of $M$. We also denote the identity
element of any of these 3 monoids by $1$. Recall that $M$ is \defn{regular} if
for every $x\in M$ there exists some $x'\in M$ such that $xx'x = x$.

In the case that $U$ is regular, in~\cite{East2019aa}, it was shown, roughly
speaking, how to utilise the Green's structure of $U$ to determine the
Green's structure of $M$. The idea being that the structure of $U$ is
``known'', in some sense. The prototypical example is when $M$ is the full
transformation monoid $T_n$ (see~\cite{Linton1998aa}), consisting of all
transformations on the set $\{0, \ldots, n - 1\}$  for some $n$. In this case,
the Green's structure of $U$ is known and can be used to compute the Green's
structure of any submonoid of $U$. In this part of the paper, we will show that
certain results from~\cite{East2019aa} hold in greater generality, for the
so-called, relative Green's relations.

We say that $x, y\in M$ are $\mathscr{L} ^ {M, N}$-related if the sets $Nx =
\set{nx}{n\in N}$ and $Ny$ coincide; and we refer to $\mathscr{L} ^
{M, N}$ as the \defn{relative Green's $\mathscr{L} ^ {M, N}$-relation
on $M$}. The \defn{relative Green's $\mathscr{R} ^ {M, N}$-relation}
on $M$ is defined dually. We say that $x, y\in M$ are $\mathscr{J} ^
{M, N}$-related if the sets $NxN = \set{nxn}{n\in N}$ and $NyN$
coincide; and we refer to $\mathscr{J} ^ {M, N}$ as the
\defn{relative Green's $\mathscr{J} ^ {M, N}$-relation on $M$}. When
$N = M$, we recover the
classical definition of Green's relations, which are ubiquitous in
the study of semigroups and monoids. For further information about
Green's relations see~\cite{Howie1995aa}. Relative Green's relations
were first introduced in~\cite{Wallace1962aa,Wallace1963aa} and
further studied in~\cite{Cain2012aa,
Gray2008aa}. It is routine to show that each of $\mathscr{L} ^ {M, N}$,
$\mathscr{R} ^ {M, N}$, and $\mathscr{J} ^ {M, N}$ is an equivalence relation
on $M$; and that $\mathscr{L} ^ {M, N}$ is a right congruence, and $\mathscr{R}
^ {M, N}$ a left congruence. We denote the $\mathscr{L} ^ {M, N}$-,
$\mathscr{R} ^ {M, N}$-, or $\mathscr{J} ^ {M, N}$-class of an element $x\in M$
by $L_x^{M, N}$, $R_x^{M, N}$ and $J_x^{M, N}$, respectively.

It may be reasonable to ask, at this point, what any of this has to do with
determining the principal congruences of a monoid? This is addressed in the
next proposition.

\begin{prop}\label{prop-relative-greens-cong}
Let $M$ be a monoid and let $\Delta_M = \{(m, m) : m\in M\}\leq M \times M$.
Then the following hold:
\begin{enumerate}[\rm (i)]
  \item\label{prop-relative-greens-cong-i}
    If $(x_0, y_0) \mathscr{R} ^ {M\times M, \Delta_M} (x_1, y_1)$, then
    the  right congruences generated by $(x_0, y_0)$ and $(x_1, y_1)$
    coincide;
  \item\label{prop-relative-greens-cong-ii}
    If $(x_0, y_0) \mathscr{L} ^ {M\times M, \Delta_M} (x_1, y_1)$, then
    the  left congruences generated by $(x_0, y_0)$ and $(x_1, y_1)$
    coincide;
  \item\label{prop-relative-greens-cong-iii}
    If $(x_0, y_0) \mathscr{J} ^ {M\times M, \Delta_M} (x_1, y_1)$, then
    the  2-sided congruences generated by $(x_0, y_0)$ and $(x_1, y_1)$
    coincide.
\end{enumerate}
\end{prop}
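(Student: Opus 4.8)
The plan is to reduce all three parts to a single two-line argument resting on two facts: the right (resp.\ left, resp.\ two-sided) congruence generated by a pair is, by definition, the least right (resp.\ left, resp.\ two-sided) congruence on $M$ containing that pair; and a right (resp.\ left, resp.\ two-sided) congruence is closed under right (resp.\ left, resp.\ two-sided) multiplication.

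First I would unwind what the relative Green's relations on $M\times M$ modulo $\Delta_M$ actually say. Since $M$ is a monoid, $\Delta_M$ is a submonoid of $M\times M$ containing the identity $(1,1)$, so $(x_0,y_0)\,\mathscr{R}^{M\times M,\Delta_M}\,(x_1,y_1)$ is equivalent to $(x_0,y_0)\Delta_M=(x_1,y_1)\Delta_M$; evaluating the action of $\Delta_M$ componentwise, and using that both sets contain their defining element (take $(1,1)$), this says precisely that there exist $m,m'\in M$ with $(x_1,y_1)=(x_0,y_0)(m,m)=(x_0m,y_0m)$ and $(x_0,y_0)=(x_1,y_1)(m',m')=(x_1m',y_1m')$. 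Dually, $(x_0,y_0)\,\mathscr{L}^{M\times M,\Delta_M}\,(x_1,y_1)$ gives $m,m'\in M$ with $(x_1,y_1)=(mx_0,my_0)$ and $(x_0,y_0)=(m'x_1,m'y_1)$; and $(x_0,y_0)\,\mathscr{J}^{M\times M,\Delta_M}\,(x_1,y_1)$ gives elements of $M$ witnessing that each of $(x_0,y_0),(x_1,y_1)$ is obtained from the other by multiplying on the left and on the right by diagonal elements.

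For \ref{prop-relative-greens-cong-i}, let $\rho_i$ denote the right congruence generated by $(x_i,y_i)$ for $i\in\{0,1\}$. Then $\rho_0$ is a right congruence containing $(x_0,y_0)$, hence is closed under right multiplication and so contains $(x_0m,y_0m)=(x_1,y_1)$; since $\rho_1$ is the least right congruence containing $(x_1,y_1)$, we get $\rho_1\subseteq\rho_0$. Running the same argument with $m'$ in place of $m$ gives $\rho_0\subseteq\rho_1$, so $\rho_0=\rho_1$. Part \ref{prop-relative-greens-cong-ii} is proved verbatim with ``right'' replaced by ``left'' and right multiplication replaced by left multiplication, and part \ref{prop-relative-greens-cong-iii} likewise with two-sided congruences, using that a two-sided congruence containing $(x_0,y_0)$ also contains every two-sided translate $(mx_0m',my_0m')$ of it, and hence contains $(x_1,y_1)$.

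I do not expect a genuine obstacle here; the content is almost entirely in correctly unwinding the relation. The two points that merit a little care are: that, because $N=\Delta_M$ is a monoid rather than merely a semigroup, the relative $\mathscr{R}$-, $\mathscr{L}$-, and $\mathscr{J}$-classes are computed without adjoining an identity, so that $(x_0,y_0)$ and $(x_1,y_1)$ really do each occur as a one- or two-sided translate of the other; and keeping the pairings straight, namely $\mathscr{R}$ with right congruences, $\mathscr{L}$ with left congruences, and $\mathscr{J}$ with two-sided congruences.
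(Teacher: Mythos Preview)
Your proposal is correct and follows essentially the same approach as the paper: the paper also only spells out part~\ref{prop-relative-greens-cong-i}, using that the relative $\mathscr{R}$-relation gives an element $(m,m)\in\Delta_M$ with $(x_0m,y_0m)=(x_1,y_1)$, so $(x_1,y_1)$ lies in the right congruence generated by $(x_0,y_0)$, and then invokes symmetry. Your version is slightly more explicit about unpacking the definitions and about why the monoid structure of $\Delta_M$ avoids adjoining an identity, but the argument is the same.
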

\begin{proof}
We only prove part \ref{prop-relative-greens-cong-i}, the proofs in
the other cases are similar. If $(x_0,
y_0) \mathscr{R} ^ {M\times M, \Delta_M} (x_1, y_1)$, then there exists $(m, m)
\in \Delta_M$ such that $(x_0m, y_0m) = (x_0, y_0)(m, m) = (x_1, y_1)$. In
particular, $(x_1, y_1)$ belongs to the right congruence generated by $(x_0,
y_0)$. By symmetry $(x_0, y_0)$ also belongs to the right congruence generated
by $(x_1, y_1)$ and so these two congruences coincide.
\end{proof}

A corollary of \cref{prop-relative-greens-cong} is: if $X$ is a set of
$\mathscr{R} ^ {M \times M, \Delta_M}$-class representatives in $M\times M$,
then every principal right congruence on $M$ is generated by a pair in $X$. So,
knowing $\mathscr{R} ^ {M\times M, \Delta_M}$-class representatives in $M\times
M$, will allow us to compute the set of all principal right congruences of $M$.
By doing this we hope for two things: that we can compute the representatives
efficiently and that the number of such representatives is relatively small
compared to $|M\times M|$.
Analogous statements hold for
principal left congruences and $\mathscr{L} ^ {M\times M, \Delta_M}$; and for
2-sided congruences and $\mathscr{J} ^ {M \times M, \Delta_M}$.

The rest of this section is dedicated to showing how to can compute
$\mathscr{R} ^ {M\times M, \Delta_M}$-, and $\mathscr{J} ^ {M\times M,
\Delta_M}$-class representatives in $M\times M$. We will not discuss
$\mathscr{L} ^ {M \times M, \Delta_M}$-classes beyond the following comments.
Suppose that we can compute relative $\mathscr{R} ^ {M \times M,
\Delta_M}$-class representatives in $M \times M$ for any arbitrary monoid $M$.
Relative $\mathscr{L}^{M \times M, \Delta_M}$-classes can be computed in one of
two ways: by performing the dual of what is described in this section for
computing  relative $\mathscr{R}^{M \times M, \Delta_M}$-classes; or by
computing an anti-isomorphism from $\phi: M\to M ^ {\dagger}$ from $M$ to its
dual $M ^ {\dagger}$, and computing relative $\mathscr{R}^{M^{\dagger}\times
M^{\dagger}, \Delta_{M^ {\dagger}}}$-class representatives in  $M ^
{\dagger}\times M ^ {\dagger}$. For the sake of simplicity, we opt for the
second approach. An anti-isomorphism into a transformation monoid can be found
from the left Cayley graph of $M$. The lattice of congruences of a monoid $M$
is generally several orders of magnitude harder to determine than the left
Cayley graph, and as such computing an anti-isomorphism from $M$ to a
transformation monoid is an insignificant step in this process. The degree of
the transformation representation of $M ^ {\dagger}$ obtained from the left
Cayley graph of $M$ is $|M|$. If $|M|$ is large, this can have an adverse
impact on the computation of relative $\mathscr{R}^{M ^ {\dagger} \times M ^
{\dagger}, \Delta_{M^ {\dagger}}}$-class representatives. However, it is
possible to reduce the degree of this representation by finding a right
congruence of $M ^ {\dagger}$ on which $M ^ {\dagger}$ acts faithfully using
the algorithms given in \cref{section-low-index}.

If $U$ is a fixed regular monoid, $M$ is an arbitrary submonoid of $U$, and $N$
an arbitrary submonoid of $N$, then we show how to compute $\mathscr{R} ^ {M ,
N}$-class representatives for $M$ using the structure of $U$. Algorithm 11 in
\cite{East2019aa} describes how to obtain the $\mathscr{R} ^ {M , M}$-class
representatives for $M$. We will show that, with minimal changes, Algorithm 11
from~\cite{East2019aa} can be used to compute $\mathscr{R}^{M, N}$-class
representatives for $M$. We will then show how, as a by-product of the algorithm
used to compute $\mathscr{R} ^ {M , N}$-class representatives, to compute
$\mathscr{J} ^ {M , N}$-class representatives.

The essential idea is to represent an $\mathscr{R}^{M, N}$-class by a group and
a strongly connected component of the action of $N$ on the $\mathscr{L}^{U,
U}$-class containing elements of $M$. We will show (in
\cref{prop-R-membership}) that this representation reduces the problem of
checking membership in an $\mathscr{R}^{M, N}$-class to checking membership in
the corresponding group. Starting with the $\mathscr{R}^{M, N}$-class of the
identity, new $\mathscr{R}^{M, N}$-class representatives are computed by
left multiplying the existing representatives by the generators of $N$,
and testing whether these multiples are $\mathscr{R}^ {M, N}$-related to an
existing representative.

Before we can describe the algorithm and prove the results showing that it is
valid, we require the following. If $\Psi: X \times M \to X$ is a right action
of $M$ on a finite set $X$, $Y$ is any subset of $X$, and $m \in M$, then we
define
\begin{equation*}\label{eq-induced-action}
(Y, m)\Psi = \set{(y, m)\Psi}{y \in Y}
\end{equation*}
and we define $m|_{Y}:Y\to (Y, m)\Psi$ by
\[
(y)m|_{Y} = (y, m)\Psi
\]
for all $y\in Y$ and all $m\in M$. When $\Psi$ is clear from the context, we
may write $x\cdot m$ and $Y\cdot m$ instead of $(x, m)\Psi$ and $(Y, m)\Psi$,
respectively. We define the \defn{stabiliser} of $Y$ to be
\[
\Stab_{M}(Y)=\set{m\in M}{(Y, m)\Psi =Y}.
\]
Clearly, if $m\in \Stab_{M}(Y)$, then $m|_{Y}: Y \to Y$ is a permutation of
$Y$. The quotient of the stabiliser by the kernel of its action on $Y$,
i.e.~the congruence
\[
\ker(\Psi) = \set{(m,n)\in M \times M}{m,n\in\Stab_{M}(Y),\ m|_Y=n|_Y},
\]
is isomorphic to $\set{m|_{Y}}{m\in \Stab_{M}(Y)}$ which is a subgroup of the
symmetric group $\Sym(Y)$ on $Y$. When using the $\cdot$ notation for actions
we write $\ker(\cdot)$ to denote the kernel of the action $\cdot$. We denote the
equivalence class of an element $m \in \Stab_{M}(Y)$ with respect to
$\ker(\Psi)$ by $[m]$. Clearly, since $N$ is a submonoid of $M$,
$\QuoStab{\Psi}{N}{Y}$ is a subgroup of $\QuoStab{\Psi}{M}{Y}$.

We denote the right action of the monoid $U$ on $U$ by right multiplication by
$\Phi: U\times U \to U$. If $L$ is a $\L^{U, U}$-class of $U$, then the group
$\QuoStab{\Phi}{U}{L}$, and its subgroup $\QuoStab{\Phi}{M}{L}$,  act
faithfully by permutations on $L$. The algorithms described in this part of the
paper involve computing with these permutation groups using standard algorithms
from computational group theory, such as the Schreier-Sims
Algorithm~\cite{Seress2003ab, Sims1971aa, Sims1970aa}.  The $\L^{U, U}$-classes
are often too large themselves for it to be practical to compute with
permutations of $\L$-classes directly. For many well-studied classes of monoids
$U$, such as the full transformation monoid, the symmetric inverse monoid, or
the partition monoid, there are natural faithful representations of the action
of $\QuoStab{\Phi}{U}{L}$ on the $\L^{U, U}$-class $L$ in a symmetric group of
relatively low degree. To avoid duplication we refer the reader to
\cite[Section 4]{East2019aa} for details. Throughout the rest of this paper, we
will abuse notation by writing $\QuoStab{\Phi}{N}{L}$,  to mean a faithful
low-degree representation of $\QuoStab{\Phi}{N}{L}$ when one is readily
computable.

It might be worth noting that we are interested in computing $\mathscr{R}^{M
\times M, \Delta_{M}}$-class representatives, but the results in
\cite{East2019aa} apply to submonoids of $M$ when $U$ is the full
transformation monoid $T_n$,  the partition monoid $P_n$, or the symmetric
inverse monoid $I_n$, for example, rather than to submonoids of $U \times U$.
If a monoid $U$ is regular, then so too is $U\times U$, and hence we may apply
the techniques from~\cite{East2019aa} to compute submonoids of $U\times U$. The
missing ingredients, however, are the analogues for $U\times U$ of the results
in~\cite[Section 4]{East2019aa}, that provide efficient faithful
representations of the right action of $N$ on the $\L^{U, U}$-classes
containing elements of $M\leq U$. It is possible to prove such analogues for
$U\times U$. However, in the case that $U=U_n$ is one of $T_n$, $P_n$, and
$I_n$, at least, this is not necessary, since $U_n\times U_n$ embeds into
$U_{2n}$. As such we may directly reuse the methods described in~\cite[Section
4]{East2019aa}.

In order to make practical use of $\QuoStab{\Phi}{M}{L}$, it is necessary that
we can efficiently obtain a generating set. The following
analogue of Schreier's Lemma for monoids provides a method for doing so.
If $X$ is any set and $Y\subseteq X$, then we denote the identity function from
$Y$ to $Y$ by $\id_Y$.

\begin{prop}[cf. Proposition 2.3 in~\cite{East2019aa}]\label{prop-schreier}
Let $M = \langle A \rangle$ be a monoid, let $\Psi: X \times M \to X$ be a
right action of $M$, and  let $Y_0, \ldots, Y_{n - 1} \subseteq X$ be
the elements
of a strongly connected component of the right action of $M$ on
$\mathcal{P}(X)$ induced by $\Psi$. Then the following hold:
\begin{enumerate}[\rm(i)]
  \item\label{prop-schreier-i} if $(Y_0, u_i)\Psi = Y_i$ for some
    $u_i\in M$, then there exists
    $\overline{u_i}\in M$ such that $(Y_i,  \overline{u_i})\Psi = Y_0$,
    $(u_i\overline{u_i})|_{Y_0} = \id_{Y_0}$, and $(\overline{u_i}u_i)|_{Y_0}
    = \id_{Y_i}$;
  \item\label{prop-schreier-ii} $\QuoStab{\Psi}{M}{Y_i}$  and
    $\QuoStab{\Psi}{M}{Y_j}$ are conjugate
    subgroups of $\Sym(X)$ for all $i, j\in \{0, \ldots, n - 1\}$;
  \item\label{prop-schreier-iii} if $u_0 = \overline{u_0} = 1_M$ and
    $u_i, \overline{u_i}\in M$ are as
    in part \ref{prop-schreier-i} for $i > 0$, then
    $\QuoStab{\Psi}{M}{Y_0}$ is generated by
    \[
      \set{(u_ia\overline{u_j})|_{Y_0}}{0\leq i, j< n,\ a\in A,\ Y_i \cdot
      a = Y_j}.
    \]
\end{enumerate}
\end{prop}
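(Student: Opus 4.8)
The plan is to prove the three parts essentially in the order stated, since part \ref{prop-schreier-i} supplies the ``Schreier representatives'' $u_i, \overline{u_i}$ that are then used in parts \ref{prop-schreier-ii} and \ref{prop-schreier-iii}. For part \ref{prop-schreier-i}, I would argue as follows. Since $Y_0, \ldots, Y_{n-1}$ lie in a single strongly connected component of the action of $M$ on $\mathcal{P}(X)$, and $(Y_0, u_i)\Psi = Y_i$, there is some $v \in M$ with $(Y_i, v)\Psi = Y_0$. Then $u_i v \in \Stab_M(Y_0)$, so $(u_i v)|_{Y_0}$ is a permutation of $Y_0$ of some finite order $k \ge 1$ (here finiteness of $X$, hence of $Y_0$, is used). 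Set $\overline{u_i} = v (u_i v)^{k-1}$. Then $(u_i \overline{u_i})|_{Y_0} = \bigl((u_i v)|_{Y_0}\bigr)^{k} = \id_{Y_0}$, and $(\overline{u_i} u_i)|_{Y_0}$ requires a short computation: $(\overline{u_i} u_i)|_{Y_i}$ makes sense because $(Y_i, \overline{u_i})\Psi = (Y_i, v)\Psi \cdot (u_i v)^{k-1} = Y_0 \cdot (u_iv)^{k-1} = Y_0$ (the last step since $(u_iv)|_{Y_0}$ fixes $Y_0$ setwise), so $\overline{u_i} u_i \in \Stab_M(Y_i)$; and on $Y_i$ one checks $(\overline{u_i} u_i)|_{Y_i} = \id_{Y_i}$ by transporting along $u_i$: for $y \in Y_i$, write $y = (y_0, u_i)\Psi$ with $y_0 \in Y_0$, noting $u_i|_{Y_0}: Y_0 \to Y_i$ is a bijection because it has the two-sided partial inverse we are constructing. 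Some care with the bookkeeping of which restriction is a bijection is needed, but it is routine.

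For part \ref{prop-schreier-ii}, I would use the elements $u_i$ and $\overline{u_i}$ from part \ref{prop-schreier-i} (choosing $u_0 = \overline{u_0} = 1_M$) as conjugating elements. The claim is that conjugation by $u_i|_{Y_0}$ — more precisely, the bijection $Y_0 \to Y_i$ induced by $u_i$, with inverse induced by $\overline{u_i}$ — carries $\QuoStab{\Psi}{M}{Y_0}$ onto $\QuoStab{\Psi}{M}{Y_i}$. Concretely, for $s \in \Stab_M(Y_0)$, the element $\overline{u_i} s u_i$ stabilises $Y_i$ (since $Y_i \cdot \overline{u_i} = Y_0$, then $Y_0 \cdot s = Y_0$, then $Y_0 \cdot u_i = Y_i$), and $(\overline{u_i} s u_i)|_{Y_i} = (u_i|_{Y_0})^{-1} \circ s|_{Y_0} \circ (u_i|_{Y_0})$ as permutations once we identify $Y_0$ with $Y_i$ via $u_i|_{Y_0}$; surjectivity onto $\QuoStab{\Psi}{M}{Y_i}$ follows by applying the symmetric construction with $Y_i$ in place of $Y_0$. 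Passing to the quotients by $\ker(\Psi)$ on each side, this becomes a genuine group isomorphism realised by conjugation inside $\Sym(X)$ (extending the bijection $Y_0 \to Y_i$ arbitrarily to a permutation of $X$, say by the identity elsewhere, noting $|Y_0| = |Y_i|$).

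For part \ref{prop-schreier-iii}, this is the standard Schreier's-lemma argument adapted to this setting, and I expect it to be the main obstacle — not because any single step is hard, but because one must be careful that the monoid (rather than group) setting does not break the usual telescoping. Let $H = \QuoStab{\Psi}{M}{Y_0}$ and let $T$ be the proposed generating set. One inclusion is easy: each $(u_i a \overline{u_j})|_{Y_0}$ with $Y_i \cdot a = Y_j$ does lie in $H$, since $Y_0 \cdot u_i = Y_i$, $Y_i \cdot a = Y_j$, $Y_j \cdot \overline{u_j} = Y_0$. For the reverse inclusion, take $s \in \Stab_M(Y_0)$ and write $s = a_1 \cdots a_\ell$ with $a_r \in A$. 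Define a sequence of indices by $i_0 = 0$ and $Y_{i_r} = Y_{i_{r-1}} \cdot a_r$; since $s$ stabilises $Y_0$ we get $i_\ell = 0$. Then telescope:
\[
  s|_{Y_0} \;=\; (a_1 \cdots a_\ell)|_{Y_0} \;=\; \prod_{r=1}^{\ell} \bigl(u_{i_{r-1}} a_r \overline{u_{i_r}}\bigr)\big|_{Y_0},
\]
where the right-hand side is evaluated using at each stage that $\overline{u_{i_r}} u_{i_r}$ restricts to the identity on $Y_{i_r}$ (part \ref{prop-schreier-i}), so that the inserted factors $\overline{u_{i_r}} u_{i_r}$ cancel when restricted appropriately; the first factor uses $u_{i_0} = u_0 = 1_M$ and the last uses $\overline{u_{i_\ell}} = \overline{u_0} = 1_M$. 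Each factor on the right lies in $T$ because $Y_{i_{r-1}} \cdot a_r = Y_{i_r}$, so $s|_{Y_0} \in \langle T \rangle$, and hence $H = \langle T \rangle$ after passing to the quotient by $\ker(\Psi)$. The delicate point to get right in the write-up is the precise justification that the restriction of a product equals the product of restrictions along the relevant sets — i.e. that $(mn)|_Y = m|_Y$ followed by $n|_{Y \cdot m}$ — and that the cancellations $(\overline{u_{i_r}} u_{i_r})|_{Y_{i_r}} = \id_{Y_{i_r}}$ are being applied to the correct intermediate set at each step.
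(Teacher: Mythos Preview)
The paper does not actually give a proof of this proposition; it is stated with a reference to Proposition~2.3 of \cite{East2019aa} and then used without further argument. So there is nothing in the paper to compare against directly.

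That said, your proposal is correct and is exactly the standard Schreier-lemma argument one would expect (and presumably what appears in the cited reference). A couple of small points worth tightening in a write-up: in part~\ref{prop-schreier-ii}, the phrase ``extend by the identity elsewhere'' does not literally work when $Y_0$ and $Y_i$ overlap nontrivially, though your parenthetical ``arbitrarily'' is the right fix since $|X\setminus Y_0|=|X\setminus Y_i|$; and in part~\ref{prop-schreier-iii} you should state explicitly that each intermediate set $Y_0\cdot(a_1\cdots a_r)$ lies in the strongly connected component (this holds because $a_{r+1}\cdots a_\ell$ sends it back to $Y_0$), which is what guarantees the indices $i_r$ are defined. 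With those details filled in, the argument is complete.
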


We require the following two right actions of $N$. One is the action on
$\mathcal{P}(U)$ induced by right multiplication, i.e. for $n\in N$ and $X\in
\mathcal{P}(U)$:
\begin{equation}\label{eqn-action-1}
(X, n)\Phi = \{xn : x\in X\}.
\end{equation}
The second right action of $N$ is that on $\mathscr{L}^{U, U}$-classes:
\begin{equation}\label{eqn-action-2}
(L^{U, U}_{x}, n)\Psi  = L^{U, U}_{xn}.
\end{equation}
where $n\in N$ and $x\in M$. The latter is an action because $\mathscr{L}^{U,
U}$ is a right congruence on $U$. The actions given in \eqref{eqn-action-1} and
\eqref{eqn-action-2} coincide in the case described by the following lemma.

\begin{lemma}[cf. Lemma 3.3 in~\cite{East2019aa}]\label{prop-actions-equiv}
Let $x, y\in U$ be arbitrary. Then the $\mathscr{L}^{U, U}$-classes $L_{x}^{U,
U}$ and  $L_{y}^{U, U}$ belong to the same strongly connected component of the
right action $\Phi$ of $N$ defined in \eqref{eqn-action-1} if and only if they
belong to the same strongly connected component of the right action $\Psi$ of
$N$ defined in \eqref{eqn-action-2}.
\end{lemma}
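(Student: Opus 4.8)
The plan is to unwind what ``same strongly connected component'' means for each of the two actions and then establish the two resulting reachability statements, following the scheme of Lemma~3.3 in~\cite{East2019aa}. Since $N$ is a monoid, reachability always includes the trivial case (act by $1$), so $L_x^{U,U}$ and $L_y^{U,U}$ lie in the same strongly connected component of $\Psi$ exactly when there are $n,m\in N$ with $L_{xn}^{U,U}=L_y^{U,U}$ and $L_{ym}^{U,U}=L_x^{U,U}$, and they lie in the same strongly connected component of $\Phi$ exactly when there are $n,m\in N$ with $L_x^{U,U}n=L_y^{U,U}$ and $L_y^{U,U}m=L_x^{U,U}$ (equalities of subsets of $U$). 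The single elementary fact I will use throughout is that $L_x^{U,U}n\subseteq L_{xn}^{U,U}$ for every $n\in N$: if $Uw=Ux$ then $Uwn=Uxn$. Moreover $xn\in L_x^{U,U}n$, so $L_x^{U,U}n$ is a nonempty subset of $L_{xn}^{U,U}$, and whenever $L_x^{U,U}n$ happens to coincide with an $\mathscr{L}^{U,U}$-class, that class is forced to be $L_{xn}^{U,U}$.

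The implication ``same $\Phi$-component implies same $\Psi$-component'' is then immediate: from $L_x^{U,U}n=L_y^{U,U}$ and $xn\in L_x^{U,U}n$ we get $L_{xn}^{U,U}=L_y^{U,U}$, i.e.\ $(L_x^{U,U},n)\Psi=L_y^{U,U}$, and applying this also to the return element $m$ shows $L_x^{U,U}$ and $L_y^{U,U}$ lie in a common $\Psi$-cycle.

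For the converse I expect the real work, since a priori $L_x^{U,U}n$ could be a \emph{proper} subset of $L_{xn}^{U,U}=L_y^{U,U}$, and this must be ruled out. First I would record the easy consequences of the $\Psi$-cycle hypothesis: $L_x^{U,U}n\subseteq L_{xn}^{U,U}=L_y^{U,U}$ and $L_y^{U,U}m\subseteq L_{ym}^{U,U}=L_x^{U,U}$, hence $L_x^{U,U}(nm)=(L_x^{U,U}n)m\subseteq L_x^{U,U}$; and $y(mn)^j\,\mathscr{L}^{U,U}\,y$ for all $j\geq 1$, by induction using $ym\,\mathscr{L}^{U,U}\,x$ and $xn\,\mathscr{L}^{U,U}\,y$. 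Since $N$ is finite, pick $s'\geq 1$ with $e':=(mn)^{s'}\in N$ idempotent; then $ye'\,\mathscr{L}^{U,U}\,y$, so for every $w\in L_y^{U,U}$ we have $Uw=Uy=Uye'\subseteq Ue'$, and because $(e')^2=e'$ one has $Ue'=\{u\in U: ue'=u\}$, whence $we'=w$. Using the identity $e'=(mn)^{s'}=m(nm)^{s'-1}n$ I then get $w=we'=\big[(wm)(nm)^{s'-1}\big]n$, where $(wm)(nm)^{s'-1}\in (L_y^{U,U}m)(nm)^{s'-1}\subseteq L_x^{U,U}(nm)^{s'-1}\subseteq L_x^{U,U}$ (iterating $L_x^{U,U}(nm)\subseteq L_x^{U,U}$); hence $w\in L_x^{U,U}n$. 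This gives $L_y^{U,U}\subseteq L_x^{U,U}n$, and with the reverse inclusion $L_x^{U,U}n=L_y^{U,U}$, i.e.\ $(L_x^{U,U},n)\Phi=L_y^{U,U}$. Running the symmetric argument with the roles of $x,y$ and of $n,m$ interchanged yields $(L_y^{U,U},m)\Phi=L_x^{U,U}$, so the two classes lie in a common $\Phi$-cycle, which finishes the proof.

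The step I expect to be the crux is the idempotent-power argument: it is exactly the device that upgrades the automatic inclusion $L_x^{U,U}n\subseteq L_{xn}^{U,U}$ to an equality once one is inside a strongly connected component; everything else is bookkeeping with that inclusion and the definitions of the two monoid actions.
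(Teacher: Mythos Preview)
The paper does not include its own proof of this lemma; it simply records the statement with a reference to \cite[Lemma~3.3]{East2019aa}. Your argument is correct and is precisely the standard one: the easy direction uses only $L_x^{U,U}n\subseteq L_{xn}^{U,U}$, and for the converse you exploit finiteness of $N$ to pass to an idempotent power $e'=(mn)^{s'}$, deduce that $e'$ acts as the identity on $L_y^{U,U}$ (via $w\in Uw=Uwe'\subseteq Ue'=\{u:ue'=u\}$), and then factor $w=we'$ through $L_x^{U,U}$. The one point worth making explicit is that your appeal to ``$N$ is finite'' is legitimate in the setting of \cref{section-algo-2} (the paper uses this freely, e.g.\ in the proof of \cref{prop-J-scc}), even though the surrounding paragraph introducing $U$, $M$, $N$ does not state it; you might flag this hypothesis in the proof itself.
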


\cref{prop-actions-equiv} allows us use the actions defined in
\eqref{eqn-action-1} and \eqref{eqn-action-2} interchangeably within a strongly
connected component of either action. We will denote both of the right actions
in \eqref{eqn-action-1} and \eqref{eqn-action-2} by $\cdot$. Although the
actions in \eqref{eqn-action-1} and \eqref{eqn-action-2} are interchangeable
the corresponding stabilisers are not. Indeed, the stabiliser of any $\L^{U,
U}$-class with respect to the action given in \eqref{eqn-action-2} is always
trivial, but the stabiliser with respect to \eqref{eqn-action-1} is not. When
we write $\Stab_N(X)$ or $\Stab_U(X)$ for some subset $X$ of $U$, we will
always mean the stabiliser with respect to \eqref{eqn-action-1}.

We require the following result from~\cite{East2019aa} which relate to
non-relative Green's relations and classes.

\begin{lemma}[cf. Lemma 3.6 in~\cite{East2019aa}]\label{lem-free}
Let $x\in U$ and $s, t\in \Stab_{U}(L_{x}^{U, U})$ be arbitrary. Then $ys=yt$
for all $y\in L_x^{U, U}$ if and only if there exists $y\in L_x^{U, U}$ such
that $ys=yt$.
\end{lemma}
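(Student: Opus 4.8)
The forward implication needs no argument: $L_x^{U,U}$ is non-empty since it contains $x$, so if $ys = yt$ for every $y \in L_x^{U,U}$ then in particular it holds for some $y$. The work is entirely in the converse, so that is where I will concentrate.

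For the converse the plan is to exploit the elementary fact that any two elements of the same $\mathscr{L}^{U,U}$-class generate the same principal left ideal. Suppose $ys = yt$ for some fixed $y \in L_x^{U,U}$, and let $z \in L_x^{U,U}$ be arbitrary. Since $y, z \in L_x^{U,U}$, the definition of $\mathscr{L}^{U,U}$ (the case $N = U$ of the relative relation) gives $Uz = Ux = Uy$; and because $U$ is a monoid, $z = 1 \cdot z \in Uz = Uy$, so $z = uy$ for some $u \in U$. Then a single use of associativity yields
\[
  zs = (uy)s = u(ys) = u(yt) = (uy)t = zt ,
\]
and since $z$ was arbitrary we conclude $zs = zt$ for all $z \in L_x^{U,U}$.

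I do not expect any obstacle here. The only subtlety worth recording is that the step $z \in Uy$ uses that $U$ is a monoid, not merely a semigroup (for a semigroup one would pass to $U^1$), which is harmless since $U$ is assumed to be a monoid throughout this section. I would also note that the hypothesis $s, t \in \Stab_U(L_x^{U,U})$ plays no role in the proof — the statement is true for arbitrary $s, t \in U$ — but it is the situation in which the lemma will be applied: it ensures that $s|_{L_x^{U,U}}$ and $t|_{L_x^{U,U}}$ are permutations of $L_x^{U,U}$, so that the conclusion says exactly that two elements of the group $\QuoStab{\Phi}{U}{L_x^{U,U}}$ agreeing at one point of $L_x^{U,U}$ must agree at every point.
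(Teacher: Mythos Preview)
Your proof is correct. The paper does not actually supply its own proof of this lemma: it merely cites it from \cite{East2019aa}, so there is nothing to compare against in detail. Your argument is the standard one --- pick $u\in U$ with $z=uy$ using $Uz=Uy$ and the fact that $U$ is a monoid, then compute $zs=uys=uyt=zt$ --- and your observation that the stabiliser hypothesis is not used in the proof is also accurate.
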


We also require the following results, which are modifications of the
corresponding
results in~\cite{East2019aa} for relative Green's relations and classes.

If $x, y\in U$, then we write $L_{x}^{U, U} \sim L_{y} ^ {U, U}$ to denote
that the $\mathscr{L}^{U, U}$-classes  $L_{x}^{U, U}$ and  $L_{y} ^{U, U}$
belong to the same strongly connected component of either of the right actions
of $N$ defined in \eqref{eqn-action-1} or \eqref{eqn-action-2}. Similarly, we
write $R_{x}^{U, U} \sim R_{y}^{U, U}$ for the analogous statement for
$\mathscr{R} ^ {U, U}$-classes.

Recall that we do not propose acting on the $\mathscr{L} ^ {U, U}$-classes
directly but rather we use a more convenient isomorphic action when available.
For example, if $U$ is the full transformation monoid, then the action of any
submonoid $M$ of $U$ on $\mathscr{L} ^ {U, U}$-classes of elements in $M$ is
isomorphic to the natural right action of $M$ on the set
$\{\operatorname{im}(m): m\in M\}$; for more examples and details see
\cite[Section 4]{East2019aa}. In~\cite[Algorithm 1]{East2019aa} a (simple brute
force) algorithm is stated that can be used to compute the word graph
corresponding to the right action of $M$ on $\{L_x^{U, U}: x\in M\}$. In the
present paper we must compute the word graph for the right action of $N$ on
$\{L_x^{U, U}: x\in M\}$.~\cite[Algorithm 1]{East2019aa} relies on the fact
that $L_1^ {U, U} \cdot M = \{L_x^{U, U}: x\in M\}$. Clearly, $L_{1_U}^ {U, U}
\cdot N$ is not equal to $\{L_x^{U, U}: x\in M\}$ in general. As such we cannot
use ~\cite[Algorithm 1]{East2019aa} directly to compute $\{L_x^{U, U}: x\in
M\}$. However, we can apply~\cite[Algorithm 1]{East2019aa} to compute the set
$\{L_x^{U, U}: x\in M\}$ and subsequently compute the word graph of the action
of $N$ on this set. The latter can be accomplished by repeating~\cite[Algorithm
1]{East2019aa} with the generating set $A$ for $N$, setting $C :=\{L_x^{U, U}:
x\in M\}$ in line 1. Since $N$  is a submonoid of $M$, the condition in line 3
never holds, and the condition in line 6 always holds.

The next result states some properties of relative Green's relations that are
required to prove the main propositions in this section.

\begin{lemma}[cf. Lemma 3.4, Corollaries 3.8 and 3.13
in~\cite{East2019aa}]\label{lem-greens}
Let $x, y\in M$ and let $s\in N$. Then the following hold:
\begin{enumerate}[\rm (i)]
  \item \label{lem-greens-simple}
    $L_{x}^{U, U} \sim L_{xs}^{U, U}$ if and
    only if $x \mathscr{R}^{M, N} xs$;
  \item \label{cor-greens-simple}
    if $x \R^{M, N} y$ and $xs \L^{U, U} y$, then $xs \R^{M, N} y$;
  \item \label{cor-collect}
    if $x\R^{M, N}y$ and  $xs \L^{U, U} y$, then $f: L_x^{U, U}\cap R_x^{M,
    N}\to L_y^{U, U}\cap R_x^{M, N}$ defined by $t \mapsto ts$ is a
    bijection.
\end{enumerate}
\end{lemma}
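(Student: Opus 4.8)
The plan is to prove part~\ref{lem-greens-simple} first, as it is the substantive statement, and then deduce parts~\ref{cor-greens-simple} and~\ref{cor-collect} from it together with \cref{prop-schreier}. Throughout I would write $\cdot$ for the right action of $N$ on $\mathcal{P}(U)$ from \eqref{eqn-action-1} and, via \cref{prop-actions-equiv}, also for the induced action on $\L^{U,U}$-classes from \eqref{eqn-action-2}, and I would use repeatedly that $\L^{U,U}$ is a right congruence on $U$, so $p\,\L^{U,U}\,q$ forces $pr\,\L^{U,U}\,qr$, and hence $L_z^{U,U}\cdot r\subseteq L_{zr}^{U,U}$, for all $r\in U$.

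For part~\ref{lem-greens-simple}: since $1\in N$ the inclusion $xsN\subseteq xN$ is automatic, so $x\,\R^{M,N}\,xs$ is equivalent to $x\in xsN$, i.e.\ to the existence of $t\in N$ with $xst=x$. Then ($\Leftarrow$) is immediate, as \eqref{eqn-action-2} gives a cycle $L_x^{U,U}\to L_{xs}^{U,U}\to L_{xst}^{U,U}=L_x^{U,U}$, so $L_x^{U,U}\sim L_{xs}^{U,U}$. For ($\Rightarrow$), from $L_x^{U,U}\sim L_{xs}^{U,U}$ a return path in \eqref{eqn-action-2} supplies $t\in N$ with $L_{xst}^{U,U}=L_x^{U,U}$, so, putting $g:=st\in N$, we get $xg\,\L^{U,U}\,x$ and therefore $L_x^{U,U}\cdot g\subseteq L_x^{U,U}$. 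The step I expect to be the main obstacle is upgrading this to $g\in\Stab_U(L_x^{U,U})$ with $g|_{L_x^{U,U}}$ of finite order: this is where I would use \cref{prop-actions-equiv} to pass to the action \eqref{eqn-action-1}, in which strong connectivity of a component forces its one-step maps to be bijections onto their images (orbit sizes cannot strictly decrease around a cycle), and then \cref{lem-free} to conclude $g|_{L_x^{U,U}}$ is a permutation of some finite order $m$. Granting this, $xg^{m}=x$ and $g^{m}=s(tg^{m-1})$ with $tg^{m-1}\in N$, so $x\in xsN$. In effect this reproduces the proof of Lemma~3.4 of \cite{East2019aa} with $M$ replaced by the submonoid $N$.

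Part~\ref{cor-greens-simple} follows quickly. From $x\,\R^{M,N}\,y$ we have $xN=yN$, so $xp=y$ and $yq=x$ for some $p,q\in N$, and \eqref{eqn-action-2} then gives a cycle $L_x^{U,U}\to L_y^{U,U}\to L_x^{U,U}$, so $L_x^{U,U}\sim L_y^{U,U}$. Since $xs\,\L^{U,U}\,y$ means $L_{xs}^{U,U}=L_y^{U,U}$, we obtain $L_x^{U,U}\sim L_{xs}^{U,U}$, whence $x\,\R^{M,N}\,xs$ by part~\ref{lem-greens-simple}, so $xsN=xN=yN$, i.e.\ $xs\,\R^{M,N}\,y$.

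For part~\ref{cor-collect} I would exhibit an explicit two-sided inverse of $f$. First, $f$ is well defined: for $t\in L_x^{U,U}\cap R_x^{M,N}$ we have $ts\,\L^{U,U}\,xs\,\L^{U,U}\,y$, so $ts\in L_y^{U,U}$, and $L_t^{U,U}=L_x^{U,U}\sim L_y^{U,U}=L_{ts}^{U,U}$ (the $\sim$ as in part~\ref{cor-greens-simple}), so part~\ref{lem-greens-simple} applied to $t$ and $s$ gives $t\,\R^{M,N}\,ts$ and hence $ts\in R_x^{M,N}$. Next, since $L_x^{U,U}\sim L_y^{U,U}$ and $xs\,\L^{U,U}\,y$, the strong-connectivity argument used in part~\ref{lem-greens-simple} gives $L_x^{U,U}\cdot s=L_y^{U,U}$, so \cref{prop-schreier}\ref{prop-schreier-i}, applied to the action of $N=\genset{A}$ on $\mathcal{P}(U)$ with $u_i=s$, produces $\overline{s}\in N$ that acts as a two-sided inverse of $s$ between these classes: $L_y^{U,U}\cdot\overline{s}=L_x^{U,U}$, $(s\overline{s})|_{L_x^{U,U}}=\id_{L_x^{U,U}}$, and $(\overline{s}s)|_{L_y^{U,U}}=\id_{L_y^{U,U}}$. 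Then $u\mapsto u\overline{s}$ defines a map $g\colon L_y^{U,U}\cap R_x^{M,N}\to L_x^{U,U}\cap R_x^{M,N}$ (the two conditions checked exactly as for $f$, now using $L_y^{U,U}\cdot\overline{s}=L_x^{U,U}$ and part~\ref{lem-greens-simple} for $u$ and $\overline{s}$), and $(u)gf=u\overline{s}s=u$ for all $u\in L_y^{U,U}$ while $(t)fg=ts\overline{s}=t$ for all $t\in L_x^{U,U}$; hence $f$ is a bijection with inverse $g$.
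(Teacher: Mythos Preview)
Your proof is correct and follows the same overall architecture as the paper's: part~\ref{lem-greens-simple} is the core, and parts~\ref{cor-greens-simple} and~\ref{cor-collect} are deduced from it together with \cref{prop-schreier}\ref{prop-schreier-i}. Parts~\ref{cor-greens-simple} and~\ref{cor-collect} match the paper almost verbatim (the paper uses part~\ref{cor-greens-simple} rather than part~\ref{lem-greens-simple} directly to check that $f$ is well defined, but that is cosmetic).

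The only noteworthy difference is in the ($\Rightarrow$) direction of part~\ref{lem-greens-simple}. The paper does not build a stabilising element by hand: it simply observes that $L_x^{U,U}$ and $L_{xs}^{U,U}$ lie in the same strongly connected component and applies \cref{prop-schreier}\ref{prop-schreier-i} directly with $u_i=s$, obtaining $\overline{s}\in N$ with $(s\overline{s})|_{L_x^{U,U}}=\id_{L_x^{U,U}}$ and hence $xs\overline{s}=x$ in one line. Your route --- finding $t$ with $L_{xst}^{U,U}=L_x^{U,U}$, setting $g=st$, and then arguing that $g|_{L_x^{U,U}}$ is a permutation of finite order $m$ so that $xg^m=x$ --- reaches the same conclusion but reconstructs part of what \cref{prop-schreier}\ref{prop-schreier-i} already packages. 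One small imprecision: your appeal to \cref{lem-free} at that step is not needed. Once the interchangeability of actions~\eqref{eqn-action-1} and~\eqref{eqn-action-2} within an SCC (the remark following \cref{prop-actions-equiv}) gives $L_x^{U,U}\cdot g=L_x^{U,U}$ as a \emph{set}, the map $g|_{L_x^{U,U}}$ is a surjection of a finite set onto itself and hence a permutation of finite order; \cref{lem-free} plays no role there.
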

\begin{proof}
\noindent \textbf{(i).} ($\Rightarrow$)
By assumption, $L_{xs}^{U, U}$ and $L_x^{U, U}$ belong to the same strongly
connected component of the action of $N$ on $U/\L^{U, U}$ by right
multiplication. Hence, by \cref{prop-schreier}\ref{prop-schreier-i},
there exists
$\overline{s}\in N$ such that $L_{xs}^{U, U}\overline{s}=L_x^{U, U}$ and
$s\overline{s}$ acts on $L_x^{U, U}$ as the identity.  Hence, in particular,
$xs\overline{s}=x$ and so $xs\R^{M, N} x$.

($\Leftarrow$)
Suppose $x\R^{M, N} xs$. Then there exists $t\in N$ such that $xst=x$. It
follows that $L_{x}^{U, U}\cdot s= L_{xs}^{U, U}$ and $L_{xs}^{U, U}\cdot t =
L_{x}^{U,U}$. Hence $L_{x}^{U,U}\sim L_{xs}^{U,U}$.
\medskip

\noindent \textbf{(ii).}
Since $x\R ^ {M, N} y$ there exists $t\in N$ such that $yt = x$. Hence
$L_x^{U, U}\cdot s = L_{xs} ^ {U, U}$ and $L_{xs}^ {U, U} \cdot t = L_y ^ {U,
U} \cdot t = L_{yt} ^ {U, U} = L_x ^ {U, U}$. In particular, $L_x ^ {U, U}
\sim L_{xs} ^ {U, U}$ and so, by part \ref{lem-greens-simple}, $y\mathscr{R} ^
{M, N} x \mathscr{R}^{M, N} xs$, as required.
\medskip

\noindent \textbf{(iii).}
Let $t\in L_x^{U, U}\cap R_x^{M, N}$ be arbitrary. Then $t\R^{M, N}x\R^{M,
N}y$ and $ts\L^{U, U} xs\L^{U, U}y$ and so, by part \ref{cor-greens-simple},
$ts\R^{M,N} y\R ^ {M, N} x$. In other words, $ts\in L_y^{U, U}\cap R_x^{M, N}$
for all $t\in L_x^{U, U}\cap R_x^{M, N}$. In particular, $x\R ^ {M, N}xs$ and
so, by part \ref{lem-greens-simple}, $L_x^ {U, U}\sim L_{xs} ^ {U, U} =
L_y^{U, U}$.  Hence, by \cref{prop-schreier}\ref{prop-schreier-i},
there exists $\overline{s}\in
N$ such that $ts\overline{s}=t$ for all $t\in L_x^{U, U}\cap R_x^{M, N}$.
Therefore $t\mapsto ts$ and $u\mapsto u\overline{s}$ are mutually inverse
bijections from $L_x^{U, U}\cap R_x^{M, N}$ to $L_y^{U, U}\cap R_x^{M, N}$ and
back.
\end{proof}

The next proposition allows us to decompose the $\R ^ {M, N}$-class of $x\in M$
into the sets $R_{x} ^ {M, N}\cap L_y ^{U, U}$ where the $L_y ^ {U, U}$ form a
${\sim}$-strongly connected component with respect to $N$.

\begin{prop}[cf. Proposition~3.7(a) in~\cite{East2019aa}]\label{prop-main-0}
Suppose that  $x, y\in M$ are arbitrary.  If $x\R ^ {M, N}y$, then $L_x^{U,
U}\sim L_y^{U, U}$. Conversely, if $L_x^{U, U}\sim L_y^{U, U}$, then there
exists $z\in M$ such that $z\R ^ {M, N}x$ and $L_z^{U, U} = L_y ^ {U, U}$.
\end{prop}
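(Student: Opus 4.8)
The plan is to reduce both implications to \cref{lem-greens}\ref{lem-greens-simple}, which characterises the relation $x \mathscr{R}^{M, N} xs$ for $s \in N$ purely in terms of ${\sim}$, once the definition of $\mathscr{R}^{M, N}$ has been unwound.

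For the forward implication, suppose $x \mathscr{R}^{M, N} y$, i.e.\ $xN = yN$. Since $1 \in N$ this yields $s, t \in N$ with $xs = y$ and $yt = x$. Under the action $\Psi$ of $N$ on $\mathscr{L}^{U, U}$-classes from \eqref{eqn-action-2} we then have $L_x^{U, U}\cdot s = L_{xs}^{U, U} = L_y^{U, U}$ and $L_y^{U, U}\cdot t = L_x^{U, U}$, so $L_x^{U, U}$ and $L_y^{U, U}$ are mutually reachable and hence lie in a single strongly connected component; that is, $L_x^{U, U} \sim L_y^{U, U}$. (Equivalently, apply \cref{lem-greens}\ref{lem-greens-simple} to $x$ and $xs = y$.)

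For the converse, suppose $L_x^{U, U} \sim L_y^{U, U}$. Being in the same strongly connected component of the action of $N$, the class $L_y^{U, U}$ is reachable from $L_x^{U, U}$, so there is $s \in N$ with $L_x^{U, U}\cdot s = L_y^{U, U}$, i.e.\ $L_{xs}^{U, U} = L_y^{U, U}$. Set $z := xs$; since $s \in N \subseteq M$ we have $z \in M$, and $L_z^{U, U} = L_{xs}^{U, U} = L_y^{U, U}$, giving the second required property. For the first, ${\sim}$ is an equivalence relation on $\mathscr{L}^{U, U}$-classes, so $L_x^{U, U} \sim L_y^{U, U} = L_{xs}^{U, U}$ gives $L_x^{U, U} \sim L_{xs}^{U, U}$; then \cref{lem-greens}\ref{lem-greens-simple} yields $x \mathscr{R}^{M, N} xs = z$, and symmetry of $\mathscr{R}^{M, N}$ gives $z \mathscr{R}^{M, N} x$.

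The only substantive ingredient is \cref{lem-greens}\ref{lem-greens-simple}, and beneath it \cref{prop-schreier}\ref{prop-schreier-i}: given $s \in N$ carrying $L_x^{U, U}$ into the component, this supplies $\overline{s} \in N$ with $xs\overline{s} = x$, which is exactly what upgrades the combinatorial fact $L_x^{U, U} \sim L_{xs}^{U, U}$ to the algebraic fact $x \mathscr{R}^{M, N} xs$. So I anticipate no real difficulty here; the only points needing care are that the witness $z = xs$ genuinely lies in $M$ (not merely in $U$) and that ${\sim}$ is being used as an equivalence relation on $\mathscr{L}^{U,U}$-classes.
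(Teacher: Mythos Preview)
Your proof is correct and follows essentially the same approach as the paper's own proof: both directions are reduced to \cref{lem-greens}\ref{lem-greens-simple}, with the forward direction obtaining $s,t\in N$ from the definition of $\mathscr{R}^{M,N}$ and the converse producing $z=xs$ from an $s\in N$ witnessing that $L_y^{U,U}$ is reachable from $L_x^{U,U}$. Your version is slightly more explicit in a couple of places (e.g.\ checking $z\in M$ and invoking symmetry of $\mathscr{R}^{M,N}$), but there is no substantive difference.
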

\begin{proof}
Suppose that $y \in M$ is such that $x\neq y$. Then $y \R ^ {M, N} x$ implies
that there exists $s, t \in N$ such that $xs = y$ and $yt = x$. In particular,
$xs \R ^ {M, N} x$ and so $L_{x} ^ {U, U}\sim L_{xs}^{U, U}=L_y^{U, U}$, by
\cref{lem-greens}\ref{lem-greens-simple}($\Leftarrow$).

If $y\in M$ is such that $L_x ^ {U, U} \sim L_{y}^{U, U}$, then there exists
$s\in N$ such that $L_y ^ {U, U} = L_{xs} ^ {U, U}$ and so, by
\cref{lem-greens}\ref{lem-greens-simple}($\Rightarrow$), $x \R ^ {M, N} xs$.
\end{proof}

The next result, when combined with \cref{prop-main-0}, completes the
decomposition of the $\R ^ {M, N}$-class of $x\in M$ into ${\sim}$-strongly
connected component with respect to $N$ and a group, by showing that $L_{x} ^
{U, U}\cap R_x ^{M, N}$ is a group with the operation defined in part
\ref{prop-main-1-i} of the next proposition.

\begin{prop}[cf. Proposition 3.9 in~\cite{East2019aa}]\label{prop-main-1}
Suppose that $x\in M$ and there exists $x'\in U$ where $xx'x=x$
(i.e.~$x$ is regular in $U$).  Then the following hold:
\begin{enumerate}[\rm (i)]
  \item\label{prop-main-1-i}
    $L_x^{U, U}\cap R_x^{M, N}$ is a group under the multiplication $*$
    defined by $s\ast t=sx't$ for all $s,t \in L_x^{U, U}\cap R_x^{M, N}$ and
    its identity is $x$;
  \item\label{prop-main-1-ii}
    $\phi:\QuoStab{\cdot}{N}{L_x^ {U, U}}\to L_x^{U, U}\cap R_x^{M, N}$
    defined by $([s])\phi=xs$, for all $s\in \Stab_{N}(L_x^ {U, U})$, is an
    isomorphism;
  \item\label{prop-main-1-iii}
    $\phi^{-1}:L_x^{U, U}\cap R_x^{M, N}\to \QuoStab{\cdot}{N}{L_x^ {U, U}}$
    is defined by $(s)\phi^{-1}=[x's]$ for all $s\in L_x^{U, U}\cap R_x^{M,
    N}$.
\end{enumerate}
\end{prop}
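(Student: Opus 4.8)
The plan is to establish parts \ref{prop-main-1-ii} and \ref{prop-main-1-i} together, deducing that $(L_x^{U,U}\cap R_x^{M,N},\ast)$ is a group by transporting the group structure of $\QuoStab{\cdot}{N}{L_x^{U,U}}$ along the bijection $\phi$, and then obtaining \ref{prop-main-1-iii} by a one-line computation. Two elementary observations are used throughout. First, since $xx'x=x$, the element $e:=x'x$ is an idempotent with $xe=x$. Second, $L_x^{U,U}$ is the classical $\mathscr{L}$-class of $x$ in the monoid $U$, so every $y\in L_x^{U,U}$ has the form $y=ux$ for some $u\in U$; consequently $ye=uxe=ux=y$, i.e. $e$ is a right identity for every element of $L_x^{U,U}$, and in particular $xs\,x'x=xs$ whenever $xs\in L_x^{U,U}$.

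First I would show that $\phi$ is a well-defined map into $L_x^{U,U}\cap R_x^{M,N}$. If $n\in\Stab_N(L_x^{U,U})$ then, using that $\mathscr{L}^{U,U}$ is a right congruence (so $L_x^{U,U}\cdot n\subseteq L_{xn}^{U,U}$) together with the fact that $n$ acts as a permutation of $L_x^{U,U}$, we get $L_{xn}^{U,U}=L_x^{U,U}$, hence $xn\in L_x^{U,U}$; and since $\QuoStab{\cdot}{N}{L_x^{U,U}}$ is a finite group some power $n^k$ acts trivially on $L_x^{U,U}$, so $xn^k=x$, which yields $xnN=xN$, i.e. $xn\,\mathscr{R}^{M,N}\,x$. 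Well-definedness on $\ker(\cdot)$-classes is immediate by evaluating at $x$. Injectivity: if $xn=xm$ with $n,m\in\Stab_N(L_x^{U,U})$, then $yn=uxn=uxm=ym$ for every $y=ux\in L_x^{U,U}$ (this is \cref{lem-free}), so $[n]=[m]$. Surjectivity is the step I expect to be the crux: given $s\in L_x^{U,U}\cap R_x^{M,N}$, from $sN=xN$ we get $s=xn$ and $x=sm=xnm$ for some $n,m\in N$; since $s\,\mathscr{L}^{U,U}\,x$ and $\mathscr{L}^{U,U}$ is a right congruence, right multiplication by $n$ maps $L_x^{U,U}$ into itself, and $x(nm)=x$ forces $y(nm)=uxnm=ux=y$ for all $y=ux\in L_x^{U,U}$, so this map is an injective self-map of the finite set $L_x^{U,U}$, hence a bijection; thus $n\in\Stab_N(L_x^{U,U})$ and $s=\phi([n])$.

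Next I would record the identity $sx't=xnm$ for $s=xn$, $t=xm$ with $n,m\in\Stab_N(L_x^{U,U})$: indeed $sx't=xn(x'x)m=(xn\,e)m=xnm$, using $xn\in L_x^{U,U}$ and that $e=x'x$ is a right identity there. This single identity does three things at once: it shows $\ast$ is a well-defined binary operation on $L_x^{U,U}\cap R_x^{M,N}$ (the value $xnm$ lies in the image of $\phi$), it shows $\phi$ intertwines $\ast$ with the multiplication of $\QuoStab{\cdot}{N}{L_x^{U,U}}$ (where $[n][m]=[nm]$), and hence, transporting the group structure along the bijection $\phi$, it shows $(L_x^{U,U}\cap R_x^{M,N},\ast)$ is a group isomorphic to $\QuoStab{\cdot}{N}{L_x^{U,U}}$ with identity $\phi([1])=x$; this establishes \ref{prop-main-1-i} and \ref{prop-main-1-ii}. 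As an independent check one can note associativity of $\ast$ is automatic since $(sx't)x'r=sx'(tx'r)$, and $x\ast s=xx's=s$, $s\ast x=sx'x=s$ follow directly from $s=ux\in L_x^{U,U}$.

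Finally, for \ref{prop-main-1-iii}, write $s=xn$ with $n\in\Stab_N(L_x^{U,U})$ as produced by the surjectivity argument; then for every $y=ux\in L_x^{U,U}$ we have $y(x's)=uxx's=uxx'xn=uxn=yn$, so $x's$ acts on $L_x^{U,U}$ exactly as $n$ does, whence $[x's]=[n]=(s)\phi^{-1}$, as claimed (this also makes sense of the notation $[x's]$, since $x'\notin N$ in general). To summarise, the whole argument rests on the two basic observations about $e=x'x$ and on one genuinely non-formal point — upgrading "$s=xn$ for some $n\in N$" to "$n$ stabilises $L_x^{U,U}$" — which is where I anticipate spending the real effort; everything else is bookkeeping.
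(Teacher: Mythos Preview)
Your proof is correct and follows the same overall architecture as the paper: establish that $\phi$ is a well-defined bijective homomorphism, deduce \ref{prop-main-1-i} by transporting the group structure, and read off \ref{prop-main-1-iii} from the surjectivity step.  The technical implementations differ in two places, both tied to your use of finiteness.  For $xn\,\mathscr{R}^{M,N}\,x$ when $n\in\Stab_N(L_x^{U,U})$, you argue via the finite order of $[n]$, whereas the paper invokes \cref{lem-greens}\ref{cor-greens-simple} directly (with $y=x$).  For surjectivity, you show that $nm$ acts trivially on $L_x^{U,U}$, so $\cdot\,n$ is an injective self-map of a finite set and hence a bijection; the paper instead shows $x's\in\Stab_U(L_x^{U,U})$ and then uses \cref{lem-free} to transfer this to $u\in N$ with $xu=s$.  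Your route is slightly more elementary and self-contained (you use only \cref{lem-free} and the right-congruence property of $\mathscr{L}^{U,U}$), at the cost of appealing to finiteness of $L_x^{U,U}$, which the paper's argument avoids.  In the intended setting everything is finite, so this is harmless; but the paper's version of these two steps would survive in an infinite $U$.  Your treatment of injectivity (directly via \cref{lem-free}) and of \ref{prop-main-1-iii} (showing $x's$ and $n$ act identically on $L_x^{U,U}$) is cleaner than the paper's, which proves injectivity by constructing the one-sided inverse $\theta$ and checking $\phi\theta=\id$.
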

\begin{proof}
We begin by showing that $x$ is an identity under the multiplication $*$ of
$L_x^{U, U}\cap R_x^{M, N}$. Since $x'x\in L_x^{U, U}$ and $xx'\in R_x^{U,
U}$ are idempotents, it follows that $x'x$ is a right identity for $L_x^{U,
U}$ and $xx'$ is a left identity for $R_x^{M, N}\subseteq R_x^{U, U}$.  So,
if $s\in L_x^{U, U}\cap R_x^{M, N}$ is arbitrary, then
\begin{equation*}
  x*s=xx's=s=sx'x=s*x,
\end{equation*}
as required.

We will prove that part (b) holds, which implies part (a). \smallskip

\noindent\textbf{$\phi$ is well-defined.}
If $s\in M$ and $s\in \Stab_N(L_x^ {U, U})$, then $xs\L^{U, U} x$. Hence, by
\cref{lem-greens}\ref{cor-greens-simple}, $xs\R^{M, N} x$ and so
$(s)\phi=xs\in L_x^{U, U}\cap R_x^{M, N}$. If $t\in\Stab_N(L_x^{U, U})$ is
such that $[t]=[s]$, then, by \cref{lem-free}, $xt=xs$.\smallskip

\noindent\textbf{$\phi$ is surjective.}
Let $s\in L_x^{U, U}\cap R_x^{M, N}$ be arbitrary. Then $xx's=x*s=s$ since
$x$ is the identity of $L_x^{U, U}\cap R_x^{M, N}$. It follows that $$L_x^{U,
U}\cdot x's=L^{U, U}_s=L^{U, U}_x$$ and so $x's\in \Stab_U(L_x^{U, U})$.
Since $x\R^{M, N}s$, there exists $u\in N$ such that $xu=s=xx's$.  It follows
that $u\in \Stab_N(L_x^{U, U})$ and, by \cref{lem-free}, $[u]=[x's]$. Thus
$(u)\phi=xu=s$ and $\phi$ is surjective.
\smallskip

\noindent\textbf{$\phi$ is a homomorphism.}
Let $s, t\in \Stab_N(L_x^{U, U})$. Then, since $xs\in L_x^{U, U}$ and $x'x$
is a right identity for $L_x^{U, U}$,
\[
  ([s])\phi*([t])\phi=xs*xt=xsx'xt=xst= ([st])\phi=([s][t])\phi,
\]
as required.
\smallskip

\noindent\textbf{$\phi$ is injective.}
Let $\theta: L_x^{U, U}\cap R_x^{M, N}\to \QuoStab{\cdot}{N}{L_x^ {U, U}}$ be
defined by $(y)\theta=[x'y]$ for all $y\in L_x^U\cap R_x^S$. We will show
that $\phi\theta$ is the identity mapping on $\QuoStab{\cdot}{N}{L_x^ {U,
U}}$, which implies that $\phi$ is injective, that $(y)\theta\in
\QuoStab{\cdot}{N}{L_x^ {U, U}}$ for all $y\in L_x^{U, U}\cap R_x^{M, N}$
(since $\phi$ is surjective), and also proves part (c) of the proposition. If
$s\in \Stab_N(L_x^{U, U})$, then $([s])\phi\theta=(xs)\theta=[x'xs]$.  But
$xx'xs=xs$ and so $[x'xs]=[s]$ by \cref{lem-free}.  Therefore,
$([s])\phi\theta=[s]$, as required.
\end{proof}

Finally, we combine the preceding results to test membership in an
$\R ^ {M, N}$-class.

\begin{prop}
\label{prop-R-membership}
Suppose that $x\in M$ and there is $x'\in U$ with $xx'x=x$. If $y\in U$ is
arbitrary, then $y \R^{M, N} x$ if and only if $y \R^{U, U} x$, $L_{y}^{U,
U}\sim L_{x}^{U, U}$, and $[x'yv]\in \QuoStab{\cdot}{N}{L_x^ {U, U}}$ where
$v\in N$ is any element such that $L_{y}^U\cdot v=L_{x}^U$.
\end{prop}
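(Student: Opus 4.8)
The plan is to prove the two implications separately, in each case translating the relative-$\R$ condition into a statement about the group $\QuoStab{\cdot}{N}{L}$, where I write $L := L_x^{U, U}$ throughout. Two elementary observations will be used repeatedly: $x \in L$; and $xx'$ is a left identity for $R_x^{U, U}$, since $xx'$ is an idempotent with $xx' \R^{U, U} x$, so any $z$ with $z \R^{U, U} x$ has the form $z = xc$ with $c \in U$ and hence $xx'z = xx'xc = xc = z$.

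First I would handle the forward implication. Suppose $y \R^{M, N} x$. Then $y \R^{U, U} x$ at once, since a pair of witnesses in $N$ for $y \R^{M, N} x$ are also witnesses in $U$; and $L_y^{U, U} \sim L$ by \cref{prop-main-0}. Now fix any $v \in N$ with $L_y^{U, U} \cdot v = L$, equivalently $yv \L^{U, U} x$. Since $y \R^{M, N} x$ and $yv \L^{U, U} x$, \cref{lem-greens}\ref{cor-greens-simple} gives $yv \R^{M, N} x$, so $yv \in L \cap R_x^{M, N}$, and applying the isomorphism of \cref{prop-main-1}\ref{prop-main-1-iii} we get $[x'yv] = (yv)\phi^{-1} \in \QuoStab{\cdot}{N}{L}$, as required. (Note this works for every admissible choice of $v$.)

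For the converse, assume $y \R^{U, U} x$, $L_y^{U, U} \sim L$, and $[x'yv] \in \QuoStab{\cdot}{N}{L}$ for some $v \in N$ with $L_y^{U, U} \cdot v = L$; such a $v$ exists precisely because $L_y^{U, U}$ and $L$ lie in the same $\sim$-class. Unpacking the last hypothesis, $x'yv$ restricts to a permutation of $L$ coinciding with $n|_L$ for some $n \in \Stab_N(L)$; hence by \cref{lem-free} we have $z\, x'yv = zn$ for all $z \in L$, and putting $z = x$ and using $xx'y = y$ yields $yv = xn$. By \cref{prop-main-1}\ref{prop-main-1-ii}, $xn = ([n])\phi \in L \cap R_x^{M, N}$, so $yv \R^{M, N} x$ and there is $t \in N$ with $x = (yv)t = y(vt)$, giving $x \in yN$. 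For the reverse containment, apply \cref{prop-schreier}\ref{prop-schreier-i} to the right action of $N$ on $\mathcal{P}(U)$ induced by right multiplication, with the $\sim$-class of $L_y^{U, U}$ as the strongly connected component and $v$ the element taking $L_y^{U, U}$ to $L$: this produces $\overline{v} \in N$ with $(v\overline{v})|_{L_y^{U, U}} = \id_{L_y^{U, U}}$, and since $y \in L_y^{U, U}$ we get $y = yv\overline{v} = (xn)\overline{v} = x(n\overline{v}) \in xN$, so in particular $y \in M$. Therefore $xN = yN$, that is, $y \R^{M, N} x$.

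The main obstacle is the opening of the converse: making precise sense of the hypothesis ``$[x'yv] \in \QuoStab{\cdot}{N}{L}$'' — which tacitly requires first checking, from $y \R^{U, U} x$ and the fact that $\L^{U, U}$ is a right congruence, that $x'yv$ stabilises $L$ so that $[x'yv]$ is a legitimate element of $\Sym(L)$ — and then converting the resulting equality of permutations of $L$ into the monoid identity $yv = xn$ with $n \in N$. This conversion is exactly what \cref{lem-free} together with the isomorphism $\phi$ of \cref{prop-main-1} is designed to supply; once $yv = xn$ is in hand, recovering $y$ itself from $yv$ via the Schreier pre-image $\overline{v}$ is routine, and is the second point at which the hypothesis $L_y^{U, U} \sim L$ is used.
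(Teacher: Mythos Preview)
Your proof is correct and follows essentially the same route as the paper's. The forward implication is identical up to your using \cref{lem-greens}\ref{cor-greens-simple} where the paper uses \cref{lem-greens}\ref{cor-collect}; both yield $yv\in L\cap R_x^{M,N}$ immediately. In the converse, you unpack the isomorphism $\phi$ of \cref{prop-main-1} explicitly via \cref{lem-free} to obtain $yv=xn$ with $n\in\Stab_N(L)$, whereas the paper simply writes $yv=x\cdot x'yv=([x'yv])\phi$; and you then recover $y$ from $yv$ by a direct appeal to \cref{prop-schreier}\ref{prop-schreier-i}, whereas the paper packages the same Schreier argument inside \cref{lem-greens}\ref{lem-greens-simple}. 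Your version has the minor advantage of making explicit that $y=x n\overline{v}\in M$, a point the paper leaves implicit when it invokes \cref{lem-greens}\ref{lem-greens-simple} (whose hypotheses require $y\in M$).
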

\begin{proof}
($\Rightarrow$)
Since $R_x^{M, N}\subseteq R_x^{U, U}$, $y \R^{U, U} x$ and from
\cref{prop-main-0}, $L_{y}^{U, U}\sim L_{x}^{U, U}$. Suppose that
$v\in N$ is such that $L_{y}^U\cdot v= L_{x}^U$. Then, by
\cref{lem-greens}\ref{cor-collect},  $yv\in L_x^{U, U}\cap R_x^{M, N}$ and so,
by \cref{prop-main-1}\ref{prop-main-1-iii}, $[x'yv]\in
\QuoStab{\cdot}{N}{L_x^ {U,
U}}$.
\smallskip

($\Leftarrow$) Since $y\in R_x^{U, U}$ and $xx'$ is a left identity in its
$\R^{U, U}$-class, it follows that $xx'y=y$.  Suppose that $v\in N$ is any
element such that $L_{y}^U\cdot v=L_{x}^U$ (such an element exists by the
assumption that $L_y^{U, U} \sim L_x^{U, U}$). Then, by assumption, $[x'yv]\in
\QuoStab{\cdot}{N}{L_x^ {U, U}}$ and so by
\cref{prop-main-1}\ref{prop-main-1-ii},
$yv = x \cdot x'yv\in L_x^{U, U}\cap R_x^{M, N}$. But $L_{y}^{U, U}\sim
L_{yv}^{U, U}$, and so, by \cref{lem-greens}\ref{lem-greens-simple}, $y\R ^
{M, N} yv$, and so $x\R^{M, N}yv\R^{M, N}y$, as required.
\end{proof}

We have shown that analogues of all the results required to prove the
correctness of~\cite[Algorithm 11]{East2019aa} hold for relative Green's
relations in addition to their non-relative counterparts. For the sake of
completeness, we state a version of Algorithm 11 from~\cite{East2019aa} that
computes  the set $\mathfrak{R}$ of $\R ^ {M, N}$-class representatives and the
word graph $\Gamma$ of the left action (by left multiplication) of $N$ on
$\mathfrak{R}$; see \cref{algorithm-enumerate}. We require the word graph
$\Gamma$ to compute $\mathscr{J}^{M, N}$-class representatives in the next
section, it is not required for finding  the $\R ^ {M, N}$-class
representatives. The algorithm presented in \cref{algorithm-enumerate} is
simplified somewhat from Algorithm 11 from~\cite{East2019aa} because we only
require the representatives and the word graph, and not the associated data
structures.

\begin{algorithm}
\caption{(cf. Algorithm 11 in~\cite{East2019aa}) Enumerate $\R ^ {M,
N}$-class representatives}
\label{algorithm-enumerate}
\begin{algorithmic}[1]
\item[\textbf{Input:}] A monoid $M$ and a submonoid $N:=\genset{A}$
  where $A:=\{a_0,\ldots, a_{m -1}\}$
\item[\textbf{Output:}] The set $\mathfrak{R}$ of $\R ^ {M, N}$-classes
  representatives of elements in $M$ and the word graph $\Gamma$ of the
  action of $N$ on $\mathfrak{R}$.

\item set $\mathfrak{R}:=(r_0:=1_M)$ where $1_M\in M$ is the identity of $M$
  \Comment{initialise the list of $\R^{M, N}$-class representatives}
\item $\Gamma:= (V, E)$ where $V := \{0\}$ and $E: = \varnothing$
  \Comment{initialise the word graph of the action of $N$ on $\mathfrak{R}$}
\item find $(L_{1_M}^ {U, U})\cdot M = \set{L_x^{U, U}}{x\in M}$
  \Comment{Algorithm~1 from~\cite{East2019aa}}
\item compute the action of $N$ on $(L_{1_M}^ {U, U})\cdot M$
  \Comment{use the modified version of Algorithm~1
  from~\cite{East2019aa} discussed above}

\item
  find the strongly connected components of $(L_{1_M}^ {U, U})\cdot M$
  \Comment{standard graph theory algorithm}
\item set  $z_0, \ldots, z_{k - 1}\in M$ to be representatives of
  $\L^{U, U}$-classes in each strongly connected component
\item find generating sets for the groups $\QuoStab{\cdot}{N}{L_{z_n}
  ^ {U, U}}$,
  $n\in\{0,\ldots, k - 1\}$
  \Comment{Algorithm~4 in~\cite{East2019aa}}
\item $i := 0$
  \While{$i < |\mathfrak{R}|$}
  \Comment{loop over: existing $\R$-representatives}
  \State $j := 0$
  \While {$j < m$}
  \Comment{loop over: generators of $N$}

  \State find $n\in \{0,\ldots, k - 1\}$ such that $L_{z_n}^{U, U} \sim
  L_{a_jr_i}^{U, U}$

  \State find $u\in N$ such that $L_{z_n}^{U, U} \cdot u=L_{a_jr_i}^{U, U}$
  \Comment{Algorithm~2 in~\cite{East2019aa}}

  \State find $\overline{u}\in U ^ 1$ such that $L_{a_jr_i}^{U, U}
  \cdot \overline{u} =
  L_{z_n}^{U, U}$ and $z_nu\overline{u} = z_n$

  \For{$r_l\in \mathfrak{R}$ with $(r_l,a_jr_i)\in \mathscr{R}^{U, U}$ and
  $(r_l, z_n) \in \mathscr{L}^{U, U}$}
  \If{$[r_l'a_jr_i\overline{u}]\in
    \QuoStab{\cdot}{N}{L_{r_l}^ {U, U}} = \QuoStab{\cdot}{N}{L_{z_n}^ {U,
  U}}$}\Comment{$a_jr_i\R^{M, N}r_l$ by \cref{prop-R-membership}}
  \State $\Gamma \gets (V, E\cup \{(i, a_j, l)\})$
  \Comment{update the word graph for the action of $N$ on $\mathfrak{R}$}
  \State go to line 9
  \EndIf
  \EndFor
  \Comment{$a_jr_i\overline{u}$ is a new $\R ^ {M, N}$-representative}
  \State $\Gamma \gets (V\cup \{|\mathfrak{R}|\}, E\cup \{(i, a_j,
  |\mathfrak{R}|)\})$
  \Comment{update the word graph}
  \State Append $r_{|\mathfrak{R}|} := a_jr_i\overline{u}$ to $\mathfrak{R}$
  \Comment{update the set of $\R^{M, N}$-representatives}
  \State $j := j + 1$
  \EndWhile
  \State $i := i + 1$
  \EndWhile
  \State \Return  $\mathfrak{R}$, $\Gamma$
\end{algorithmic}
\end{algorithm}

The next proposition shows that relative $\mathscr{J} ^ {M, N}$-classes
correspond to strongly connected components of the word graph output
by~\cref{algorithm-enumerate}.

\begin{prop}\label{prop-J-scc}
Let $x, y\in M$. Then $x\mathscr{J} ^ {M, N} y$ if and only if $R_x^{M, N}$
and $R_y^{M, N}$ belong to the same strongly connected component of the action
of $N$ on the $\mathscr{R}^{M, N}$-classes of $M$ by left multiplication.
\end{prop}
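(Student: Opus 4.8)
The plan is to recast both sides in elementary terms and then settle the harder direction with a short computation that exploits finiteness. Recall that $a\mathrel{\mathscr{R}^{M, N}}b$ means $aN=bN$, that $a\mathrel{\mathscr{J}^{M, N}}b$ means $NaN=NbN$, and that $\mathscr{R}^{M, N}$ is a left congruence, so $N$ acts on the set of $\mathscr{R}^{M, N}$-classes of $M$ by $n\cdot R_z^{M, N}=R_{nz}^{M, N}$. Since $N$ is a monoid, $R_y^{M, N}$ is reachable from $R_x^{M, N}$ under this action exactly when $R_{nx}^{M, N}=R_y^{M, N}$ for some $n\in N$, that is, when $nx\mathrel{\mathscr{R}^{M, N}}y$ for some $n\in N$. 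Hence $R_x^{M, N}$ and $R_y^{M, N}$ lie in the same strongly connected component if and only if there are $n,m\in N$ with $nx\mathrel{\mathscr{R}^{M, N}}y$ and $my\mathrel{\mathscr{R}^{M, N}}x$, and I would prove the proposition in this form.

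The ($\Leftarrow$) direction is immediate. If $nx\mathrel{\mathscr{R}^{M, N}}y$ for some $n\in N$, then $yN=(nx)N\subseteq NxN$, so $y\in NxN$, and hence $NyN\subseteq NxN$ because $N$ is a submonoid. Symmetrically $my\mathrel{\mathscr{R}^{M, N}}x$ for some $m\in N$ gives $NxN\subseteq NyN$, so $NxN=NyN$, i.e.\ $x\mathrel{\mathscr{J}^{M, N}}y$.

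For ($\Rightarrow$), assuming as throughout this section that $M$, and hence $N$, is finite, suppose $NxN=NyN$. Since $y\in NxN$ and $x\in NyN$, write $y=axb$ and $x=cyd$ with $a,b,c,d\in N$, and put $u=ax\in Nx$, so $y=ub$. Substituting gives $u=ax=a(cyd)=ac\,(ub)\,d=(ac)\,u\,(bd)$, and by induction $u=(ac)^{i}u(bd)^{i}$ for all $i\ge 1$. Choosing $i$ so that $e:=(ac)^{i}$ is idempotent (possible since $N$ is finite), we get $u=e\,u\,(bd)^{i}$, whence $eu=e^{2}u(bd)^{i}=eu(bd)^{i}=u$ and therefore $u=u(bd)^{i}=ub\bigl(d(bd)^{i-1}\bigr)=y\bigl(d(bd)^{i-1}\bigr)$. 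Thus $uN\subseteq yN$; since also $y=ub\in uN$ we get $yN\subseteq uN$, so $ax=u\mathrel{\mathscr{R}^{M, N}}y$. This shows $R_y^{M, N}=R_{ax}^{M, N}$ is reachable from $R_x^{M, N}$, and the symmetric argument (with the roles of $x$ and $y$ exchanged) shows $R_x^{M, N}$ is reachable from $R_y^{M, N}$, so the two classes lie in the same strongly connected component.

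The main obstacle is precisely this forward implication: the hypothesis $NxN=NyN$ is purely set-theoretic, while membership in a common strongly connected component of the $\mathscr{R}^{M, N}$-class digraph is a statement about specific one-sided orbits, and the only leverage available for crossing that gap is finiteness. The identity $u=(ac)^{i}u(bd)^{i}$ together with the idempotency of $(ac)^{i}$ is exactly what promotes ``$y$ lies in the two-sided $N$-orbit of $x$'' to ``$ax$ is genuinely $\mathscr{R}^{M, N}$-equivalent to $y$''; everything else is bookkeeping.
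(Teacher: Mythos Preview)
Your proof is correct. The paper's own proof is a single line: it invokes the identity $\mathscr{D}^{M,N}=\mathscr{L}^{M,N}\circ\mathscr{R}^{M,N}=\mathscr{J}^{M,N}$, valid because $M$ and $N$ are finite, and leaves the reader to extract the statement from that. Unpacking their argument, if $x\mathscr{J}^{M,N}y$ then there is some $z$ with $x\mathscr{L}^{M,N}z\mathscr{R}^{M,N}y$; writing $z=nx$ and $x=mz$ with $n,m\in N$ immediately gives that $R_x^{M,N}$ and $R_y^{M,N}=R_z^{M,N}$ are mutually reachable. Your $(\Leftarrow)$ direction is the same as what one would write in either approach.

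Where you genuinely differ is in the forward direction: rather than citing the relative $\mathscr{D}=\mathscr{J}$ fact, you prove what is needed directly via the idempotent-power trick $u=(ac)^{i}u(bd)^{i}$. This is essentially an inline proof of (the relevant half of) that structural identity. The paper's route is shorter if one is willing to take relative-$\mathscr{D}=\mathscr{J}$ as known; yours is self-contained and makes explicit exactly where finiteness enters, namely in producing the idempotent $(ac)^{i}$. Both arguments have the same content and the same dependence on finiteness, just packaged differently.
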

\begin{proof}
This follows almost immediately since $\mathscr{D} ^ {M, N} = \mathscr{L} ^ {M,
N} \circ \mathscr{R} ^ {M, N} = \mathscr{J} ^ {M, N}$, because $M$ and $N$ are
finite.
%
\end{proof}

It follows from~\cref{prop-J-scc} that we can compute $\mathscr{J} ^ {M,
N}$-class representatives by using \cref{algorithm-enumerate} to find the word
graph $\Gamma$, and then using one of the standard algorithms from graph theory
to compute the strongly connected components of $\Gamma$.

\printbibliography
\appendix

\section{Performance comparison}\label{appendix-benchmarks}


In this section we present some data related to the performance of the low-index
congruences algorithm as implemented in \libsemigroups and
\cref{algorithm-enumerate} as implemented in version 5.3.0 of
\Semigroups for \GAP by the
authors.
We compare the performance of our implementations in  \libsemigroups
and \Semigroups with the algorithm from \cite{Freese2008aa} implemented in
\CREAM and with earlier versions  of \Semigroups which do not contain
the optimizations described in \cref{section-algo-2}. It may be worth
bearing in mind that the input to the algorithms
implemented in \CREAM is the multiplication table of a semigroup or monoid, and
that these multiplication
tables were computed using the methods in the \Semigroups package.
The input to the low-index algorithm is the presentation of a
semigroup or monoid and the input for \cref{algorithm-enumerate} is a
black-box multiplication monoid and a set of generators of a submonoid.

\subsection{A parallel implementation of the low-index congruences algorithm}
In \cref{figure-parallel} we present some data related to the performance of
the parallel implementation of the low-index congruences algorithm in
\libsemigroups. It can be seen in \cref{figure-parallel} that, in these
examples, doubling the number of threads, more or less, halves the execution
time up to 4 threads (out of 8) on the left, and 16 threads (out of 64) on the
right. Although the performance continues to improve after these numbers of
threads, it does not continue to halve the runtime. The degradation in
performance might be a consequence of using too many resources on the host
computer, or due to there being insufficient work for the threads in the chosen
examples.

\begin{figure}
\centering
\includegraphics[width=0.48\textwidth]{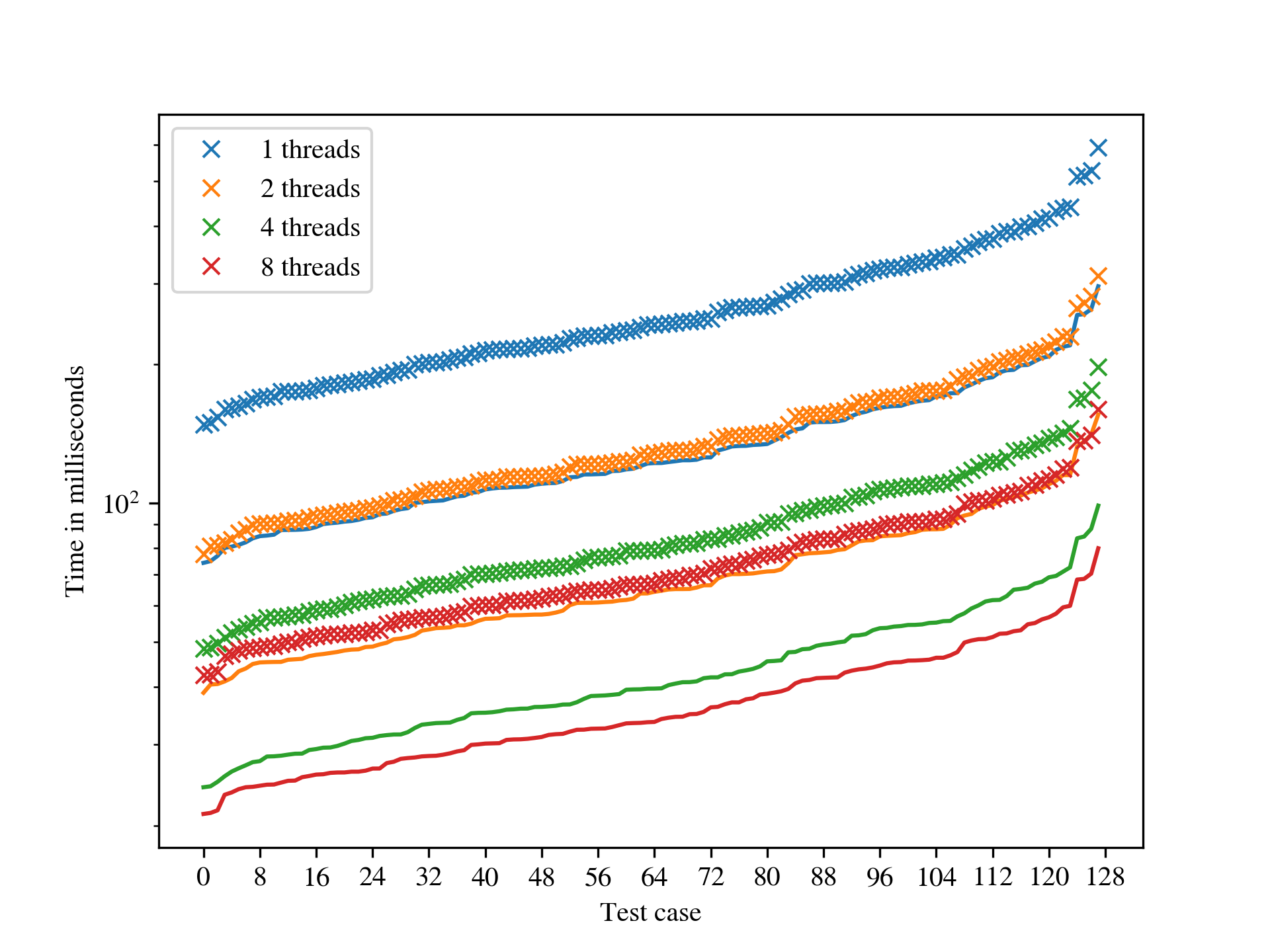}
\includegraphics[width=0.48\textwidth]{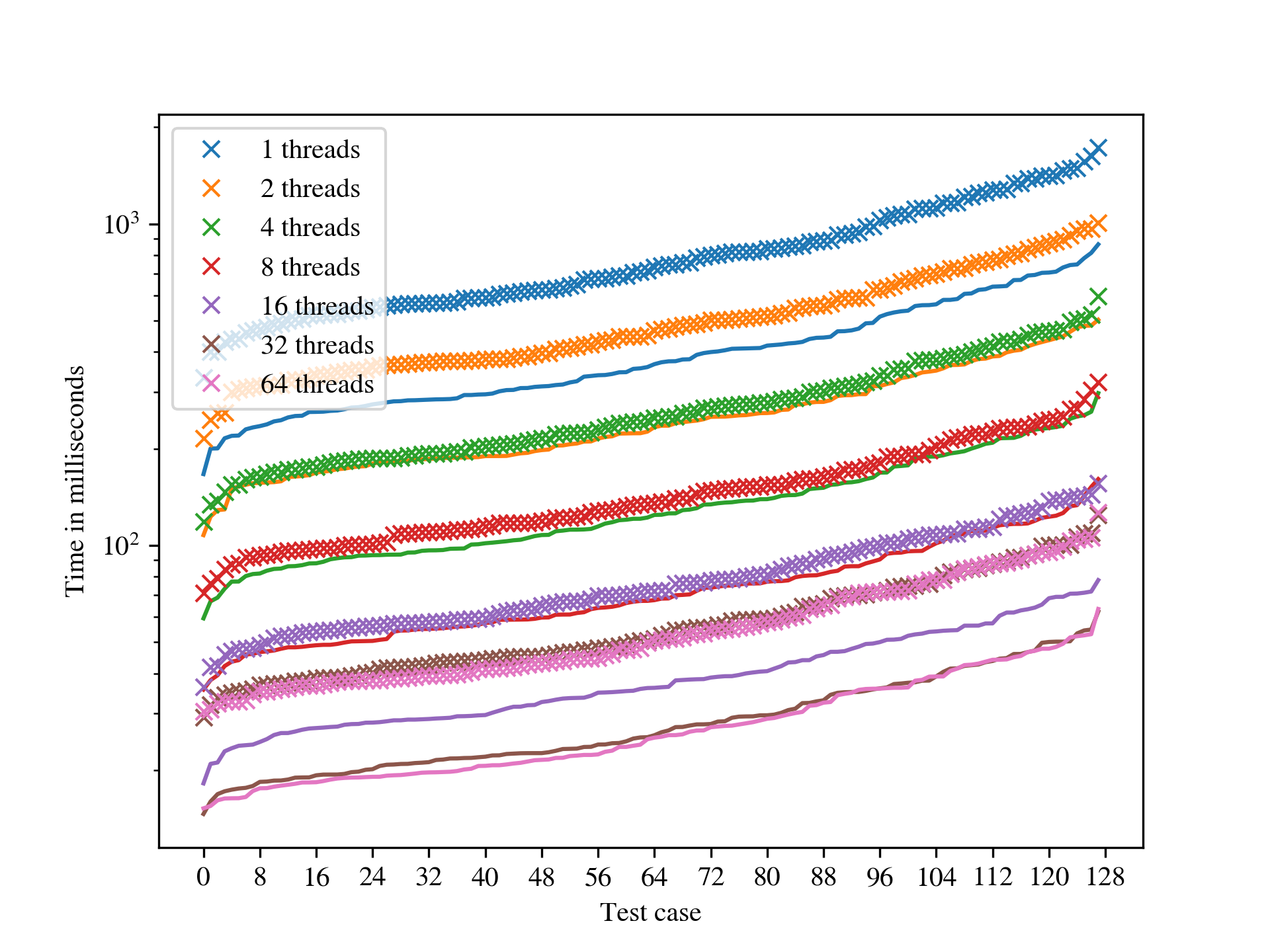}
\caption{Comparison of the performance of the parallel implementation
  of low-index congruences algorithm in \libsemigroups for 128 randomly chosen
  2-generated 1-relation monoid presentations where the lengths of the relation
  words is $10$. The times, indicated by crosses, are the means of the times
  taken in 100 trials to compute the right congruences with up to $5$ classes
  (inclusive).  For comparison, the solid lines indicate half the time taken for
  corresponding computation written using the same colour. The mean number of
  congruences computed per presentation was $189,589$ (left) and $204,739$
  (right). The graph on the left was produced using a 2021 MacBook Air with 4
  cores (each with 2 threads of execution) and on the right using a
  cluster of 64
2.3GHz AMD Opteron 6376 cores with 512GB of RAM.}
\label{figure-parallel}
\end{figure}

\subsection{The impact of presentation length on the low-index
congruences algorithm}

In this section we present some experimental evidence about how the length of a
presentation impacts the performance of the low-index congruences algorithm. If
$\P = \langle A\mid R\rangle$ is a monoid presentation, then we refer to the
sum $\sum_{(u, v) \in R} |u| + |v|$ of the lengths of the relation words in $R$
as the \defn{length} of $\P$. In \cref{figure-length}, a comparison of the
length of a presentation for the full transformation monoid $T_4$ versus the
runtime of the implementation of the low-index congruences algorithm in
\libsemigroups is given. In~\cref{subfigure-length-t-n}, the initial input
presentations were Iwahori's presentation from~\cite[Theorem
9.3.1]{Ganyushkin2009} for the full transformation monoid $T_4$, and a complete
rewriting system for $T_4$ output from the Froidure-Pin
Algorithm~\cite{Froidure1997aa}. Both initial presentations contain many
redundant relations, these were removed $5$ at a time, and the time to compute
the number of left congruences of $T_4$ with at most $16$ classes is plotted
for each resulting presentation. We also attempted to perform the same
computation with input Aizenstat's presentation from~\cite[Ch. 3, Prop
1.7]{Ruskuc1995aa} (which has length $162$), but the computation was extremely
slow.

In~\cref{subfigure-length-cyclic-1} and \cref{subfigure-length-cyclic-2}, a
similar approach is taken. The initial input presentations
in~\cref{subfigure-length-cyclic-1} and \cref{subfigure-length-cyclic-2} were
Fernandes' presentations from~\cite[Theorems 2.6 and 2.7]{Fernandes2022aa},
respectively, and the outputs of the Froidure-Pin
Algorithm~\cite{Froidure1997aa} with the corresponding generating sets.

\begin{figure}
\centering
\begin{subfigure}{0.48\textwidth}
  \includegraphics[width=\textwidth]{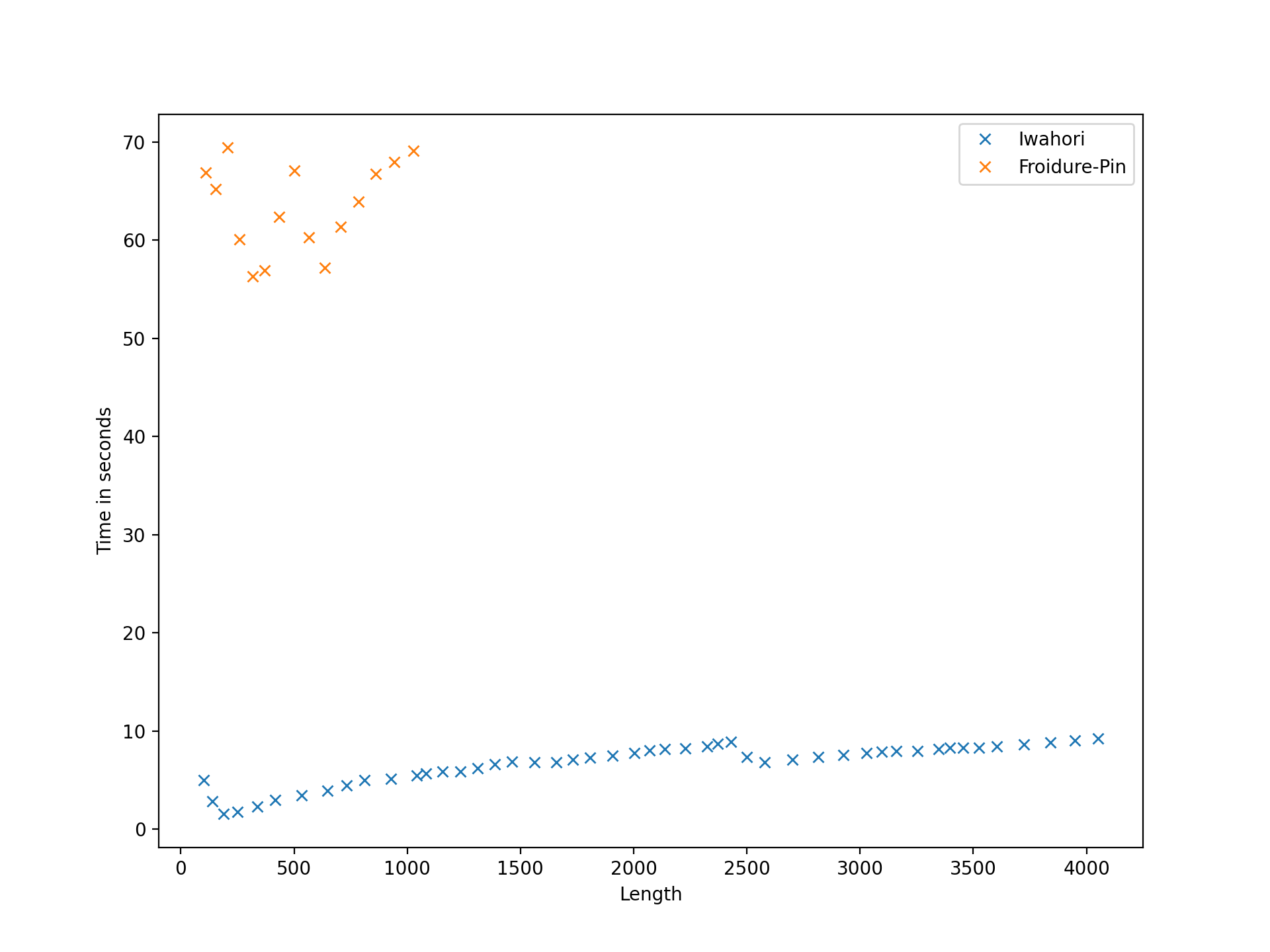}
  \caption{Full transformation monoid $T_4$.}
  \label{subfigure-length-t-n}
\end{subfigure}
\begin{subfigure}{0.48\textwidth}
  \includegraphics[width=\textwidth]{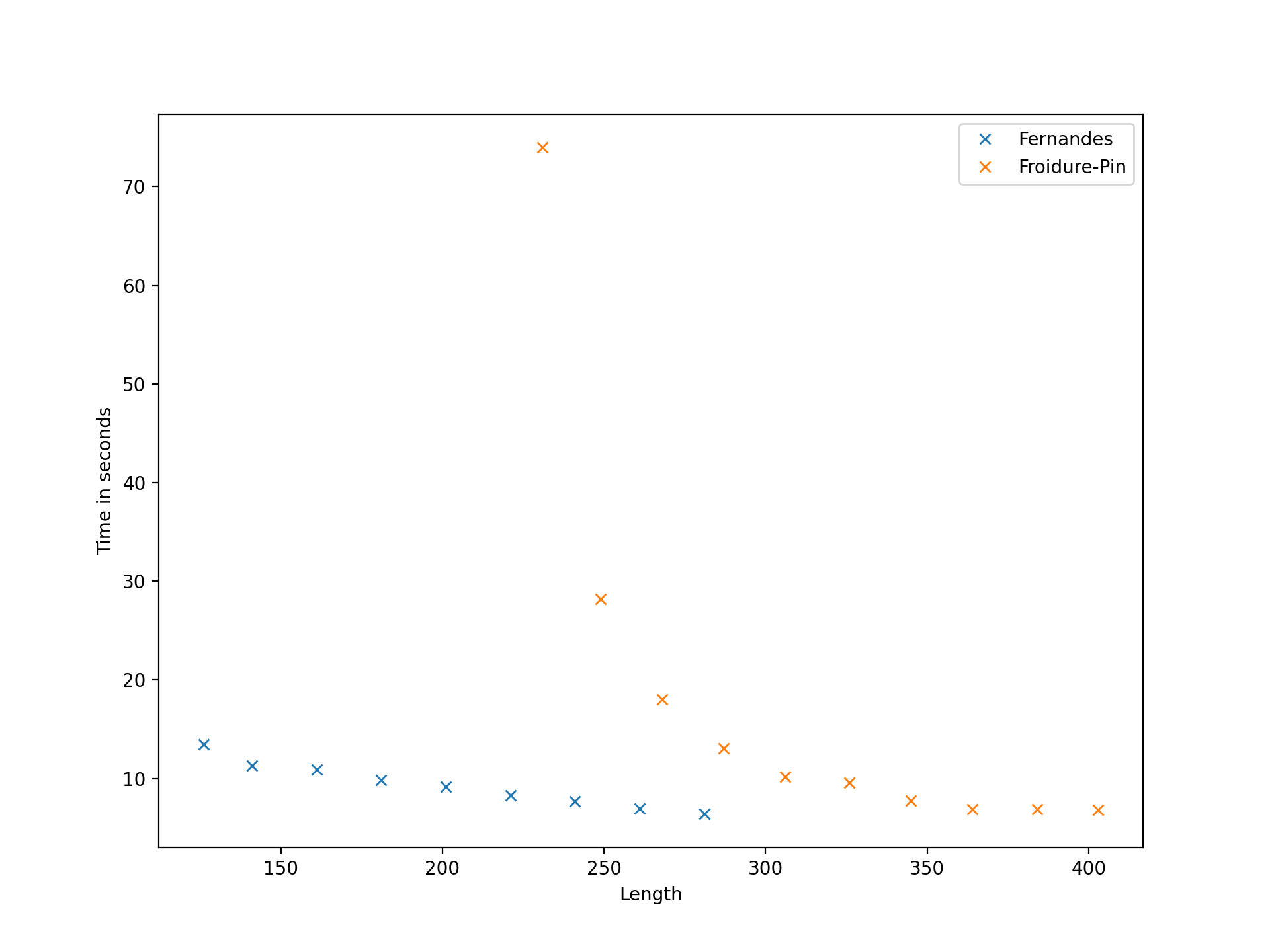}
  \caption{Cyclic inverse monoid $\mathcal{CI}_{10}$.}
  \label{subfigure-length-cyclic-1}
\end{subfigure}
\begin{subfigure}{0.48\textwidth}
  \includegraphics[width=\textwidth]{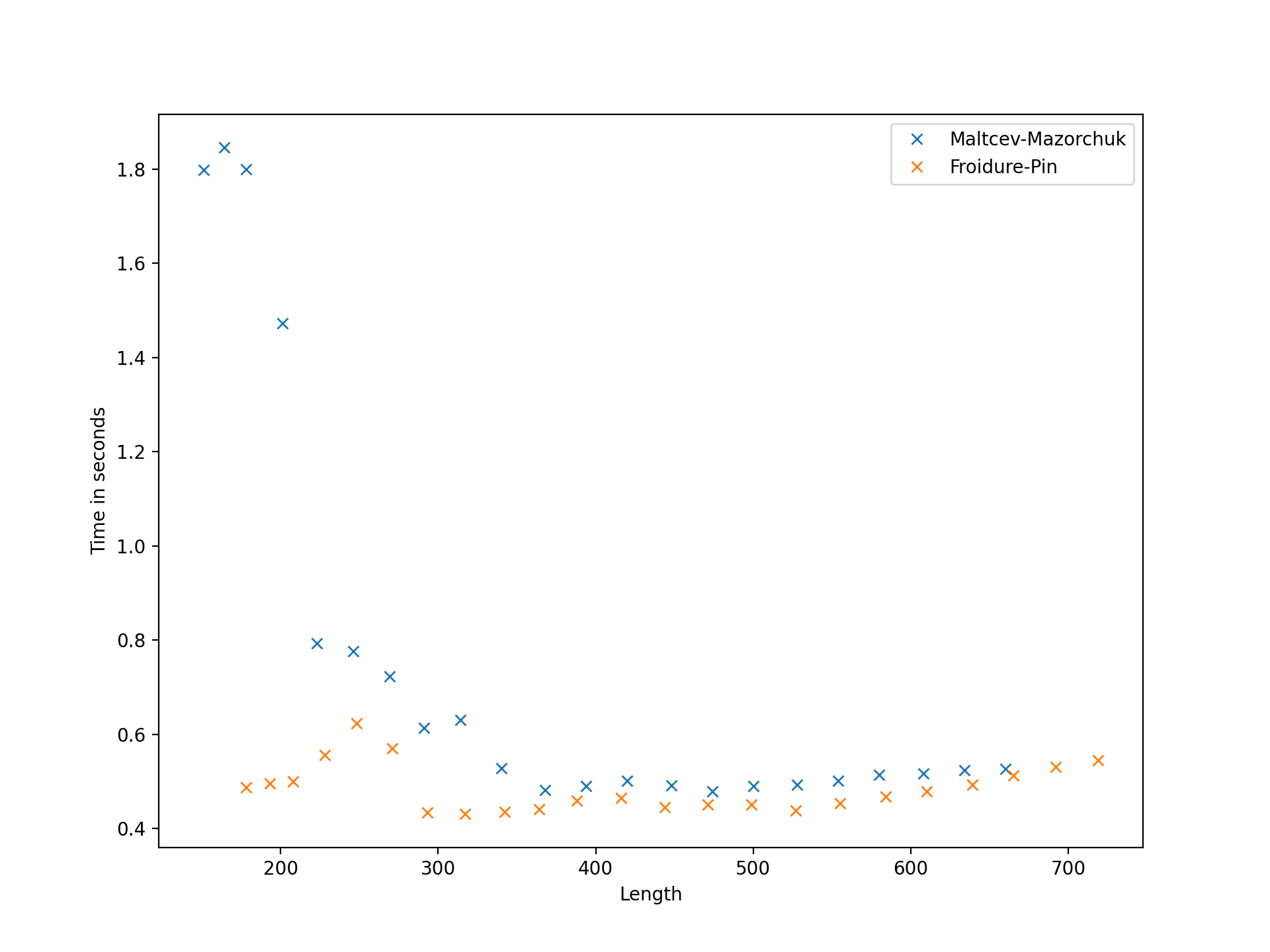}
  \caption{The singular part $B_4\setminus S_4$ of the Brauer monoid $B_4$.}
  \label{subfigure-length-cyclic-2}
\end{subfigure}
\caption{Comparison of the runtime of low-index congruences algorithm for
  computing the right congruences on the full transformation monoid $T_4$
  with up to 16 classes in~\cref{subfigure-length-t-n}; the right
  congruences on the cyclic inverse monoid $\mathcal{CI}_{10}$ with at most
  $4$ classes in~\cref{subfigure-length-cyclic-1} and the singular part of
  the Brauer monoid $B_4$~\cref{subfigure-length-cyclic-2} with
index up to $6$. Each value is the mean of $100$ trials.}
\label{figure-length}
\end{figure}

The outcome is mixed: in all of these examples the ``human'' presentations
provide better performance than their ``non-human'' counterparts; for $T_n$
shorter means faster up to a point; for the first presentation  for
$\mathcal{CI}_n$ shorter means slower; for the second presentation for
$\mathcal{CI}_n$ shorter means faster.  There is only a single value for the
computer generated presentation for $\mathcal{CI}_n$ with the second
generating set because it was not straightforward to find redundant relations
in this presentation. It might be worth noting that although the length of
this presentation is ${\sim}16,000$, the time taken is still considerably less
than the fastest time using the first presentation.

\subsection{1-sided congruences on finite monoids}
In this section we present some data about the relative performance of the
implementation of low-index congruences algorithm,
\cref{algorithm-enumerate}, and~\cite{Freese2008aa} in \libsemigroups,
\Semigroups, and \CREAM, respectively; see
Tables~\ref{table-catalan-1},~\ref{table-catalan-2},~\ref{table-catalan-3},~\ref{table-order-endo-2},
and~\ref{table-order-endo-3}.

Again, the outcome is somewhat mixed. Generally in the examples presented in
this section, the low-index congruence algorithm as implemented in
\libsemigroups is fastest for finding all right congruences, followed by \CREAM
for the small examples, and \cref{algorithm-enumerate} from \Semigroups for
larger values. For finding only principal congruences, again \CREAM is faster
for smaller examples, and \Semigroups is faster for larger examples.

Given that \CREAM is tool for arbitrary finite algebras of type
$(2^m, 1^n)$ (i.e. with a finite number of binary and unary
operations), it might be expected that specialised algorithms (such
as those in this article) for semigroups and monoids would always
perform better. There are two possible reasons for this.
Firstly, as mentioned above, for some examples, the number of pairs
output by \cref{algorithm-enumerate} is more or less the same as the
maximum possible number; see \cref{figure-sporadic-pairs}(a),
\cref{figure-random-transf-pairs}(a) and \cref{figure-endo-pairs}(a).
As such running \cref{algorithm-enumerate} prior to finding principal
congruences, and then computing the joins, can increase the overall
runtime. Secondly, the
implementation in \CREAM is written almost entirely in C, and the
main loop of the corresponding implementation in \Semigroups is
written in the interpreted \GAP language. Specifically, the main
loops in the implementation of \cref{algorithm-enumerate} in
\Semigroups, and the code for determining the set of principal
congruences are written in \GAP. The code for forming all joins in
\Semigroups is written in C++.
Interpreted languages typically incur a performance penalty when
compared to compiled languages.

\begin{table}
\centering
\begin{tabular}[t]{r|r|r|r|r|r|r}
  \multirow{2}{*}{$n$}
  & \multicolumn{2}{c|}{low-index}
  & \multicolumn{2}{c|}{\cref{algorithm-enumerate}}
  &  \multicolumn{2}{c}{\cref{algorithm-enumerate} + joins}
  \\\cline{2-7}
  & \multicolumn{1}{c|}{mean} & \multicolumn{1}{c|}{s.d.}
  & \multicolumn{1}{c|}{mean} & \multicolumn{1}{c|}{s.d.}
  & \multicolumn{1}{c|}{mean} & \multicolumn{1}{c}{s.d.}
  \\\hline
  1   & $2.2 \times 10 ^ {-6}$ & $1.8\times 10 ^ {-6}$ & $5.0\times
  10 ^ {-4}$ & $2.0\times 10 ^ {-4}$ & $8.0 \times 10 ^ {-4}$ &
  $4.0\times 10 ^ {-4}$ \\
  2   & $4.39\times 10 ^ {-6}$ & $0.14 \times 10 ^ {-6}$ &
  $1.6\times 10 ^ {-4}$ & $0.3 \times 10 ^ {-4}$
  & $2.2 \times 10 ^ {-4}$ & $1.1\times 10 ^ {-4}$          \\
  3   & $1.3\times 10 ^ {-5}$ & $0.2\times 10 ^ {-5}$ & $5.8\times 10
  ^ {-4}$ & $0.7 \times 10 ^ {-4}$ & $1.0 \times 10 ^ {-3}$ & $0.4
  \times 10 ^ {-3}$                  \\
  4   & $9.20\times 10 ^ {-4}$ & $0.10 \times 10 ^ {-4}$ & $5.5\times
  10 ^{-3}$ & $0.8 \times 10 ^ {-3}$ & $1.1 \times 10 ^ {-2}$ & $0.2
  \times 10 ^ {-2}$              \\\cline{6-7}
  5    & $22.500 \times 10 ^ 0$ & $0.012 \times 10 ^ 0$ & $7.0\times
  10 ^ {-2}$ & $0.6\times 10 ^ {-2}$ & \multicolumn{2}{c}{Exceeded
  available memory}     \\\cline{2-3}
  6   & \multicolumn{2}{c|}{Exceeded available time}  & $1.23 \times
  10 ^0$ & $0.11\times 10 ^ 0$          & \multicolumn{2}{c}{Exceeded
  available memory}  \\
  7   & \multicolumn{2}{c|}{Exceeded available time} & $3.0 \times 10
  ^ 1$ & $0.2 \times 10 ^ 1$  & \multicolumn{2}{c}{Exceeded available memory}
\end{tabular}
\caption{
  Runtimes in seconds for the low-index congruence algorithm
  (1 thread); \cref{algorithm-enumerate}; and
  \cref{algorithm-enumerate} and all joins of principal right
  congruences of the Catalan monoids. For
  the low-index congruences algorithm, the values are the means of
  $100$ trials using the presentation output by the Froidure-Pin
  Algorithm\cite{Froidure1997aa}. The values for
\cref{algorithm-enumerate} (and all joins) are the means of $200$ trials.}
\label{table-catalan-2}
\end{table}

\begin{table}
\centering
\begin{tabular}[t]{r|r|r|r|r}
  \multirow{2}{*}{$n$}
  & \multicolumn{2}{c|}{principal}
  & \multicolumn{2}{c}{all}\\\cline{2-5}
  & \multicolumn{1}{c|}{mean} & \multicolumn{1}{c|}{s.d.}
  & \multicolumn{1}{c|}{mean} & \multicolumn{1}{c}{s.d.}
  \\\hline
  1  &$6\times 10 ^ {-5}$ & $4\times 10 ^ {-5}$ &   $9.7 \times 10 ^
  {-5}$ & $3.0 \times 10 ^ {-4}$                      \\
  2  & $7\times 10 ^ {-5}$ & $3\times 10 ^ {-5}$ & $6.0 \times 10 ^
  {-5}$ & $4.0\times 10 ^{-5}$               \\
  3  &$9\times 10 ^ {-5}$ & $6\times 10 ^ {-5}$ &  $9.8 \times 10 ^
  {-5}$  & $6.0 \times 10 ^{-5}$                  \\
  4  & $2.2\times 10 ^ {-4}$ & $1.1\times 10 ^ {-4}$ & $4.5 \times 10
  ^ {-3}$ & $1.0 \times 10 ^ {-3}$              \\ \cline{4-5}
  5  & $2.0\times 10 ^{-3}$ & $0.7 \times 10 ^ {-3}$ &
  \multicolumn{2}{c}{Exceeded available memory}      \\\cline{2-3}
  6  &\multicolumn{2}{c|}{Seg. fault} & \multicolumn{2}{c}{Seg. fault}       \\
  7  &\multicolumn{2}{c|}{Seg. fault} & \multicolumn{2}{c}{Seg. fault}
\end{tabular}
\caption{
  The means of runtimes (in seconds) for $200$ trials of
  \textsc{CreamPrincipalCongruences} and \textsc{CreamAllCongruences}
from \CREAM applied to the Catalan monoids (1 thread).}
\label{table-catalan-3}
\end{table}

\begin{table}
\centering
\begin{tabular}[t]{r|r|r|r|r|r|r}
  \multirow{2}{*}{$n$}
  & \multicolumn{2}{c|}{low-index}
  & \multicolumn{2}{c|}{\cref{algorithm-enumerate}}
  &  \multicolumn{2}{c}{\cref{algorithm-enumerate} + joins}
  \\\cline{2-7}
  & \multicolumn{1}{c|}{mean} & \multicolumn{1}{c|}{s.d.}
  & \multicolumn{1}{c|}{mean} & \multicolumn{1}{c|}{s.d.}
  & \multicolumn{1}{c|}{mean} & \multicolumn{1}{c}{s.d.}
  \\\hline
  1  & $2.2 \times 10 ^ {-6}$ & $1.8\times 10 ^ {-6}$ & $5.0\times 10
  ^ {-4}$ & $2.0\times 10 ^ {-4}$ & $8.0 \times 10 ^ {-4}$ &
  $4.0\times 10 ^ {-4}$ \\
  2 & $5.6 \times 10 ^ {-6}$ & $0.8 \times 10 ^ {-6}$ & $1.0\times 10
  ^ {-3}$ & $0.2\times 10 ^ {-3}$ & $1.0\times 10 ^ {-3}$ &
  $0.2\times 10 ^ {-3}$ \\
  3 & $4.9 \times 10 ^ {-5}$ & $0.1\times 10 ^ {-5}$ &  $4.31\times
  10 ^ {-3}$ & $0.09\times 10 ^ {-3}$ & $4.6\times 10 ^ {-3}$ &
  $0.3\times 10 ^ {-3}$ \\
  4 & $3.555 \times 10 ^ {-2}$ & $0.009\times 10 ^ {-2}$ & $3.9\times
  10 ^ {-2}$ & $0.5\times 10 ^ {-2}$ & $5.1\times 10 ^ {-2}$ &
  $0.5\times 10 ^ {-2}$ \\
  5 & $3.364 \times 10 ^ {1}$ & $0.004\times 10 ^ {1}$ & $5.33\times
  10 ^ {-1}$ & $0.08\times 10 ^ {-1}$ & $2.763 \times 10 ^ {1}$ &
  $0.012\times 10 ^ {1}$\\ \cline{2-3}\cline{6-7}
  6 & \multicolumn{2}{c|}{Exceeded available time} & $9.39 \times 10
  ^ {0}$ & $0.19 \times 10 ^ {0}$ & \multicolumn{2}{c}{Exceeded
  available memory}\\
  7 & \multicolumn{2}{c|}{Exceeded available time} & $2.95 \times 10
  ^ 2$ & - & \multicolumn{2}{c}{Exceeded available memory}
\end{tabular}
\caption{
  Runtimes in seconds for low-index congruences algorithm (1 thread);
  \cref{algorithm-enumerate}; and \cref{algorithm-enumerate} and all joins of
  principal right congruences of the monoids $O_n$ of order preserving
  transformations on $\{1, \ldots, n\}$. For the low-index
  congruences algorithm,
  the values are the means of $100$ trials using the presentation
  from~\cite[Section 2]{Arthur2000aa} for $n\geq 3$ and using the output of the
  Froidure-Pin Algorithm~\cite{Froidure1997aa} for $n < 3$. The values for
\cref{algorithm-enumerate} (and all joins) are the means of $200$ trials.}
\label{table-order-endo-2}
\end{table}
\begin{table}
\centering
\begin{tabular}[t]{r|r|r|r|r}
  \multirow{2}{*}{$n$}
  & \multicolumn{2}{c|}{principal}
  & \multicolumn{2}{c}{all}\\\cline{2-5}
  & \multicolumn{1}{c|}{mean} & \multicolumn{1}{c|}{s.d.}
  & \multicolumn{1}{c|}{mean} & \multicolumn{1}{c}{s.d.}
  \\\hline
  1 & $6\times 10 ^ {-5}$             & $4\times 10 ^ {-5}$
  & $9.7 \times 10 ^ {-5}$                                      &
  $3.0 \times 10 ^ {-4}$                      \\
  2 & $2.7 \times 10 ^ {-4}$          & $0.6 \times 10 ^ {-4}$
  & $1.2  \times 10 ^ {-4}$                                     &
  $0.1 \times 10 ^ {-4}$ \\
  3 & $1.72\times 10 ^ {-4}$          & $0.06\times 10 ^ {-4}$
  & $2.05\times 10 ^ {-4}$                                      &
  $0.05\times 10 ^ {-4}$ \\
  4 & $1.475\times 10 ^ {-3}$         & $0.015\times 10 ^ {-3}$
  & $1.435\times 10 ^ {-2}$                                     &
  $0.007\times 10 ^ {-2}$\\
  5 & $1.112 \times 10 ^ {-1}$        & $0.011 \times 10 ^ {-1}$
  & $3.08\times 10 ^ {1}$                                       &
  $0.14 \times 10 ^ {1}$\\ \cline{4-5}
  6 & $1.6\times 10 ^ {1}$            & $0.1 \times 10 ^ {1}$
  & \multicolumn{2}{c}{Exceeded available memory} \\\cline{2-3}
  7 & \multicolumn{2}{c|}{Seg. fault} & \multicolumn{2}{c}{Seg. fault}
\end{tabular}
\caption{
  The means of runtimes (in seconds) for $200$ trials of
  \textsc{CreamAllPrincipalCongruences} and \textsc{CreamAllCongruences} from
  \CREAM applied to the monoids $O_n$ of order preserving transformations of
$\{1, \ldots, n\}$ (1 thread).}
\label{table-order-endo-3}
\end{table}

\subsection{Low index subgroups}

In this section, we compare the performance of: the implementation of
the low-index congruences algorithm in \libsemigroups; the
function \texttt{LowIndexSubgroups} in \GAP;
the function \texttt{permutation\_reps} from the C++ library
\textsc{3Manifolds}~\cite{Culler2025aa};
see~\cref{table-infinite}.
The authors thank Derek Holt for suggesting the examples in this
section. It is important to note that the implementation in \libsemigroups has
no optimizations for groups, and that the inputs to \libsemigroups are monoid
presentations; while the input to \cite{Culler2025aa} and \GAP are
group presentations.
For details of the particular presentations used please see
\url{https://github.com/libsemigroups/libsemigroups/tree/main/benchmarks/sims}.

\begin{table}
\centering
\begin{tabular}[t]{l||r|r|c|c|c}
  group             & index & subgroups &  \threemanifolds &\GAP &
  \libsemigroups\\\hline
  $(2, 3, 7)$-triangle &
  $50$  &
  $1,747$ &
  \cellcolor{green!25} $8.94 \times 10 ^ {-2}$ &
  \cellcolor{red!25} $4.20\times 10 ^ {1}$ &
  \cellcolor{orange!25} $1.76 \times 10 ^ {-1}$
  \\
  4-generated braid group&
  $12$ &
  $21$ &
  \cellcolor{orange!25} $9.87 \times 10 ^ {-1}$ &
  \cellcolor{red!25} $1.11 \times 10 ^ {0}$ &
  \cellcolor{green!25} $2.02 \times 10 ^ {-1}$
  \\
  modular group&
  $23$ &
  $109,859$ &
  \cellcolor{green!25} $3.79 \times 10 ^ {-1}$ &
  \cellcolor{red!25} $1.26 \times 10 ^ {2}$ &
  \cellcolor{orange!25} $4.39 \times 10 ^ {-1}$
  \\
  fundamental group of K11n34 &
  $7$ &
  $52$ &
  \cellcolor{green!25} $7.01 \times 10 ^ {-1}$ &
  \cellcolor{red!25} $1.50 \times 10 ^ {0}$&
  \cellcolor{orange!25} $1.06 \times 10 ^ {0}$\\
  Heineken            &
  $8$  &
  $3$ &
  \cellcolor{red!25} $2.26 \times 10 ^ {0}$ &
  \cellcolor{green!25} $9.5 \times 10 ^ {-1}$ &
  \cellcolor{orange!25}$1.81 \times 10 ^ {0}$\\
  fundamental group of K15n12345 &
  $7$ &
  $40$ &
  \cellcolor{green!25}$4.47 \times 10 ^ {-1}$ &
  \cellcolor{red!25}$2.12 \times 10 ^ {0}$ &
  \cellcolor{orange!25}$6.33 \times 10 ^ {-1}$ \\
  fundamental group of o9$\mbox{}_{15405}$ &
  $9$&
  $38$ &
  \cellcolor{red!25} $2.10 \times 10 ^ {0}$ &
  \cellcolor{green!25} $1.65 \times 10 ^ {0}$ &
  \cellcolor{orange!25} $1.68 \times 10 ^ {0}$
\end{tabular}
\caption{The time to determine subgroups up to some index for some
  groups. All times are in seconds.
  The indicated times are the mean
  values of between $10$ and $100$ trials. Standard deviations not
  included due to lack of space. Note that both \threemanifolds and
  \libsemigroups can be multithreaded with a corresponding decrease in
  the runtime. The fastest times are highlighted in green, the next
fastest in orange, and the slowest in red.}
\label{table-infinite}
\end{table}

\subsection{2-sided congruences of finite monoids}

In this section we present a comparison of the performance of computing the
distinct 2-sided principal congruences of a selection of finite monoids.  We
compare several different versions of \Semigroups for \GAP that contain
different optimisations. \Semigroups v3.4.1 contains none of the optimisations
from the present paper, v4.0.2 uses \cref{algorithm-enumerate} where possible,
and v5.3.0 uses \cref{algorithm-enumerate} and some further improvements to the
code for finding distinct principal congruences.
All computations in this section are single-threaded.
We would have liked to include
a comparison with \CREAM also, but unfortunately \CREAM suffers a segmentation
fault (i.e. abnormal termination of the entire \GAP programme) on almost every
input with more than approximately $400$ elements. This could have been
circumvented, but due to time limitations, it was not. See
\cref{figure-sporadic-pairs}, \cref{figure-random-transf-pairs},
\cref{figure-endo-pairs}, and \cref{table-numbers}.

\begin{figure}
\centering
\begin{subfigure}{0.48\textwidth}
  \includegraphics[width=\textwidth]{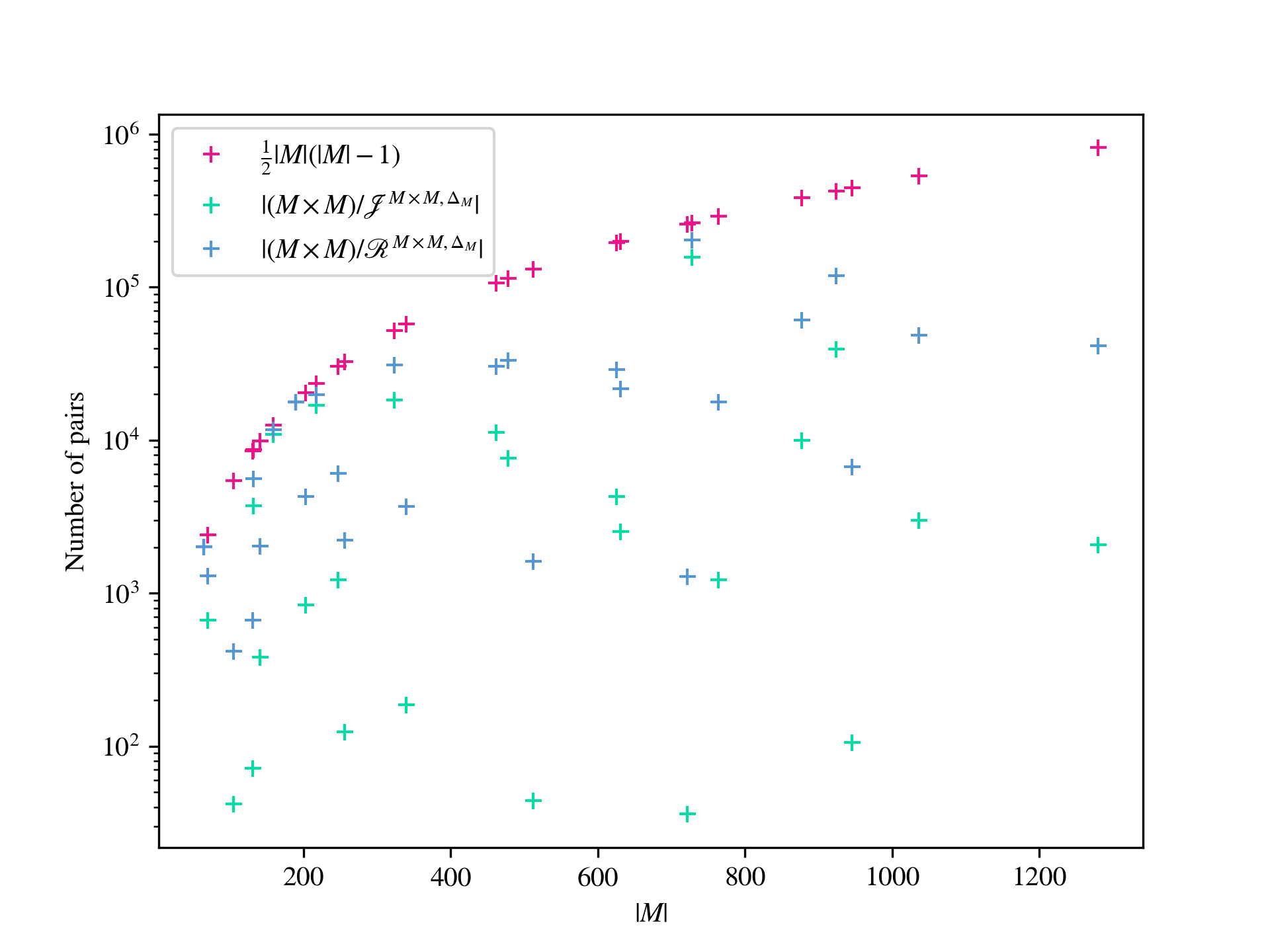}
  \caption{The number of pairs in total and output by
  \cref{algorithm-enumerate}.}
\end{subfigure}
\begin{subfigure}{0.48\textwidth}
  \includegraphics[width=\textwidth]{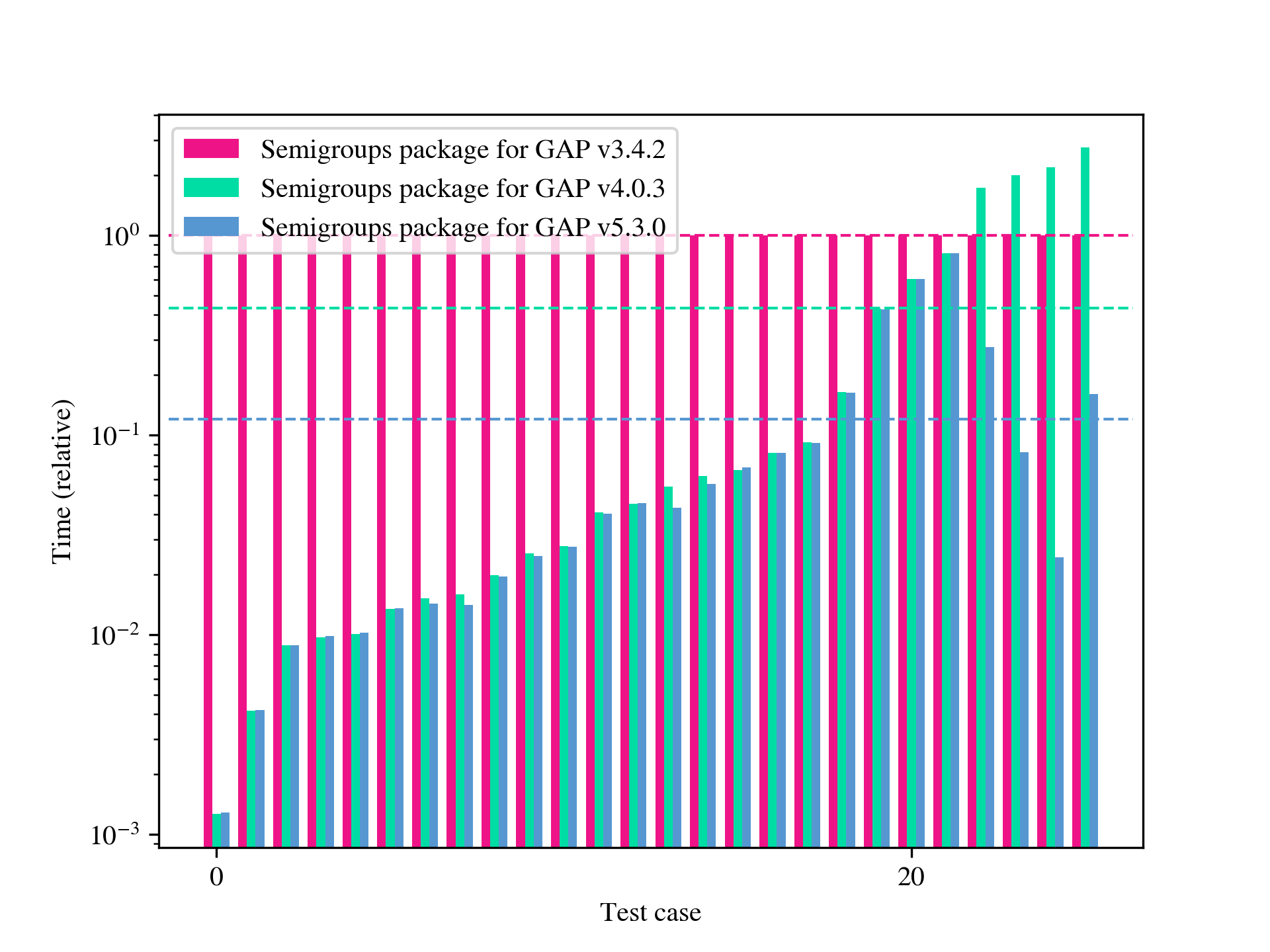}
  \caption{Relative times to find distinct principal $2$-sided congruences.}
\end{subfigure}
\caption{
  Comparison of the performance with and without the use of
  \cref{algorithm-enumerate} for $30$ sporadic
examples of semigroups and monoids with size at most ${\sim}1000$ (1 thread).}
\label{figure-sporadic-pairs}
\end{figure}

\begin{figure}
\centering
\begin{subfigure}{0.48\textwidth}
  \includegraphics[width=\textwidth]{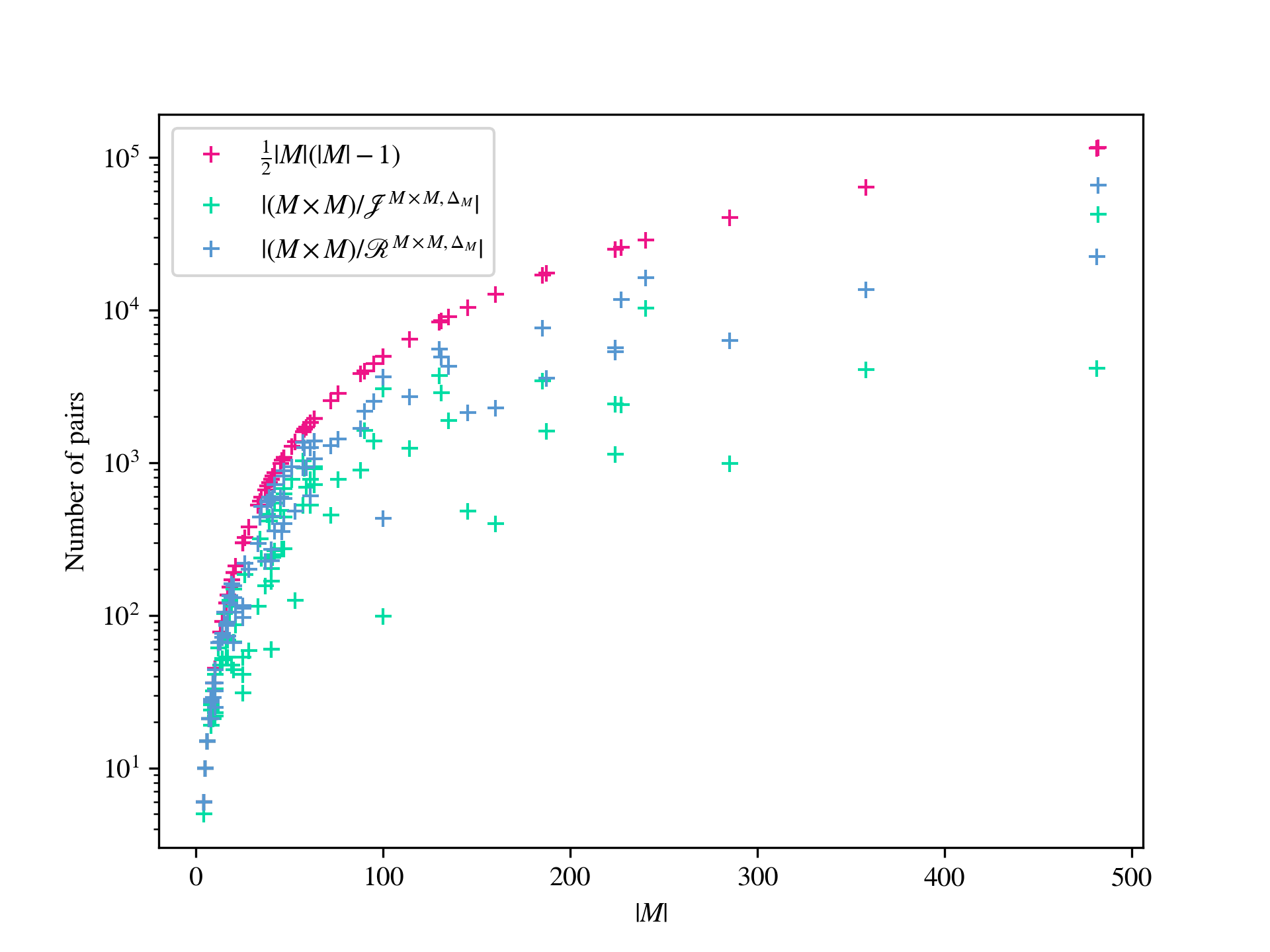}
  \caption{The number of pairs in total and output by
  \cref{algorithm-enumerate}.}
\end{subfigure}\qquad
\begin{subfigure}{0.48\textwidth}
  \includegraphics[width=\textwidth]{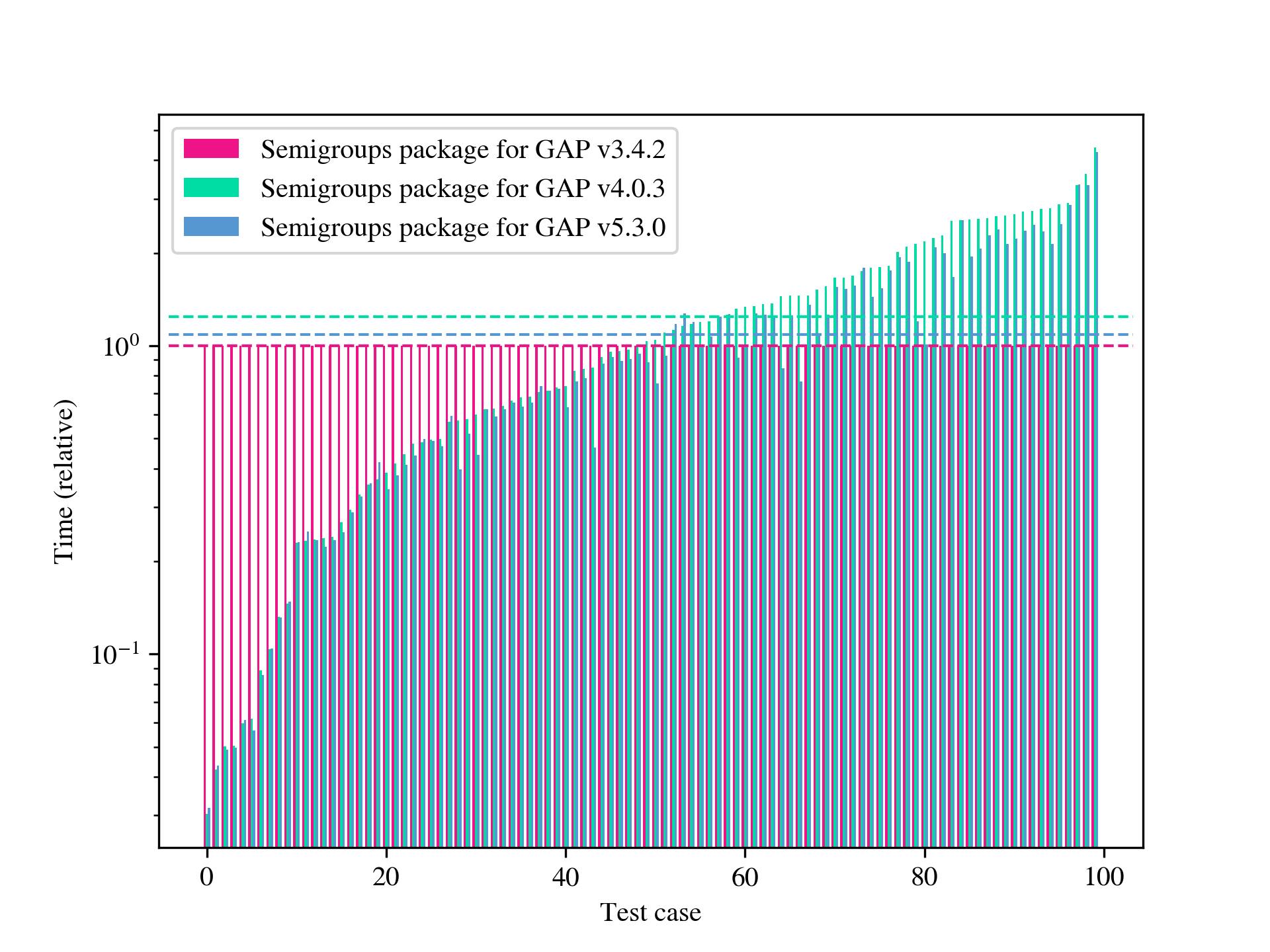}
  \caption{Relative times to find distinct principal $2$-sided congruences.}
\end{subfigure}
\caption{
  Comparison of the performance with and without the use of
  \cref{algorithm-enumerate} for 100 random 2-generated transformation
semigroups of degree $5$ (1 thread).}
\label{figure-random-transf-pairs}
\end{figure}

\begin{figure}
\centering
\begin{subfigure}{0.48\textwidth}
  \includegraphics[width=\textwidth]{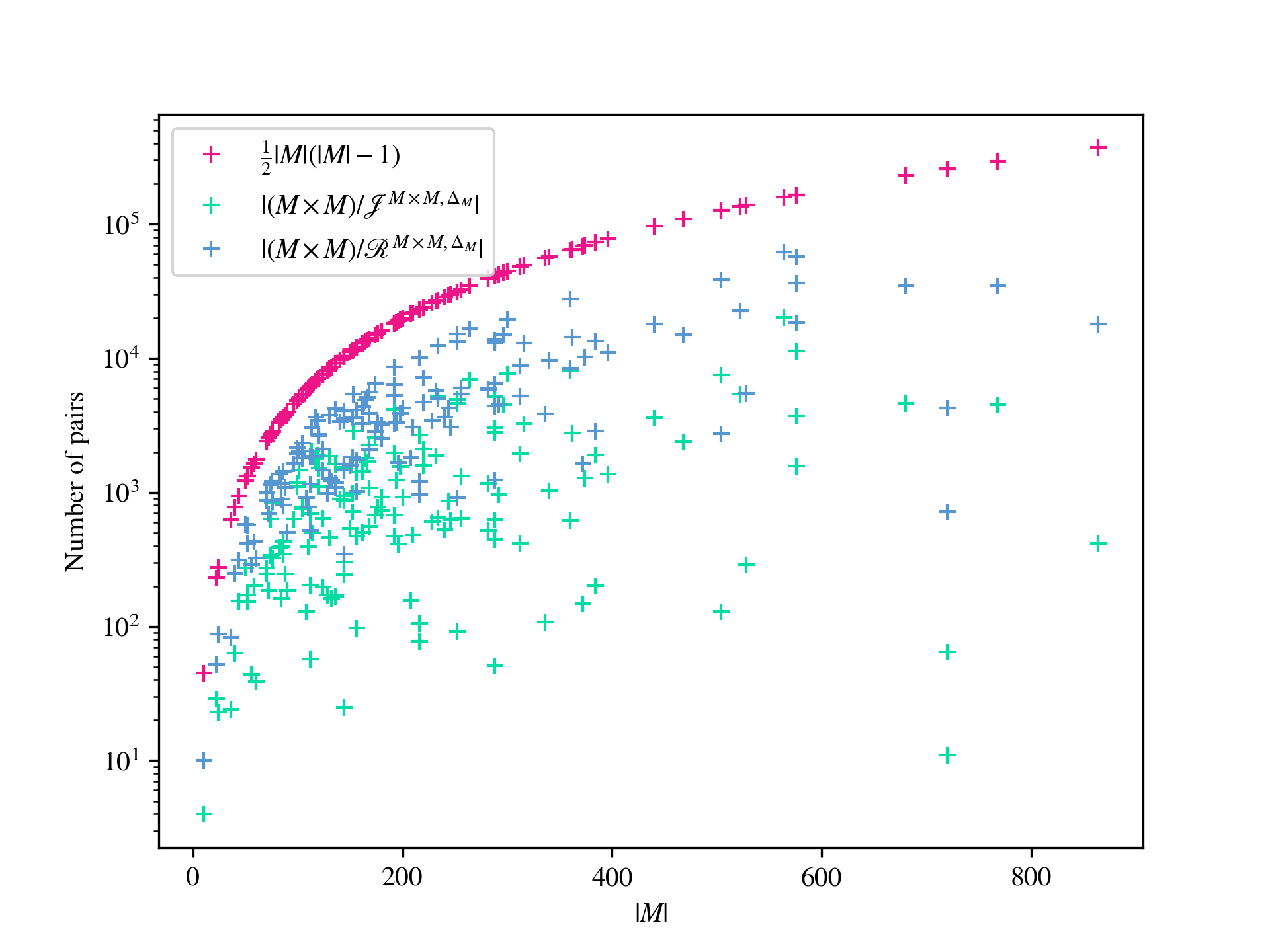}
  \caption{The number of pairs in total and output by
  \cref{algorithm-enumerate}.}
  \label{subfigure-endo-1}
\end{subfigure}\qquad
\begin{subfigure}{0.48\textwidth}
  \includegraphics[width=\textwidth]{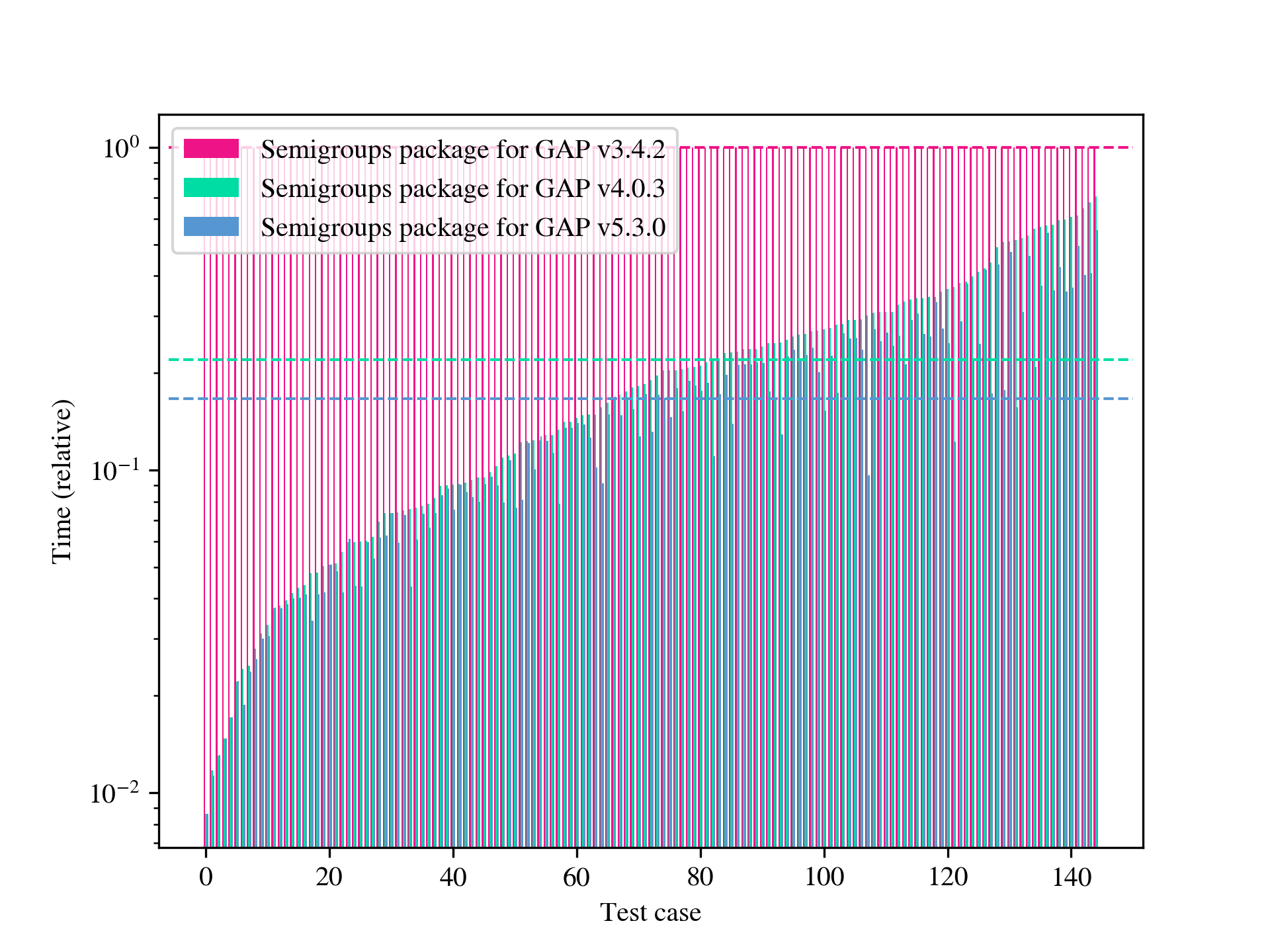}
  \caption{Relative times to find distinct principal $2$-sided congruences.}
  \label{subfigure-endo-2}
\end{subfigure}
\caption{
  Comparison of the performance with and without the use of
  \cref{algorithm-enumerate} for  selected endomorphism monoids of
graphs with 6 nodes (1 thread).}
\label{figure-endo-pairs}
\end{figure}

\begin{landscape}
\begin{table}
  \centering
  \begin{tabular}{l|l|l|l|l|l|l|l|l|l}
    \multirow{2}{*}{Monoid = $M_n$}
    & \multirow{2}{*}{$n$}
    & \multirow{2}{*}{$|M_n|$}
    & \multirow{2}{*}{$|\mathcal{L}|$}
    & \multicolumn{2}{c|}{\CREAM}
    & \multicolumn{2}{c|}{\Semigroups}
    & \multicolumn{2}{c}{\libsemigroups}
    \\\cline{5-10}
    & & &
    & \multicolumn{1}{c|}{mean}
    & \multicolumn{1}{c|}{s.d.}
    & \multicolumn{1}{c|}{mean}
    & \multicolumn{1}{c|}{s.d.}
    & \multicolumn{1}{c|}{mean}
    & \multicolumn{1}{c}{s.d.}
    \\\hline\hline
    Gossip  & $3$ & $11$ & $84$ & \cellcolor{orange!25}$6.0 \times 10
    ^{-4}$ & \cellcolor{orange!25}$1.0 \times 10 ^ {-4}$ &
    \cellcolor{red!25}$5.2 \times 10 ^ {-3}$ &
    \cellcolor{red!25}$0.2\times 10 ^{-3}$       &
    \cellcolor{green!25}$1.87 \times 10 ^ {-4}$ &
    \cellcolor{green!25}$0.08\times 10 ^ {-4}$ \\\hline
    Jones  & $4$ & $14$ & $9$ & \cellcolor{orange!25}$5.0 \times 10
    ^{-4}$ & \cellcolor{orange!25}$1.0 \times 10 ^ {-4}$ &
    \cellcolor{red!25}$1.2
    \times 10 ^ {-2}$ & \cellcolor{red!25}$0.2\times 10 ^{-2}$ &
    \cellcolor{green!25}$3.5 \times 10 ^ {-5}$ &
    \cellcolor{green!25}$0.2 \times 10 ^ {-5}$  \\\hline
    Brauer & $2$ & $3$ & $3$ & \cellcolor{orange!25}$3.7\times 10
    ^{-4}$ & \cellcolor{orange!25}$0.6\times
    10 ^ {-4}$ & \cellcolor{red!25}$2.1\times 10 ^ {-3}$ &
    \cellcolor{red!25}$0.4\times 10 ^{-3}$ & \cellcolor{green!25}$3.8
    \times 10 ^ {-6}$ & \cellcolor{green!25}$0.4 \times 10 ^ {-6}$\\\hline
    Partition  & $2$ & $15$ & $13$ & \cellcolor{orange!25}$4.5 \times
    10 ^{-4}$ & \cellcolor{orange!25}$0.8\times 10 ^ {-4}$
    & \cellcolor{red!25}$6.9\times 10 ^ {-3}$ &
    \cellcolor{red!25}$0.9\times 10 ^{-3}$ & \cellcolor{green!25}$3.4
    \times 10 ^ {-5}$  & \cellcolor{green!25}$0.5 \times 10 ^ {-5}$\\\hline
    Full PBR  & $1$ & $16$ & $167$ & \cellcolor{orange!25}$1.4\times
    10 ^{-3}$ & \cellcolor{orange!25}$0.2\times 10 ^ {-3}$ &
    \cellcolor{red!25}$1.2\times 10 ^ {-2}$ &
    \cellcolor{red!25}$0.2\times 10 ^{-2}$ &
    \cellcolor{green!25}$2.73 \times 10 ^ {-4}$ &
    \cellcolor{green!25}$0.07 \times 10 ^ {-4}$            \\\hline
    Symmetric group & $4$ & $24$ & $4$ & \cellcolor{orange!25}$1.4
    \times 10 ^{-3}$ & \cellcolor{orange!25}$0.3 \times 10 ^{-3}$ &
    \cellcolor{red!25}$4.0 \times 10 ^ {-3}$ &
    \cellcolor{red!25}$0.3\times 10 ^{-3}$     &
    \cellcolor{green!25}$5.5 \times 10 ^ {-5}$ &
    \cellcolor{green!25}$0.8 \times 10 ^ {-5}$             \\\hline
    Full transformation  & $3$ & $27$ & $7$ &
    \cellcolor{orange!25}$1.6\times 10 ^{-3}$ &
    \cellcolor{orange!25}$0.3 \times 10 ^ {-3}$ &
    \cellcolor{red!25}$5.3\times 10 ^ {-3}$ & \cellcolor{red!25}$0.6\times 10 ^
    {-3}$ & \cellcolor{green!25}$8.6 \times 10 ^ {-5}$ &
    \cellcolor{green!25}$0.9 \times 10 ^ {-5}$\\\hline
    Symmetric inverse  & $3$ & $34$ & $7$ &
    \cellcolor{orange!25}$3.8\times 10 ^{-3}$ &
    \cellcolor{orange!25}$0.4 \times 10 ^ {-3}$ &
    \cellcolor{red!25}$8.7\times 10 ^ {-3}$ &
    \cellcolor{red!25}$0.9\times 10 ^{-\
    3}$ & \cellcolor{green!25}$9.6 \times 10 ^ {-5}$ &
    \cellcolor{green!25}$0.4 \times 10 ^ {-5}$\\\hline
    Jones  & $5$ & $42$ & $6$ & \cellcolor{orange!25}$7.5 \times 10 ^{-3}$ &
    \cellcolor{orange!25}$0.2\times 10 ^ {-3}$ & \cellcolor{red!25}$9
    .0 \times 10 ^ {-2}$ & \cellcolor{red!25}$8.0\times 10 ^{-2}$ &
    \cellcolor{green!25}$3.8 \times 10 ^ {-4}$ &
    \cellcolor{green!25}$0.1\times 10 ^ {-4}$
    \\\hline
    Motzkin  & $3$ & $51$ & $10$ & \cellcolor{orange!25}$1.5 \times
    10 ^{-2}$ & \cellcolor{orange!25}$0.1\times 10 ^ {-2}$ &
    \cellcolor{red!25}$1.4 \times 10 ^ {-1}$ &
    \cellcolor{red!25}$0.9\times 10 ^{-1}$ & \cellcolor{green!25}$8.2
    \times 10 ^ {-3}$ & \cellcolor{green!25}$0.05 \times 10 ^ {-3}$ \\\hline
    Partial transformation & $3$ & $64$ & $7$
    &\cellcolor{red!25}$3.8\times 10 ^{-2}$ & \cellcolor{red!25}$0.1
    \times 10 ^ {-2}$ &\cellcolor{orange!25}$2.0 \times 10 ^ {-2}$ &
    \cellcolor{orange!25}$1.0\times 10 ^{-2}$   &
    \cellcolor{green!25}$1.96 \times 10 ^ {-3}$ &
    \cellcolor{green!25}$0.02 \times 10 ^ {-3}$\\\hline
    Partial brauer & $3$ & $76$ & $16$ & \cellcolor{red!25}$6.8\times
    10 ^{-2}$ & \cellcolor{red!25}$0.5 \times 10 ^ {-2}$ &
    \cellcolor{orange!25}$5.4\times 10 ^ {-2}$ &
    \cellcolor{orange!25}$0.3\times 10 ^{-2}$ &
    \cellcolor{green!25}$5.06 \times 10 ^ {-3}$ &
    \cellcolor{green!25}$0.05 \times 10 ^ {-3}$ \\\hline
    Brauer  & $4$ & $105$ & $19$ & \cellcolor{red!25}$2.52 \times 10
    ^{-1}$ & \cellcolor{red!25}$0.06\times 10 ^ {-1}$\
    & \cellcolor{orange!25}$2.0 \times 10 ^ {-2}$ &
    \cellcolor{orange!25}$0.1\times 10 ^{-2}$&
    \cellcolor{green!25}$4.3 \times 10 ^ {-3}$ &
    \cellcolor{green!25}$0.1 \times 10 ^ {-3}$\\\hline
    Symmetric group& $5$ & $120$ & $3$ &
    \cellcolor{red!25}$4.60\times 10 ^{-1}$ &
    \cellcolor{red!25}$0.08\times 10 ^ {-1}$ &
    \cellcolor{orange!25}$2.9\times 10 ^ {-2}$ &
    \cellcolor{orange!25}$0.3\times 10 ^{-2}$   &
    \cellcolor{green!25}$1.27 \times 10 ^ {-2}$ &
    \cellcolor{green!25}$0.01 \times 10 ^ {-2}$                   \\\hline
    Jones & $6$ & $132$ & $10$ & \cellcolor{orange!25}$7.14\times 10
    ^{-1}$ & \cellcolor{orange!25}$0.08\times 10 ^ {-1}$ &
    \cellcolor{red!25}$1.001\times 10 ^ {0}$ &
    \cellcolor{red!25}$0.007\times 10 ^{0}$  &
    \cellcolor{green!25}$6.22 \times 10 ^ {-3}$ &
    \cellcolor{green!25}$0.03\times 10 ^{-3}$              \\\hline
    Partition & $3$ & $203$ & $16$ & \cellcolor{red!25}$4.08\times 10
    ^{0}$ & \cellcolor{red!25}$0.02 \times 10 ^ {0}$
    & \cellcolor{orange!25}$2.7\times 10 ^ {-1}$ &
    \cellcolor{orange!25}$0.2\times 10 ^{-1}$ &
    \cellcolor{green!25}$2.82 \times 10 ^ {-2}$  &
    \cellcolor{green!25}$0.02 \times 10 ^ {-2}$               \\\hline
    Symmetric inverse  & $4$ & $209$ & $11$ &
    \cellcolor{red!25}$4.65\times 10 ^{0}$ & \cellcolor{red!25}$0.02 \times
    10 ^ {0}$ & \cellcolor{orange!25}$7.0 \times 10 ^ {-2}$ &
    \cellcolor{orange!25}$0.5\times 10 ^{-2}$         &
    \cellcolor{green!25}$4.34 \times 10 ^ {-3}$ &
    \cellcolor{green!25}$0.04 \times 10 ^ {-3}$            \\\hline
    Full transformation  & $4$ & $256$ & $11$ &
    \cellcolor{red!25}$9.65\times 10 ^{0}$ & \cellcolor{red!25}$0.05
    \times 10 ^ {0}$ & \cellcolor{green!25}$7.7\times 10 ^ {-2}$ &
    \cellcolor{green!25}$0.4\times 10  ^{-2}$
    &\cellcolor{orange!25}$1.007 \times 10 ^ {-1}$ &
    \cellcolor{orange!25}$0.006 \times 10 ^ {-1}$              \\\hline
    Motzkin  & $4$ & $323$ & $11$ & \cellcolor{orange!25}$2.54 \times
    10 ^{1}$ & \cellcolor{orange!25}$0.01 \times 10 ^ {1}$ &
    \cellcolor{green!25}$8.9\times 10 ^ {0}$ &
    \cellcolor{green!25}$0.1\times 10 ^{0}$                &
    \cellcolor{red!25}$3.38 \times 10 ^ {1}$ &
    \cellcolor{red!25}$0.03 \times 10 ^ {1}$\\\hline
    Jones  & $7$ & $429$ & $7$ & \cellcolor{red!25}$8.56\times 10
    ^{1}$ & \cellcolor{red!25}$0.02 \times 10 ^ {1}$ & \cellcolor{orange!25}$1.
    72 \times 10 ^ {1}$ & \cellcolor{orange!25}$0.02\times 10 ^{1}$
    &\cellcolor{green!25}$1.54 \times 10 ^ {-1}$&
    \cellcolor{green!25}$0.02\times 10 ^{-1}$
  \end{tabular}
  \caption{Performance comparison of \CREAM, the algorithm described
    in~\cref{section-algo-2} as implemented in the \Semigroups package for
    \GAP, and the algorithm described in \cref{subsec-2-sided-congs} as
    implemented in \libsemigroups for computing all 2-sided congruences
    (1 thread). All times are in seconds, and the fastest times are
    highlighted in green, the next fastest in orange, and the slowest
  in red.}\label{table-numbers}
\end{table}
\end{landscape}

\section{Congruence statistics}\label{appendix-numbers}

In this appendix we provide the numbers of finite index congruences of some
well-known finite and infinite finitely presented monoids in a number of
tables. By searching for these numbers in the \cite{oeis} and in the
literature, it appears that many of them were not previously known.

\newcommand{\oeis}[1]{\href{https://oeis.org/#1}{#1}}
\newcommand{\searchfive}[5]{\href{https://oeis.org/search?q=#1\%2C+#2\%2C+#3\%2C+#4\%2C+#5}{OEIS}}
\newcommand{\searchfour}[4]{\href{https://oeis.org/search?q=#1\%2C+#2\%2C+#3\%2C+#4}{OEIS}}
\newcommand{\searchthree}[3]{\href{https://oeis.org/search?q=#1\%2C+#2\%2C+#3}{OEIS}}

\begin{table}
\newcommand{\ncard}{$(2n)!/(n!(n+1)!)$}
\newcommand{\sprinc}{\searchfour{1}{8}{67}{641}}
\newcommand{\smin}{\searchfour{3}{6}{10}{15}}
\newcommand{\nmin}{$(n+1)(n+2)/2$}
\newcommand{\sall}{\searchfour{1}{2}{11}{575}}
\centering
\begin{tabular}[t]{r|r|r|r|r}
  $n$     & $|C_n|$        & principal & minimal & all        \\\hline\hline
  1       & 1              & 0         & 0       & 1         \\
  2       & 2              & 1         & 0       & 2         \\
  3       & 5              & 8         & 3       & 11        \\
  4       & 14             & 67        & 6       & 575       \\
  5       & 42             & 641       & 10      & 5,295,135 \\
  6       & 132            & 6,790     & 15      & ? \\
  7       & 429            & 76,568    & 21      & ?         \\
  \vdots  & \vdots         & \vdots    & \vdots  & \vdots    \\\hline
  $n$     & \ncard         & ?         & \nmin?  & ?         \\\hline
  & \oeis{A000108} & \sprinc   & \smin   & \sall
\end{tabular}
\caption{The numbers of principal, minimal, and all right congruences of the
Catalan monoids $C_n$~\cite{Higgins1993aa} for some small values of $n$.}
\label{table-catalan-1}
\end{table}

\begin{table}
\centering
\begingroup
\newcommand{\ncard}{$\binom{2n+1}{n+1}$}
\newcommand{\sprinc}{\searchfive{0}{2}{18}{138}{1055}}
\newcommand{\sall}{\searchfive{1}{3}{31}{2634}{6964196}}
\begin{tabular}[t]{r|r|r|r|r}
  \multicolumn{2}{c}{} & \multicolumn{3}{c}{left congruences} \\

  $n$   & $|O_n|$        & principal & minimal        & all       \\\hline\hline
  1     & 1              & 0         & 0              & 1         \\
  2     & 3              & 2         & 1              & 3         \\
  3     & 10             & 18        & 3              & 31        \\
  4     & 35             & 138       & 6              & 2,634     \\
  5     & 126            & 1,055     & 10             & 6,964,196 \\
  6     & 462            & 8,234     & 15             & ?         \\
  7     & 1,716          & ?         & ?              & ?         \\
  \vdots& \vdots         & \vdots    & \vdots         & \vdots    \\\hline
  $n$   & \ncard         & ?         & $n(n+1)/2$?    & ?         \\\hline
  & \oeis{A001700} & \sprinc   & \oeis{A253145} & \sall     \\
\end{tabular}
\endgroup
\begingroup
\newcommand{\sprinc}{\searchfive{0}{3}{18}{116}{853}}
\newcommand{\sall}{\searchfive{1}{5}{25}{385}{37951}}
\begin{tabular}[t]{r|r|r}
  \multicolumn{3}{c}{right congruences} \\

  principal    & minimal        & all            \\\hline\hline
  0            & 0              & 1              \\
  3            & 3              & 5              \\
  18           & 6              & 25             \\
  116          & 10             & 385            \\
  853          & 15             & 37,951         \\
  6,707        & 21             &                \\
  54,494       & 28             & ?              \\
  \vdots       & \vdots         & \vdots         \\\hline
  ? & $n(n+1)/2$?    & ?              \\\hline
  \sprinc      & \oeis{A253145} & \sall
\end{tabular}
\endgroup
\caption{The numbers of principal, minimal, and all left and right
  congruences of the monoids $O_n$ of order-preserving transformations of an
$n$-chain for some small values of $n$.}
\label{table-order-endo}
\end{table}

\begin{table}
\centering
\begingroup
\newcommand{\sprinc}{\searchfive{0}{3}{27}{222}{1831}}
\newcommand{\sall}{\searchfour{1}{4}{94}{32571}}
\newcommand{\smin}{\searchfive{0}{1}{3}{6}{10}}
\begin{tabular}[t]{r|r|r|r|r}
  \multicolumn{2}{c}{} & \multicolumn{3}{c}{left congruences} \\

  $n$    & $|OR_n|$        & principal & minimal       & all    \\\hline\hline
  1      & 1               & 0         & 0             & 1      \\
  2      & 4               & 3         & 1             & 4      \\
  3      & 17              & 27        & 3             & 94     \\
  4      & 66              & 222       & 6             & 32,571 \\
  5      & 247             & 1,831     & 10            & ?      \\
  6      & 918             & 15,137    & 15            & ?      \\
  \vdots & \vdots          & \vdots    & \vdots        & \vdots \\\hline
  $n$    & $\binom{2n}{n}$ & ?         & $n(n + 1)/2$? & -      \\\hline
  & \oeis{A045992}  & \sprinc   & \smin         & \sall  \\
\end{tabular}
\endgroup
\begingroup
\newcommand{\sprinc}{\searchfive{0}{4}{25}{176}{1382}}
\newcommand{\sall}{\searchfour{1}{7}{54}{1335}}
\begin{tabular}[t]{r|r|r}
  \multicolumn{3}{c}{right congruences} \\
  principal & minimal  & all    \\\hline\hline
  0         & 0        & 1      \\
  4         & 4        & 7      \\
  25        & 9        & 54     \\
  176       & 16       & 1,335  \\
  1,382     & 25       & ?      \\
  11,575    & 36       & ?      \\
  \vdots    & \vdots   & \vdots \\\hline
  ?         & $n ^ 2$? & ?      \\\hline
  \sprinc   &          & \sall
\end{tabular}
\endgroup
\caption{The numbers of principal, minimal, and all left and right
  congruences of the monoids $OR_n$ of order-preserving or -reversing
transformations of an $n$-chain for some small values of $n$.}
\label{table-order-anti}
\end{table}

\begin{table}
\centering
\begingroup
\newcommand{\ncard}{$2\sum_{k=0}^{n-1} \binom{n-1}{k}\binom{n+k}{k}$}
\newcommand{\smax}{\searchfour{1}{4}{13}{40}}
\newcommand{\smin}{\searchfour{0}{3}{6}{10}}
\newcommand{\scard}{\searchfour{2}{8}{38}{192}}
\newcommand{\sprinc}{\searchfour{1}{8}{67}{653}}
\newcommand{\sall}{\searchfour{2}{9}{142}{16239}}
\begin{tabular}[t]{r|r|r|r|r}
  \multicolumn{2}{c}{} & \multicolumn{3}{c}{left congruences} \\

  $n$    & $|PO_n|$       & principal & minimal & all    \\\hline\hline
  1      & 2              & 1         & 0       & 2      \\
  2      & 8              & 8         & 3       & 9      \\
  3      & 38             & 67        & 6       & 142    \\
  4      & 192            & 653       & 10      & 16,239 \\
  5      & 1,002          & 7,314     & 15      & ?      \\
  \vdots & \vdots         & \vdots    & \vdots  & \vdots \\\hline
  $n$    & \ncard         & ?         & ?       & ?      \\\hline
  & \oeis{A002003} & \sprinc   & \smin   & \sall  \\
\end{tabular}
\endgroup
\begingroup
\newcommand{\sprinc}{\searchfour{1}{12}{172}{2612}}
\newcommand{\smin}{\searchfive{0}{6}{28}{120}{496}}
\newcommand{\sall}{\searchthree{2}{18}{10036}}
\centering
\begin{tabular}[t]{r|r|r}
  \multicolumn{3}{c}{right congruences} \\

  principal & minimal  & all             \\\hline\hline
  1         & 0        & 2               \\
  12        & 6        & 18              \\
  172       & 28       & 10,036          \\
  2,612     & 120      & ?               \\
  40,074    & 496      & ?               \\
  \vdots    & \vdots   & \vdots          \\\hline
  ?         & ?        & ?               \\\hline
  \sprinc   & \smin    & \sall           \\
\end{tabular}
\endgroup
\caption{The numbers of principal, minimal, and all left and right
  congruences of the monoids $PO_n$ of partial order-preserving
transformations of an $n$-chain for some small values of $n$.}
\label{table-partial-order}
\end{table}

\begin{table}
\centering
\begingroup
\newcommand{\scard}{\searchfour{2}{9}{54}{323}}
\newcommand{\sprinc}{\searchfour{1}{9}{79}{874}}
\newcommand{\smin}{\searchfour{0}{3}{6}{10}}
\newcommand{\sall}{\searchfour{2}{11}{306}{104400}}
\newcommand{\ncard}{$2|PO_n|-(1+\sum_{k=1}^n \binom{n}{k} n)$}
\begin{tabular}[t]{r|r|r|r|r}
  \multicolumn{2}{c}{} & \multicolumn{3}{c}{left congruences} \\

  $n$     & $|POR_n|$ & principal & minimal  & all     \\\hline\hline
  1       & 2         & 1         & 0        & 2       \\
  2       & 9         & 9         & 3        & 11      \\
  3       & 54        & 79        & 6        & 306     \\
  4       & 323       & 874       & 10       & 104,400 \\
  5       & 1,848     & ?         & ?        & ?       \\
  \vdots  & \vdots    & \vdots    & \vdots   & \vdots  \\\hline
  $n$     & \ncard   & ?         & ?        & ?       \\\hline
  & \scard    & \sprinc   & \smin    & \sall   \\
\end{tabular}
\endgroup
\begingroup
\newcommand{\sprinc}{\searchfour{1}{13}{191}{3195}}
\newcommand{\smin}{\searchfour{0}{6}{28}{120}}
\newcommand{\sall}{\searchthree{2}{23}{35598}}
\begin{tabular}[t]{r|r|r}
  \multicolumn{3}{c}{right congruences} \\
  principal & minimal & all    \\\hline\hline
  1         & 0       & 2      \\
  13        & 6       & 23     \\
  191       & 28      & 35,598 \\
  3,195     & 120     & ?      \\
  54,785    & 496     & ?      \\
  \vdots    & \vdots  & \vdots \\\hline
  ?         & ?       & ?      \\\hline
  \sprinc   & \smin   & \sall  \\
\end{tabular}
\endgroup
\caption{The numbers of principal, minimal, and all left and right
  congruences of the monoids $POR_n$ of partial order-preserving
or -reversing transformations of an $n$-chain for some small values of $n$.}
\label{table-partial-order-anti}
\end{table}

\begin{table}
\centering
\begingroup
\newcommand{\scard}{$\binom{2n}{n}$}
\newcommand{\sprinc}{\searchfour{1}{7}{46}{330}}
\newcommand{\smin}{\searchfour{0}{3}{6}{10}}
\newcommand{\nmin}{$(n+1)(n+2)/2$}
\newcommand{\sall}{\searchfour{2}{8}{99}{8146}}
\begin{tabular}[t]{r|r|r|r|r}
  $n$   & $|POI_n|$      & principal & minimal        & all       \\\hline\hline
  1     & 2              & 1         & 0              & 2         \\
  2     & 6              & 7         & 3              & 8         \\
  3     & 20             & 46        & 6              & 99        \\
  4     & 70             & 330       & 10             & 8,146     \\
  5     & 252            & 2,602     & 15             & 18,732,669\\
  6     & 924            & 21,900    & 21             & ?         \\
  \vdots& \vdots         & \vdots    & \vdots         & \vdots    \\\hline
  $n$   & \scard         & ?         & \nmin          & -         \\\hline
  & \oeis{A000984} & \sprinc   & \oeis{A253145} & \sall \\
\end{tabular}
\endgroup
\begingroup
\newcommand{\scard}{\searchfour{2}{7}{30}{123}}
\newcommand{\sprinc}{\searchfour{1}{8}{56}{453}}
\newcommand{\smin}{\searchfour{0}{3}{6}{10}}
\newcommand{\nprinc}{$(n+1)(n+2)/2$}
\newcommand{\sall}{\searchfour{2}{10}{232}{64520}}
\begin{tabular}[t]{r|r|r|r|r}
  $n$   & $|PODI_n|$ & principal & minimal & all    \\\hline\hline
  1     & 2          & 1         & 0       & 2      \\
  2     & 7          & 8         & 3       & 10     \\
  3     & 30         & 56        & 6       & 232    \\
  4     & 123        & 453       & 10      & 64,520 \\
  5     & 478        & 4,032     & 15      & ?      \\
  6     & 1,811      & 37,410    & 21      & ?      \\
  \vdots& \vdots     & \vdots    & \vdots  & \vdots \\\hline
  $n$   &            & ?         & \nprinc & ?      \\\hline
  & \scard     & \sprinc   & \smin   & \sall  \\
\end{tabular}
\endgroup
\caption{The numbers of principal, minimal, and all left/right congruences of
  the monoids $POI_n$ and $PODI_n$ of order-preserving, and order-preserving
  and -reversing, respectively, partial permutations of an $n$-chain
for some small values of $n$.}
\label{table-poi-podi}
\end{table}

\begin{table}
\centering
\begingroup
\newcommand{\smin}{\searchfour{0}{3}{6}{10}}
\newcommand{\nprinc}{$(n+1)(n+2)/2$}
\newcommand{\sprinc}{\searchfour{1}{8}{56}{460}}
\newcommand{\ncard}{$(n/2)\binom{2n}{n} + 1$}
\newcommand{\sall}{\searchfour{2}{10}{220}{57357}}
\begin{tabular}[t]{r|r|r|r|r}
  $n$     & $|POPI_n|$     & principal & minimal  & all    \\\hline\hline
  1       & 2              & 1         & 0        & 2      \\
  2       & 7              & 8         & 3        & 10     \\
  3       & 31             & 56        & 6        & 220    \\
  4       & 141            & 460       & 10       & 57,357 \\
  5       & 631            & 4,322     & 15       & ?      \\
  \vdots  & \vdots         & \vdots    & \vdots   & \vdots \\\hline
  $n$     &          & ?         & \nprinc? & ?      \\\hline
  & \oeis{A289719} & \sprinc   & \smin    & \sall  \\
\end{tabular}
\endgroup
\begingroup
\newcommand{\sprinc}{\searchfour{1}{8}{59}{506}}
\newcommand{\nmin}{$(n+1)(n+2)/2$}
\newcommand{\smin}{\searchfour{0}{3}{6}{10}}
\newcommand{\sall}{\searchfour{2}{10}{274}{188740}}
\begin{tabular}[t]{r|r|r|r|r}
  $n$    & $|PORI_n|$     & principal & minimal & all     \\\hline\hline
  1      & 2              & 1         & 0       & 2       \\
  2      & 7              & 8         & 3       & 10      \\
  3      & 34             & 59        & 6       & 274     \\
  4      & 193            & 506       & 10      & 188,740 \\
  5      & 1,036          & 5,347     & 15      & ?       \\
  \vdots & \vdots         & \vdots    & \vdots  & \vdots  \\\hline
  $n$    &                & ?         & \nmin?  & ?       \\\hline
  & \oeis{A289720} & \sprinc   & \smin   & ?       \\
\end{tabular}
\endgroup
\caption{The numbers of principal, minimal, and all left/right congruences of
  the monoids $POPI_n$ and $PORI_n$ of orientation-preserving, and
  orientation-preserving or -reversing, respectively, partial permutations of an
$n$-chain for some small values of $n$.}
\label{table-popi-pori}
\end{table}

\begin{table}
\centering
\begingroup
\newcommand{\ncard}{$(n + 1) ^n$}
\newcommand{\sprinc}{\searchfour{1}{9}{84}{1086}}
\newcommand{\smin}{\searchfour{1}{3}{6}{10}}
\begin{tabular}[t]{r|r|r|r|r}
  \multicolumn{2}{c}{} & \multicolumn{3}{c}{left congruences} \\

  $n$      & $|PT_n|$       & principal & minimal     & all     \\\hline\hline
  1        & 2              & 1         & 1           & 2 \\
  2        & 9              & 9         & 3           & 11 \\
  3        & 64             & 84        & 6           & 371 \\
  4        & 625            & 1,086     & 10          & 335,497 \\
  \vdots   & \vdots         & \vdots    & \vdots      & \vdots  \\\hline
  $n$      & \ncard         & ?         & $n(n+1)/2$? & ?       \\\hline
  & \oeis{A289720} & \sprinc   & \smin       & ?       \\
\end{tabular}
\endgroup
\begingroup
\newcommand{\sprinc}{\searchfour{1}{13}{237}{6398}}
\newcommand{\smin}{\searchfour{1}{6}{28}{120}}
\newcommand{\sall}{\searchthree{2}{23}{92703}}
\begin{tabular}[t]{r|r|r}
  \multicolumn{3}{c}{right congruences} \\

  principal & minimal & all     \\\hline\hline
  1         & 1       & 2 \\
  13        & 6       & 23 \\
  237       & 28      & 92,703 \\
  6,398     & 120     & ? \\
  \vdots    & \vdots  & \vdots  \\\hline
  ?         & ?       & ?    \\\hline
  \sprinc   & \smin   & \sall    \\
\end{tabular}
\endgroup
\caption{The numbers of principal, minimal, and all left and right
  congruences of
the monoids $PT_n$ of all partial transformations on an $n$-set.}
\label{table-pt}
\end{table}

\begin{table}
\centering
\begingroup
\newcommand{\sprinc}{\searchfive{0}{3}{32}{370}{5892}}
\newcommand{\smin}{\searchfive{0}{1}{3}{6}{10}}
\newcommand{\nmin}{$n(n+1)/2$}
\newcommand{\sall}{\searchfour{1}{4}{120}{120121}}
\begin{tabular}[t]{l||r|r|r|r}
  \multicolumn{2}{c}{} & \multicolumn{3}{c}{left congruences} \\

  $n$   & $|T_n|$        & principal & minimal & all      \\\hline\hline
  1    & 1              & 0         & 0       & 1        \\
  2    & 4              & 3         & 1       & 4        \\
  3    & 27             & 32        & 3       & 120      \\
  4    & 256            & 370       & 6       & 120,121  \\
  5    & 3,125          & 5,892     & 10      & ? \\
  \vdots& \vdots         & \vdots    & \vdots  & \vdots  \\\hline
  $n$   & $n^n$          & ?         & \nmin?  & ? \\\hline
  & \oeis{A000312} & \sprinc   & \smin   & \sall \\
\end{tabular}
\endgroup
\begingroup
\newcommand{\sprinc}{\searchfive{0}{4}{44}{900}{28647}}
\newcommand{\smin}{\searchfive{0}{4}{16}{64}{256}}
\newcommand{\nmin}{$2^{2n-2}$}
\newcommand{\sall}{\searchfour{1}{7}{287}{22069828}}
\begin{tabular}[t]{r|r|r}
  \multicolumn{3}{c}{right congruences} \\
  principal  & minimal & all     \\\hline\hline
  0      & 0       & 1 \\
  4      & 4       & 7 \\
  44     & 16      & 287 \\
  900    & 64      & 22,069,828 \\
  28,647 & 256     & ? \\
  \vdots     & \vdots  & \vdots  \\\hline
  ?          & \nmin   & ?    \\\hline
  \sprinc    & \smin   & \sall    \\
\end{tabular}
\endgroup
\caption{The numbers of principal, minimal, and all left and right
  congruences of
the monoids $T_n$ of all transformations on an $n$-set.}
\label{table-t}
\end{table}

\begin{table}
\centering
\begingroup
\newcommand{\ncard}{$\sum_{k=0}^n\binom{n}{k}^2k!$}
\newcommand{\smin}{\searchfive{0}{1}{3}{6}{10}}
\newcommand{\nmin}{$n(n+1)/2$}
\newcommand{\sall}{\searchfour{1}{10}{274}{195709}}
\newcommand{\sprinc}{\searchfive{1}{8}{59}{516}{5667}}
\begin{tabular}[t]{l|r|r|r|r|r}
  $n$    & $|I_n|$        & principal & minimal & all \\ \hline\hline
  1      & 2              & 1         & 1       & 2       \\
  2      & 7              & 8         & 3       & 10      \\
  3      & 34             & 59        & 6       & 274     \\
  4      & 209            & 516       & 10      & 195,709 \\
  5      & 1,546          & 5,667     & 15      & ?       \\
  \vdots & \vdots         & \vdots    & \vdots  & \vdots  \\\hline
  $n$    & \ncard         & ?         & \nmin?  & ? \\\hline
  & \oeis{A002720} & \sprinc   & \smin   & \sall \\
\end{tabular}
\endgroup
\caption{The numbers of principal, minimal, and all left and right
  congruences of
the symmetric inverse monoids $I_n$ of all partial permutations on an $n$-set.}
\label{table-i}
\end{table}

\begin{table}
\centering
\begingroup
\newcommand{\ncard}{$(2n)!/(n!(n+1)!)$}
\newcommand{\sprinc}{\searchfive{0}{1}{6}{30}{118}}
\newcommand{\smin}{\searchfive{3}{7}{15}{29}{105}}
\newcommand{\sall}{\searchfive{1}{2}{9}{79}{2157}}
\begin{tabular}[t]{l|r|r|r|r}
  $n$    & $|J_n|$        & principal & minimal & all \\\hline\hline
  1      & 1              & 0         & 0       & 1         \\
  2      & 2              & 1         & 1       & 2         \\
  3      & 5              & 6         & 3       & 9         \\
  4      & 14             & 30        & 7       & 79        \\
  5      & 42             & 118       & 15      & 2,157     \\
  6      & 132            & 602       & 29      & 4,326,459 \\
  7      & 429            & 2,858     & 105     & ?         \\
  \vdots & \vdots         & \vdots    & \vdots  & \vdots  \\\hline
  $n$    & \ncard         & ?         & ?       & ? \\\hline
  & \oeis{A002720} & \sprinc   & \smin   & \sall \\
\end{tabular}
\endgroup
\begingroup
\newcommand{\sprinc}{\searchfive{0}{2}{16}{142}{1636}}
\newcommand{\smin}{\searchfive{0}{1}{6}{18}{120}}
\newcommand{\sall}{\searchfour{1}{3}{48}{103406}}
\begin{tabular}[t]{l|r|r|r|r}
  $n$    & $|B_n|$        & principal & minimal & all  \\\hline
  1      & 1              & 0         & 0       & 1         \\
  2      & 3              & 2         & 1       & 3         \\
  3      & 15             & 16        & 6       & 48        \\
  4      & 105            & 142       & 18      & 103,406   \\
  5      & 945            & 1,636     & 120     & ?         \\
  \vdots& \vdots         & \vdots    & \vdots  & \vdots  \\\hline
  $n$   & $(2n-1)!!$     & ?         & ?       & ? \\\hline
  & \oeis{A001147} & \sprinc   & \smin   & \sall \\
\end{tabular}
\endgroup
\caption{The numbers of principal, minimal, and all left and right
  congruences of the Jones (or Temperley-Lieb) monoids $J_n$ and the Brauer
monoids $B_n$ of degree $n$.}\label{table-jones-brauer}
\end{table}

\begin{table}
\centering
\begingroup
\newcommand{\ncard}{$(2n)!/(n!(n+1)!)\sum_{k=0}^n \binom{2n}{2k}$}
\newcommand{\sprinc}{\searchfour{1}{18}{188}{2332}}
\newcommand{\smin}{\searchfour{1}{6}{18}{66}}
\newcommand{\sall}{\searchthree{2}{37}{15367}}
\begin{tabular}[t]{l|r|r|r|r}
  $n$   & $|M_n|$        & principal & minimal & all   \\\hline\hline
  1     & 2              & 1         & 1       & 2     \\
  2     & 9              & 18        & 6       & 37    \\
  3     & 51             & 188       & 18      & 15,367\\
  4     & 323            & 2,332     & 66      & ?     \\
  \vdots& \vdots         & \vdots    & \vdots  & \vdots\\\hline
  $n$   & \ncard         & ?         & ?       & ?     \\\hline
  & \oeis{A026945} & \sprinc   & \smin   & \sall \\
\end{tabular}
\endgroup
\begingroup
\newcommand{\sprinc}{\searchfour{0}{2}{26}{627}}
\newcommand{\smin}{\searchfour{0}{1}{9}{49}}
\newcommand{\sall}{\searchthree{1}{3}{108}}
\begin{tabular}[t]{l|r|r|r|r}
  $n$   & $|I_n ^ *|$    & principal & minimal       & all   \\\hline
  1     & 1              & 0         & 0             & 1     \\
  2     & 3              & 2         & 1             & 3     \\
  3     & 25             & 26        & 9             & 108   \\
  4     & 339            & 627       & 49            & ?     \\
  \vdots& \vdots         & \vdots    & \vdots        & \vdots\\\hline
  $n$   & ?              & ?         & $(2^n -1)^2$? & ?     \\\hline
  & \oeis{A026945} & \sprinc   & \smin         & ?\\
\end{tabular}
\endgroup
\caption{The numbers of principal, minimal, and all left and right
  congruences of the Motzkin monoids $M_n$ and the dual symmetric inverse
monoids $I_n^*$ of degree $n$.}
\label{table-motzkin-dual-sym-inv}
\end{table}

\begin{landscape}
\begin{table}
  \centering
  \begingroup
  \setlength{\tabcolsep}{2pt}
  \begin{tabular}[t]{l|r|r|r|r|r|r|r|r|r|r|r}
    & 2              & 3              & 4             & 5
    & 6           & 7            & 8               & 9              &
    10          & 11             & 12 \\\hline\hline
    1  & 1              & 1              & 1             & 1
    & 1           & 1            & 1               & 1              &
    1           & 1              & 1\\
    2  & 7              & 15             & 31            & 63
    & 127         & 255          & 511             & 1023           &
    2047        & 4095           & 8191\\
    3  & 27             & 102            & 351           & 1146
    & 3627        & 11262        & 34511           & 105186         &
    318627      & 962022         & 2898351\\
    4  & 94             & 616            & 3346          & 16360
    & 75034       & 330256       & 1414066         & 5940760        &
    24627274    & 101123296      & 412378786\\
    5  & 275            & 3126           & 26483         & 189420
    & 1220045     & 7335126      & 42061343        & 233221440      &
    1261948865  & 6705793626     & 35151482003\\
    6  & 833            & 16914          & 222425        & 2289148
    & 20263055    & 162391586    & 1214730797      & 8646767880     &
    59332761467 & 396002654398\\
    7  & 2307           & 91072          & 1927625       & 28829987
    & 349842816   & 3703956326   & 35686022019     & 321211486801\\
    8  & 6488           & 491767         & 36892600      & 995133667
    & 17898782992 & 257691084789 & 3222193441904\\
    9  & 18207          & 2583262        & 955309568     & 457189474269\\
    10 & 52960          & 14222001       & 12834025602\\
    11 & 156100         & 87797807\\
    12 & 462271         & 676179934\\
    13 & 1387117        & 7373636081\\
    14 & 4330708        & 120976491573\\
    15 & 14361633\\
    16 & 51895901\\
    17 & 209067418\\
    18 & 955165194\\
    19 & 4777286691\\
    20 & 24434867465\\
    21 & 114830826032\\
    22 & 499062448513
  \end{tabular}
  \endgroup
  \caption{Numbers of 2-sided congruences of the free monoids; see
    also~\cite[Appendix C]{Bailey2016aa} for the corresponding
    results in the free semigroup.
    The columns correspond to the number of
    generators and the rows to the index of the congruences, so that the
    value in row $i$ and column $j$ is the number of 2-sided congruences
  with index at most $i$ of the free monoid with $j$-generators.}
  \label{table-2-sided}
\end{table}
\end{landscape}

\begin{table}
\centering
\begin{tabular}{l||r|r|r|r|r|r|r}
  $n$            & index $\leq 2$       & index $\leq 3$
  & index $\leq 4$ & index $\leq 5$ &
  index $\leq 6$ & index $\leq 7$       & index $\leq 8$ \\\hline\hline
  3              & 29                   & 484
  & 6,896          & 103,204        & 1,773,360      & 35,874,182
  & 849,953,461\\ \hline
  4              & 67                   & 2,794
  & 106,264        & 4,795,980      & 278,253,841    & 20,855,970,290
  & - \\ \hline
  5              & 145                  & 14,851
  & 1,496,113      & 198,996,912    & 37,585,675,984 & -
  & - \\\hline
  6              & 303                  & 77,409
  & 20,526,128     & 7,778,840,717  & -              & -
  & - \\\hline
  7              & 621                  & 408,024
  & 281,600,130    & -              & -              & -
  & - \\\hline
  8              & 1,259                & 2,201,564
  & -              & -              & -              & -
  & -\\\hline
  $a_n$          & $2 a_{n-1} + 2n + 3$ & ?
  & ?              & ?              & ?              & ?              & ?
\end{tabular}
\caption{The number of left and right congruences of the Plactic monoid
with $n$-generators for some small values of $n$.}
\label{table-plactic}
\end{table}
\end{document}